\definecolor{backgroundcolor}{rgb}{1,1,0.8}
\numberwithin{equation}{section}
\renewcommand{\AA}{\mathbb A}
\newcommand{\CC}{\mathbb C}
\newcommand{\FF}{\mathbb F}
\newcommand{\PP}{\mathbb P}
\newcommand{\QQ}{\mathbb Q}
\newcommand{\RR}{\mathbb R}
\newcommand{\ZZ}{\mathbb Z} 
\newcommand{\Zhat}{\widehat\ZZ}
\newcommand{\OO}{\mathcal O}
\newcommand{\calE}{\mathcal E}
\newcommand{\calH}{\mathcal H}
\newcommand{\calB}{\mathcal B}
\newcommand{\calS}{\mathcal S}
\newcommand{\calA}{\mathcal A}
\newcommand{\calU}{\mathcal U}
\newcommand{\calV}{\mathcal V}
\newcommand{\calZ}{\mathcal Z}
\newcommand{\calL}{\mathcal L}
\newcommand{\calD}{\mathcal D}
\newcommand{\calW}{\mathcal W}
 \newcommand{\bfGr}{\mathbf {Gr}}
\newcommand{\p}{\mathfrak p}
\newcommand{\scrF}{\mathscr F}
\newcommand{\norm}[1]{ \left|\!\left| #1 \right|\!\right|  }
\def\cyc{{\operatorname{cyc}}}
\def\ab{{\operatorname{ab}}}
\def\un{{\operatorname{un}}}
\def\Spec{\operatorname{Spec}} 
\def\Gal{\operatorname{Gal}}
\def\ord{\operatorname{ord}} 
\def \GL {\operatorname{GL}}  
\def \SL {\operatorname{SL}}
\def\Aut{\operatorname{Aut}} 
\def\End{\operatorname{End}}
\def\Frob{\operatorname{Frob}}
\def\Fr{\operatorname{Fr}}
\def\tr{\operatorname{tr}}
\def\Tr{\operatorname{Tr}}
\def\bbar#1{\setbox0=\hbox{$#1$}\dimen0=.2\ht0 \kern\dimen0 
\overline{\kern-\dimen0 #1}}
\newcommand{\Qbar}{{\overline{\mathbb Q}}} 
\newcommand{\Kbar}{\bbar{K}} 
\newcommand{\kbar}{\bbar{k}} 
\newcommand{\FFbar}{\overline{\FF}} 
\newcommand\legendre[2]{\Bigl(\frac{#1}{#2}\Bigr) }   
\newtheorem{thm}{Theorem}[section]
\newtheorem{lemma}[thm]{Lemma}
\newtheorem{prop}[thm]{Proposition}
\theoremstyle{definition}
\theoremstyle{remark}
\newtheorem{remark}[thm]{Remark}
\newtheorem{example}[thm]{Example}
\newenvironment{romanenum}{\hfill \begin{enumerate} }{\end{enumerate}}
\definecolor{webcolor}{rgb}{0.8,0,0.2}
\definecolor{webbrown}{rgb}{.6,0,0}
\begin{document}
\title[Hilbert's irreducibility theorem and the larger sieve]{Hilbert's irreducibility theorem and the larger sieve}

\subjclass[2000]{Primary 12E25; Secondary 11G05, 11F80, 11N36}

\keywords{Hilbert's irreducibility theorem, elliptic curves, Galois representations, sieve methods}
\author{David Zywina}
\address{Department of Mathematics, University of Pennsylvania, Philadelphia, PA 19104-6395, USA}
\email{zywina@math.upenn.edu}
\urladdr{http://www.math.upenn.edu/\~{}zywina}
\date{{\today}}

\begin{abstract}
We describe an explicit version of Hilbert's irreducibility theorem using a generalization of Gallagher's larger sieve.  We give applications to the Galois theory of random polynomials, and to the images of the adelic representation associated to elliptic curves varying in rational families.  
\end{abstract}

\maketitle

\section{Introduction}
In this paper, we are interested in quantitative versions of Hilbert's irreducibility theorem (HIT).  In \S\ref{SS:HIT intro}, we will review the classical description of HIT in terms of polynomials and give a special case of our new bounds in this setting (our most general bound can be found in \S\ref{SS:main}).    As an illustration of these bounds, we then study the fundamental example of HIT in \S\ref{SS:van Waerdan}, i.e., the Galois group of a ``random'' polynomial of degree $n$.       

A more serious application is given in \S\ref{SS:intro EC} where we discuss the Galois representations associated to the division points of an elliptic curve.  We shall start with a model of a non-isotrivial elliptic curve $E$ over a field $K=k(T_1,\ldots, T_n)$ where $k$ is a number field and the $T_i$ are independent variables.    Associated to $E$, there is a Galois representation $\rho_E\colon \Gal(\Kbar/K)\to \GL_2(\Zhat)$ describing the Galois action on the torsion points of $E(\Kbar)$.    For most $n$-tuples $t=(t_1,\ldots, t_n)\in k^n$, we obtain an elliptic curve $E_t$ over $k$ by specializing each $T_i$ with $t_i$.    For a ``random'' $t\in k^n$, we will describe the image of the corresponding Galois representation $\rho_{E_t}\colon \Gal(\kbar/k)\to \GL_2(\Zhat)$.   For $k\neq \QQ$, we will see that $\rho_{E_t}(\Gal(\kbar/k))$ agrees with the image of $\rho_E$ for most $t\in k^n$.   The case $k=\QQ$ is subtler, and we will see that $\rho_{E_t}(\Gal(\Qbar/\QQ))$ is usually a subgroup of index $r$ in $\rho_E(\Kbar/K)$ where $r$ is a certain positive integer depending on $E$.

\subsection{Hilbert's irreducibility theorem}   \label{SS:HIT intro}
Let $k$ be a number field with a fixed algebraic closure $\kbar$.   Fix a monic irreducible polynomial $f(x,T_1,\ldots, T_n) \in k(T_1,\ldots, T_n)[x]$ in the variable $x$.  To ease notation slightly, we will denote the $n$-tuple of independent variables $(T_1,\ldots, T_n)$ by $T$.     
Let $L$ be the splitting field of $f(x,T)\in k(T)[x]$ in a fixed algebraic closure $\bbar{k(T)}$.   Denote the Galois group $\Gal(L/k(T))$ by $G$.  

Now let $\Omega_f$ be the set of $t \in k^n$ for which some coefficient of $f(x,T)$ has a pole at $T=t$, or for which $f(x,t)$ is not separable.  For each $\in k^n-\Omega_f$, let $L_t$ be the splitting field of $f(x,t)\in k[x]$ in $\kbar$ and define $G_t=\Gal(L_t/k)$.   Specialization induces an inclusion $G_t \subseteq G$ which is uniquely determined up to conjugation.  We then have the following:

\begin{thm}[Hilbert's irreducibility theorem] \label{T:HIT classical}
For ``most'' points $t\in k^n-\Omega_f$, we have $G_t = G$. 
\end{thm}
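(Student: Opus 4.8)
The plan is to deduce Theorem~\ref{T:HIT classical} from a congruence sieve, in a way that also yields the quantitative refinements given later in the paper. Throughout, ``most'' refers to density by height: after fixing a height function on $k^n$, the goal is to show that
\[
\#\{\,t\in k^n-\Omega_f:\operatorname{ht}(t)\le B,\ G_t\neq G\,\}=o(B^n),
\]
with an explicit saving. Three inputs are needed: (i) the geometric Chebotarev density theorem for the reductions of the covers cut out by $f$ --- equivalently, the Weil and Lang--Weil estimates together with the standard translation of factorization types into Frobenius conjugacy classes; (ii) Jordan's theorem, that a finite group acting transitively on a set of size at least $2$ contains a fixed-point-free element; and (iii) the $n$-dimensional form of Gallagher's larger sieve developed in \S\ref{SS:main}.

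First I would translate ``$G_t\neq G$'' into finitely many congruence conditions. Set $k'=L\cap\kbar$, a finite extension of $k$, and let $N=\Gal(L\kbar/\kbar(T))\trianglelefteq G$ be the geometric monodromy group, so that $L^N=k'(T)$ and $G/N\cong\Gal(k'/k)$. The elements of $k'$ lie in $L$ and specialize to themselves, so $k'\subseteq L_t$ for every $t\in k^n-\Omega_f$; hence $G_t$ always surjects onto $G/N$. In particular, a maximal subgroup $M$ of $G$ containing $G_t$ cannot contain $N$, for otherwise $G/M$ would be a nontrivial quotient of $G/N$ onto which $G_t$ both surjects and maps trivially. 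Let $M_1,\dots,M_r$ represent the conjugacy classes of maximal subgroups of $G$ not containing $N$. For each $i$ the fixed field $L^{M_i}=k(T)(\theta_i)$ is a finite separable extension of $k(T)$; let $h_i(x,T)\in k(T)[x]$ be the minimal polynomial of $\theta_i$, of degree $m_i=[G:M_i]\ge 2$. After enlarging $\Omega_f$ to a proper Zariski-closed subset $\Omega'$ of $k^n$ and performing the routine reduction to $t$ with $\mathfrak p$-integral coordinates, the standard dictionary gives: for $t\in k^n-\Omega'$, if $G_t$ lies in a conjugate of $M_i$, then $h_i(x,t)$ has a root in $k$. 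Thus $G_t\neq G$ forces $t\in S:=\bigcup_{i=1}^r S_i$ with $S_i=\{\,t\in k^n-\Omega':h_i(x,t)\text{ has a root in }k\,\}$, and it suffices to bound each $S_i$.

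Fix $i$ and write $H=M_i$, $h=h_i$. For a prime $\mathfrak p$ of $k$ outside a finite set depending only on $f$, and $t\in S_i$, reducing modulo $\mathfrak p$ a $k$-rational root of $h(x,t)$ shows that $\bar t\in S_{i,\mathfrak p}:=\{\,s\in\FF_{\mathfrak p}^{\,n}:h(x,s)\text{ has a root in }\FF_{\mathfrak p}\,\}$, where $\FF_{\mathfrak p}$ is the residue field. I would restrict to the primes $\mathfrak p$ splitting completely in $k'$, a set of density $1/[k':k]>0$. For such $\mathfrak p$ of sufficiently large norm, reduction modulo a prime of $k'$ above $\mathfrak p$ turns $L/k'(T)$ into a geometrically connected Galois cover of $\AA^n_{\FF_{\mathfrak p}}$ with group $N$, so the factorization type of $h(x,s)$ over $\FF_{\mathfrak p}$ is governed by the Frobenius class of $s$ in $N$ acting on $G/H$. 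Writing $c(H)=\#\{\,g\in N:g\text{ fixes a point of }G/H\,\}$, the ensuing Chebotarev equidistribution --- via the Lang--Weil bound for the relevant $n$-dimensional varieties --- gives
\[
|S_{i,\mathfrak p}|\ \le\ \frac{c(H)}{|N|}\,|\FF_{\mathfrak p}|^{n}+O_f\!\bigl(|\FF_{\mathfrak p}|^{\,n-1/2}\bigr).
\]
Since $N\trianglelefteq G$ and $N\not\subseteq H$, the subgroup $NH$ properly contains the maximal subgroup $H$ and hence equals $G$, so $N$ acts transitively on $G/H$, a set of size $m_i\ge2$; Jordan's theorem then produces an element of $N$ with no fixed point, so $c(H)<|N|$ and $\delta_i:=1-c(H)/|N|>0$. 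Therefore $|S_{i,\mathfrak p}|\le(1-\tfrac12\delta_i)\,|\FF_{\mathfrak p}|^{n}$ for all sufficiently large primes $\mathfrak p$ split in $k'$.

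Finally I would feed these conditions into the larger sieve. Intersecting $S_i$ with the set of $t$ of height at most $B$, its reduction modulo each admissible split prime $\mathfrak p$ with $|\FF_{\mathfrak p}|\le B^{\kappa}$ (for a small fixed $\kappa>0$) lands in a subset of $\FF_{\mathfrak p}^{\,n}$ omitting at least a proportion $\tfrac12\delta_i$ of the residues. The $n$-dimensional larger sieve of \S\ref{SS:main} then bounds this intersection by $O_f\bigl(B^{\,n-\eta}\bigr)$ for some $\eta=\eta(\delta_i)>0$, which is $o(B^n)$; summing over the finitely many $i$ gives the theorem. The structural points --- the reduction to finitely many covers and the use of Jordan's theorem, including the passage to primes split in $k'$ to account for the constant field $k'$ --- are I think the cleanest part; I expect the real work to be quantitative. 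One must make the error term in the geometric Chebotarev estimate explicit in the degrees and the heights of the coefficients of $f$, so that it is genuinely negligible once $|\FF_{\mathfrak p}|$ is a small power of $B$; and for $n>1$ one must either apply Lang--Weil directly to the $n$-dimensional fibre products above, or run an induction on $n$ in which all but one variable is specialized to generic values --- which itself invokes the case of $n-1$ variables for an auxiliary polynomial, so one must check that the constants remain uniform and the argument stays non-circular.
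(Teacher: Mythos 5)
Your overall strategy — decomposing $G_t\neq G$ into containment in conjugates of maximal subgroups $M_i$, discarding those containing the geometric monodromy group $N$, invoking Jordan's lemma to get a fixed-point-free element so that the relevant union $\bigcup_g gM_ig^{-1}$ is a proper subset, and then sieving — is precisely the derivation the paper gives around equation \eqref{E:raw larger}. Your treatment of the constant field extension $k'=L\cap\kbar$ by restricting to primes that split completely in $k'$ also tracks the paper's short exact sequence $1\to G^g\to G\to\Gal(K/k)\to 1$. So the conceptual skeleton is right.

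The gap is in the final sieve step, and it is not just a quantitative loose end. The larger sieve (Theorem~\ref{T:larger sieve 2}) is only nontrivial when $\sum_{\p} \log N(\p)/g_\p - [k:\QQ]\log B$ can be made positive by extending the range of $\p$. With your bound $g_\p \sim (1-\tfrac12\delta_i)\,N(\p)^n$ and $n\ge 2$, the series $\sum_\p \log N(\p)/g_\p$ converges, so the denominator is eventually negative and the sieve yields nothing. A constant-proportion restriction on $\FF_\p^n$ is simply too weak for Gallagher's mechanism when $n>1$: the sieve needs $g_\p \ll N(\p)$, not $g_\p \ll N(\p)^n$. This is exactly why the paper does not prove Theorem~\ref{T:Main} by a one-shot $n$-dimensional sieve. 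Instead it fibers $\AA^n_k$ by rational lines $\calL_b$ (with $b$ ranging over $\OO_k^{n-1}$), applies the larger sieve on each line — where the image modulo $\p$ has cardinality $\delta N(\p)+O(N(\p)^{1/2})$ inside $\FF_\p$, so the sieve bites — and sums the per-line bounds over the $O(B^{[k:\QQ](n-1)})$ choices of $b$. Making that work uniformly in the line is the real content of \S\ref{S:equidistribution}: Proposition~\ref{P:Chebotarev positive char} and Theorem~\ref{T:fibered Chebotarev} give a Chebotarev estimate for curves over $\FF_\p$ via Deligne's theorem (not an $n$-dimensional Lang--Weil bound, which would have far worse constant-tracking), and Lemma~\ref{L:height for log B term} controls the bad primes as $b$ varies. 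Of the two options you list at the end, ``apply Lang--Weil directly to the $n$-dimensional fibre products'' is a dead end for the larger sieve for the reason above; the other — specialize all but one variable and reduce to one dimension — is the viable route and is what the paper does, but it needs the uniformity machinery to be carried out, not just gestured at.
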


Of course one needs to make the ``\emph{most}'' condition precise.  In this paper, we shall interpret this in terms of natural density.   Let $H$ be the absolute (multiplicative) height on $\PP^n(\kbar)$, see \cite{MR1745599}*{\S{}B.2} for background.   For example, if $x_0,\ldots, x_n$ belong to $\ZZ$ and satisfy $\gcd(x_0,\ldots, x_n)=1$, then $H([x_0,\cdots,x_n])=\max_{i}|x_i|$.  We shall also view $H$ as a function on $k^n=\AA^n(k)$ by using the open embedding $\AA_k^n\to \PP_k^n,\, (x_1,\ldots, x_n) \mapsto [x_1,\cdots, x_n,1]$.  For any real number $B\geq 1$, there are only finitely many $t\in k^n$ with $H(t)\leq B$.  

A precise formulation of Theorem~\ref{T:HIT classical} is then the following
\[
\lim_{B\to +\infty}  \frac{|\{ t \in k^n - \Omega_f :  H(t)\leq B,\, G_t = G \}|}{|\{ t\in k^n : H(t)\leq B\}|} = 1.
\]
Intuitively, this says that if we write down large ``random'' $t_1,\ldots, t_n\in k$, then almost surely the splitting field of the polynomial $f(x,t)$ over $k$ has Galois group $G$.  As a consequence we find that $f(x,t) \in k[x]$ is irreducible for ``most'' $t\in k^n$.   Another possible notion of ``most'' is that the theorem holds for all $t$ outside a \emph{thin} subset of $k^n$ (see \cite{MR1757192}*{\S9} or \cite{MR2363329}*{\S3} for details).   

 We will also want to consider integral versions of HIT, let $\OO_k$ be the ring of integers of $k$.  For $t=(t_1,\ldots, t_n)\in \OO_k^n$, define $\norm{t}=\max_{\sigma,i}|\sigma(t_i)|$ where $\sigma$ runs over the field embeddings $\sigma\colon k \hookrightarrow \CC$.    The following theorem, which is a consequence of the large sieve, gives essentially the best general upper bound available.   For reference, we note that there are positive constants $c_{n,k}$ and $c_{n,k}'$ such that 
\begin{equation} \label{E:comparison}
|\{ t\in \OO_k^n : \norm{t} \leq B \}| \sim c_{n,k} B^{[k:\QQ]n} \quad\quad \text{and} \quad\quad  |\{ t\in k^n : H(t) \leq B \}| \sim c_{n,k}' B^{[k:\QQ](n+1)}
\end{equation}
as $B\to +\infty$.

\begin{thm}  [Cohen, Serre] \label{T:HIT large}
With notation as above, 
\begin{align*}
  |\{ t\in \OO_k^n-\Omega_f: \norm{t} \leq B,\, G_t \neq G  \}| &\ll_{n,f,k} B^{[k:\QQ](n-1/2)} \log B \quad \text{and}\\
  |\{ t\in k^n-\Omega_f: H(t) \leq B,\, G_t \neq G  \}| &\ll_{n,f,k} B^{[k:\QQ](n+1/2)} \log B.
\end{align*}
\end{thm}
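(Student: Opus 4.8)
The plan is to deduce both estimates from a multidimensional large sieve inequality over $\OO_k$, after a group-theoretic reduction to a question about a fixed auxiliary polynomial acquiring a $k$-rational root. Since $G$ is finite, every proper subgroup of $G$ lies in one of finitely many conjugacy classes of maximal subgroups $M_1,\dots,M_r$. For each $i$ I would choose a primitive element $\theta_i$ of the fixed field $L^{M_i}$ over $k(T)$ that is integral over $k[T]$ (a suitable $k[T]$-multiple of any primitive element works), and let $h_i(x,T)\in k[T][x]$ be its minimal polynomial; it is monic in $x$, irreducible over $k(T)$, and of degree $[G:M_i]\ge 2$. If $t\in k^n-\Omega_f$ and $G_t\ne G$, then $G_t$ is conjugate into some $M_i$, so the corresponding conjugate of $\theta_i$ has specialization lying in $k$ and hence $h_i(x,t)$ has a root in $k$. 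The set $\Omega_f$ lies on finitely many hypersurfaces in $\AA^n$, so the number of its points with $\norm{t}\le B$ (resp.\ $H(t)\le B$) is $O(B^{[k:\QQ](n-1)})$ (resp.\ $O(B^{[k:\QQ]n}\log B)$), which is negligible against the asserted bounds; it therefore suffices to bound, for each fixed $i$, the number of $t\in\OO_k^n$ with $\norm{t}\le B$ for which $h_i(x,t)$ has a root in $k$ --- and likewise with $t\in k^n$ and $H(t)\le B$.

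Fix $h=h_i$, of degree $d\ge 2$ in $x$. Because $h$ is monic in $x$, any root $a\in k$ of $h(x,t)$ with $t\in\OO_k^n$ is an algebraic integer and hence lies in $\OO_k$; consequently its reduction modulo each prime $\p$ of $\OO_k$ is a root of $h(x,\bar t)$ in $\FF_\p$. Thus $\mathcal A\bmod\p\subseteq S_\p$ for every $\p$, where $\mathcal A=\{t\in\OO_k^n:\norm{t}\le B,\ h(x,t)\text{ has a root in }k\}$ and $S_\p=\{\bar t\in\FF_\p^{\,n}:h(x,\bar t)\text{ has a root in }\FF_\p\}$. The crucial point is that there is a set $\mathcal P$ of primes of positive density with $\omega(\p):=(N\p)^n-|S_\p|\gg_h(N\p)^n$ for $\p\in\mathcal P$ of large norm. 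This comes from reducing modulo $\p$ the $G$-cover of $\AA^n_k$ attached to $L/k(T)$: for $\bar t$ off the discriminant locus of $h$, the fibre acquires an $\FF_\p$-point exactly when the Frobenius conjugacy class at $\bar t$ meets a conjugate of $M_i$ (equivalently, fixes a point of $G/M_i$); by Jordan's remark that a finite group is never the union of conjugates of a proper subgroup --- together, when $k$ is not algebraically closed in $L$, with the choice of a favourable Frobenius coset of the constant-field extension, which is precisely why only positive density survives --- some conjugacy class avoids $\bigcup_g gM_ig^{-1}$, and the effective Chebotarev density theorem for the reduced cover (error term $O_h((N\p)^{n-1/2})$, uniform in $\p$) then forces $\omega(\p)\gg(N\p)^n$.

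Now invoke the multidimensional large sieve over $\OO_k$ (Huxley, Schaal): for every $Q\ge 2$,
\[
|\mathcal A|\;\ll_{n,k}\;\frac{\big(B^{[k:\QQ]}+Q^2\big)^n}{\displaystyle\sum_{\substack{\p\in\mathcal P\\ N\p\le Q}}\frac{\omega(\p)}{(N\p)^n-\omega(\p)}}.
\]
Each summand is $\gg_h 1$, so by the prime ideal theorem the denominator is $\gg Q/\log Q$; choosing $Q=B^{[k:\QQ]/2}$ makes the numerator $\ll B^{[k:\QQ]n}$ and yields $|\mathcal A|\ll B^{[k:\QQ](n-1/2)}\log B$, which together with the first paragraph gives the integral bound. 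For the height bound, write a point $t\in k^n$ with $H(t)\le B$ as $(a_1/a_0,\dots,a_n/a_0)$ with $(a_0,\dots,a_n)\in\OO_k^{n+1}$, $a_0\ne 0$, and $\norm{(a_0,\dots,a_n)}\ll_k B$; homogenizing and clearing denominators turns ``$h_i(x,t)$ has a root in $k$'' into ``$F_i(x,a_0,\dots,a_n)$ has a root in $k$'' for a polynomial monic in $x$ and homogeneous in the $a_j$, whose local sets in $\FF_\p^{\,n+1}$ are invariant under scaling, so the argument of the second paragraph gives $\omega(\p)\gg(N\p)^{n+1}$ for $\p\in\mathcal P$. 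Applying the large sieve in the $n+1$ variables $a_j$ with box size $B$ and $Q=B^{[k:\QQ]/2}$ then produces the bound $O(B^{[k:\QQ](n+1/2)}\log B)$.

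The main obstacle is the local estimate of the second paragraph: one needs the Chebotarev/Lang--Weil point count for the reduction of the cover with an error term uniform over all sufficiently large $\p$, which requires spreading the cover out over $\OO_k[1/N]$ for a suitable $N$ and controlling its dimension, degree and branch locus in terms of $f$ alone, and one must carry along the constant-field bookkeeping that is exactly what prevents the estimate from holding for every $\p$. The underlying group theory ($\bigcup_g gM_ig^{-1}\ne G$) is elementary but is the real reason the sieve gains anything; the sieve inequality itself and the optimization in $Q$ are routine.
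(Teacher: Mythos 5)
The paper does not reproduce a proof of this theorem: it simply cites Theorems 1 and 2 of \cite{MR1757192}*{\S13} and \cite{MR516576}, and those references prove it by exactly the large-sieve argument you give (reduction to maximal subgroups $M_i$, passage to the resolvent cover corresponding to $L^{M_i}/k(T)$, Jordan's theorem to produce a conjugacy class missing $\bigcup_g gM_ig^{-1}$, Lang--Weil/Chebotarev for the reduced cover to get $\omega(\p)\gg (N\p)^n$, and the multidimensional large sieve with $Q=B^{[k:\QQ]/2}$; the rational-point case by lifting to $n+1$ integral coordinates). You have also correctly identified the two genuine subtleties --- that when $K=L\cap\kbar\ne k$ the gain is only available on the positive-density set of $\p$ whose Artin symbol in $\Gal(K/k)$ is the image of an element outside $\bigcup_g gM_ig^{-1}$, and that the error term in the point count must be made uniform by spreading the cover out over a ring of $S$-integers --- so the proposal is a faithful reconstruction of the cited proof.
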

This follows from Theorems 1 and 2 of \cite[\S13]{MR1757192} (where $\log B$ can be actually be replaced with $(\log B)^{\lambda}$ for some $\lambda<1$).  The integral version with a more explicit constant can be found in \cite{MR516576}. Here is an equivalent version of Theorem~\ref{T:HIT large}:

\begin{thm}  \label{T:HIT large2}
With notation as above, let $C$ be a proper subset of $G$ that is stable under conjugation.  Then
\begin{align*}
  |\{ t\in \OO_k^n-\Omega_f: \norm{t} \leq B,\, G_t \subseteq C  \}| &\ll_{n,f,k} B^{[k:\QQ](n-1/2)} \log B \quad \text{and}\\
  |\{ t\in k^n-\Omega_f: H(t) \leq B,\, G_t \subseteq C  \}| &\ll_{n,f,k} B^{[k:\QQ](n+1/2)} \log B.
\end{align*}
\end{thm}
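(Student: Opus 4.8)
The plan is to show that Theorems~\ref{T:HIT large} and \ref{T:HIT large2} are equivalent, so that the former (which is cited as known) immediately yields the latter. One direction is trivial: if $t\notin\Omega_f$ and $G_t\subseteq C$ for some proper conjugation-stable subset $C\subsetneq G$, then in particular $G_t\neq G$ (since $C\neq G$), so the counting set in Theorem~\ref{T:HIT large2} is a subset of the one in Theorem~\ref{T:HIT large}, giving the stated bounds at once. Thus the content is the reverse implication, i.e.\ deriving Theorem~\ref{T:HIT large2} \emph{as stated from its own hypotheses} using only Theorem~\ref{T:HIT large} — but really the point of including it here is that the two theorems carry the same information, so I would present the reverse reduction as the substantive step.

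The key observation for the reverse direction is that the recollection ``$G_t\neq G$'' can be repackaged in terms of conjugacy classes. Recall that the inclusion $G_t\subseteq G$ is only well-defined up to conjugation, so the condition $G_t\neq G$ really means: no conjugate of $G_t$ equals $G$, equivalently $G_t$ is conjugate into some maximal proper subgroup of $G$. First I would let $C$ be the union of all proper subgroups $H\subsetneq G$ (equivalently, the union of all maximal proper subgroups), together with their conjugates; this $C$ is a proper subset of $G$ — it misses any generator of a cyclic quotient-free part, but more simply it misses at least the identity's complement in the following sense — wait, one must check $C\neq G$. This is where the finite group theory enters: a finite group is \emph{not} the union of the conjugates of a single proper subgroup (Jordan's theorem), but it certainly can be the union of all its proper subgroups (e.g.\ $(\ZZ/2)^2$). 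So this naive choice of $C$ fails in general.

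The correct reduction, therefore, runs through the element-level (Frobenius) formulation rather than the subgroup-level one, and the main obstacle is exactly this bookkeeping. Here is the fix. By Jordan's theorem there exists $g\in G$ lying in no proper subgroup's conjugate other than via the cyclic group it generates; more precisely, choose $g\in G$ generating a maximal cyclic subgroup such that no $G$-conjugate of $\langle g\rangle$ is contained in a given maximal subgroup — but the clean statement is: the set $C_0$ of elements of $G$ that are \emph{not} generators of $G$ (when $G$ is cyclic) — this still doesn't cover the non-cyclic case. The genuinely correct route: for each maximal proper subgroup $M_1,\dots,M_r$ of $G$ (up to conjugacy), apply the hypothesis of Theorem~\ref{T:HIT large2} with $C=M_i^{\mathrm{conj}}:=\bigcup_{x\in G}xM_ix^{-1}$, which is conjugation-stable and proper (proper since $M_i\subsetneq G$ and a finite group is not a union of conjugates of one proper subgroup, by Jordan). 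Then $\{t: G_t\neq G\}=\bigcup_{i=1}^r\{t: G_t\subseteq M_i^{\mathrm{conj}}\}$, because $G_t\neq G$ iff some conjugate of $G_t$ lies in some maximal $M_i$. Summing the $r$ bounds (with $r$ absorbed into the implied constant $\ll_{n,f,k}$, as $r$ depends only on $f$) gives Theorem~\ref{T:HIT large} from Theorem~\ref{T:HIT large2}. Running this argument in reverse — deducing \ref{T:HIT large2} from \ref{T:HIT large}, which is the direction we need — is the trivial containment of the first paragraph. So the real work is only in verifying ``a finite group is not the union of the conjugates of a proper subgroup'' to see that the two formulations are logically interchangeable, and I expect writing down that standard group-theory fact cleanly to be the only nontrivial point.
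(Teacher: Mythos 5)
Your first paragraph is exactly the paper's proof: since $C\subsetneq G$, the condition $G_t\subseteq C$ implies $G_t\neq G$, so the set being counted is contained in the one bounded by Theorem~\ref{T:HIT large}, and the estimate follows at once. The rest of your discussion of the reverse implication (via maximal subgroups and Jordan's lemma applied to $C=\bigcup_{x\in G}xM_ix^{-1}$) also matches the paper's exposition, though the false starts in your middle paragraph could be cut.
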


Theorem~\ref{T:HIT large2} follows directly from Theorem~\ref{T:HIT large}.   Let us explain the other implication; we consider only the integral case.    If $G_t\neq G$, then it must lie in some maximal subgroup $M$ of $G$.   Since our $G_t$ is only uniquely defined up to conjugation, it is less ambiguous to write $G_t\subseteq \bigcup_{g\in G} g Mg^{-1}$.   So we have
\begin{equation} \label{E:HIT large connection}
|\{ t\in \OO_k^n-\Omega_f:\norm{t}\leq B,\, G_t \neq G  \}| \leq \sum_M  \Big|\Big\{ t\in \OO_k^n-\Omega_f: \norm{t} \leq B,\, G_t \subseteq \bigcup_{g\in G} gMg^{-1}  \Big\}\Big|
\end{equation}
where the sum is over representatives of the conjugacy classes of maximal subgroups of $G.$  Define $\delta(G,M):= |\bigcup_{g\in G} gMg^{-1}|/|G|$.   By Jordan's lemma \cite{MR1997347}, we know that $\bigcup_{g\in G} gMg^{-1}$ is a proper subset of $G$ (equivalently $\delta(G,M)<1$).    Applying the bound of Theorem~\ref{T:HIT large2}  to the right hand side of (\ref{E:HIT large connection}) gives
\[
|\{ t\in \OO_k^n-\Omega_f:\norm{t}\leq B,\, G_t \neq G  \}| \ll_{n,f,k} \sum_M B^{[k:\QQ](n-1/2)} \log B.
\]
We obtain Theorem~\ref{T:HIT large} by noting that the number of representatives $M$ of maximal subgroups is $O_n(1)$.
\par

Our main abstract result is the following general bound which beats the large sieve when $|C|/|G| < 1/2$.  Its proof utilizes an extension of Gallagher's \emph{larger sieve}.  We will state a more general version of this theorem in \S\ref{SS:main} that removes the assumption that $L/k(T)$ is geometric (i.e., $L\cap \kbar=k$) and gives better control over the implicit constant.

\begin{thm} \label{T:HIT larger} 
Assume that $L/k(T)$ is geometric and let $C$ be a subset of $G$ that is stable under conjugation.   Then
\begin{align*}
  |\{ t\in \OO_k^n-\Omega_f: \norm{t} \leq B,\, G_t \subseteq C  \}|& \ll_{n,f,k} B^{[k:\QQ](n-1+|C|/|G|)} \log B\quad \text{and}\\
  |\{ t\in k^n-\Omega_f: H(t) \leq B,\, G_t \subseteq C  \}|& \ll_{n,f,k} B^{[k:\QQ](n+|C|/|G|)} \log B.
\end{align*}
\end{thm}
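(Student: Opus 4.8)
The plan is to deduce Theorem~\ref{T:HIT larger} from a quantitative reduction modulo primes combined with a version of Gallagher's larger sieve. Fix a finite set $S$ of primes of $k$ containing all primes of bad reduction for the extension $L/k(T)$ (i.e.\ those dividing the discriminant of a suitable model of $f$, or where the chosen integral model of $L/k(T)$ degenerates). For a prime $\p\notin S$ of residue field $\FF_\p$, reduction mod $\p$ makes sense for all $t\in\OO_k^n$ whose reduction $\bar t\in\FF_\p^n$ avoids the (proper, closed) bad locus $\bar\Omega_\p\subset\AA^n_{\FF_\p}$. The key input, which I would isolate as a lemma, is the Chebotarev/Lang--Weil statement: because $L/k(T)$ is geometric with group $G$, for all but $O(1)$ primes $\p$ the specialization $f(x,t)\bmod\p$ has a well-defined Frobenius conjugacy class $\Frob_{\p,\bar t}\subset G$, and the number of $\bar t\in\FF_\p^n$ (away from the bad locus) for which $\Frob_{\p,\bar t}$ meets $C$ is $(|C|/|G|)\,q^n + O(q^{n-1/2})$ where $q=|\FF_\p|$; here the constant in the error depends only on $n,f,k$. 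The point is that if $G_t\subseteq C$ for the global $t$, then $\Frob_{\p,\bar t}\subseteq G_t\subseteq C$ for every good $\p$, so such $t$ reduce into a set of density $\leq |C|/|G|+O(q^{-1/2})$ in $\FF_\p^n$.

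Next I would set up the larger sieve in the form appropriate to counting points in $\OO_k^n$ of norm $\leq B$. Let $\calA$ be the set of $t\in\OO_k^n-\Omega_f$ with $\norm t\leq B$ and $G_t\subseteq C$; write $N=|\calA|$ and $Z=|\{t\in\OO_k^n:\norm t\leq B\}|\asymp_{n,k} B^{[k:\QQ]n}$. For each good $\p$ with norm $N\p=q$, all elements of $\calA$ reduce into a subset $\Pp\subset\FF_\p^n$ of size $|\Pp|\leq (|C|/|G|)q^n+O(q^{n-1/2})$, hence into at most that many residue classes modulo $\p$. Gallagher's larger sieve (in the form generalized to $n$ variables and to $\OO_k$, which replaces the single modulus by products of primes and compares the ``amount of sieving'' $\sum_\p \log N\p$ against $\log$ of the search region) then yields a bound of the shape
\[
N \;\ll\; \frac{\sum_{\p\in\mathcal P}\log N\p}{\displaystyle\sum_{\p\in\mathcal P}\frac{\log N\p}{|\Pp|/q^n} \;-\; [k:\QQ]n\log B}
\]
over any admissible set $\mathcal P$ of primes with $N\p\leq B$; the denominator is positive once $|\Pp|/q^n$ is bounded away from $1$, which holds here since $|C|/|G|\le 1$ and one may, if $C=G$, reduce to a proper conjugation-stable $C'$ as in the passage from Theorem~\ref{T:HIT large} to Theorem~\ref{T:HIT large2}. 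Substituting $|\Pp|/q^n\le |C|/|G|+O(q^{-1/2})$ and summing over primes of norm up to $B$ (so $\sum_{N\p\le B}\log N\p\asymp B$ by the prime ideal theorem) produces, after choosing the truncation optimally, $N\ll_{n,f,k} B^{[k:\QQ](n-1+|C|/|G|)}\log B$, which is the first displayed bound.

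The projective statement follows by the standard affine-to-projective bookkeeping: a point $[x_0:\cdots:x_n]\in\PP^n(k)$ can be scaled to have $\OO_k$-coordinates that are ``nearly coprime,'' and the fiber of $\AA^n\hookrightarrow\PP^n$ over a height-$B$ point lives in a box of side $\asymp B$ in one extra coordinate; running the same sieve on $\OO_k^{n+1}$ (with the homogeneous model of $f$) and accounting for the extra variable replaces the exponent $n-1$ by $n$, giving the second bound. Alternatively one reduces directly to the integral case by the $\asymp$-relations in~\eqref{E:comparison}.

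The main obstacle I anticipate is the uniformity in the Chebotarev-type lemma: one must control the number and the nature of the ``bad primes'' (where $G$ might fail to be the full geometric monodromy group mod $\p$, or where the reduction of $L$ acquires extra constant-field extensions) and, crucially, make the Lang--Weil error term $O(q^{n-1/2})$ with a constant depending only on $n$ and the geometry of $f$, not on $\p$. This requires a good integral model of the cover corresponding to $L/k(T)$ and an effective Lang--Weil / Weil-bound estimate for the associated $\FF_\p$-varieties, uniform over $\p$; managing this cleanly (and checking the denominator in the larger sieve stays bounded below, i.e.\ the sieving really ``bites'') is where the real work lies. The precise form of the generalized larger sieve over $\OO_k$ in several variables, and the verification that it is applicable here, is the other technical point I would need to nail down, and is presumably where the paper's main abstract contribution enters.
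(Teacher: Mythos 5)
Your proposal identifies the right ingredients (larger sieve over $\OO_k$, a Chebotarev/Lang--Weil estimate for the reduction of $L/k(T)$ mod $\p$, and control of bad primes), but the way you combine them does not work, and the missing step is exactly the paper's main technical idea.

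The issue is the application of the larger sieve directly in $n$ variables. In the actual form of the sieve (Theorem~\ref{T:larger sieve 2}), the quantity $g_\p$ that enters is the \emph{absolute} cardinality of the image $\calA\bmod\p\subseteq\FF_\p^n$, and the relevant comparison in the denominator is $\sum_\p \log N(\p)/g_\p$ against $[k:\QQ]\log B$ (not $[k:\QQ]n\log B$, and with $g_\p$ itself, not a normalized density $g_\p/q^n$). Your Chebotarev input gives $g_\p\asymp (|C|/|G|)\,q^n$. For $n\geq 2$, the sum $\sum_{N(\p)\leq x}\log N(\p)/q^n$ is $O(1)$ uniformly in $x$: the larger sieve never ``bites,'' since the denominator can never be made positive. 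For $n=1$ it grows only like $\log x$, which is what the paper exploits. So a direct $n$-dimensional sieve with $g_\p$ of size $\delta q^n$ cannot give any nontrivial bound, let alone $B^{[k:\QQ](n-1+\delta)}$; the display you wrote down with $g_\p/q^n$ and $[k:\QQ]n\log B$ is not a valid form of the larger sieve, and even taken at face value it would yield a constant bound, not the claimed one.

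What the paper does instead is fiber $\AA^n_k$ by lines: decompose $\{\norm{t}\leq B\}$ into $\asymp B^{[k:\QQ](n-1)}$ affine lines $\calL_b$, and for \emph{each} line apply the larger sieve to the one-variable problem $\calA\cap\calL_b$, where the mod-$\p$ image has size $g_\p\leq\delta(N(\p)+DN(\p)^{1/2})$, i.e.\ linear in $q$. This is the regime where the larger sieve is effective (Proposition~\ref{P:specialized larger sieve}), and it yields $\ll B^{[k:\QQ]\delta}\log B$ per line, hence $\ll B^{[k:\QQ](n-1+\delta)}\log B$ in total. Making this uniform over lines is where the real work happens: one needs a one-dimensional Chebotarev along the reduced line with error $O_U(q^{1/2})$ and constant independent of $\p$ and of the line (Proposition~\ref{P:Chebotarev positive char} and Theorem~\ref{T:fibered Chebotarev}, via Grothendieck--Lefschetz plus Deligne, together with the Grothendieck specialization theorem to transfer the geometric monodromy from characteristic $0$), and a control on the set of $\p$ at which a given line becomes degenerate (Lemma~\ref{L:height for log B term}), which feeds into the $\log B$ factor. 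Without this fibering step your argument has no way to obtain the exponent $n-1+\delta$, and the uniformity you flag as the ``main obstacle'' is, in the paper, handled precisely through these one-dimensional statements rather than a multi-dimensional Lang--Weil bound.

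Minor points: the projective case is indeed reduced to the affine integral case, but via Proposition~\ref{P:integral to rational} (representing a height-$B$ point by $\OO_k$-coordinates of norm $\ll B$ in $n+1$ variables), not by the $\asymp$-relations~\eqref{E:comparison} alone. And your remark about reducing $C=G$ to a proper $C'$ is unnecessary: Theorem~\ref{T:Main} allows arbitrary conjugation-stable $C$, and the bound is trivially true when $|C|/|G|=1$.
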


Arguing as before, Theorem~\ref{T:HIT larger} implies that 
\begin{equation} \label{E:raw larger}
|\{ a\in \OO_k^n-\Omega_f:\norm{t}\leq B,\, G_t \neq G  \}| \ll_{n,f,k}  B^{[k:\QQ](n-1+ \delta(G))}\log B
\end{equation}
where $\delta(G)$ is the maximum of the $\delta(G,M)$ over all maximal subgroups $M$ of $G$.  The bound (\ref{E:raw larger}) is superior to that of the large sieve if $\delta(G)<1/2$.  Unfortunately $\delta(G)\geq 1/2$ for many interesting groups (an example where (\ref{E:raw larger}) is superior is when $G$ is a $p$-group with odd $p$, since one has $\delta(G)=1/p$).

As we will see in the next section, the larger sieve can be used to deal with the \emph{small} maximal subgroups $M$ of $G$, that is, small in the sense of the quantity $\delta(G,M)$.  This leaves the larger maximal subgroups to be studied using alternate methods.

\subsection{The Galois group of a random polynomial} \label{SS:van Waerdan}
We now consider the fundamental example of Hilbert's irreducibility theorem.  Fix a positive integer $n$.  For $T=(T_1,\ldots, T_n)$, define the polynomial 
\[
f(x,T) =x^n +T_1 x^{n-1} +\cdots +  T_{n-1}x+T_n.
\] 
For $t\in\ZZ^n$, let $G_t$ be the Galois group of the splitting field of $f(x,t)$ over $\QQ$.  By numbering the roots of $f(x,t)$, we may view $G_t$ as a subgroup of $S_n$.   
Hilbert's irreducibility theorem says that $G_t=S_n$ for ``most'' choices of $t\in \ZZ^n$.    

We now consider a quantitative version.  Define the following counting function
\[
E_n(B) := |\{t\in \ZZ^n:\norm{t}\leq B,\, G_t \neq S_n \}|
\]
(recall that $\norm{t}=\max_i |t_i|$).  We will restrict ourselves to $n\geq 3$, since $n=1$ is uninteresting and it is known that $E_2(B) \sim 2B \log B$.

In 1936, van der Waerden \cite{MR1550517} gave the explicit bound
\[
E_n(B) \ll_n B^{n - \frac{c}{\log\log B}} \quad\quad \text{with $c=\frac{1}{6(n-2)}$},
\]
and further conjectured that $|E_n(B)|\ll_n B^{n-1}$ for $n>2$.   Van der Waerdan's conjecture is best possible since the polynomials $f(x,t_1,\ldots, t_{n-1},0)$ are always reducible and hence $|E_n(B)|\gg B^{n-1}$.

In 1956, Knobloch \cite{MR0080071} gave the improved bound
\[
E_n(B) \ll_n B^{n - c_n} \quad\quad \text{with $c_n=\frac{1}{18n(n!)^3}$}.
\]

In 1973, Gallagher \cite{MR0332694} used a higher dimensional large sieve to give the bound
\begin{equation} \label{E:Gallagher}
E_n(B) \ll_n B^{n-1/2} (\log B)^{1- \gamma_n}
\end{equation}
where $\{\gamma_n\}$ is a sequence of positive numbers with $\gamma_n\sim (2\pi n)^{-1/2}$.    This power of the $\log B$ can be further improved, but the large sieve is incapable of lowering the power of $B$ that occurs.  

There has been some progress for small $n$.  For any $\varepsilon>0$, one has $E_3(B)\ll_\varepsilon B^{2+\varepsilon}$ and $E_4(B)\ll_\varepsilon B^{3+\varepsilon}$ (this is due to Lefton \cite{MR550295} and Dietmann \cite{MR2271383}, respectively).    We have the following modest improvement for large $n$.

\begin{prop} \label{P:modest improvement}
For all $n$ sufficiently large, we have
\[
E_n(B) \ll_{n} B^{n- \frac{1}{2}}.
\]
\end{prop}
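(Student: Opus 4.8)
The plan is to bound the contribution to $E_n(B)$ of each maximal subgroup $M$ of $G=S_n$, splitting these into three kinds handled by three different tools. Recall that for the generic polynomial $G=S_n$, and the extension $L/\QQ(T)$ is geometric (indeed $L=\QQ(\text{roots})$ is purely transcendental over $\QQ$), so Theorem~\ref{T:HIT larger} applies. If $G_t\neq S_n$ then either $G_t$ is intransitive, i.e.\ $f(x,t)$ is reducible over $\QQ$, or $G_t$ lies in a conjugate of a transitive maximal subgroup $M\subsetneq S_n$ (a group containing a transitive subgroup being transitive). Writing $R(B)$ for the number of $t\in\ZZ^n$ with $\norm{t}\le B$ for which $f(x,t)$ is reducible, we get
\[
E_n(B)\;\le\;R(B)\;+\;\sum_{M}\#\bigl\{\,t\in\ZZ^n:\norm{t}\le B,\ G_t\subseteq\textstyle\bigcup_{g}gMg^{-1}\,\bigr\},
\]
a finite sum over representatives of the conjugacy classes of transitive maximal subgroups $M\neq S_n$; these are $A_n$, the imprimitive groups $S_a\wr S_b$ (with $ab=n$), and the primitive maximal subgroups other than $A_n$ and $S_n$. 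I would bound $R(B)$ directly, the transitive maximal $M\neq A_n$ via the larger sieve, and $A_n$ (where $\delta(S_n,A_n)=1/2$ exactly, so neither sieve gains anything) by a separate argument.

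For a transitive maximal $M\neq S_n,A_n$ --- that is, an imprimitive or a primitive-non-$A_n$ one --- I would use the theorem of {\L}uczak and Pyber that the proportion of $\sigma\in S_n$ lying in some transitive subgroup other than $A_n$ and $S_n$ tends to $0$ as $n\to\infty$; this forces $\delta(S_n,M)=\#(\bigcup_g gMg^{-1})/n!\le 1/4$ for all such $M$ once $n$ is large. Applying Theorem~\ref{T:HIT larger} with the conjugation-stable set $C=\bigcup_g gMg^{-1}$ then bounds the contribution of $M$ by $\ll_n B^{\,n-1+\delta(S_n,M)}\log B\ll_n B^{\,n-3/4}\log B$, and the sum over the finitely many such $M$ (the group $S_n$ being finite) is $o(B^{\,n-1/2})$.

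To bound $R(B)$: a reducible $f(x,t)$ factors as $g(x)h(x)$ with $g,h\in\ZZ[x]$ monic of degrees $k$ and $n-k$ for some $1\le k\le n/2$, so it is enough to bound, for each such $k$, the number of pairs $(g,h)$ with $\norm{gh}\le B$ (writing $\norm{\cdot}$ also for the maximum modulus of a coefficient). By standard inequalities relating the coefficient sup-norm and the Mahler measure, multiplicativity of the Mahler measure, and $M(h)\ge 1$ for monic $h$, one has $\norm{g}\ll_n M(g)\le M(gh)\ll_n\norm{gh}\le B$, so there are $\ll_n B^{k}$ possibilities for $g$. Fix such a $g$; if its constant term $g_k$ is zero then $f(0)=0$, contributing $\ll_n B^{n-1}$ over all $g$, so assume $g_k\neq 0$. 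Reading the coefficients of $gh$ from lowest degree upward --- the constant coefficient is $g_k h_{n-k}$, the next is $g_k h_{n-k-1}+g_{k-1}h_{n-k}$, and so on --- pins each of the $n-k$ coefficients of $h$, in turn, to an interval of length $\le 2B/|g_k|$, so there are $\ll_n(1+B/|g_k|)^{n-k}$ admissible $h$. Grouping the $g$ by $|g_k|=d$ (with $\ll_n B^{k-1}$ choices for the other coefficients of $g$) yields $R(B)\ll_n\sum_{k\le n/2}B^{k-1}\sum_{d\ge1}(B/d)^{n-k}\ll_n B^{n-1}$, the $d$-sum converging since $n-k\ge 2$. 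Thus $R(B)=o(B^{\,n-1/2})$.

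The remaining, and in my view most serious, case is $M=A_n$: here the count equals $\#\{\,t:\norm{t}\le B,\ \operatorname{disc}(f(x,t))\ \text{a perfect square}\,\}$, and sieve methods do not suffice --- the large sieve only gives $B^{\,n-1/2}\log B$ --- so I would instead count integer points of height $\le B$ on the affine hypersurface $W:\,y^2=\operatorname{disc}(f(x,t))\subset\AA^{n+1}$ (which has dimension $n$). Since the discriminant of the generic degree-$n$ polynomial is geometrically irreducible and not a perfect square, $W$ is geometrically irreducible of some degree $d_n$ with $d_n\to\infty$; by the determinant method (Bombieri--Pila, Pila, Heath--Brown, Salberger) the integer points of $W$ of height $\le B$ not lying on a line contained in $W$ number $\ll_n B^{\,n-1+\theta_n}$ with $\theta_n=O(1/d_n)$, hence $\le B^{\,n-1/2}$ once $\theta_n<1/2$, i.e.\ for $n$ large. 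The lines contained in $W$ --- which parametrize degenerate one-parameter families of polynomials --- and the integer points on the locus $\operatorname{disc}=0$ (containing the excluded set $\Omega_f$) each contribute $\ll_n B^{n-1}$. Combining the four estimates gives $E_n(B)\ll_n B^{\,n-1}+o(B^{\,n-1/2})+B^{\,n-1/2}\ll_n B^{\,n-1/2}$. I expect this last step to be the main obstacle: it is the one place where one must step outside the sieve framework, and controlling the uniformity of the determinant-method bound together with the contribution of the line locus is the delicate part. Largeness of $n$ is used both here (to make the degree $d_n$, and hence the saving $1/d_n$, favorable) and --- more essentially --- in the {\L}uczak--Pyber input of the second paragraph.
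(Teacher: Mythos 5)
Your decomposition of $E_n(B)$ into the reducible, non-$A_n$-non-$S_n$ transitive, and $A_n$ contributions is exactly the paper's, and your handling of the first two (van der Waerden's count of reducibles and the {\L}uczak--Pyber input to Theorem~\ref{T:HIT larger}) matches the paper's reduction via Theorem~\ref{T:transitive bound}. Where you deviate is the $A_n$ case, and that deviation rests on a misjudgment.

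You dismiss the large sieve, asserting it ``only gives $B^{n-1/2}\log B$'' for the count of $t$ with $\Delta(t)$ a square, and reach for the determinant method. But the paper simply states (and it is correct) that the large sieve gives $E_n'(B)\ll_n B^{n-1/2}$ with no logarithm. The log factor appears if one uses only prime moduli in the large sieve; using the full large sieve inequality over all squarefree moduli, the lower bound for the sieve sum $G(Q)=\sum_{q\le Q}^{\flat}\prod_{p\mid q}\frac{p^n-g(p)}{g(p)}$ is $\gg Q$ rather than $\gg Q/\log Q$. Indeed, the Weil bounds give $g(p)=\tfrac12 p^n(1+O(p^{-1/2}))$, so the multiplicative function $h(p)=\frac{p^n-g(p)}{g(p)}=1+O(p^{-1/2})$ satisfies $h\ge 0$ with $\sum_p |h(p)-1|/p$ convergent, whence $\sum_{q\le Q}^{\flat}\prod_{p\mid q}h(p)\sim cQ$ with $c>0$. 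Taking $Q\asymp B^{1/2}$ yields $E_n'(B)\ll B^{n-1/2}$ directly. You should not abandon the sieve here.

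Your determinant-method alternative, besides being unnecessary, also has real gaps as written. The points of $W\colon y^2=\operatorname{disc}(f(x,t))$ that you want to count do \emph{not} have height $\le B$: with $\norm{t}\le B$ the discriminant is of size $B^{O(n)}$, so $|y|$ is polynomially larger than $B$, and you must count in a highly lopsided box $[-B,B]^n\times[-B^{O(n)},B^{O(n)}]$. Moreover, for an affine hypersurface in $\AA^{n+1}$, the determinant method requires controlling not just lines but all low-degree, positive-dimensional subvarieties of $W$ (up to dimension $n-1$), and obtaining explicit, $n$-uniform exponents $\theta_n=O(1/d_n)$ in that generality is far from routine. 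None of this is needed: the paper's proof of Proposition~\ref{P:modest improvement} is simply the combination of Theorem~\ref{T:transitive bound} (for $G_t\neq S_n,A_n$, giving $\ll_n B^{n-1+\varepsilon}$ once $n$ is large enough) with the higher-dimensional large sieve applied to the square-discriminant condition (for $G_t\subseteq A_n$, giving $\ll_n B^{n-1/2}$).
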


If instead we count those $t\in \ZZ^n$ for which $G_t$ is neither $S_n$ nor the alternating group $A_n$, then we have the following significantly stronger bound.

\begin{thm} \label{T:transitive bound}  For every $\varepsilon>0$ there is an $N$ such that
\begin{equation} \label{E: transitive bound}
|\{a\in \ZZ^n:\norm{t}\leq B,\, \text{$G_t\neq S_n$ and $G_t\neq A_n$} \}| \ll_n B^{n-1 + \varepsilon}
\end{equation}
for all $n\geq N$.
\end{thm}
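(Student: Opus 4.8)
The plan is to split the points $t$ with $G_t\neq S_n$ and $G_t\neq A_n$ according to the factorization of $f(x,t)$ over $\QQ$, reserving the larger sieve of Theorem~\ref{T:HIT larger} for the genuinely small Galois groups. Fix $\varepsilon>0$. Since the coefficients of $f(x,T)$ are polynomials in $T$, the set $\Omega_f$ consists exactly of the $t$ with $\operatorname{disc}(f(x,t))=0$; this lies on a hypersurface, so $|\{t\in\ZZ^n: t\in\Omega_f,\ \norm{t}\le B\}|\ll_n B^{n-1}$ and we may discard it. For $t\notin\Omega_f$ the polynomial $f(x,t)$ is separable and $G_t\subseteq S_n$ is defined up to conjugacy, and if $G_t\neq S_n,A_n$ then either (i) $f(x,t)$ is reducible over $\QQ$, equivalently $G_t$ is intransitive, or (ii) $f(x,t)$ is irreducible, so that $G_t$ is a transitive subgroup of $S_n$ with $G_t\neq S_n$ and $G_t\neq A_n$. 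I will bound the contribution of case (i) by $\ll_n B^{n-1}\log B$ and that of case (ii) by $\ll_n B^{n-1+\varepsilon}\log B$ once $n$ is large in terms of $\varepsilon$; the theorem then follows after absorbing the $\log B$ and rescaling $\varepsilon$.

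For case (i): if $f(x,t)=g(x)h(x)$ with $\deg g,\deg h\ge 1$, then by Gauss's lemma we may take $g,h$ monic in $\ZZ[x]$, and after swapping if necessary $d:=\deg g\le n/2$. Since the Mahler measure of a nonzero monic integer polynomial is at least $1$ and $M(f)\le\norm{f}_2$ by Landau's inequality, $M(g)\le M(g)M(h)=M(f)\le\norm{f}_2\ll_n B$, and likewise $M(h)\ll_n B$. Because each coefficient of a monic degree-$d$ polynomial is at most $\binom{d}{\lfloor d/2\rfloor}$ times its Mahler measure in absolute value, the number of monic $g\in\ZZ[x]$ of degree $d$ with $M(g)\le X$ is $\ll_n X^{d}$, and similarly there are $\ll_n Y^{\,n-d}$ admissible $h$ with $M(h)\le Y$. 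Summing over $1\le d\le n/2$ and over dyadic values $X,Y\ge 1$ with $XY\ll_n B$ gives
\[
|\{t\in\ZZ^n:\ \norm{t}\le B,\ f(x,t)\ \text{reducible over}\ \QQ\}|\ \ll_n\ \sum_{1\le d\le n/2}\ \sum_{\substack{X\ \text{dyadic}\\ 1\le X\ll_n B}} X^{d}\Big(\frac{B}{X}\Big)^{\,n-d}\ \ll_n\ B^{\,n-1}\log B,
\]
the dominant contribution coming from $d=1$; for $n\ge 4$ the series over $X$ converges and the $\log B$ may be dropped.

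For case (ii): let $C\subseteq S_n$ be the set of permutations that lie in some transitive subgroup of $S_n$ other than $S_n$ and $A_n$. Transitivity and being distinct from $S_n$ and from $A_n$ are invariant under conjugation, so $C$ is stable under conjugation; moreover, if $G_t$ is transitive with $G_t\neq S_n,A_n$, then $G_t$ is itself such a subgroup, so $G_t\subseteq C$. By the theorem of \L uczak and Pyber, the proportion $|C|/|S_n|$ tends to $0$ as $n\to\infty$; fix $N$ with $|C|/|S_n|<\varepsilon$ for all $n\ge N$. The roots $\alpha_1,\ldots,\alpha_n$ of $f(x,T)$ are algebraically independent over $\QQ$ (they generate the function field $L$, which has transcendence degree $n$ over $\QQ$), so $L=\QQ(\alpha_1,\ldots,\alpha_n)$ is purely transcendental over $\QQ$; hence $L/\QQ(T)$ is geometric with $\Gal(L/\QQ(T))=S_n$. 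Applying Theorem~\ref{T:HIT larger} with $k=\QQ$ (so $\OO_k=\ZZ$, $[k:\QQ]=1$, and the implied constant depends only on $n$ once $f$ and $k$ are fixed) yields, for all $n\ge N$,
\[
|\{t\in\ZZ^n-\Omega_f:\ \norm{t}\le B,\ G_t\subseteq C\}|\ \ll_n\ B^{\,n-1+|C|/|S_n|}\log B\ \ll_n\ B^{\,n-1+\varepsilon}\log B.
\]
Combining the bounds from (i), (ii), and for $\Omega_f$ gives \eqref{E: transitive bound} with $2\varepsilon$ in place of $\varepsilon$, which suffices.

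The main obstacle is the group-theoretic input to step (ii): all but an $o(n!)$ fraction of the permutations of $S_n$ must lie in no transitive subgroup other than $S_n$ and $A_n$, which is exactly (the relevant consequence of) the \L uczak--Pyber theorem and is the only nonelementary ingredient. The larger sieve then converts the smallness of $C$ into the exponent $n-1+\varepsilon$, and case (i) is elementary once one controls the coefficients of polynomial divisors of $f(x,t)$ through the Mahler measure. Having the larger sieve rather than only the large sieve of Theorem~\ref{T:HIT large} is essential here, since the large sieve caps the exponent at $n-\tfrac12$, which exceeds $n-1+\varepsilon$ whenever $\varepsilon<\tfrac12$.
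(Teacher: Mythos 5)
Your proof follows the same strategy as the paper: split by whether $f(x,t)$ is reducible, bound the reducible case by a van der Waerden--type count, and for the irreducible case apply \L uczak--Pyber together with Theorem~\ref{T:HIT larger} to the set $C$ of permutations lying in some transitive subgroup of $S_n$ other than $S_n$ or $A_n$. The only difference is cosmetic: you re-derive van der Waerden's reducibility bound (Theorem~\ref{P:reducible}) from scratch via Mahler-measure estimates instead of citing it, but the decomposition and the key group-theoretic input are identical to the paper's argument.
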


\begin{remark} 
It should be noted that the condition ``$G_t \neq S_n $ and $G_t\neq A_n$'' does show up in practice.     For example, let $f(x)\in\ZZ[x]$ be a separable polynomial of degree $n\geq 5$ and let $C_f$ be the hyperelliptic curve with affine model $y^2=f(x)$.   Let $J(C_f)$ be the Jacobian of $C_f$; it is an abelian variety over $\QQ$ of dimension $2\lfloor (n-1)/2 \rfloor$.  Zarhin \cite{MR1748293} has shown that if $\Gal(f)=A_n$ or $\Gal(f)=S_n$, then $\End(J(C_f)_{\Qbar})=\ZZ$.   Theorem~\ref{T:transitive bound} thus gives an upper bound on the number of $t\in\ZZ^n$ with $\norm{t}\leq B$ for which $f(x,t)$ is not separable or $\End(J(C_{f(x,t)})_{\Qbar})\neq \ZZ.$
\end{remark}

\begin{remark}
R.~Dietmann \cite{1010.5341} has recently given a proof of Theorem~\ref{T:transitive bound} that gives superior bounds than ours.   In particular, he proves that $|\{a\in \ZZ^n:\norm{t}\leq B,\, \text{$G_t\neq S_n$ and $G_t\neq A_n$} \}| \ll_{n,\varepsilon} B^{n-1 +e(n) + \varepsilon}$ where $e(n)$ is the middle binomial coefficient $\binom{n}{\lfloor n/2 \rfloor}$.  Dietmann's techniques are not sieve theoretic; he uses Galois resolvents to reduce the question to counting integral points on certain varieties.   
\end{remark}

The first thing to note is that Theorem~\ref{T:HIT larger} by itself does not lead to an improved bound for $E_n(B)$.  Let $M_1$ be the maximal subgroup of $S_n$ that stabilizes the letter $1$.    Since $\delta(S_n,M_1)= 1- \sum_{i=0}^n (-1)^i/i!$ (this is just the proportion of elements in $S_n$ that are not derangements) we find that $\limsup_{n\to \infty} \delta(S_n) \geq 1- e^{-1}$, and in fact equality holds.  Equation (\ref{E:raw larger}) would then give the inferior bound $E_n(B)\ll_n B^{n - e^{-1} +o_n(1)}.$

Instead we shall treat $M_1$ separately.  Note that $G_t\subseteq \bigcup_{g\in G} gM_1g^{-1}$ if and only if $f(x,t)$ has a root in $\ZZ$.     The following theorem bounds the number of $t$ with $f(x,t)$ reducible.

\begin{thm}[van der Waerden \cite{MR1550517}]  \label{P:reducible}  For an integer $1\leq i \leq n/2$, we have
\[
|\{t\in \ZZ^n:\norm{t}\leq B,f(x,t) \text{ is reducible with a factor of degree $i$}\}| \ll_n \begin{cases}
      B^{n-i} & \text{ if $i< n/2$}, \\
      B^{n-i}\log B & \text{ if $i=n/2$.}
\end{cases}
\]
\end{thm}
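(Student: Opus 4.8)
The plan is to fix the degree-$i$ factor and reduce to a lattice-point count. If $f(x,t)$ has a factor of degree $i$, then by Gauss's lemma $f(x,t)=g(x)h(x)$ with $g,h\in\ZZ[x]$ monic and $\deg g=i$. Writing $\m$ for the Mahler measure, its multiplicativity together with Landau's inequality gives, for $t\in\ZZ^n$ with $\norm{t}\le B$ (say $B\ge 1$),
\[
\m(g)\le\m(g)\,\m(h)=\m\big(f(x,t)\big)\le\big\|f(x,t)\big\|_2\le\sqrt{n+1}\,B,
\]
so in particular every coefficient of $g$ is $\ll_n B$. Hence it suffices to bound $N(g):=\bigl|\{t\in\ZZ^n:\norm{t}\le B,\ g(x)\mid f(x,t)\}\bigr|$ and sum over the finitely many monic $g\in\ZZ[x]$ of degree $i$ with $\m(g)\le\sqrt{n+1}\,B$.

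To handle $N(g)$, identify $t=(t_1,\dots,t_n)$ with the polynomial $t_1x^{n-1}+\cdots+t_n$. Since $(g(x)-x^i)x^{n-i}$ corresponds to a point $t_0\in\ZZ^n$ with $f(x,t_0)=x^{n-i}g(x)$, the set $\{t\in\ZZ^n:g\mid f(x,t)\}$ is empty or the coset $t_0+\Lambda_g$, where $\Lambda_g\subseteq\ZZ^n$ is the rank-$(n-i)$ lattice generated by the coefficient vectors of $g,gx,\dots,gx^{\,n-1-i}$. As $\Lambda_g\subseteq\ZZ^n$ its first successive minimum is $\ge1$, and those $n-i$ generators have sup-norm $\le\|g\|_\infty\ll_n B$, so every successive minimum of $\Lambda_g$ is $\ll_n B$. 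A standard count of lattice points in a box, combined with Minkowski's second theorem, then gives $N(g)\ll_n B^{\,n-i}/\operatorname{covol}(\Lambda_g)$.

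The main point is a lower bound for $\operatorname{covol}(\Lambda_g)$. By Parseval the Euclidean inner product of coefficient vectors equals the $L^2$ inner product on the unit circle $\TT$, so Gram--Schmidt applied to the basis $g,gx,\dots,gx^{\,n-1-i}$ of $\Lambda_g$ yields
\[
\operatorname{covol}(\Lambda_g)=\prod_{k=0}^{n-i-1}\min\bigl\{\|g\,p\|_{L^2(\TT)}:p\in\RR[x]\ \text{monic},\ \deg p=k\bigr\}.
\]
For a monic $p$, the arithmetic--geometric mean inequality for integrals together with Jensen's formula gives $\|gp\|_{L^2(\TT)}^2\ge\exp\bigl(\tfrac1{2\pi}\int_0^{2\pi}\log|g(e^{i\theta})p(e^{i\theta})|^2\,d\theta\bigr)=\m(gp)^2=\m(g)^2\m(p)^2\ge\m(g)^2$, whence $\operatorname{covol}(\Lambda_g)\ge\m(g)^{\,n-i}$. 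Combining these estimates,
\[
\bigl|\{t\in\ZZ^n:\norm{t}\le B,\ f(x,t)\ \text{has a degree-}i\ \text{factor}\}\bigr|\ \ll_n\ B^{\,n-i}\sum_{\substack{g\ \text{monic},\ \deg g=i\\ \m(g)\le\sqrt{n+1}\,B}}\frac{1}{\m(g)^{\,n-i}}.
\]

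It remains to estimate the sum. A monic $g$ of degree $i$ with $\m(g)<X$ has all coefficients $\ll_i X$, so there are $\ll_i X^i$ of them; summing over the ranges $\m(g)\in[2^{\ell},2^{\ell+1})$ gives $\sum_{\m(g)\le X}\m(g)^{-(n-i)}\ll_i\sum_{0\le\ell\le\log_2X}2^{\ell(2i-n)}$, which is $\ll_n1$ when $i<n/2$ and $\ll_n\log X$ when $i=n/2$. Taking $X=\sqrt{n+1}\,B$ (and treating bounded ranges of $B$ trivially) produces the two claimed bounds. The only genuinely delicate step is the covolume bound $\operatorname{covol}(\Lambda_g)\ge\m(g)^{\,n-i}$; everything else is bookkeeping, and it is precisely the match between the exponent $n-i$ there and the exponent $i$ in $\#\{g:\m(g)<X\}\ll_iX^i$ that forces the transition at $i=n/2$.
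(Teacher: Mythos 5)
The paper states this result with a citation to van der Waerden's 1936 article and gives no proof, so there is nothing internal to compare against; you are supplying your own argument. Your argument is correct. The reduction to fixed monic $g\mid f$ via Gauss's lemma, the identification of $\{t : g\mid f(x,t)\}$ as a coset of the rank-$(n-i)$ lattice $\Lambda_g=\ZZ\text{-span}\{g,gx,\dots,gx^{n-1-i}\}$, the lattice-point bound $N(g)\ll_n B^{n-i}/\operatorname{covol}(\Lambda_g)$ (using that the $n-i$ generators have sup-norm $\|g\|_\infty\ll_n B$, so every successive minimum is $\ll_n B$, together with Minkowski's second theorem), and the Parseval/Jensen lower bound $\operatorname{covol}(\Lambda_g)\ge \m(g)^{n-i}$ all check out. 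The dyadic summation over $\m(g)\in[2^\ell,2^{\ell+1})$ using $\#\{g:\m(g)<X\}\ll_i X^i$ then gives the stated bounds with the transition at $i=n/2$. Two tiny remarks: the set $\{t:g\mid f(x,t)\}$ is in fact never empty (it contains your $t_0$), and it is worth noting explicitly that since $g$ is monic, polynomial division in $\ZZ[x]$ is exact, which is what makes $\Lambda_g$ precisely the $\ZZ$-span of $g,gx,\dots,gx^{n-1-i}$. Conceptually your proof is a clean ``Mahler-measure balancing'' argument: the covolume lower bound $\m(g)^{n-i}$ against the count $\ll_i\m(g)^i$ of small-measure $g$'s is exactly dual to the more classical phrasing that $\m(g)\m(h)\ll_n B$ forces a trade-off between the number of admissible $g$'s and $h$'s, and both give the $2i-n$ exponent that produces the $\log B$ at $i=n/2$.
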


\begin{remark}  Using van der Waerdan's theorem and counting those $t$ for which $f(x,t)$ has a root in $\ZZ$, Chela \cite{MR0148647} showed that
\[
{|\{t\in \ZZ^n:\norm{t}\leq B,\, f(x,t) \text{ is reducible}\}|} \sim  {c_n}{B^{n-1}}
\]
as $B\to +\infty$, where $c_n>0$ is an explicit constant.  
\end{remark}

Using this theorem we now need only consider those $t$ for which $f(x,t)$ is irreducible; in other words, those $t$ for which $G_t$ is a transitive subgroup of $S_n$.    Let $\mathcal{M}_n$ be the set of transitive subgroups of $S_n$ that are neither $A_n$ or $S_n$.   The following theorem of \L uczak and Pyber shows that few elements of $S_n$ belong to any of the $M\in \mathcal{M}_n$.

\begin{thm}[\L uczak-Pyber \cite{MR1476464}]  \label{T:LP} 
We have $\displaystyle\lim_{n\to \infty} \frac{|\bigcup_{M\in \mathcal{M}_n} M|}{|S_n|}= 0.$
\end{thm}

\begin{proof}[Proof of Theorem~\ref{T:transitive bound}]
From Theorem~\ref{P:reducible}, we know that
\begin{equation} \label{E: transitive bound a}
|\{t\in \ZZ^n:\norm{t}\leq B,\, G_t \text{ is a non-transitive subgroup of $S_n$}\}| \ll_n B^{n-1}.
\end{equation}
By Theorem~\ref{T:LP} there exists an $N$ such that $|\bigcup_{M\in\mathcal{M}_n} M|/|S_n| < \varepsilon$ for all $n\geq N$.  Applying Theorem~\ref{T:HIT larger} with $C=\bigcup_{M\in\mathcal{M}_n} M$ gives
\begin{equation} \label{E: transitive bound b}
|\{t\in \ZZ^n:\norm{t}\leq B,\, G_a \in \mathcal{M}_n\}| \ll_n B^{n-1 + \varepsilon} 
\end{equation}
for all $n\geq N$.  Theorem~\ref{T:transitive bound} follows by combining (\ref{E: transitive bound a}) and (\ref{E: transitive bound b}). 
\end{proof}

Thus to improve on Gallagher's bound, at least for $n$ large enough, it suffices to bound the function 
\[
E'_n(T)=|\{t\in \ZZ^n:\norm{t}\leq B,\, G_t \subseteq A_n\}|.
\]
Equivalently, bound the number of $t \in \ZZ^n$ with $\norm{t}\leq B$ for which $\Delta(t_1,\dots,t_n)$ is a square, where $\Delta(T_1,\ldots, T_n)\in k[T_1,\ldots, T_n]$ is the discriminant of $x^n +T_1 x^{n-1} +\cdots +  T_{n-1}x+T_n$.   Using the large sieve one can show that $E'_n(T)\ll_n B^{n-1/2}$ which completes the proof of Proposition~\ref{P:modest improvement}.

\begin{remark} 
In the final comments of \cite{MR1476464}, the authors claim that ${|\bigcup_{M\in \mathcal{M}_n} M|}/{|S_n|}  =O( n^{-\alpha})$ for some absolute constant $\alpha>0$.  This would imply the following  strengthening of (\ref{E: transitive bound}):
\[
|\{ t \in \ZZ^n: \norm{t}\leq B,\, G_t \neq S_n \text{ and } G_t \neq A_n\}| \ll_{n} B^{n-1+O(n^{-\alpha})}.
\]
We should also point out that an analogue of Theorem~\ref{T:LP} has recently been proven for almost simple Chevalley group over $\FF_q$ where the rank is fixed and $q\to \infty$ \cite{Fulman-Guralnick}.
\end{remark}

\subsection{Galois actions on the torsion points of elliptic curves}  \label{SS:intro EC}

\subsubsection{Serre's open image theorem}
Consider an elliptic curve $E$ defined over a field $K$.  For each positive integer $m$ relatively prime to the characteristic of $K$, let $E[m]$ be the $m$-torsion subgroup of $E(\Kbar)$.  The group $E[m]$ is non-canonically isomorphic to $(\ZZ/m\ZZ)^2$ and has a natural $\Gal(\Kbar/K)$-action which can be expressed in terms of a Galois representation 
\[
\rho_{E,m}\colon \Gal(\Kbar/K) \to \Aut(E[m]) \cong \GL_2(\ZZ/m\ZZ).
\]
If $K$ has characteristic $0$, then combining these representations together we obtain a single Galois representation
\[
\rho_{E}\colon \Gal(\Kbar/K) \to \GL_2(\Zhat)
\]
which describes the Galois action on all the torsion points of $E$ (where $\Zhat$ is the profinite completion of $\ZZ$).    The main result for these representations over number fields is the following important theorem of Serre \cite{MR0387283}.

\begin{thm}[Serre] 
Let $k$ be a number field and let $E$ be an elliptic curve over $k$ without complex multiplication.  Then $\rho_E(\Gal(\kbar/k))$ is a finite index subgroup of $\GL_2(\Zhat)$.
\end{thm}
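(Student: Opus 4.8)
The plan is to prove that $G:=\rho_E(\Gal(\kbar/k))$ is open in $\GL_2(\Zhat)$---equivalently, of finite index, since $G$ is closed---by working one prime at a time and then assembling. Write $\GL_2(\Zhat)=\prod_\ell \GL_2(\ZZ_\ell)$, and let $G_\ell\subseteq\GL_2(\ZZ_\ell)$ and $\bar G_\ell\subseteq\GL_2(\FF_\ell)$ be the images of $\rho_{E,\ell^\infty}$ and $\rho_{E,\ell}$, so that $G_\ell$ is the $\ell$-th projection of $G$. I would establish three statements: (i) $G_\ell$ is open in $\GL_2(\ZZ_\ell)$ for every prime $\ell$; (ii) $G_\ell=\GL_2(\ZZ_\ell)$ for all but finitely many $\ell$; and (iii) a Goursat-type statement showing that (i) and (ii) force $G$ to have finite index in $\prod_\ell G_\ell$, which is itself open in $\GL_2(\Zhat)$ by (i) and (ii). For (iii) the key inputs are that $\SL_2(\ZZ_\ell)$ is perfect and $\PSL_2(\FF_\ell)$ is simple for $\ell\ge 5$, so the factors $G_\ell$ for distinct large $\ell$ share no common nontrivial finite quotient, while the only systematic entanglement runs through the determinant $\det\circ\rho_{E,\ell}=\chi_{\cyc,\ell}$, whose product over $\ell$ has open image in $\Zhat^*$; the finitely many small primes then contribute only a finite discrepancy.

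The heart of the argument is (ii), the surjectivity of $\rho_{E,\ell}$ for almost all $\ell$. Fix the finite set $S$ of primes of $k$ of bad reduction and take $\ell$ large enough that $\ell$ is unramified in $k$ and $E$ has good reduction at all primes above $\ell$. Since $\det\bar G_\ell=\FF_\ell^*$, Dickson's classification of the subgroups of $\GL_2(\FF_\ell)$ reduces the equality $\bar G_\ell=\GL_2(\FF_\ell)$ to excluding that $\bar G_\ell$ lies in: (a) a Borel subgroup; (b) the normalizer of a split or nonsplit Cartan; or (c) a subgroup whose image in $\PGL_2(\FF_\ell)$ is one of $A_4$, $S_4$, $A_5$. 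Case (c): at a prime above $\ell$ the image of inertia already contains an element of projective order $\ell-1$ in the ordinary case (from the mod-$\ell$ cyclotomic character) or $\ell+1$ in the supersingular case (from the fundamental characters of level $2$), which exceeds the order of every element of $A_5$ once $\ell\ge 7$; so (c) cannot occur for $\ell$ large. Case (b): if $\bar G_\ell$ lies in the normalizer of a Cartan $C_\ell$ for infinitely many $\ell$, then the associated quadratic character $\Gal(\kbar/k)\to\{\pm1\}$ is unramified above $\ell$ (the good reduction of $E$ above $\ell$ forces the image of inertia there into $C_\ell$ itself), hence unramified outside $S$, hence one of only finitely many; after base change to the corresponding quadratic fields $k'$, the image of $\rho_{E_{k'},\ell}$ is abelian for infinitely many $\ell$, and by the theory of abelian $\ell$-adic representations this forces $E_{k'}$, and hence $E$, to have complex multiplication---contrary to hypothesis. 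Case (a): if $\bar G_\ell$ is reducible for infinitely many $\ell$, then the diagonal characters $\chi_{1,\ell},\chi_{2,\ell}\colon\Gal(\kbar/k)\to\FF_\ell^*$ satisfy $\chi_{1,\ell}\chi_{2,\ell}=\chi_{\cyc,\ell}$, and the local structure of $\rho_{E,\ell}$ at the primes above $\ell$ together with the finiteness of the ideal class group constrains the pair so strongly that the congruence $a_{\mathfrak p}\equiv\chi_{1,\ell}(\Frob_{\mathfrak p})+\chi_{2,\ell}(\Frob_{\mathfrak p})\pmod{\ell}$ at a fixed prime ${\mathfrak p}\notin S$, valid for infinitely many $\ell$, becomes---via the Hasse bound---impossible unless $E$ has complex multiplication. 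Hence $\bar G_\ell=\GL_2(\FF_\ell)$ for $\ell$ large, and a standard Frattini argument---the kernel of $\GL_2(\ZZ_\ell)\to\GL_2(\FF_\ell)$ is pro-$\ell$ with successive quotients isomorphic to $\mathfrak{gl}_2(\FF_\ell)$, and $\mathfrak{sl}_2(\FF_\ell)$ has no nonzero $\SL_2(\FF_\ell)$-coinvariants for $\ell\ge 5$---upgrades this to $G_\ell=\GL_2(\ZZ_\ell)$, proving (ii) and also (i) for all large $\ell$.

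For the finitely many remaining small primes I would prove (i) by the Lie-algebra method. The $\ell$-adic Lie algebra $\mathfrak g_\ell\subseteq\mathfrak{gl}_2(\QQ_\ell)$ of $G_\ell$ is reductive (by semisimplicity of the rational $\ell$-adic Tate module) and contains an element of nonzero trace (because $\det\circ\rho_{E,\ell^\infty}$ is the $\ell$-adic cyclotomic character, with infinite-order image); moreover $\mathfrak g_\ell$ is non-abelian, for otherwise $\rho_{E,\ell^\infty}$ would be abelian on an open subgroup, hence locally algebraic and attached to a Hecke character, forcing complex multiplication. A reductive non-abelian subalgebra of $\mathfrak{gl}_2$ containing an element of nonzero trace is all of $\mathfrak{gl}_2$, so $G_\ell$ is a closed $\ell$-adic Lie subgroup of $\GL_2(\ZZ_\ell)$ of dimension $4$, hence open. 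Combining (i), (ii) and (iii) then shows that $G$ is open in $\GL_2(\Zhat)$.

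I expect the main obstacle to be the reducible case (a): unlike (b) and (c), it is not disposed of by the hypothesis that $E$ has no complex multiplication or by a direct estimate on the image of inertia, and it genuinely requires showing that the Galois characters appearing on the diagonal have bounded conductor---so that, up to swapping, one of them is a power of the cyclotomic character times one of finitely many fixed finite-order characters---which is where both the local structure of $\rho_{E,\ell}$ above $\ell$ and the finiteness of the ideal class group enter, after which a Hasse-bound/Chebotarev argument closes the loop. A secondary difficulty is the assembly step (iii), where one must rule out wild entanglement among the small primes and across the determinant maps; this is precisely where the perfectness of $\SL_2(\FF_\ell)$ for $\ell\ge 5$ and the rigidity of the cyclotomic character do the work.
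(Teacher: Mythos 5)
The paper does not prove this theorem; it cites it directly from Serre's 1972 paper \cite{MR0387283}, so there is no in-paper argument to compare against. That said, your proposal is a faithful high-level reconstruction of Serre's own strategy: the three-part reduction (local openness at every $\ell$, residual surjectivity for almost all $\ell$, Goursat-type assembly), Dickson's classification to split the residual step into the Borel / normalizer-of-Cartan / exceptional subcases, the tame-inertia order estimate to kill the exceptional case, the theory of abelian $\ell$-adic representations for the normalizer case, and the $\ell$-adic Lie algebra method for the finitely many remaining small primes all appear in Serre with essentially the roles you assign them. Your identification of the Borel case as the genuine bottleneck is also historically and mathematically accurate.

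Two places in the sketch are thinner than they look and are where the actual work lives. In case (b), placing the inertia image inside $C$ rather than merely $N(C)$ is not automatic from good reduction: the right argument is that elements of $N(C)\setminus C$ project to order $2$ in $\PGL_2(\FF_\ell)$, which is incompatible with the tame-inertia element of projective order $\ell-1$ or $\ell+1$, and separately that a nontrivial unipotent of order $\ell$ cannot lie in $N(C)$, so wild inertia must act trivially. In case (a), ``finiteness of the ideal class group'' is not by itself enough to bound the diagonal characters: one must first use the local structure at the primes above $\ell$ to show that, after swapping, $\chi_{1,\ell}$ becomes unramified at $\ell$ once twisted by $\bar\chi_\ell^{-a}$ for a \emph{fixed} $a\in\{0,1\}$ (this uses that global reducibility forces ordinary reduction above $\ell$, since the level-$2$ fundamental characters in the supersingular case are not $\FF_\ell$-valued), then use bounded ramification at the bad primes to pin $\chi_{i,\ell}\bar\chi_\ell^{-a_i}$ into a fixed finite set of characters independent of $\ell$, and only then does the Hasse bound at a fixed auxiliary $\mathfrak p$ deliver the contradiction. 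Finally, in step (iii), openness of each $G_\ell$ and equality $G_\ell=\GL_2(\ZZ_\ell)$ for almost all $\ell$ do not formally imply $G$ open in the product; one needs Serre's group-theoretic criterion, whose engine is exactly what you name (perfectness of $\SL_2(\ZZ_\ell)$ for $\ell\geq5$, the Frattini/Goursat reduction, and the rigidity of the determinant), so this is a matter of execution rather than a wrong turn.
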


\subsubsection{Families of elliptic curves}  \label{SS:families of EC}
Fix a number field $k$, an integer $n\geq 1$, and define the field $K:=k(T_1,\ldots, T_n)=k(T)$.   Let $E$ be an elliptic curve over the function field $K$, and assume that the $j$-invariant of $E$ does not belong to $k$.  Now choose a model for $E/K$, say, a short Weierstrass model
\[
y^2= x^3 + a(T)x + b(T).
\]
Let $\Omega$ be the set of $t\in k^n$ for which $a(T)$ and $b(T)$ have a pole at $T=t$ or for which the discriminant of the Weierstrass equation is zero at $T=t$.  Then for each $t\in k^n-\Omega$, the curve $E_t$ obtained by replacing $T$ with $t$ in our model, i.e., $y^2= x^3 + a(t)x + b(t)$, is an elliptic curve over $k$.  Our goal is to understand how the images of $\rho_{E_t}$ vary with $t\in k^n- \Omega$, and in particular to describe the image for ``most'' $t$ in terms of $E/K$.

For each integer $m\geq 1$, we define the group $\calH_E(m)= \rho_{E,m}(\Gal(\Kbar/K))$.  Specialization by $t\in k^n- \Omega$ gives an inclusion  $\rho_{E_t,m}(\Gal(\kbar/k))\subseteq \calH_E(m)$ that is determined up to conjugation.  We may thus view $\rho_{E_t}(\Gal(\kbar/k))$ as a subgroup of $\calH_E := \rho_E(\Gal(\Kbar/K))$ which again is uniquely determined up to conjugation.  Hilbert's irreducibility theorem implies that $\rho_{E_t,m}(\Gal(\kbar/k))= \calH_E(m)$ for ``most'' $t$ (where $m$ is fixed).  It also \emph{suggests} that $\rho_{E_t}(\Gal(\kbar/k))=\calH_E$ holds for most $t$.

\begin{thm} \label{T:HIT EC}
Suppose that $k\neq \QQ$.  Then 
\begin{align*}
\frac{|\{ t\in k^n-\Omega: H(t)\leq B,\, \rho_{E_t}(\Gal(\kbar/k))= \calH_E \}|}{|\{ t\in k^n : H(t)\leq B \}|} &= 1 +O\big( B^{-1/2}\log B\big) \quad \text{and}\\
\frac{|\{ t\in \OO_k^n-\Omega: \norm{t}\leq B,\, \rho_{E_t}(\Gal(\kbar/k))= \calH_E \}|}{|\{ t\in \OO_k^n : \norm{t}\leq B \}|} &= 1  +O\big( B^{-1/2}\log B\big)
\end{align*}
where the implicit constants do not depend on $B$.
\end{thm}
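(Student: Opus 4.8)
The plan is to reduce the equality $\rho_{E_t}(\Gal(\kbar/k))=\calH_E$ to finitely many congruence conditions together with one sparse exceptional locus, and then to bound each contribution using Theorems~\ref{T:HIT large2} and~\ref{T:HIT larger}.

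Since the $j$-invariant of $E$ does not lie in $k$, the curve $E/K$ is non-isotrivial, so its geometric monodromy is open in $\SL_2(\Zhat)$; together with the fact that $\det\circ\rho_E$ is the cyclotomic character of $K=k(T)$, whose image is the open subgroup $\Gal(k(\zeta_\infty)/k)$ of $\Zhat^\times$, this shows that $\calH_E=\rho_E(\Gal(\Kbar/K))$ is an \emph{open} subgroup of $\GL_2(\Zhat)$. Fix $m_1\geq1$ with $\calH_E$ equal to the preimage of $\calH_E(m_1)$ under $\GL_2(\Zhat)\to\GL_2(\ZZ/m_1\ZZ)$. For $t\in k^n-\Omega$ write $H_t=\rho_{E_t}(\Gal(\kbar/k))\subseteq\calH_E$, and note that $\det H_t$ is the cyclotomic character of $k$, which has the same image as $\det\circ\rho_E$. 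The crux is the following reduction.

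\emph{Claim.} There is a multiple $m$ of $m_1$, divisible by a large power of every prime below a suitable bound, such that for $t\in k^n-\Omega$ one has $H_t=\calH_E$ unless at least one of the following holds: \emph{(a)} $\rho_{E_t,m}(\Gal(\kbar/k))\subsetneq\calH_E(m)$; \emph{(b)} $\rho_{E_t,\ell}(\Gal(\kbar/k))\subsetneq\GL_2(\FF_\ell)$ for some prime $\ell\nmid m$; \emph{(c)} $\Delta(E_t)\in\QQ^\times\cdot(k^\times)^2$.

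To prove the Claim I would classify the maximal open subgroups of $\calH_E$: using Goursat's lemma, the facts that $\SL_2(\FF_\ell)$ is perfect and $\PSL_2(\FF_\ell)$ is simple and pairwise non-isomorphic for $\ell\geq5$, and the fact that an open subgroup of some $\GL_2(\ZZ_\ell)$ has open Frattini subgroup (so that ``$\ell$-adic fullness'' is a finite-level condition, absorbed into $m$ for $\ell\mid m_1$), all the nonabelian and bounded-level entanglements already take place at level $m$. Any remaining entanglement is abelian, i.e.\ an equality between a small-order character of $H_t$ factoring through a $2$- or $3$-power level and a power of the cyclotomic character; since $\det H_t$ is the \emph{full} cyclotomic image of $k$ and $\SL_2(\ZZ_\ell)$ has trivial abelianization for $\ell\geq5$, the only such character is the quadratic one cutting out $k(\sqrt{\Delta(E_t)})$, and it is a subcharacter of a cyclotomic character exactly when $k(\sqrt{\Delta(E_t)})\subseteq k(\zeta_N)$ for some $N$, i.e.\ when $\Delta(E_t)\in\QQ^\times\cdot(k^\times)^2$ --- case (c). This is precisely where $k\neq\QQ$ is used: by Kronecker--Weber the condition of (c) holds for \emph{every} $t$ when $k=\QQ$, so the entanglement is then unavoidable and pushes $H_t$ into a fixed proper subgroup of index $r$ (the phenomenon mentioned in the introduction), whereas for $k\neq\QQ$ it is a thin condition.

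It remains to estimate the three loci. For (a): presenting the splitting field of $\rho_{E,m}$ over $k(T)$ as that of a monic irreducible $g(x,T)\in k(T)[x]$ (with $\Omega_g$ contained in $\Omega$ up to a finite set), one sums over the finitely many conjugacy classes of maximal subgroups $M$ of $\calH_E(m)$ and applies Theorem~\ref{T:HIT large2} with $G=\calH_E(m)$ and $C=\bigcup_{h}hMh^{-1}$, which is a proper conjugation-stable subset by Jordan's lemma; this gives $\ll_{n,E,k}B^{[k:\QQ](n-1/2)}\log B$, respectively $\ll_{n,E,k}B^{[k:\QQ](n+1/2)}\log B$. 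Locus (c) is cut out by a quadratic condition on the polynomial $\Delta$ (equivalently, $N_{k/\QQ}(\Delta(t))$ being a rational square), so by the large sieve --- equivalently Theorem~\ref{T:HIT large2} applied to a suitable $y^2=P(x)$ --- it is bounded by the same quantities. For (b): the Borel and Cartan-normalizer subcases occur only for $\ell$ bounded in terms of $E/K$ and $k$ (results on rational points of $X_0(\ell)$, $X^+_{\mathrm{sp}}(\ell)$, $X^+_{\mathrm{ns}}(\ell)$ due to Mazur, Momose, Bilu--Parent and others), so they are absorbed into (a) by enlarging $m$; the exceptional subcase occurs, for $E_t$ of height $\leq B$, only for $\ell\ll(\log B)^{O(1)}$ by an effective form of Serre's theorem (Masser--W\"ustholz, Kraus), and since the union of conjugates of an exceptional maximal subgroup has density $\ll1/\ell$ in $\GL_2(\FF_\ell)$, Theorem~\ref{T:HIT larger} in the non-geometric form of \S\ref{SS:main} bounds the corresponding $t$, for each such $\ell\geq3$, by $\ll B^{[k:\QQ](n-1+c/\ell)}\log B$; summing over the $O((\log B)^{O(1)})$ relevant primes gives $O\big(B^{[k:\QQ](n-2/3)}(\log B)^{O(1)}\big)$, which is negligible. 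Summing the bounds and dividing by the asymptotics in~\eqref{E:comparison} yields the asserted estimates $1+O(B^{-1/2}\log B)$.

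The main obstacle is the Claim: showing that surjectivity of $\rho_{E_t}$ onto $\calH_E$ is governed by the finitely many conditions (a) and (b) and the single thin locus (c). This blends the group theory of open subgroups of $\GL_2(\Zhat)$ with the arithmetic input $k\neq\QQ$, which is exactly what removes the Kronecker--Weber entanglement obstructing equality over $\QQ$; the large-prime bookkeeping entering (b) is technically delicate but conceptually secondary.
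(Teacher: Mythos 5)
Your overall architecture (reduce to a finite congruence condition, an exceptional locus for each remaining prime, and one "entanglement" condition, then sieve each) is in the right spirit, but two of the three legs of the \emph{Claim} don't hold up as stated, and fixing them essentially forces you back onto the path the paper actually takes.

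\textbf{Gap~1: the entanglement in~(c) is not captured by $\sqrt{\Delta}$ alone.}
You assert that after~(a) and~(b) are imposed, ``the only such character is the quadratic one cutting out $k(\sqrt{\Delta(E_t)})$.'' This is false. The relevant abelian group is $\calH_E\cap\SL_2(\Zhat)$ modulo $[\calH_E,\calH_E]$, and its order $r$ can exceed~$2$; it can have characters of order~$3$ and~$4$ coming from $\SL_2(\ZZ_2)^{\ab}\cong\ZZ/4\ZZ$ and $\SL_2(\ZZ_3)^{\ab}\cong\ZZ/3\ZZ$. The paper's Legendre-family example has $r=8$, and there the $\sqrt\Delta$ character is actually \emph{trivial} (the discriminant $16\lambda^2(\lambda-1)^2$ is always a square), so condition~(c) is vacuous while genuine higher-order entanglements remain. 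Each such character gives a separate thin condition that must be excluded. The paper handles all of them uniformly: Lemma~\ref{L:Openness criterion} reduces to $[H_t,H_t]=[\calH_E,\calH_E]$ (the Frattini/Goursat argument you sketch), and then Proposition~\ref{P:cyclotomic HIT} is applied once, with $G$ the \emph{full} abelian quotient $\calH_E(m)/[\calH_E(m),\calH_E(m)]$, to control all the abelian entanglements simultaneously; this is where $k\neq\QQ$ is genuinely used. Your~(c) is a special case of that step.

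\textbf{Gap~2: the Borel and Cartan-normalizer branches of~(b) cannot be ``absorbed into~(a)'' over a general number field $k$.}
The theorems you invoke (Mazur, Momose, Bilu--Parent) are about $\QQ$-rational points of $X_0(\ell)$, $X^+_{\mathrm{sp}}(\ell)$, $X^+_{\mathrm{ns}}(\ell)$, and even over $\QQ$ the non-split Cartan case is not settled for all $\ell$ (Serre's uniformity conjecture); over an arbitrary number field there is no bound $\ell_0(E/K,k)$ of the type you need. This is precisely the obstruction the paper points out when noting that Grant's and \cite{Co-Gr-Jo}'s Mazur-dependent arguments ``do not generalize to the $k\neq\QQ$ setting.'' The correct move --- which you already apply, but only in the exceptional branch --- is to use Masser--W\"ustholz to restrict to $\ell\ll(\log B)^\gamma$ for \emph{all} branches, and then to sieve each such $\ell$ with a bound whose implicit constant is polynomial in $\ell$. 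Since the Borel and Cartan unions of conjugates have density about $1/2$ in $\GL_2(\FF_\ell)$, Theorem~\ref{T:HIT larger} gives only exponent $n-1/2+O(1/\ell)$ there, with an $\ell^6$-type constant coming from $|G^g|=|\SL_2(\FF_\ell)|$; summing over $\ell\ll(\log B)^\gamma$ then yields $O(B^{[k:\QQ](n-1/2+\varepsilon)})$, which is acceptable. The paper makes exactly this computation explicit in Proposition~\ref{P:mod l prototype} by using Serre's three-set criterion (Lemma~\ref{L:Serre criterion}) rather than a case split by subgroup type, which is cleaner and also sidesteps the need to know which maximal subgroup one is in.

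Once these two points are repaired, you effectively recover the paper's decomposition into a commutator part (Lemma~\ref{L:Openness criterion} plus Proposition~\ref{P:EC HIT first}/\ref{P:EC HIT all primes}) and an abelian part (Proposition~\ref{P:cyclotomic HIT}), so the proposal ends up being a variant of the paper's proof rather than a genuine alternative.
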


First observe that the choice of model is not important for this theorem.  The specializations of any two models will agree away from some closed subvariety $Z\subsetneq \AA^n_k$, and the $k$-rational points of $Z$ have density zero in $k^n$.

Secondly, it is important to note that Theorem~\ref{T:HIT EC} is \emph{not} a direct consequence of HIT (since $\calH_E$ is an infinite group).  This is well illustrated by the fact that Theorem~\ref{T:HIT EC} can fail when $k=\QQ$.\\

Let us describe why we excluded $k=\QQ$.  Recall that for a profinite group $H$, the \emph{commutator subgroup} $[H,H]$ is the smallest closed normal subgroup of $H$ for which $H/[H,H]$ is abelian. 

  Fix a $t\in \QQ^n-\Omega$, and suppose that $\rho_{E_t}(\Gal(\Qbar/\QQ))=\calH_E$.  The homomorphism $\det\circ \rho_{E_t}\colon \Gal(\Qbar/\QQ)\to \Zhat^\times$ is the cyclotomic character.   Therefore, $\rho_{E_t}(\Gal(\Qbar/\QQ^\cyc))= \calH_E \cap \SL_2(\Zhat)$ where $\QQ^\cyc$ is the cyclotomic extension of $\QQ$.   Let $\QQ^\ab$ be the maximal abelian extension of $\QQ$.  The commutator subgroup of $\rho_{E_t}(\Gal(\Qbar/\QQ))$ is $\rho_{E_t}(\Gal(\Qbar/\QQ^\ab))$, so $\rho_{E_t}(\Gal(\Qbar/\QQ^\ab))=[\calH_E,\calH_E]$.

The Kronecker-Weber theorem says that $\QQ^\cyc=\QQ^\ab$, so an equality $\rho_{E_t}(\Gal(\Qbar/\QQ))=\calH_E$ would imply that $[\calH_E,\calH_E]=\calH_E\cap \SL_2(\Zhat)$.  This relation need not hold though!  For example, with $\calH_E=\GL_2(\Zhat)$, the group $[\GL_2(\Zhat),\GL_2(\Zhat)]$ has index 2 in $\SL_2(\Zhat)$.  Our main result for $k=\QQ$ is the following.

\begin{thm} \label{T:EC Main Q}
Suppose that $k= \QQ$.  Let $r$ be the index of $[\calH_E,\calH_E]$ in $\calH_E\cap \SL_2(\Zhat)$.  Then for any $\varepsilon>0$,
\begin{align*} 
\frac{\big|\big\{ t\in \QQ^n-\Omega: H(t)\leq B,\, \big[\calH_E : \rho_{E_t}(\Gal(\Qbar/\QQ))\big] = r \big\}\big|}{|\{ t\in \QQ^n : H(t)\leq B \}|} &=1 +O(B^{-1/2+\varepsilon}) \quad\text{and}\\
\frac{\big|\big\{ t\in \ZZ^n-\Omega: \norm{t}\leq B,\, \big[\calH_E : \rho_{E_t}(\Gal(\Qbar/\QQ))\big] = r \big\}\big|}{|\{ t\in \ZZ^n : \norm{t}\leq B \}|} &=1 +O(B^{-1/2+\varepsilon})
\end{align*}
where the implicit constants do not depend on $B$.
\end{thm}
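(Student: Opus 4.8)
The plan is to split the ``bad'' set into two pieces: one controlled by a single fixed level, which I bound by the classical Hilbert-irreducibility estimate (Theorem~\ref{T:HIT large2}), and one controlled by primes $\ell$ ranging over a set that grows like a power of $\log B$, which I bound by summing that estimate over those primes. I write out the height ordering; the integral statement is identical using the $\OO_k^n$-versions of the bounds. First some reductions: as for Theorem~\ref{T:HIT EC}, changing the Weierstrass model of $E/K$ alters all the relevant sets only along a proper closed subvariety of $\AA^n_\QQ$, so it is harmless; since $j(E)\in\QQ(T)$ is non-constant it is transcendental over $\QQ$, so $E$ has no complex multiplication, the cyclotomic character of $K=\QQ(T)$ is surjective onto $\Zhat^\times$, and (by an open-image theorem over finitely generated fields, i.e.\ the specialization of Serre's theorem) $\calH_E$ is open in $\GL_2(\Zhat)$. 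Hence $[\calH_E,\calH_E]$ is open in $\SL_2(\Zhat)$ and $r<\infty$, and I may fix a positive integer $N$, divisible by $6$, such that $\calH_E$ is the full preimage of $\calH_E(N)$, such that $\Gamma(N):=\ker\!\big(\SL_2(\Zhat)\to\SL_2(\ZZ/N\ZZ)\big)\subseteq[\calH_E,\calH_E]$, such that $\calH_E(\ell)=\GL_2(\ZZ/\ell\ZZ)$ for every prime $\ell\nmid N$, and (enlarging $N$) such that every prime-power obstruction at the primes dividing $N$ and every Goursat-type entanglement among the $\ell$-adic factors of $\calH_E\cap\SL_2(\Zhat)$ is already visible modulo $N$.

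Next the group theory. For $t\in\QQ^n\setminus\Omega$ put $G_t:=\rho_{E_t}(\Gal(\Qbar/\QQ))\subseteq\calH_E$. As recalled before the statement, $\det(G_t)=\Zhat^\times$ and $G_t\cap\SL_2(\Zhat)=[G_t,G_t]$ (Kronecker--Weber). Since $\det(G_t)=\det(\calH_E)$ one has $[\calH_E:G_t]=[\calH_E\cap\SL_2(\Zhat):[G_t,G_t]]=r\cdot[[\calH_E,\calH_E]:[G_t,G_t]]$, using $[G_t,G_t]\subseteq[\calH_E,\calH_E]\subseteq\calH_E\cap\SL_2(\Zhat)$; thus $[\calH_E:G_t]=r$ if and only if $[G_t,G_t]=[\calH_E,\calH_E]$. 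Because $[G_t,G_t]=G_t\cap\SL_2(\Zhat)$, the inclusion $\Gamma(N)\subseteq G_t$ forces $\Gamma(N)\subseteq[G_t,G_t]$, so $[G_t,G_t]$ and $[\calH_E,\calH_E]$ are then both preimages of their reductions modulo $N$; hence $[\calH_E:G_t]>r$ implies either $G_t\not\supseteq\Gamma(N)$ or $G_t(N)\subsetneq\calH_E(N)$. By the choice of $N$, the first alternative --- once the $t$ for which $E_t$ has complex multiplication and the $t$ exhibiting a prime-power or Goursat obstruction (the latter absorbed into the condition $G_t(N)\subsetneq\calH_E(N)$) are set aside --- forces $\rho_{E_t,\ell}(\Gal(\Qbar/\QQ))\subsetneq\GL_2(\ZZ/\ell\ZZ)$ for some prime $\ell\nmid N$. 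So
\[
\{\,t\in\QQ^n\setminus\Omega:H(t)\le B,\ [\calH_E:G_t]>r\,\}\ \subseteq\ \calB_1(B)\ \cup\ \calB_2(B)\ \cup\ \{\,t:H(t)\le B,\ E_t\text{ has CM}\,\},
\]
with $\calB_1(B):=\{\,t:H(t)\le B,\ G_t(N)\subsetneq\calH_E(N)\,\}$ and $\calB_2(B):=\{\,t:H(t)\le B,\ \rho_{E_t,\ell}(\Gal(\Qbar/\QQ))\subsetneq\GL_2(\ZZ/\ell\ZZ)\text{ for some prime }\ell\nmid N\,\}$.

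Now $\calB_1(B)$ is a classical Hilbert-irreducibility ``bad'' set: $G_t(N)$ is the Galois group of the specialization at $t$ of a polynomial defining $K(E[N])/K$, and $G_t(N)\subsetneq\calH_E(N)$ places it inside $\bigcup_g gMg^{-1}$ for some maximal subgroup $M$ of $\calH_E(N)$ (Jordan's lemma making this a proper conjugation-stable set). As $K(E[N])\supseteq\QQ(\mu_N)$ this extension is not geometric, so I apply the form of Theorem~\ref{T:HIT large2} that drops the geometric hypothesis, to each of the $O_N(1)$ conjugacy classes of maximal subgroups, obtaining $|\calB_1(B)|\ll B^{n+1/2}\log B$, i.e.\ density $\ll B^{-1/2}\log B\ll B^{-1/2+\varepsilon}$; and the $t$ with $E_t$ of CM type satisfy $\rho_{E_t,\ell}(\Gal(\Qbar/\QQ))\subsetneq\GL_2(\ZZ/\ell\ZZ)$ for every prime $\ell\nmid N$, so fixing one such $\ell$ and again invoking Theorem~\ref{T:HIT large2} bounds their density by $\ll B^{-1/2}\log B$ as well.

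The remaining and main work is $\calB_2(B)$. Here I would first show that, for $E_t$ non-CM with $H(t)\le B$, the representation $\rho_{E_t,\ell}$ is surjective for every prime $\ell$ exceeding a fixed power $(\log B)^{A}$. If $\rho_{E_t,\ell}$ is not surjective its image lies in a maximal subgroup of $\GL_2(\ZZ/\ell\ZZ)$: a Borel, the normalizer of a split or non-split Cartan, or an exceptional one. A Borel image means $E_t$ has a rational $\ell$-isogeny, whence $\ell\le 37$ by Mazur (CM being already excluded); an exceptional image forces $\ell$ below an absolute bound by Dickson's classification; an image contained in a Cartan subgroup itself is reducible in the split case (hence $\ell\le37$) and, for $\ell$ large, is ruled out in the non-split case by the finiteness results for rational points on the relevant Cartan modular curves; and in the remaining case the image lies in a Cartan normalizer but not in the Cartan, so every element of its nontrivial coset has trace $0$. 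Then $a_p(E_t)\equiv0\pmod\ell$ for every prime $p$ of good reduction inert in the quadratic subfield of $\QQ(E_t[\ell])$ cut out by the Cartan, and the Hasse bound $|a_p(E_t)|\le 2\sqrt p$ forces $a_p(E_t)=0$ for all such $p<\ell^2/4$; so $E_t$ would have far more supersingular primes below $\ell^2/4$ than the bound $O_\varepsilon(x^{3/4+\varepsilon})$ for supersingular primes below $x$ allows, a contradiction once $\ell>(\log B)^{A}$ (using $\operatorname{cond}(E_t)\ll_E B^{O(1)}$ and effective Chebotarev/Dirichlet to supply the needed inert primes). Hence, for non-CM $t$, $\calB_2(B)\subseteq\bigcup_{\ell\le(\log B)^{A}}\{\,t:H(t)\le B,\ \rho_{E_t,\ell}(\Gal(\Qbar/\QQ))\subsetneq\GL_2(\ZZ/\ell\ZZ)\,\}$; for each such $\ell$ the image lies in $\bigcup_g gMg^{-1}$ for a maximal $M\subsetneq\GL_2(\ZZ/\ell\ZZ)$, so Theorem~\ref{T:HIT large2} applied to $K(E[\ell])/K$ gives $\ll C(\ell)\,B^{n+1/2}\log B$, where --- crucially --- the explicit dependence of the implied constant on the level in the refined estimate of \S\ref{SS:main} gives $C(\ell)\ll\ell^{O(1)}$ (for the small exceptional subgroups the cheaper non-geometric form of Theorem~\ref{T:HIT larger} may be used). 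Summing over $\ell\le(\log B)^{A}$ yields $|\calB_2(B)|\ll B^{n+1/2}(\log B)^{O(1)}$, density $\ll B^{-1/2+\varepsilon}$. Adding the three estimates bounds the density of the bad set by $O(B^{-1/2+\varepsilon})$, so the displayed ratio, being at most $1$, equals $1+O(B^{-1/2+\varepsilon})$. The step I expect to be the real obstacle is this last one: it needs the arithmetic input (Mazur's isogeny theorem, Dickson's classification, finiteness of rational points on the Cartan modular curves, and the trace-$0$/supersingular-prime argument --- this last requiring care with effectivity) to cut the relevant primes down to $\ell\le(\log B)^{A}$, together with the polynomial-in-the-level control on the Hilbert-irreducibility constant so that the sum over those primes costs only a power of $\log B$; the group-theoretic bookkeeping in the second step --- that prime-power and Goursat obstructions are detected at the fixed level $N$ --- is routine but must be checked.
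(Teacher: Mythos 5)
Your decomposition of the bad set into a fixed-level piece $\calB_1(B)$ and a varying-prime piece $\calB_2(B)$ is the right strategy and is essentially what the paper does (via Lemma~\ref{L:Openness criterion} and Proposition~\ref{T:HIT EC 2b}). The $\calB_1$ estimate and the final commutator/Kronecker--Weber reduction are also fine. The gap is in $\calB_2(B)$: you need that, for non-CM $E_t$ with $H(t)\le B$, the representation $\rho_{E_t,\ell}$ is surjective once $\ell$ exceeds a \emph{power of} $\log B$, and the arithmetic inputs you list cannot produce a polylogarithmic cutoff. Mazur's theorem handles the Borel case and exceptional images are absolutely bounded, but the Cartan-normalizer case is where the difficulty lives, and your trace-$0$/supersingular argument --- in order to force a contradiction from the bound $O_\varepsilon(x^{3/4+\varepsilon})$ on supersingular primes together with effective Chebotarev/Dirichlet for the inert primes in a quadratic field whose discriminant is roughly $\ell\cdot\operatorname{cond}(E_t)$ --- requires $\ell$ to exceed a small positive \emph{power of} $B$ (since $\operatorname{cond}(E_t)\ll B^{O(1)}$ and the implicit constants in both inputs depend on the conductor), not a power of $\log B$. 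Finiteness of rational points on Cartan modular curves is neither effective nor uniform in $\ell$, so it also gives no cutoff. The missing ingredient is Theorem~\ref{T:M-W} (Masser--W\"ustholz), which is precisely what delivers surjectivity for $\ell\gg(\max\{[k:\QQ],h(j(E_t))\})^\gamma\ll(\log B)^\gamma$; it rests on transcendence-theoretic isogeny estimates and is not obtainable by the supersingular route. Without it, summing the per-prime HIT bound over $\ell$ does not stay inside $B^{-1/2+\varepsilon}$.

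A secondary point: you correctly note that you need polynomial-in-$\ell$ control of the HIT constant, but this does not come from Theorem~\ref{T:HIT large2} (the Cohen--Serre large sieve, whose implicit constant depends on $f$ in an uncontrolled way); it is exactly what the paper's Theorem~\ref{T:Main} supplies, with $c\ll|G^g|^2\ll\ell^6$. Moreover, to get $\delta\le\tfrac12+O(1/\ell)$ --- so that Theorem~\ref{T:Main} yields exponent $n-\tfrac12+O(1/\ell)$ --- the paper does not go through a case analysis of maximal subgroups of $\GL_2(\FF_\ell)$; it uses Serre's surjectivity criterion (Lemma~\ref{L:Serre criterion}, the three sets $C_1,C_2,C_3$), which provides the required proportions cleanly and uniformly and avoids the bookkeeping your plan would demand.
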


\begin{remark}
The proof of Theorem~\ref{T:EC Main Q} will actually show that $\rho_{E_t}(\Gal(\Qbar/\QQ^\ab))=[\calH_E,\calH_E]$ for ``most'' $t$.  For such $t$, $G=\rho_{E_t}(\Gal(\Qbar/\QQ))$ is a subgroup of $\GL_2(\Zhat)$ satisfying $\det(G)=\Zhat^\times$ and $G\cap \SL_2(\Zhat)= [\calH_E,\calH_E]$.   The group $G$ depends on $t$ and not necessarily on $E/K$ alone.
\end{remark}

These theorems build on several earlier results.   Much focus has been on the family $y^2=x^3+t_1 x + t_2$ with $(t_1,t_2)\in\ZZ^2$ in a growing box.    In this context, Duke \cite{MR1485897} showed that for ``most'' elliptic curve $E/\QQ$ one has $\rho_{E,\ell}(\Gal(\Qbar/\QQ))=\GL_2(\ZZ/\ell\ZZ)$ for all primes $\ell$.   Grant \cite{MR1775416} gave another proof with an asymptotic expression for those elliptic curves that do not have surjective mod $\ell$ representations for all $\ell$.

Cojocaru and Hall \cite{MR2189500} considered considered a fixed model of an elliptic curve $E$ over $\QQ(T)$ ($n=1$) with non-constant $j$-invariant.  They proved that for ``most'' specializations $t\in \QQ$, one has $\rho_{E_t,\ell}(\Gal(\Qbar/\QQ))=\GL_2(\ZZ/\ell\ZZ)$ for all $\ell\geq 17$.   This will be reproved when we generalize to higher dimensions and number fields and is essentially Theorem~\ref{T:EC Main Q}.

Building on Duke's theorem, Jones \cite{Jones-AAECASC} was able to show that $[\GL_2(\Zhat):\rho_E(\Gal(\Qbar/\QQ))]=2$ for ``most'' elliptic curve $E$ over $\QQ$ (such curves are called \emph{Serre curves} in the literature).  There has also been recent work of Cojocaru, Grant and Jones \cite{Co-Gr-Jo} studying Serre curves in one-parameter families which gives results similar to Theorem~\ref{T:EC Main Q}(i) with $n=1$; they give much stronger error terms than ours but their methods do not generalize to arbitrary number fields.

For $k\neq \QQ$, the integral point version of Theorem~\ref{T:HIT EC} for the family $y^2=x^3+t_1 x + t_2$ was proved in \cite{Zywina-Maximal}.   

The proofs in all these papers, except Grant's and \cite{Co-Gr-Jo}, uses some version of the large sieve (Grant's paper requires deep theorems of Mazur on elliptic curves over $\QQ$, and in particular do not generalize to the $k\neq \QQ$ setting).\\

A key ingredient in the proof of our theorems is an effective version of HIT applied to the representation $\rho_{E,\ell}$ for rational primes $\ell$.   
\begin{prop}  \label{P:mod l prototype}
For each rational prime $\ell\geq 17$, we have
\[
|\{  t\in \OO_k^n-\Omega: \norm{t}\leq B,\, \rho_{E_t, \ell}(\Gal(\kbar/k))\not\supseteq \SL_2(\ZZ/\ell\ZZ)  \}|\ll_{E} \ell^6 B^{[k:\QQ](n-1/2+O(1/\ell))} \log B.
\]
where the implicit constants depend only on the model for $E/K$ and the exceptional set $\Omega$.
\end{prop}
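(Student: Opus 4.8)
The plan is to apply the larger-sieve bound of Theorem~\ref{T:HIT larger} to a well-chosen finite quotient of the tower of division fields, so that the exponent $|C|/|G|$ coming out of the sieve is $O(1/\ell)$. Fix a rational prime $\ell\geq 17$. The representation $\rho_{E,\ell}\colon\Gal(\Kbar/K)\to\GL_2(\ZZ/\ell\ZZ)$ cuts out a finite Galois extension $L_\ell/K$ with group $\calH_E(\ell)=\rho_{E,\ell}(\Gal(\Kbar/K))$. Because the $j$-invariant of $E$ is non-constant, $E/K$ is non-isotrivial, and a theorem of Serre (following Igusa) gives that $\calH_E(\ell)\supseteq\SL_2(\ZZ/\ell\ZZ)$ for all $\ell$ outside a finite set depending only on $E$; since we only claim a bound with an $\ell^6$ factor we may in fact just take $\calH_E(\ell)$ as it is and track its size. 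Now the condition $\rho_{E_t,\ell}(\Gal(\kbar/k))\not\supseteq\SL_2(\ZZ/\ell\ZZ)$ means the specialized group $G_t\subseteq\calH_E(\ell)$ lands in the union of conjugates of the \emph{proper} subgroups of $\calH_E(\ell)$ whose image in $\PGL_2(\FF_\ell)$ is a proper subgroup (the Borel, normalizers of (split/nonsplit) tori, and the exceptional subgroups $A_4,S_4,A_5$); call the conjugation-stable union of these $C_\ell\subseteq\calH_E(\ell)$.

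The first key step is the group theory: I would show $|C_\ell|/|\calH_E(\ell)| = O(1/\ell)$. The dominant contribution is the Borel; the union of all Borels in $\GL_2(\FF_\ell)$ has size on the order of $\ell^3$ out of $|\GL_2(\FF_\ell)|\sim\ell^4$, so the Borel part of $C_\ell$ has relative density $O(1/\ell)$. The tori normalizers contribute an even smaller $O(1/\ell)$ proportion, and the three exceptional subgroups have bounded order, contributing $O(\ell^{-3})$. Summing, $|C_\ell|/|\calH_E(\ell)|\leq c/\ell$ for an absolute constant $c$. (A cleaner route: for $\ell\geq 17$ the only maximal subgroups of $\PSL_2(\FF_\ell)$ of index $\leq\ell$ are the Borel of index $\ell+1$, so $\delta(\PSL_2,M)$ is minimized by the Borel; pull back along $\SL_2\to\PSL_2$ and note the non-$\SL_2$-containing subgroups all sit inside preimages of proper subgroups of $\PGL_2$.)

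The second step is to feed this into Theorem~\ref{T:HIT larger}. The extension $L_\ell/k(T)$ need not be geometric — the constant field can grow by cyclotomic contributions since $\det\rho_{E,\ell}$ is the cyclotomic character — but the more general form of the larger sieve promised in \S\ref{SS:main} handles the non-geometric case, and moreover gives explicit control of the implicit constant in terms of the ramification data of $L_\ell/k(T)$, which grows polynomially in $\ell$ (the relevant discriminant/conductor data is controlled by the bad primes of $E$ together with $\ell$, yielding the $\ell^6$ factor — the precise exponent $6$ comes from bounding the degree and the number of ramified places, and one should not fuss over whether $6$ is optimal). Applying that bound with $C=C_\ell$ and $|C|/|\calH_E(\ell)|=O(1/\ell)$ gives
\[
|\{t\in\OO_k^n-\Omega:\norm{t}\leq B,\ G_t\subseteq C_\ell\}|\ll_E \ell^6\, B^{[k:\QQ](n-1+O(1/\ell))}\log B,
\]
and since $G_t\subseteq C_\ell$ is exactly the event $\rho_{E_t,\ell}(\Gal(\kbar/k))\not\supseteq\SL_2(\ZZ/\ell\ZZ)$ (when $\SL_2\not\subseteq G_t$ the image in $\PGL_2$ is proper, hence contained in a conjugate of one of the listed subgroups), this is the claimed estimate after absorbing the harmless discrepancy between $\Omega$ and $\Omega_f$ for a defining polynomial $f$ of $L_\ell/K$.

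The main obstacle I anticipate is the bookkeeping on the implicit constant: one must check that the general larger sieve of \S\ref{SS:main}, applied with the field $k$, the tower $L_\ell/k(T)$, and the conjugacy-stable set $C_\ell$, produces a constant of the shape $\ell^6$ (and not something worse) uniformly in $\ell$. This requires a uniform-in-$\ell$ bound on the number of primes of $k$ that are ``bad'' for the reduction of $E[\ell]$ and on the degree $[L_\ell:k(T)]\le|\GL_2(\FF_\ell)|$, together with the observation that the $o_n(1)$ in the exponent is genuinely $O(1/\ell)$ rather than merely $o(1)$ — this last point is where the density computation $|C_\ell|/|\calH_E(\ell)|=O(1/\ell)$ is used quantitatively rather than just qualitatively.
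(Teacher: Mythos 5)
Your proposal breaks down at the group-theoretic density estimate, and this is not a repairable bookkeeping issue: the set $C_\ell$ you define is essentially all of the group. You take $C_\ell$ to be the union of conjugates of the maximal subgroups of $\calH_E(\ell)$ that do not contain $\SL_2(\FF_\ell)$ (Borels, normalizers of split and non-split tori, exceptional subgroups), and you claim $|C_\ell|/|\calH_E(\ell)| = O(1/\ell)$, sourcing the main term to the Borel "of size on the order of $\ell^3$". But the union of all $\GL_2(\FF_\ell)$-conjugates of the Borel is precisely the set of matrices with an $\FF_\ell$-rational eigenvalue, which is roughly half of $\GL_2(\FF_\ell)$, not an $O(1/\ell)$-fraction. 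Worse, once you also throw in the normalizers of the two kinds of maximal tori, the union is \emph{everything}: every element of $\GL_2(\FF_\ell)$ is either split-semisimple (in a split torus, hence a Borel), non-split semisimple (in a non-split torus), or non-semisimple (in a Borel). So feeding this $C_\ell$ into Theorem~\ref{T:HIT larger} gives $\delta = 1$ and hence no saving at all. The alternative route you sketch is also backwards: for the larger sieve applied to a union of maximal subgroups one needs a bound on the \emph{maximum} of $\delta(G,M)$, and the Borel realizes $\delta(G,M)\approx 1/2$, not the minimum.

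The paper's proof (Proposition~\ref{P:EC HIT first}(ii)) avoids this by not forming the union at all. It instead uses Serre's criterion (Lemma~\ref{L:Serre criterion}): a subgroup of $\GL_2(\FF_\ell)$ contains $\SL_2(\FF_\ell)$ as soon as it meets each of three explicit conjugation-stable sets $C_1(\ell),C_2(\ell),C_3(\ell)$. Contrapositively, if $G_t\not\supseteq\SL_2$ then $G_t\subseteq\GL_2(\FF_\ell)\setminus C_i(\ell)$ for some $i$, and the paper applies Theorem~\ref{T:Main} \emph{three times}, once for each $i$, with a union bound. The key point is that the relevant density parameter for the complement of each $C_i(\ell)$ (computed on each determinant coset, as required by the non-geometric version of the sieve) comes out to $\delta = 1/2 + O(1/\ell)$, not $O(1/\ell)$ --- which is exactly why the stated exponent is $n-\tfrac12+O(1/\ell)$ and not the $n-1+O(1/\ell)$ you predicted. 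Your approach cannot produce the better exponent because the premise it rests on is false. Finally, the $\ell^6$ factor in the paper is not a ramification bookkeeping estimate but is transparent from Theorem~\ref{T:Main}: the constant there is $c = |G^g|^2\exp(\cdots)$, and $G^g = \SL_2(\FF_\ell)$ (this needs the genus computation for modular curves showing $\calH_E^g(\ell)=\SL_2(\ZZ/\ell\ZZ)$ once $\ell\geq 17$), so $c\ll\ell^6$.
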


Since we are interested in the Galois action on the full torsion groups of elliptic curves (and hence with varying $\ell$) it is vital to have bounds with both good and explicit dependencies on $\ell$.   With $\ell>19$, one can use Faltings theorem (originally the Mordell conjecture) to prove that
\[
|\{  t\in \OO_k^n-\Omega: \rho_{E_t, \ell}(\Gal(\kbar/k))\not\supseteq \SL_2(\ZZ/\ell\ZZ)  \}| \ll_{E,\ell} 1.
\]
While this seems much stronger than Proposition~\ref{P:mod l prototype}, the difficulty in controlling how the implicit constant depends on $\ell$ makes it unusable for our application.

The other major ingredient in the proof of Theorem~\ref{T:HIT EC} will be an effective version of Serre's open image theorem due to Masser and W\"ustholz.    Note that even to prove a more \emph{qualitative} version of Theorem~\ref{T:HIT EC}, with the big-O term replaced with $o(1)$, we still need to use \emph{quantitative} HIT bounds.

\subsubsection{Examples}
We now give a few examples of families of elliptic curves to illustrate the theoretic results above.

\begin{example}
Let $E$ be the elliptic curve over the function field $k(j)$ defined by the Weierstrass equation 
\begin{equation} \label{E:j-invariant j}
y^2+xy=  x^3  -\frac{36}{j-1728}x-\frac{1}{j-1728}.
\end{equation}
This elliptic curve has $j$-invariant $j$, and for each $t\in k-\{0,1728\}$, specializing $j$ by $t$ gives an elliptic curve $E_t$ over $k$ with $j$-invariant $t$.    The image of $\rho_E$ is 
\[
\calH_E =\big\{ A \in \GL_2(\Zhat): \det(A)\in \chi_k(\Gal(\kbar/k)) \big\}
\]
where $\chi_k \colon \Gal(\kbar/k)\to \Zhat^\times$ is the cyclotomic character of $k$.  Note that $\calH_E=\GL_2(\Zhat)$ if and only if $k\cap \QQ^\cyc=\QQ$.    If $k\neq \QQ$, then by Theorem~\ref{T:HIT EC} we find that for ``most'' choices of $t\in k-\{0,1728\}$, the elliptic curve $E_t/k$ satisfies $\rho_{E_t}(\Gal(\kbar/k))=\calH_E$

Now consider the case $k=\QQ$.  We have $\calH_E=\GL_2(\Zhat)$ and $[\GL_2(\Zhat),\GL_2(\Zhat)]$ has index 2 in $\SL_2(\Zhat)$.  By Theorem~\ref{T:EC Main Q}, 
\[
[\GL_2(\Zhat): \rho_{E_t}(\Gal(\Qbar/\QQ))] =2
\]
for ``most'' $t\in\QQ-\{0,1728\}$.

Similar remarks hold for the elliptic curve $E$ over $k(a,b)$ given by the equation $y^2=x^3+ax+b$; it has the same monodromy group $\calH_E$.
\end{example}

\begin{example}
Let $E$ be be the elliptic curve over the function field $k(\lambda)$ given by the Weierstrass equation 
\[
y^2=  x(x-1)(x-\lambda).
\]
For simplicity assume that $k\cap \QQ^\cyc=\QQ$, so $\calH_E =\big\{ A \in \GL_2(\Zhat): A\equiv I \pmod{2}  \big\}.$
For each $t\in k-\{0,1\}$, specializing $\lambda$ by $t$ gives an elliptic curve $E_t  \colon y^2=x(x-1)(x-t)$ over $k$.  If $k\neq \QQ$, then for ``most'' choices of $t\in k-\{0,1\}$, the elliptic curve $E_t \colon y^2=x(x-1)(x-t)$ satisfies $\rho_{E_t}(\Gal(\kbar/k))=\calH_E$.

Now consider the case $k=\QQ$.    One can check that $[\calH_E, \calH_E]=\{A\in\SL_2(\Zhat): A\equiv I\pmod{4}\}$. Therefore by Theorem~\ref{T:EC Main Q} we know that for ``most'' choices of $t\in \QQ-\{0,1\}$, the elliptic curve $E_t\colon y^2=x(x-1)(x-t)$ satisfies 
\[
[\calH_E:\rho_{E_t}(\Gal(\Qbar/\QQ))]=\big[\calH_E\cap \SL_2(\Zhat): [\calH_E, \calH_E]\big]=8
\]
and hence $[\GL_2(\Zhat):\rho_{E_t}(\Gal(\Qbar/\QQ))] = 48$.
\end{example}

\begin{example}
Let $E$ be an elliptic curve over $\QQ(T)$ defined by replacing the variable $j$ in (\ref{E:j-invariant j}) with
\begin{equation} \label{E:hauptmodul relation}
j=\frac{(T^{16} + 256 T^8 + 4096)^3}{T^{32}(T^8 + 16)}. 
\end{equation}
For each $t\in \QQ-\{0\}$, we have a specialization $E_t/\QQ$ by replacing $T$ by $t$.   We claim that 
\[
[\GL_2(\Zhat):\rho_{E_t}(\Gal(\Qbar/\QQ))] = 1536
\]
for ``most'' $t\in \QQ-\{0\}$.

Let us briefly explain how this elliptic curve arises.  Define the function $h(z)=\eta(z)/\eta(4z)$ on the upper-half plane where $\eta$ is the Dedekind eta function.  Let $\Gamma$ be the group of $A \in \SL_2(\ZZ)$ for which $h(A\cdot z)=h(z)$ where $A$ acts on the upper-half plane via a linear fractional transformation.    We claim that $\Gamma$ is a congruence subgroup of $\SL_2(\ZZ)$ of index $48$ and level $32$, and the equation (\ref{E:hauptmodul relation}) holds when $T$ is replaced by $h(z)$ and $j$ is the modular $j$-function  (these claims are straightforward to show after observing that $h(z)^8$ is the Hauptmodul of $\Gamma_0(4)$).  Using that the Fourier expansion of $h(z)$ at $\infty$ has rational coefficients, one can argue that for each integer $m\geq 1$, the group $\pm\calH_E(m)$ is conjugate to the group generated by $\Gamma \bmod{m}$ and the matrices of the form $\left(\begin{smallmatrix}1 & 0 \\0 & d\end{smallmatrix}\right)$ with $d\in (\ZZ/m\ZZ)^\times$.  Some group theory then shows that $[\calH_E:\calH_E]=[\pm\calH_E:\pm\calH_E]$ has index 1536 in $\SL_2(\Zhat)$ (moreover, $[\calH_E:\calH_E]$ is of the form $\calH \times \prod_{\ell\neq 2} \SL_2(\ZZ_\ell)$ for a certain subgroup $\calH$ of index 1536 in $\SL_2(\ZZ_2)$).
\end{example}

\subsection{Overview}
We now give a quick overview of the rest of the paper.  In \S\ref{SS:main}, we state our main version of HIT.
In \S\ref{S:larger sieve}, we give an extension of the larger sieve to the setting of sieving rational or integral points; we also include a standalone application to arithmetic dynamics in \S\ref{S:dynamics}.  In \S\ref{S:larger sieve special}, we state a special form of our larger sieve that will be suitable for our application of HIT which will be proved in \S\ref{S:main proof}.  

Our general approach to finding bounds is to reduced to the one variable case; more geometrically, we have an open subvariety of $\AA_k^n$ which we will fiber by lines.   We then prove a version of HIT for each line separately, and then combine these individual bounds (it is thus vital to have uniform bounds, and this uniformity needs the equidistribution and Grassmannian calculations of \S\ref{S:equidistribution}).

Finally in \S\ref{S:EC}, we give the details of our theorems on elliptic curves stated in \S\ref{SS:intro EC}; this involves combining our quantitative HIT with an effective version of Serre's open image theorem due to Masser and W\"ustholz.

\subsection*{Notation}
For a number field $k$, let $\OO_k$ be the ring of integers and let $\Sigma_k$ be the set of non-zero prime ideals of $\OO_k$.  For each $\p\in\Sigma_k$, let $\FF_\p$ be the residue field $\OO_k/\p$ whose cardinality we denote by $N(\p)$.    The degree of $\p$ is the unique integer $\deg(\p)$ for which $N(\p)=p^{\deg(\p)}$ where $p$ is the prime lying under $\p$.  If $K/k$ is a finite Galois extension and $\p$ is unramified in $K$, then $(\p,K/k)$ will denote the Artin symbol which is a conjugacy class of $\Gal(K/k)$.   Let $k^\cyc$ and $k^\ab$ be the cyclotomic and maximal abelian extensions of $k$, respectively, in $\kbar$.   The absolute height on $\PP^n_k$ is denoted $H$.

For a finite group $G$, let $G^\sharp$ denote the set of conjugacy classes of $G$.  For a profinite group $G$, the \emph{commutator subgroup} $[G,G]$ is the smallest closed normal subgroup of $G$ for which $G/[G,G]$ is abelian.   We will always consider profinite groups with their profinite topology.

If $X$ is a scheme over a ring $R$ and we have a ring homomorphism $R\to R'$, then we denote by $X_{R'}$ the scheme $X\times_{\Spec R} \Spec R'$ over $R'$.   The homomorphism is implicit in the notation; it will frequently be one of the natural homomorphisms $k\to\kbar$, $\OO_k\to k$ and $\OO_k\to \FF_\p$.

Suppose that $f$ and $g$ are real valued functions of a real variable $x$. By $f\ll g$ (or $g\gg f$), we 
shall mean that there are positive constants $C_1$ and $C_2$ such that for all $x\geq C_1$, $|f(x)|\leq C_2 |g(x)|$.  We shall use $O(f)$ to denote an unspeciÞed function $g$ with $g\ll f$.   When needed we will indicate the dependence of the implied constants with subscripts on $\ll$ or $O$, and in the main results we will indicate the dependencies.

\subsection*{Acknowledgments}
Thanks to David Brown for several useful suggestions.
\section{Main version}

\subsection{Reinterpretation} \label{SS:reinterpretation}
It will be useful to view Hilbert's irreducibility theorem in terms of algebraic geometry.    Let $U$ be a non-empty open subvariety of $\PP^n_k$, and let
\[
\rho\colon \pi_1(U) \to G
\]
be a continuous and surjective homomorphism where $G$ is a finite group and $\pi_1(U)$ is the \emph{\'etale fundamental group} of $U$.  For every point $u \in U(k)$, we have a homomorphism
\[
\rho_u \colon \Gal(\kbar/k) = \pi_1(\Spec k) \xrightarrow{u_*} \pi(U) \xrightarrow{\rho} G
\]
by viewing $u$ as a $k$-morphism $\Spec k \to U$ and using the functoriality of $\pi_1$.

Denote the image of $\rho_u$ by $G_u$.  Note that we have suppressed the base points of our fundamental groups,  and thus the representations $\rho$ and $\rho_u$ are uniquely defined only up to an inner automorphism of $G$.   Moreover, the subgroup $G_u$ of $G$ is only defined up to conjugation; this is not a problem for us since the condition $G_u=G$ is well-defined.  We will frequently suppress base points when the choice does not matter.  \emph{Hilbert's irreducibility theorem} is then the statement that $G_u=G$ for ``most'' $u\in U(k)$.
\\

Let's describe how this version of HIT relates to the classical polynomial version described in the introduction.    Let $f(x,T_1,\ldots, T_n) \in k(T_1,\ldots, T_n)[x]$ be an irreducible polynomial.   Let $L$ be the splitting field of $f$ over $k(T_1,\ldots, T_n)$ in a fixed algebraic closure.  Let $X$ be a variety over $k$ with function field $L$.  The extension $L/k(T_1,\ldots,T_n)$ gives a dominant rational map $\pi\colon X \dashrightarrow \AA^n_k=\Spec k[T_1,\ldots, T_n]$.  By replacing $X$ with a suitable non-empty open subvariety, we have an \'etale morphism $\pi\colon X \to U$ where $U$ is an open subvariety of $\AA^n_k$.   Let $G$ be the group of automorphisms of $\pi\colon X\to U$.  The group $G$ acts faithfully on $X$ and $\pi$ induces an isomorphism $X/G\xrightarrow{\sim} U$, so the cover $\pi\colon X\to U$ gives a continuous homomorphism $\pi_1(U)\to G$.  Note that we have $G\cong \Gal(L/k(T))$.  For $u\in U(k)\subseteq k^n$, the group $G_u$ will agree with the corresponding group constructed in \S\ref{SS:HIT intro}.\\

\subsection{Uniform Hilbert's Irreducibility Theorem}  \label{SS:main}
Let $U$ be a non-empty open subvariety of $\PP^n_k$, and let
\[
\rho\colon \pi_1(U) \to G
\]
be a continuous and surjective homomorphism where $G$ is a finite group and $\pi_1(U)$ is the \'etale fundamental group of $U$.   Let $G^g$ be the image of $\pi_1(U_{\kbar})$ under $\rho$, and let $K$ be the minimal extension of $k$ in $\kbar$ for which $G^g$ is the image of $\pi_1(U_K)$.   We have a short exact sequence
\[
1\to G^g \to G \overset{\varphi}{\to} \Gal(K/k) \to 1.
\]
For each $u\in U(k)$, let $G_u$ be the image of 
\[
\Gal(\kbar/k)=\pi_1(\Spec k) \xrightarrow{u_*} \pi_1(U) \xrightarrow{\rho} G.
\]   
The subgroup $G_u$ of $G$ is uniquely defined up to conjugation.  

We define $\calU$ to be the open subscheme of $\PP^n_{\OO_k}$ that is the complement of the Zariski closure of $\PP^n_k-U$ in $\PP^n_{\OO_k}$.   The $\OO_k$-scheme $\calU$ has generic fiber $U$.  There exists a finite set $S\subseteq \Sigma_k$ such that $\rho$ factors through a homomorphism
\[
\pi_1(\calU_{\OO})\to G
\]
where $\OO$ is the ring of $S$-integers in $k$.  The main quantitative form of HIT in this paper is the following:

\begin{thm} \label{T:Main}
Let $C$ be a non-empty subset of $G$ that is stable under conjugation.  For each conjugacy class $\kappa \in \Gal(K/k)^\sharp$ define $C_\kappa = C \cap \varphi^{-1}(\kappa)$.   Define the numbers $\displaystyle \delta:= \max_{\kappa \in \Gal(K/k)^\sharp} \frac{1}{|\kappa|}\frac{|C_\kappa|}{|G^g|}$ and
\[
c:= |G^g|^2\exp\bigg(\sum_{\substack{\p\in S \\ \deg(\p)=1 \text{ and } N(\p)\geq |G^g|^2 }} \frac{\log N(\p)}{N(\p)} \bigg).
\]
\begin{romanenum}
\item   Assume further that $U$ is an open subvariety of $\AA^n_{k}.$  Then 
\[
|\{ u \in U(k)\cap \OO_k^n : \norm{u}\leq B,\, G_u\subseteq C\}| \ll_{U} c B^{[k:\QQ](n-1+\delta)} \log B.
\]
 \item  We have
\[
|\{ u \in U(k) : H(u)\leq B,\, G_u\subseteq C\}| \ll_{U} c B^{[k:\QQ](n + \delta)} \log B.  
\]
\end{romanenum}
In both cases, the implicit constant depends only on $U$ and the open embedding $U\subseteq \PP^n_k$.
\end{thm}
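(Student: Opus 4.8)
The plan is to reduce Theorem~\ref{T:Main} to the one-variable case by fibering $U$ by lines, and to treat the one-variable case with Gallagher's larger sieve, in the number-field form developed in \S\ref{S:larger sieve}; the local data fed to the sieve comes from an effective Chebotarev count of Frobenius classes on the fibers of the $G$-cover attached to $\rho$.

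\textbf{The one-variable case.} Assume first $U\subseteq\AA^1_k$, and let $Y\to\calU$ be the connected $G$-Galois cover corresponding to $\rho$ (so that $\rho$ factors through $\pi_1(\calU_\OO)$). Fix a degree-one prime $\p\notin S$ with $N(\p)=p\ge|G^g|^2$. Every $\bar u\in\calU(\FF_\p)$ has a well-defined Frobenius conjugacy class $\Frob_{\p,\bar u}\subseteq G$, and $\varphi(\Frob_{\p,\bar u})$ is the class $\kappa_\p\in\Gal(K/k)^\sharp$ of $\Frob_\p$. If $u\in U(k)\cap\OO_k$ reduces to $\bar u$ and $G_u\subseteq C$, then $\Frob_{\p,\bar u}\subseteq C$ (here conjugacy-stability of $C$ is used), hence $\Frob_{\p,\bar u}\subseteq C_{\kappa_\p}$. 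By the effective Chebotarev density theorem for the curve $\calU_{\FF_\p}$ and its cover $Y_{\FF_\p}$ --- i.e. Weil's bounds applied to the (at most $O_U(1)$) components of $Y_{\FF_\p}$, whose genera are $O_U(|G^g|)$ by Riemann--Hurwitz --- the number of such $\bar u$ is at most
\[
\frac{|C_{\kappa_\p}|}{|\varphi^{-1}(\kappa_\p)|}\,|\calU(\FF_\p)|+O_U\!\big(|G^g|\sqrt p\,\big)\ \le\ \delta\,p+O_U\!\big(|G^g|\sqrt p\,\big),
\]
using $|\varphi^{-1}(\kappa)|=|\kappa|\,|G^g|$ and the definition of $\delta$. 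Thus $\mathcal A:=\{u\in U(k)\cap\OO_k:\norm u\le B,\ G_u\subseteq C\}$ has, modulo each such $\p$, a reduction of size $\nu(\p)\le\delta p+O_U(|G^g|\sqrt p)$. Feeding this into the larger sieve, summed over degree-one primes $\p\notin S$ with $|G^g|^2\le N(\p)\le z$ and optimizing $z$ near $c\,B^{[k:\QQ]\delta}\log B$, gives $|\mathcal A|\ll_U c\,B^{[k:\QQ]\delta}\log B$. The precise form of $c$ is forced here: the denominator $\sum\frac{\log N(\p)}{\nu(\p)}$ of the larger sieve equals $\tfrac1\delta(\log z-\log c)+o(1)$, the factor $|G^g|^2$ in $c$ cancelling the $-\log|G^g|^2$ produced by Mertens' estimate at the lower cutoff, so that only the tail $\sum_{\p\in S,\ \deg\p=1,\ N(\p)\ge|G^g|^2}\tfrac{\log N(\p)}{N(\p)}$ survives. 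The same argument over $\PP^1_k$ gives the one-variable height bound $\ll_U c\,B^{[k:\QQ](1+\delta)}\log B$.

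\textbf{Passage to general $n$, and from (i) to (ii).} For (i), fiber $\AA^n_k$ by the lines $\ell_v=\{(v,t):t\in\AA^1_k\}$ with $v\in\AA^{n-1}_k$. Hilbert irreducibility over the function field $k(v)$ shows that the generic line carries a restricted cover with the same $G$, $G^g$, $K$ and $\delta$; hence the lines for which any of these data --- or the genus and number of components of the restricted cover, or the relevant set of bad primes --- degenerate lie in a proper closed $Z\subsetneq\AA^{n-1}_k$. The points of $Z$ contribute only $\ll_U B^{[k:\QQ](n-1)}=o\big(B^{[k:\QQ](n-1+\delta)}\big)$, as $\delta>0$. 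For each of the $\ll B^{[k:\QQ](n-1)}$ remaining lines the one-variable bound applies \emph{with a constant $c$ uniform in $v$}, contributing $\ll_U c\,B^{[k:\QQ]\delta}\log B$; summing over $v$ yields (i). For (ii), pull $\rho$ back along the punctured affine cone $\widetilde U\subseteq\AA^{n+1}_k$ over $U\subseteq\PP^n_k$: since the cone map $\widetilde U\to U$ is a $\GG_m$-torsor, the pulled-back cover has the same $G,G^g,K,\delta$, and each $u\in U(k)$ with $H(u)\le B$ lifts to an (essentially primitive) integral point of $\widetilde U$ of norm $\ll B$; applying (i) in dimension $n+1$ gives $\ll_U c\,B^{[k:\QQ](n+\delta)}\log B$.

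\textbf{Main obstacle.} The delicate step is the uniformity asserted in the last paragraph: for $v$ outside a negligible set, the one-variable constant $c$ --- equivalently the genus and number of components of the restricted cover over $\ell_v$ and its bad-reduction set --- must be bounded independently of $v$. A linear family of covers can in principle degenerate in codimension one, and controlling this (or absorbing the degeneracies into the error term) is exactly the purpose of the equidistribution and Grassmannian estimates of \S\ref{S:equidistribution}; I expect the bulk of the work to lie there, the larger-sieve step being routine once the Chebotarev local densities and their explicit constants are in hand.
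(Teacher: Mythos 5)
Your proposal follows the same strategy as the paper: fiber $\AA^n_k$ by lines, apply an effective Chebotarev estimate on each line modulo $\p$ (via Deligne/Weil on the restricted cover), feed the resulting local density bounds into the larger sieve of \S\ref{S:larger sieve}, and reduce (ii) to (i) by lifting $\PP^n$-points to the punctured affine cone using Proposition~\ref{P:integral to rational}. The one real inaccuracy is your account of where the $\log B$ factor comes from. It does \emph{not} come from optimizing the sieving cutoff $z$: the one-variable larger-sieve bound (Proposition~\ref{P:specialized larger sieve}(ii)) is $\ll_k c\,B^{[k:\QQ]\delta}$ with no $\log B$. Rather, when you run the argument on the line $\calL_b$, the set of bad primes for that line is not just $S$; there is an extra (and $b$-dependent) set $T$ of primes $\p$ where the reduction $\calL_b\bmod\p$ falls into the degeneracy locus $\calW_{\FF_\p}$ (so that Theorem~\ref{T:fibered Chebotarev} does not apply). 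The constant in the larger-sieve bound thus carries a factor $\exp\big(\sum_{\p\in T}\log N(\p)/N(\p)\big)$, and it is Lemma~\ref{L:height for log B term} that controls this via $\sum_{\p\in T}\log N(\p)\ll_U \log H(\calL_b)+O(1)\ll\log B$, whence (after $\exp(\sum\log N(\p)/N(\p))\ll_k\sum\log N(\p)$) the $\log B$.

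Your ``main obstacle'' paragraph correctly senses the delicate point but states it imprecisely. There is no question of the data $G$, $G^g$, $K$, $\delta$ varying with the line: the paper fixes once and for all a proper closed $W\subsetneq\bfGr_k$ such that every line $L\notin W(\kbar)$ yields the same geometric monodromy $G^g$ on the restriction (Lemma~\ref{L:topological}, by Bertini plus a deformation argument), and the closure $\calW$ in $\bfGr_{\OO_k}$ controls the finitely many ramification points uniformly. The only thing that varies with $b$ is the finite set of primes where the \emph{reduction} of the fixed good line $\calL_b$ happens to lie in $\calW_{\FF_\p}$ --- and that set is exactly what Lemma~\ref{L:height for log B term} bounds in terms of the height of $b$. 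Once this is distinguished from degeneration over the generic point, the uniformity you worried about is routine, and your remaining steps match the paper's proof.
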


In the situation where $K=k$, we have $\delta=|C|/|G|$.  This is the case in Theorem~\ref{T:HIT larger} where we made the assumption that $L/k(T)$ is geometric, hence Theorem~\ref{T:HIT larger} is an easy consequence of Theorem~\ref{T:Main}.   

\begin{remark}
In applications, one might start with a representation $\rho\colon \pi_1(\calU')\to G$ where $\calU'$ is an open subscheme of $\PP_{\OO}^n$ for some ring $\OO$ of $S$-integers.   After possibly increasing $S$, the schemes $\calU'$ and $\calU_{\OO}$ will agree.  The reason for our construction of $\calU$ from $U\subseteq \PP^n_k$ is simply that our bounds can be expressed in terms of $U\subseteq \PP^n_k$ and the set $S$.
\end{remark}	

\section{The larger sieve} \label{S:larger sieve}

In this section, we give an extension of Gallagher's \emph{larger sieve} \cite{MR0291120} (it is Theorem~\ref{T:larger sieve 2} below in the case $k=\QQ$ and $n=1$).  Our versions can be used to sieve rational or integral points in $\PP^n_k$ or $\AA^n_k$, respectively.    The larger sieve tends to be very effective when we consider sets that have strict constraints on the size of their images modulo several primes $\p$.   An identical version of the sieve in the case $\PP^1_k$ can be found in \cite{EEHK}.    We will only use the integral point version in this paper, the rational point version is included for future reference.

\subsection{The larger sieve for rational points}

\begin{thm}[Larger sieve for $\PP^n(k)$]  \label{T:larger sieve}
Let $k$ be number field.  Let $\calA$ be a finite subset of $\PP^n(k)$ and $B>0$ a real number such that $H(P) \leq B$ for all $P \in \calA$.

Let $J$ be a finite set of maximal ideals of $\OO_k$.  For every $\p\in J $, let $g_\p\geq 1$ be a real number such that the reduction of $\calA$ in $\PP^n(\FF_\p)$ has cardinality at most $g_\p$.  Then
\[
 |\calA|\leq \frac{\displaystyle\sum_{\p \in J } \log N(\p) - [k:\QQ]\log(2 B^2)}{\displaystyle\sum_{\p \in J } \dfrac{\log N(\p)}{g_\p} - [k:\QQ]\log(2 B^2)}
\]
provided the denominator is positive.
\end{thm}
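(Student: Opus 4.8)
The larger sieve for $\PP^n(k)$ is a counting argument exactly parallel to Gallagher's original larger sieve, and the route I would take is the standard one: double-count pairs $(P,\p)$ with $P\in\calA$ and $\p\in J$, sorting by the residue class of $P$ modulo $\p$. The key device is that for a point $P\in\PP^n(k)$ with $H(P)\le B$, the set of primes $\p$ modulo which two distinct points $P,Q$ of $\calA$ become congruent is controlled by a height/product-formula estimate: if $P\neq Q$ in $\PP^n(k)$ then $\sum_{\p}\log N(\p)$, where $\p$ ranges over primes of $J$ at which $P\equiv Q$, is bounded by something like $[k:\QQ]\log(2B^2)$. This is the arithmetic input, and it comes from choosing homogeneous integral coordinate vectors for $P$ and $Q$, forming a suitable $2\times2$ minor (or the collection of all such minors) of the coordinate matrix, and noting that a prime $\p$ with $P\equiv Q\pmod\p$ must divide all these minors, whose heights are bounded by $2B^2$ (up to the standard normalization constants for number fields, which is where the $[k:\QQ]$ and the factor $2$ enter).

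**The main steps, in order.** First I set up the incidence count: let $N=\sum_{\p\in J}\log N(\p)$ and for each $\p$ partition $\calA$ into residue classes mod $\p$; write $\calA_{\p,v}$ for the fiber over $v\in\PP^n(\FF_\p)$, so that $\sum_v|\calA_{\p,v}| = |\calA|$ and, by hypothesis, at most $g_\p$ of these fibers are nonempty. Second, I compute the weighted sum $\sum_{\p\in J}\frac{\log N(\p)}{?}$ in two ways. The clean way (this is the trick in Gallagher's sieve) is to consider
\[
\Sigma := \sum_{\p\in J}\frac{\log N(\p)}{|\calA|}\cdot\Big(\sum_{v\in\PP^n(\FF_\p)} |\calA_{\p,v}|^2\Big)
\]
or, more transparently, to bound from below $\sum_{\p\in J}\log N(\p)\sum_v\binom{|\calA_{\p,v}|}{2}$-type quantities. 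Third — and this is where the height estimate is used — I bound the total number of "collisions" $\sum_{\p\in J}\sum_{v}\binom{|\calA_{\p,v}|}{2}\log N(\p)$ from above: each unordered pair $\{P,Q\}\subseteq\calA$ with $P\neq Q$ contributes $\sum_{\p: P\equiv Q \bmod \p}\log N(\p)\le [k:\QQ]\log(2B^2)$, so the total is at most $\binom{|\calA|}{2}[k:\QQ]\log(2B^2)$. Fourth, Cauchy–Schwarz applied to $\sum_v|\calA_{\p,v}|=|\calA|$ over the at most $g_\p$ nonempty fibers gives $\sum_v|\calA_{\p,v}|^2\ge |\calA|^2/g_\p$. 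Combining the lower bound (step four, summed against $\log N(\p)$) with the upper bound (step three, rewriting $\sum_v|\calA_{\p,v}|^2 = |\calA| + 2\sum_v\binom{|\calA_{\p,v}|}{2}$) yields
\[
|\calA|\sum_{\p\in J}\frac{\log N(\p)}{g_\p} \;\le\; |\calA|\sum_{\p\in J}\log N(\p) + |\calA|(|\calA|-1)[k:\QQ]\log(2B^2),
\]
and rearranging for $|\calA|$, after dividing by $|\calA|$ and collecting terms, gives exactly the stated inequality, valid whenever the denominator $\sum_{\p\in J}\frac{\log N(\p)}{g_\p}-[k:\QQ]\log(2B^2)$ is positive.

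**Where the difficulty lies.** The combinatorial skeleton (Cauchy–Schwarz plus double counting) is routine; the real content — and the step I would spend the most care on — is the height inequality asserting that $\sum_{\p: P\equiv Q}\log N(\p)\le [k:\QQ]\log(2B^2)$ for distinct $P,Q\in\PP^n(k)$ with heights $\le B$. Over $\QQ$ with $n=1$ this is just the elementary observation that two reduced fractions being congruent mod many primes forces those primes to divide a fixed nonzero integer bounded by $2B^2$. Over a general number field one must work with ideals rather than integers: pick $\OO_k$-module generators of the coordinate ideals, and argue that a prime $\p$ at which $P$ and $Q$ collide divides the ideal generated by the $2\times2$ minors $x_iy_j-x_jy_i$; then bound the norm of that (nonzero, since $P\neq Q$) ideal by $2B^2$ raised to the $[k:\QQ]$ power via the definition of the absolute height and the product formula, with the factor $2$ absorbing the archimedean contributions. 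I would either prove this lemma directly or, since the excerpt notes that "an identical version of the sieve in the case $\PP^1_k$ can be found in \cite{EEHK}," cite the $\PP^1$ argument and indicate that the $\PP^n$ case follows by the same minor computation; the bookkeeping with the normalization of $H$ and the set $S$ of bad primes is the only place where one must be careful, but it is entirely standard Diophantine geometry.
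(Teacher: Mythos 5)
Your proposal is correct and follows essentially the same argument as the paper: fiber $\calA$ over residue classes mod $\p$, apply Cauchy--Schwarz to get $\sum_v |\calA_{\p,v}|^2 \ge |\calA|^2/g_\p$, weight by $\log N(\p)$ and sum, then bound the collision term for each distinct pair $\{P,Q\}$ via the $2\times 2$-minor height estimate $\sum_{\p: P\equiv Q}\log N(\p) \le [k:\QQ]\log(2H(P)H(Q)) \le [k:\QQ]\log(2B^2)$. The paper's Lemma~\ref{L:Weil height} is exactly the height inequality you identify as the real content, proved via the $\ord_\p$ of the minors $a_ib_j-a_jb_i$ together with the product formula, with the factor $2$ absorbing the archimedean triangle inequality, precisely as you indicate.
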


\begin{remark}
One can use Theorem~\ref{T:larger sieve} to sieve points on arbitrary quasi-projective varieties $V$ over $k$.  First choose an embedding $V\hookrightarrow \PP^n_k$ (so $V$ is open in a Zariski closed subvariety of $\PP^n_k$) and then give $V$ the corresponding height.   Note that the bound in Theorem~\ref{T:larger sieve} makes no direct reference to the dimension $n$.
\end{remark}

The main arithmetic input of the sieve is the following easy lemma.  It says that if two distinct points $P$ and $Q$ in $\PP^n(k)$ have the same reduction modulo several primes, then one of them must have large height.  We will write $P\equiv Q \bmod{\p}$ if the reduction of $P$ and $Q$ in $\PP^n(\FF_\p)$ agree.

\begin{lemma} \label{L:Weil height}
Let $P$ and $Q$ be distinct elements of $\PP^n(k)$. Then
\[
\sum_{\substack{\p\in\Sigma_k\\P \equiv Q \bmod{\p}}} \log N(\p) \leq [k:\QQ] \log\big( 2 H(P)H(Q) \big).
\]
\end{lemma}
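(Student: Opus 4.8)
The plan is to prove Lemma~\ref{L:Weil height} by working with explicit coordinates and bounding the contribution of each prime $\p$ at which $P\equiv Q\bmod\p$ via the local height machinery. First I would fix primitive integral coordinate vectors: write $P=[x_0:\cdots:x_n]$ and $Q=[y_0:\cdots:y_n]$ with all $x_i,y_i\in\OO_k$, and (ideally) with the ideals $(x_0,\ldots,x_n)$ and $(y_0,\ldots,y_n)$ as small as possible. Over a general number field one cannot always clear denominators to a unit ideal, so I would either pass to the ideal-theoretic definition of the height or absorb the bounded discrepancy into the constant; since the lemma only claims an inequality with the factor $2$, a clean approach is to use the formula $H(P)^{[k:\QQ]}=\prod_v \max_i|x_i|_v$ over all places $v$ of $k$ (suitably normalized so the product formula holds), and similarly for $Q$.

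Next, the core computation. Since $P\neq Q$ in $\PP^n$, there exist indices $i,j$ with $x_i y_j - x_j y_i \neq 0$; call this nonzero element $\Delta\in\OO_k\setminus\{0\}$. The key observation is that if $P\equiv Q\bmod\p$, then the reductions $\bar P,\bar Q\in\PP^n(\FF_\p)$ coincide, which forces $x_i y_j \equiv x_j y_i \bmod\p$ after suitable normalization — more precisely, all $2\times2$ minors $x_a y_b - x_b y_a$ lie in $\p$ (this is exactly the Segre/Plücker characterization of equality of points in projective space: two points agree iff the $2\times 2$ minors of the $2\times(n+1)$ coordinate matrix all vanish, and the reductions agree iff all those minors reduce to zero). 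In particular $\Delta\in\p$, so $\p\mid(\Delta)$. Therefore
\[
\sum_{\substack{\p\in\Sigma_k\\ P\equiv Q\bmod\p}} \log N(\p) \;\leq\; \sum_{\p\mid(\Delta)} v_\p(\Delta)\log N(\p) \;=\; \log N\big((\Delta)\big) \;=\; \log|N_{k/\QQ}(\Delta)|.
\]
This reduces everything to an estimate of the form $|N_{k/\QQ}(\Delta)| \leq \big(2H(P)H(Q)\big)^{[k:\QQ]}$.

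The remaining step is this archimedean estimate, which is where the factor $2$ comes from. For each embedding $\sigma\colon k\hookrightarrow\CC$ we have $|\sigma(\Delta)| = |\sigma(x_i)\sigma(y_j)-\sigma(x_j)\sigma(y_i)| \leq 2\max_a|\sigma(x_a)|\cdot\max_b|\sigma(y_b)|$ (or the sharper $\leq |\sigma(x_i)\sigma(y_j)|+|\sigma(x_j)\sigma(y_i)|$, bounded by twice the product of the sup-norms; for complex places one is slightly more careful but the bound survives). Multiplying over all archimedean places and combining with the product formula — which says $\prod_{v\nmid\infty}\max_i|x_i|_v \cdot \prod_{v\mid\infty}\max_i|x_i|_v$ relates to $H(P)^{[k:\QQ]}$, and the non-archimedean minor-contributions are exactly what we bounded above — yields $|N_{k/\QQ}(\Delta)|\leq 2^{[k:\QQ]} H(P)^{[k:\QQ]}H(Q)^{[k:\QQ]}$, hence the claim. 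I expect the main obstacle to be bookkeeping around normalizations: making the product formula, the definition of $H$, and the passage from "$\p$ divides all minors" to "$\p$ divides $(\Delta)$" all consistent, especially over a number field where coordinate vectors need not be primitive. The cleanest route is to phrase the height as $H(P)^{[k:\QQ]} = \prod_v \|x\|_v$ with $\|x\|_v=\max_i|x_i|_v$ and the places normalized so $\prod_v|a|_v=1$ for $a\in k^\times$; then the inequality follows by writing $1 = \prod_v |\Delta|_v^{-1}\cdot\prod_v|\Delta|_v$, bounding $|\Delta|_v\leq \|x\|_v\|y\|_v$ for $v\nmid\infty$ (in fact $\leq$, using the ultrametric inequality) and $|\Delta|_v\leq 2^{[k_v:\RR]/[k:\QQ]\cdots}\|x\|_v\|y\|_v$ at the finitely many archimedean $v$, and noting that the sum over $\p$ with $P\equiv Q\bmod\p$ of $-\log|\Delta|_\p$ (which is $v_\p(\Delta)\log N(\p)$ times a normalization constant) is at most $\sum_v \log^+(\|x\|_v\|y\|_v/|\Delta|_v)$ over non-archimedean $v$, which telescopes against the archimedean bound via the product formula. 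This is routine but must be done carefully.
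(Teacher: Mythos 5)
Your first "core computation" has a genuine gap. You claim
\[
\sum_{\substack{\p\in\Sigma_k\\ P\equiv Q\bmod\p}} \log N(\p) \;\leq\; \log\big|N_{k/\QQ}(\Delta)\big|
\quad\text{and then}\quad
\big|N_{k/\QQ}(\Delta)\big| \leq \big(2H(P)H(Q)\big)^{[k:\QQ]}.
\]
The second inequality is false in general. With integral coordinates $x=(x_i)$, $y=(y_j)$ one has
\[
H(P)^{[k:\QQ]} \;=\; \frac{\prod_{\sigma}\max_i|\sigma(x_i)|}{N(\aA_P)},
\qquad \aA_P := (x_0,\ldots,x_n)\subseteq\OO_k,
\]
and similarly for $Q$, so the archimedean bound $|\sigma(\Delta)|\leq 2\max_i|\sigma(x_i)|\max_j|\sigma(y_j)|$ only gives
$\big|N_{k/\QQ}(\Delta)\big| \leq 2^{[k:\QQ]} H(P)^{[k:\QQ]}H(Q)^{[k:\QQ]}\,N(\aA_P)N(\aA_Q)$. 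Over a number field of class number $>1$ the content ideals $\aA_P,\aA_Q$ cannot in general be made trivial by rescaling, and $N(\aA_P)N(\aA_Q)$ is unbounded, so the shortcut ``sum $\leq \log N((\Delta))$'' throws away exactly the terms needed for the clean bound.

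What rescues the argument — and what the paper's proof does — is to subtract the content at each prime. The key local inequality is
\[
1 \;\leq\; \ord_\p(\Delta) - \min_i\ord_\p(x_i) - \min_j\ord_\p(y_j)
\qquad\text{whenever } P\equiv Q\bmod\p,
\]
and the right-hand side is $\geq 0$ at every $\p$ by the ultrametric inequality, so one may sum over all $\p$:
\[
\sum_{\substack{\p\\ P\equiv Q\bmod\p}}\log N(\p) \;\leq\; \log\big|N_{k/\QQ}(\Delta)\big| \;-\; \log N(\aA_P) \;-\; \log N(\aA_Q).
\]
Combined with $\log\big|N_{k/\QQ}(\Delta)\big| \leq [k:\QQ]\log 2 + \sum_\sigma\log\max_i|\sigma(x_i)| + \sum_\sigma\log\max_j|\sigma(y_j)|$ and the formula for $H(P)^{[k:\QQ]}$, $H(Q)^{[k:\QQ]}$ above, the content-ideal terms cancel and the lemma drops out. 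Your ``cleanest route'' paragraph gestures at exactly this cancellation via the product formula, so you had the right instinct, but the sketch never actually isolates the local correction $-\min_i\ord_\p(x_i)-\min_j\ord_\p(y_j)$, which is the crux. One further cosmetic point: the paper works with $\min_{i\neq j}\ord_\p(x_iy_j-x_jy_i)$, i.e.\ with the full ideal generated by the $2\times 2$ minors rather than a single nonzero $\Delta$; a single minor suffices for the stated inequality (every minor has valuation at least $1$ plus the content at a prime where $P\equiv Q$), but the paper's phrasing is what lets it cleanly say $\log H([\,x_iy_j-x_jy_i\,])\geq 0$ in the final estimate.
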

\begin{proof}
Choose coordinates $a_i,b_j \in k$ such that $P=[a_0,\ldots,a_n]$ and $Q=[b_0,\ldots, b_n]$.
Now fix a prime ideal $\p\in\Sigma_k$ such that $P\equiv Q \bmod{\p}$. We claim that:
\begin{equation} \label{E:Weil height}
1\leq \min_{i\neq j} \ord_\p(a_ib_j-a_j b_i) - \min_{i} \ord_\p(a_i) - \min_{i} \ord_\p(b_i).
\end{equation}
Note that the right hand side of (\ref{E:Weil height}) does not depend on the initial choice of coordinates.  So without loss of generality, we may assume that $\min_{i} \ord_\p(a_i)=\min_{i} \ord_\p(b_i)=0$. Under this assumption, $P\equiv Q \bmod{\p}$ is equivalent to $\min_{i\neq j}\ord_\p(a_ib_j-a_j b_i) \geq 1$, and the claim follows.

By (\ref{E:Weil height}), we have
\begin{align} \label{E:local height bound}
\sum_{\substack{\p\in\Sigma_k\\P\equiv Q \bmod{\p}}} \log N(\p) \leq
\sum_{\p\in\Sigma_k} \min_{i\neq j} \ord_\p(a_ib_j-a_j b_i) \log N(\p) &- \sum_{\p\in\Sigma_k} \min_{i} \ord_\p(a_i) \log N(\p) \\
\notag &- \sum_{\p\in\Sigma_k} \min_{i} \ord_\p(b_i)\log N(\p).
\end{align}

Let $\Sigma_k^\infty$ be the set of archimedean places of $k$.  For each $v\in \Sigma_k^\infty$, let $|\!\cdot\!|_v$ be the extension of the usual absolute value on $\RR$ to the completion $k_v$.   Rewriting (\ref{E:local height bound}) in terms of heights gives
\begin{align*}
\frac{1}{[k:\QQ]} \sum_{\substack{\p\in\Sigma_k\\P\equiv Q \bmod{\p}}} \log N(\p) \leq &
\log H(P) + \log H(Q) - \log H([a_ib_j-a_jb_i]) \\
& + \sum_{v\in \Sigma_k^\infty} \frac{[k_v:\RR]}{[k:\QQ]} \log\Big(\frac{\max_{i\neq j} |a_i b_j -a_j b_i|_v}{\max_i |a_i|_v \cdot \max_i |b_i|_v} \Big).
\end{align*}
Using $H \geq 1$ and the triangle inequality, we have
\begin{align*}
\frac{1}{[k:\QQ]} \sum_{\p\in\Sigma_k, \, P\equiv Q \bmod{\p}} \log N(\p) &\leq
\log H(P) + \log H(Q) + {\sum}_{v\in \Sigma_k^\infty} \frac{[k_v:\RR]}{[k:\QQ]} \log 2\\
&= \log H(P) + \log H(Q) +  \log 2. \qedhere
\end{align*}
\end{proof}

\begin{proof}[Proof of Theorem~\ref{T:larger sieve}]
Fix a prime ideal $\p \in J $.   For each $c\in \PP^n(\FF_\p)$, let $Z(c,\p)$ be the number of elements in $\calA$ whose reduction in $\PP^n(\FF_\p)$ is equal to $c$.  By the Cauchy-Schwartz inequality and our assumption on the cardinality of $\calA$ modulo $\p$, we have the following inequality:
\begin{align*}
 \frac{|\calA|^2}{g_\p} = \frac{1}{g_\p} \Big(\sum_{c\in \PP^n(\FF_\p)} Z(c,\p) \Big)^2 & \leq \frac{1}{g_\p} \Big(g_\p \sum_{c\in \PP^n(\FF_\p)} Z(c,\p)^2 \Big)\\
&=\sum_{\substack{P,Q\in \calA \\ P\equiv Q \bmod{\p}}} 1 = |\calA| + \sum_{\substack{P,Q\in \calA, \, P\neq Q \\ P\equiv Q \bmod{\p}}} 1.
\end{align*}
Multiplying by $\log N(\p)$ and summing over all $\p \in J $ gives the following:
\begin{align*}
|\calA|^2 \sum_{\p\in J } \frac{\log N(\p)}{g_\p} & \leq  \sum_{\p\in J } \log N(\p) \Big(|\calA| + \sum_{\substack{P,Q\in \calA, \, P\neq Q \\ P\equiv Q \bmod{\p}}} 1 \Big)\\
&= |\calA| \sum_{\p\in J } \log N(\p) + \sum_{P,Q\in \calA, \, P\neq Q} \Big(\sum_{\substack{\p \in J  \\ P\equiv Q \bmod{\p}}}\log N(\p) \Big).
\end{align*}
By Lemma~\ref{L:Weil height}, we have
\begin{align*}
|\calA|^2 \sum_{\p\in J } \frac{\log N(\p)}{g_\p}  &\leq   |\calA| \sum_{\p\in J } \log N(\p) + \sum_{P,Q\in \calA, \, P\neq Q} [k:\QQ] \log(2H(P)H(Q)) 
\intertext{and by our choice of $B$,}
|\calA|^2 \sum_{\p\in J } \frac{\log N(\p)}{g_\p} & \leq  |\calA| \sum_{\p\in J } \log N(\p) + (|\calA|^2-|\calA|) [k:\QQ]\log(2B^2).
\end{align*}
After cancelling both sides by $|\calA|$ (the theorem is trivial if $|\calA|=0$), the theorem is immediate.
\end{proof}

\subsection{The larger sieve for integral points}

\begin{thm}[Larger sieve for $\OO_k^n$]  \label{T:larger sieve 2}
Let $k$ be number field.  Let $\calA$ be a finite subset of $\OO_k^n$ and $B>0$ a real number such that $\norm{P-Q} \leq B$ for all $P,Q \in \calA$.

Let $J$ be a finite set of maximal ideals of $\OO_k$.  For every $\p\in J $, let $g_\p\geq 1$ be a real number such that the reduction of $\calA$ in $\FF_\p^n$ has cardinality at most $g_\p$.  Then
\[
 |\calA|\leq \frac{\displaystyle\sum_{\p \in J } \log N(\p) - [k:\QQ]\log B}{\displaystyle\sum_{\p \in J } \dfrac{\log N(\p)}{g_\p} - [k:\QQ]\log B}
\]
provided the denominator is positive.
\end{thm}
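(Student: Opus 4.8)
The plan is to follow the proof of Theorem~\ref{T:larger sieve} essentially verbatim, with the role of Lemma~\ref{L:Weil height} played by the following integral analogue, which I would prove first: \emph{if $P$ and $Q$ are distinct elements of $\OO_k^n$, then}
\[
\sum_{\substack{\p\in\Sigma_k\\ P\equiv Q \bmod{\p}}} \log N(\p) \leq [k:\QQ]\log\norm{P-Q},
\]
\emph{where $P\equiv Q \bmod{\p}$ means that the reductions of $P$ and $Q$ in $\FF_\p^n$ coincide.} To see this, write $P=(P_1,\dots,P_n)$ and $Q=(Q_1,\dots,Q_n)$, and let $\aA\subseteq\OO_k$ be the ideal generated by $P_1-Q_1,\dots,P_n-Q_n$; it is nonzero since $P\neq Q$. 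The condition $P\equiv Q\bmod\p$ is equivalent to $\p\mid\aA$, so the left-hand side is at most $\sum_{\p\mid\aA}\log N(\p)\leq\log N(\aA)$. Choosing an index $j$ with $P_j\neq Q_j$, the ideal $\aA$ divides the principal ideal $(P_j-Q_j)$, hence $N(\aA)$ divides $|N_{k/\QQ}(P_j-Q_j)|=\prod_{\sigma}|\sigma(P_j-Q_j)|$, the product running over the embeddings $\sigma\colon k\hookrightarrow\CC$. Each factor is at most $\norm{P-Q}$, so $N(\aA)\leq\norm{P-Q}^{[k:\QQ]}$ and the claim follows (this also forces $\norm{P-Q}\geq 1$, so the right-hand side is non-negative).

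With this lemma available, the sieve argument is identical to the rational-point case. For $\p\in J$ and $c\in\FF_\p^n$, let $Z(c,\p)$ be the number of $P\in\calA$ whose reduction in $\FF_\p^n$ equals $c$. Since $\calA$ has at most $g_\p$ distinct reductions modulo $\p$, Cauchy--Schwarz gives
\[
\frac{|\calA|^2}{g_\p}=\frac{1}{g_\p}\Big(\sum_{c\in\FF_\p^n}Z(c,\p)\Big)^2\leq\sum_{c\in\FF_\p^n}Z(c,\p)^2=|\calA|+\sum_{\substack{P,Q\in\calA,\ P\neq Q\\ P\equiv Q\bmod\p}}1 .
\]
Multiplying by $\log N(\p)$, summing over $\p\in J$, and applying the lemma together with the hypothesis $\norm{P-Q}\leq B$ for all $P,Q\in\calA$ to the resulting sum over pairs $(P,Q)$, I get
\[
|\calA|^2\sum_{\p\in J}\frac{\log N(\p)}{g_\p}\leq|\calA|\sum_{\p\in J}\log N(\p)+(|\calA|^2-|\calA|)\,[k:\QQ]\log B .
\]
Cancelling a factor of $|\calA|$ (the theorem is trivial when $|\calA|=0$) and collecting the terms involving $|\calA|$ on the left yields
\[
|\calA|\Big(\sum_{\p\in J}\frac{\log N(\p)}{g_\p}-[k:\QQ]\log B\Big)\leq\sum_{\p\in J}\log N(\p)-[k:\QQ]\log B ,
\]
and dividing by the (assumed positive) coefficient of $|\calA|$ gives the stated bound.

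I do not expect a serious obstacle here: this is a routine transcription of the proof of Theorem~\ref{T:larger sieve}. The only step needing care is the passage from ``common reduction modulo many primes'' to a bound in terms of $\norm{P-Q}$, where the elementary ideal-norm estimate above replaces the product formula and the triangle inequality at the archimedean places used in Lemma~\ref{L:Weil height}; the sole bookkeeping point is to fix a coordinate $j$ with $P_j\neq Q_j$ so that $(P_j-Q_j)$ is a nonzero ideal, and to dispose of the trivial case $|\calA|=0$ before cancelling.
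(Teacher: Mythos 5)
Your proposal is correct, and the sieve portion (the Cauchy--Schwarz manipulation, multiplying by $\log N(\p)$, summing over $J$, cancelling $|\calA|$) is verbatim the paper's argument. Where you diverge from the paper is only in the proof of the key arithmetic lemma, which the paper states and proves as Lemma~\ref{L:Weil height 2}. The paper's proof converts the condition $P\equiv Q\bmod\p$ into valuation inequalities $\min_i\ord_\p(a_i-b_i)\geq 1$, then invokes the product formula and the projective height of $[P-Q]\in\PP^{n-1}(k)$, finishing with $H([P-Q])\geq 1$. You instead package the same information into the ideal $\aA=(P_1-Q_1,\ldots,P_n-Q_n)$, observe $\p\mid\aA$ iff $P\equiv Q\bmod\p$, bound $\sum_{\p\mid\aA}\log N(\p)\leq\log N(\aA)$, and then bound $N(\aA)$ via a single coordinate difference: $\aA\mid(P_j-Q_j)$ gives $N(\aA)\leq|N_{k/\QQ}(P_j-Q_j)|\leq\norm{P-Q}^{[k:\QQ]}$. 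The two routes yield the identical inequality; yours bypasses the height machinery entirely, which is arguably cleaner in the integral-point setting and makes the argument self-contained, while the paper's version has the virtue of running in parallel with the rational-point case (Lemma~\ref{L:Weil height}), where the ideal-theoretic shortcut is not available and the product formula plus triangle inequality are genuinely needed. Both are correct, and the routine bookkeeping you flag ($|\calA|=0$ case, existence of $j$ with $P_j\neq Q_j$) is handled appropriately.
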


\begin{lemma} \label{L:Weil height 2}
Let $P$ and $Q$ be distinct elements of $\OO_k^n$. Then
\[
\sum_{\substack{\p\in\Sigma_k\\P \equiv Q \bmod{\p}}} \log N(\p) \leq [k:\QQ] \log \norm{P-Q}.
\]
\end{lemma}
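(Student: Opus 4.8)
The plan is to follow the template of Lemma~\ref{L:Weil height}, but the passage from projective to integral points trivializes most of the estimate. Write $P-Q=(d_1,\dots,d_n)\in\OO_k^n$. Since $P\neq Q$, at least one coordinate is nonzero; fix an index $j$ with $d_j\neq 0$.

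The first and essentially only step is the observation that $P\equiv Q\bmod{\p}$ forces every $d_i$ to lie in $\p$, and in particular $\p\mid(d_j)$, so that $\ord_\p(d_j)\geq 1$ for every prime $\p$ occurring in the sum. Hence
\[
\sum_{\substack{\p\in\Sigma_k\\P\equiv Q\bmod{\p}}}\log N(\p)\;\leq\;\sum_{\p\in\Sigma_k}\ord_\p(d_j)\,\log N(\p)\;=\;\log N\big((d_j)\big),
\]
the last equality being the definition of the absolute norm of the nonzero ideal $(d_j)$ in terms of its factorization into primes together with the multiplicativity of $N$.

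It then remains to bound $N((d_j))$. Using the identity $N((d_j))=\big|N_{k/\QQ}(d_j)\big|=\prod_{\sigma}\big|\sigma(d_j)\big|$, where $\sigma$ runs over the $[k:\QQ]$ embeddings $k\hookrightarrow\CC$, and the fact that $\big|\sigma(d_j)\big|\leq\max_{\sigma,i}\big|\sigma(d_i)\big|=\norm{P-Q}$ for every $\sigma$, we obtain $N((d_j))\leq\norm{P-Q}^{[k:\QQ]}$. Taking logarithms yields the claimed inequality.

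There is essentially no obstacle here; the only points requiring a moment's care are choosing a nonzero coordinate of $P-Q$ before invoking the field norm, and recalling the compatibility of the ideal norm with factorization into primes. With Lemma~\ref{L:Weil height 2} in hand, Theorem~\ref{T:larger sieve 2} follows from the identical Cauchy--Schwarz argument used to deduce Theorem~\ref{T:larger sieve} from Lemma~\ref{L:Weil height}: for each $\p\in J$ apply Cauchy--Schwarz to the fibers of the reduction map $\calA\to\FF_\p^n$, multiply by $\log N(\p)$, sum over $\p\in J$, and estimate the resulting double sum over pairs $P\neq Q$ by Lemma~\ref{L:Weil height 2}; the term $\log(2B^2)$ of the projective case is replaced by $\log B$ because the lemma already bounds the contribution of each pair directly in terms of $\norm{P-Q}\leq B$.
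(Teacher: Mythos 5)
Your proof is correct, and it takes a genuinely different and more elementary route than the paper's. The paper mirrors the structure of Lemma~\ref{L:Weil height}: it works with $\min_i \ord_\p(a_i-b_i)$, rewrites the resulting sum via the product formula in terms of $\log H([P-Q])$ (where $[P-Q]$ is the image of $P-Q$ in $\PP^{n-1}(k)$) and the archimedean contributions, and then discards the term $-\log H([P-Q]) \leq 0$. You instead fix a single nonzero coordinate $d_j$ of $P-Q$, observe that $P\equiv Q\bmod\p$ forces $\ord_\p(d_j)\geq 1$, and bound
\[
\sum_{\p}\ord_\p(d_j)\log N(\p)=\log N\big((d_j)\big)=\log\big|N_{k/\QQ}(d_j)\big|=\sum_\sigma\log|\sigma(d_j)|\leq [k:\QQ]\log\norm{P-Q}
\]
directly from the factorization of the field norm into embeddings. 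Your argument bypasses the projective height on $\PP^{n-1}(k)$ entirely, needing only the compatibility of the ideal norm with the field norm; this makes the inequality transparent and is arguably cleaner in the integral setting. The paper's route, by contrast, emphasizes the parallel with the rational-point Lemma~\ref{L:Weil height}, which clarifies why the projective sieve (Theorem~\ref{T:larger sieve}) incurs $\log(2B^2)$ where the integral one (Theorem~\ref{T:larger sieve 2}) gets $\log B$. Your closing remarks on deducing Theorem~\ref{T:larger sieve 2} are also accurate: the paper's proof of that theorem simply says it is identical to that of Theorem~\ref{T:larger sieve} with Lemma~\ref{L:Weil height} replaced by Lemma~\ref{L:Weil height 2}.
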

\begin{proof}
If $\p\in\Sigma_k$ is a prime ideal such that $P\equiv Q \bmod{\p}$, then 
\begin{equation*} 
 \min_{i} \ord_\p(a_i-b_i)\geq 1
\end{equation*}
where $P=(a_1,\dots,a_n)$ and $Q=(b_1,\dots,b_n)$.  Therefore, we have
\begin{align*}
\frac{1}{[k:\QQ]} \sum_{\substack{\p\in\Sigma_k\\P\equiv Q \bmod{\p}}} \log N(\p) &\leq
\frac{1}{[k:\QQ]} \sum_{\p\in\Sigma_k} \min_{i} \ord_\p(a_i- b_i) \log N(\p)  \\
 & = \sum_{v\in \Sigma_k^\infty} \frac{[k_v:\RR]}{[k:\QQ]} \log\big( \max_i |a_i-b_i|_v \big) - \log H([P-Q])\\
& \leq \sum_{v\in\Sigma_k^\infty} \frac{[k_v:\RR]}{[k:\QQ]} \log \norm{P-Q} - \log H([P-Q])\\
 & = \log \norm{P-Q} - \log H([P-Q])
\end{align*}
where $[P-Q]$ is the image of $P-Q$ in $\PP^{n-1}(k)$.  We obtain the desired inequality by noting that $H([P-Q])\geq 1$.
\end{proof}

\begin{proof}[Proof of Theorem~\ref{T:larger sieve 2}]
The proof is identical to that of Theorem~\ref{T:larger sieve}, the main difference being that we use Lemma~\ref{L:Weil height 2} in place of Lemma~\ref{L:Weil height}.
\end{proof}

\subsection{Interlude: orbits modulo $\mathfrak{p}$} \label{S:dynamics}

In this section (which is independent of the rest of the paper), we consider a problem of arithmetic dynamics studied by Silverman \cite{MR2448661}, and then by Akbary and Ghioca \cite{MR2549537}.  This quick application of our larger sieve gives a good illustration of how Theorem~\ref{T:larger sieve 2} can be used to sieve points on general quasi-projective varieties.    It is also significantly easier that our main application (Theorem~\ref{T:Main}) which requires a more elaborate proof.

Let $V$ be a quasi-projective variety defined over a number field $k$.  Fix a morphism $\varphi \colon V \to V$ and a point $P\in V(k)$.    Suppose that the forward $\varphi$-orbit 
\[
\OO_\varphi(P):=\{P, \varphi(P),\varphi^2(P), \varphi^3(P),\dots\}
\]
is infinite.  
Choose a model of $V$ and $\varphi$ over the ring of integers of $k$.  Then for all but finitely many non-zero prime ideals $\p$ of $\OO_K$,  we can (by abuse of notation) consider the reduction 
\[
\varphi_\p \colon V(\FF_\p) \to V(\FF_\p)
\]
 and the reduction $P_\p \in V(\FF_\p)$ of the point $P$.   
We define $m_\p(\varphi,P)$ to be the cardinality of the forward $\varphi_\p$-orbit 
\[
\OO_{\varphi_\p}(P_\p) := \{P_\p, \varphi_\p(P_\p),\varphi_\p^2(P_\p), \dots\}.
\]
For the finite number of excluded primes, we simply define $m_\p(\varphi,P)=+\infty$.  The choice of model for $V$ and $\varphi$ is not important for our applications since a different choice would change only finitely many of the values $m_\p(\varphi,P)$.

Since $V$ is quasi-projective, we may choose an embedding $V \subseteq \PP^n_k$ defined over $k$ (so $V$ is open in a closed subvariety of $\PP^n_k$).   Using this embedding, we equip $V$ with the height $H$ of $\PP^n_k$; it will be convenient to work with the \emph{logarithmic height} on $\PP^n_k$, i.e., $h = \log \circ H$.  By \cite{MR2448661}*{Proposition~4}, there are numbers $d>1$ and $c\geq 0$ such that $h(\varphi^i(P)) \leq d^i (h(P)+c)$ holds for all integers $i \geq 0$.

\begin{thm} \label{T:interlude}
For any $\varepsilon<1/\log d$, the set
\[
\big\{ \p \in \Sigma_K : m_\p(\varphi,P) \geq \varepsilon \log N(\p) \big\}
\]
has natural density $1$. 
\end{thm}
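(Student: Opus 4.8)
The plan is to apply the larger sieve (Theorem~\ref{T:larger sieve 2}) to show that the complementary set
\[
\calS_\varepsilon := \big\{ \p \in \Sigma_k : m_\p(\varphi,P) < \varepsilon \log N(\p) \big\}
\]
has natural density $0$. The idea is that if $\p$ lies in $\calS_\varepsilon$, then the reductions of the first $N$ iterates $P, \varphi(P), \dots, \varphi^{N-1}(P)$ already "wrap around" the finite orbit $\OO_{\varphi_\p}(P_\p)$, so their image modulo $\p$ has fewer than $\varepsilon \log N(\p)$ elements. Thus the finite set $\calA_N := \{P, \varphi(P), \dots, \varphi^{N-1}(P)\} \subseteq V(k) \subseteq \PP^n(k)$ (which has exactly $N$ elements for $N$ large, since $\OO_\varphi(P)$ is infinite) has small reduction modulo every $\p \in \calS_\varepsilon$ with $N(\p)$ not too small. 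One then feeds this into the sieve with $J$ a truncation of $\calS_\varepsilon$ and derives a contradiction unless $\calS_\varepsilon$ is sparse.

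In more detail: first I would fix $\varepsilon < 1/\log d$ and pick $\eta > 0$ with $\varepsilon(\log d + \eta) < 1$; this slack will be needed at the end. Since $h(\varphi^i(P)) \leq d^i(h(P)+c)$, every point of $\calA_N$ has logarithmic height at most $d^{N}(h(P)+c)$, so $\norm{\cdot}$-type bounds give $\log B \leq d^{N}(h(P)+c) + O(1)$ where $B$ bounds the mutual distances (one works in $\PP^n$, so one uses $H(P_i) \le e^{d^N(h(P)+c)}$ and Lemma~\ref{L:Weil height} contributes $[k:\QQ]\log(2B^2) \ll d^N$). Next, for a large parameter $X$, let $J = J(N,X)$ be the set of primes $\p$ with $N(\p) \le X$ that lie in $\calS_\varepsilon$ and for which the reduction maps are defined; excluding the finitely many bad primes costs nothing asymptotically. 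For $\p \in J$ with, say, $N(\p) \ge N(\p)_0$ (a fixed bound depending only on $\varepsilon$, chosen so that $\varepsilon \log N(\p) \le N$ forces the relevant wraparound), the reduction of $\calA_N$ in $\PP^n(\FF_\p)$ has cardinality at most $g_\p := \varepsilon \log N(\p)$. Apply Theorem~\ref{T:larger sieve 2} (in its $\PP^n$ form, Theorem~\ref{T:larger sieve}) with $\calA = \calA_N$, this $J$, and these $g_\p$: since $|\calA_N| = N$, we get, provided the denominator is positive,
\[
N \;\le\; \frac{\sum_{\p \in J} \log N(\p) - [k:\QQ]\log(2B^2)}{\sum_{\p \in J} \frac{\log N(\p)}{\varepsilon \log N(\p)} - [k:\QQ]\log(2B^2)} \;=\; \frac{\sum_{\p \in J} \log N(\p) - [k:\QQ]\log(2B^2)}{\tfrac{|J|}{\varepsilon} - [k:\QQ]\log(2B^2)}.
\]

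The endgame is an optimization in $N$ and $X$. Suppose toward a contradiction that $\calS_\varepsilon$ has positive upper density, so there is $\alpha > 0$ with $|\{\p \in \calS_\varepsilon : N(\p) \le X\}| \ge \alpha\, X/\log X$ for arbitrarily large $X$ (using the prime ideal theorem to compare with $\pi_k(X)$). Then $|J| \gg \alpha X/\log X$ while $\sum_{\p \in J} \log N(\p) \le \sum_{N(\p) \le X} \log N(\p) \sim X$. Choosing $X = X(N)$ of the form $X \asymp e^{\lambda N}$ for a suitable constant $\lambda$ makes $[k:\QQ]\log(2B^2) \ll d^N \ll X^{(\log d)/\lambda}$ — for $\lambda$ large enough relative to $\log d$, this error term is $o(|J|/\varepsilon)$, so the denominator is $(1+o(1))|J|/\varepsilon \gg \alpha X/(\varepsilon \log X)$, and the numerator is $\le X$. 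The inequality then reads $N \ll \varepsilon \log X / \alpha \ll \varepsilon \lambda N/\alpha$. The main obstacle — and the only delicate point — is arranging the constants so that this last inequality is violated: one needs the freedom to take $\lambda$ not too large (to keep $\varepsilon\lambda/\alpha < 1$) while still dominating the height error $d^N$; this is exactly where the hypothesis $\varepsilon < 1/\log d$ enters, and a careful but routine bookkeeping of the two competing exponential rates $d^N$ versus $X = e^{\lambda N}$ with $\lambda$ slightly larger than $\log d$ closes the argument, forcing $\alpha = 0$. Hence $\calS_\varepsilon$ has density zero and $\{\p : m_\p(\varphi,P) \ge \varepsilon\log N(\p)\}$ has density $1$.
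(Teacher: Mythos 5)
Your setup is essentially the same as the paper's: you run the larger sieve on an initial segment of the forward orbit, using that reduction mod~$\p$ has image inside $\OO_{\varphi_\p}(P_\p)$ (of size $<\varepsilon\log N(\p)$ when $\p\in\calS_\varepsilon$) together with the height growth $h(\varphi^i(P))\leq d^i(h(P)+c)$. That part is correct, and the ``wraparound'' comment is even unnecessary: the image of $\calA_N$ mod~$\p$ has size at most $m_\p(\varphi,P)$ automatically, regardless of whether $N$ exceeds the period.

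The endgame, however, has a genuine gap. Your contradiction argument assumes positive upper density $\alpha>0$ and obtains $N\ll \varepsilon\lambda N/\alpha$; for this to be contradictory you need $\varepsilon\lambda/\alpha<1$, and to control $\calB\ll d^N$ you need $\lambda>\log d$, so the argument only yields a contradiction when $\alpha>\varepsilon\log d$. Since $\varepsilon\log d<1$, this does \emph{not} force $\alpha=0$: it merely shows the upper density of $\calS_\varepsilon$ is at most $\varepsilon\log d$, which is strictly weaker than the theorem's claim of density $0$. The ``careful but routine bookkeeping'' you appeal to cannot close this gap, because the constraint $\log d<\lambda<\alpha/\varepsilon$ is simply empty once $\alpha\leq\varepsilon\log d$.

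The fix is to bypass the positive-density hypothesis entirely and solve the sieve inequality for the prime count directly, which is in effect what the paper does. In your notation: from $N\leq\dfrac{\sum_{\p\in J}\log N(\p)-\calB}{|J|/\varepsilon-\calB}$ and the trivial bound $\sum_{\p\in J}\log N(\p)\leq|J|\log X$, rearranging gives $|J|\bigl(N/\varepsilon-\log X\bigr)\leq(N-1)\calB$ whenever the denominator is positive (and $|J|\leq\varepsilon\calB$ otherwise). Choosing $\log X=\gamma N$ with $\log d<\gamma<1/\varepsilon$ (possible precisely because $\varepsilon<1/\log d$) yields $|J|\ll\calB\ll d^N=X^{\log d/\gamma}$ in either case, and since $\log d/\gamma<1$ this is $o(X/\log X)$. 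The paper achieves the same thing by parametrizing by the prime cutoff $x$ rather than the orbit length $N$: it takes $\calA(x)=\{Q\in\OO_\varphi(P):h(Q)\leq x^\alpha\}$ with $\alpha<1$, so the height error is only $\calB\ll x^\alpha$, derives a contradiction under the assumption that $L=\sum_{\p\in\mathscr{S}(x)}\log N(\p)$ is \emph{large} (not that $\mathscr{S}(x)$ has positive density), and concludes $L\ll x^\alpha\log x$ and hence the power-saving bound $|\mathscr{S}(x)|\ll x^\alpha$. You should replace your contradiction-with-$\alpha$ argument with one of these direct bounds.
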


In \cite{MR2549537}, Akbary and Ghioca define the \emph{degree} $\deg(\varphi)$ of the morphism $\varphi$.  If $\deg(\varphi)>1$, then we can choose $d=\deg(\varphi)$ above.  Theorem~\ref{T:interlude} is then the same as Theorem 1.1(i) of \cite{MR2549537}.    If $\deg(\varphi)=1$, then \cite{MR2549537} gives a stronger bound which also follows from the larger sieve.

This theorem is a slight improvement over \cite{MR2448661}*{Theorem~3}, where it is shown that for each $\lambda < 1$, the set $\{\p : m_\p(\varphi,P) \geq (\log N(\p))^\lambda\}$ has analytic density $1$.   The bound $m_\p(\varphi,P) \geq \varepsilon \log N(\p)$ is likely far from optimal.  In fact, one expects to be able to replace $\log N(\p)$ by an appropriate power of $N(\p)$ (see \cite{MR2448661}*{\S6} for details).  

\begin{proof}[Proof of Theorem~\ref{T:interlude}]
Since $\varepsilon<1/\log d$, we can choose constants $0<\alpha<1$ and $C>1$ such that $(1+C^{-1})\varepsilon < \alpha/\log d$.  Define the function $g(x):=\varepsilon \log x$ and the set 
\[
\mathscr{S}(x):= \{ \p : N(\p)\leq x, \; m_\p(\varphi,P) \leq g(x)\}.
\] 
It suffices to show that $|\mathscr{S}(x)|=o(x/\log x)$ as $x\to +\infty$.

Define the set
\[
\calA(x) = \{ Q \in \OO_\varphi(P) : h(Q) \leq x^\alpha \}.
\]
The number of $i\geq 0$ that satisfy $d^i (h(P)+c)\leq x^\alpha$ is $\frac{\alpha}{\log d} \log x + O(1)$, so using this and the assumption  $|\OO_\varphi(P)|=\infty$ we have 
\[
|\calA(x)| \geq \frac{\alpha}{\log d} \log x + O(1). 
\]

We now find an upper bound for $|\calA(x)|$ using the larger sieve.  For each $\p \in \mathscr{S}(x)$, the reduction of $\calA(x)$ modulo $\p$ lies in $\OO_{\varphi_\p}(P_\p)$ which has cardinality at most $g(x)$.  Define $L:=\sum_{\p\in \mathscr{S}(x)} \log N(\p)$ and $\calB:= [k:\QQ]\log\big(2(e^{x^\alpha})^2\big)=[k:\QQ](2x^\alpha +\log 2).$   Assume that  $L - g(x) \geq C g(x)\calB$ holds.      Then by Theorem \ref{T:larger sieve}, we have
\[
|\calA(x)|\leq \frac{L-\calB}{L/g(x)-\calB} = g(x) + \frac{g(x)^2\calB-g(x)\calB}{L-g(x)\calB}  
\]
(from our assumption, we have $L/g(X)-\calB \geq (C-1)\calB+1>0$).   Therefore,
\[
|\calA(x)|\leq  g(x) + \frac{g(x)^2\calB-g(x)\calB}{L-g(x)\calB} \leq  g(x) + \frac{g(x)^2\calB-g(x)\calB}{Cg(x)\calB} = (1+C^{-1})g(x)+O(1).   
\]
and so $|\calA(x)| \leq (1+C^{-1})\varepsilon \log x + O(1)$.

Since $(1+C^{-1})\varepsilon < \alpha/\log d$, our lower and upper bounds for $|\calA(x)|$ are contradictory for all sufficiently large $x$.  Therefore, we must have $L - g(x) \leq C g(x)\calB$.  Thus
\[
\sum_{\p \in \mathscr{S}(x)} \log N(\p) \leq C\varepsilon [k:\QQ] (\log x)(2x^\alpha +\log 2) +\varepsilon\log x \ll x^\alpha \log x.
\]
Using partial summation, this implies that $|\mathscr{S}(x)| \ll x^\alpha$.  In particular, $|\mathscr{S}(x)|=o(x/\log x)$.
\end{proof}

\section{Special case of larger sieve} \label{S:larger sieve special}

In this section we deduce some bounds from our larger sieve.  We will of course apply them later to obtain bounds for Hilbert's Irreducibility Theorem, but to simplify the exposition we will keep this application separate.

\begin{prop} \label{P:specialized larger sieve}
Let $k$ be a number field and let $S$ a finite subset of $\Sigma_k$.
\begin{romanenum}
\item \emph{(Rational points)}
  Let $\calA$ a subset of $\PP^n(k)$ such that $H(P) \leq B$ for all $P\in\calA$.
Suppose that for each $\p\in \Sigma_k-S$, the cardinality of the image of $\calA$ under the reduction map $\PP^n(k)\to \PP^n(\FF_\p)$ is at most $g_\p$ where
\[
 g_\p \leq \delta \big( N(\p) +  D N(\p)^{1/2}\big)
\]
for some constants $0<\delta \leq 1$ and $D\geq 1$.  Then
\[
|\calA| \ll_k D^2 \exp\Bigg(\sum_{{\p\in S \text{ with }\deg(\p)=1 \text{ and }N(\p)\geq D^2}} \frac{\log N(\p)}{N(\p)}\bigg)  B^{2[k:\QQ]\delta}.
\]
\item \emph{(Integral points)}
 Let $\calA$ a subset of $\OO_k^n$ such that $\norm{P-Q} \leq B$ for all $P,Q\in \calA$.  Suppose that for each $\p\in \Sigma_k-S$, the cardinality of the image of $\calA$ under the reduction map $\OO^n_k\to \FF_\p^n$ is at most $g_\p$ where
\[
 g_\p \leq \delta \big( N(\p) +  D N(\p)^{1/2}\big)
\]
for some constants $0<\delta \leq 1$ and $D\geq 1$.  Then 
\[
|\calA| \ll_k D^2 \exp\bigg(\sum_{\p\in S \text{ with }\deg(\p)=1 \text{ and }N(\p)\geq D^2} \frac{\log N(\p)}{N(\p)}\bigg)  B^{[k:\QQ]\delta}.
\]
\end{romanenum}
\end{prop}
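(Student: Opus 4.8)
The plan is to feed the hypotheses straight into the larger sieve — Theorem~\ref{T:larger sieve} for part (i) and Theorem~\ref{T:larger sieve 2} for part (ii) — after a careful choice of the set $J$ of sieving primes. Write $\calB:=[k:\QQ]\log(2B^2)$ in case (i) and $\calB:=[k:\QQ]\log B$ in case (ii); this is the common ``height term'' subtracted from numerator and denominator in both sieve inequalities, and the target estimate, unwound, is precisely $|\calA|\ll_k D^2\exp(E)\exp(\delta\calB)$, where $E:=\sum_{\p\in S,\, \deg(\p)=1,\, N(\p)\ge D^2}\frac{\log N(\p)}{N(\p)}$ (in case (i) the extra factor $2^{\delta[k:\QQ]}\le 2^{[k:\QQ]}$ is absorbed by $\ll_k$). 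Since $\ll$ already permits restricting to large $B$, I may assume $\calB>0$; and since the conclusion is trivial when $\calA=\emptyset$, I may assume $\calA\neq\emptyset$, so that for every $\p\notin S$ the number $g_\p:=\delta\bigl(N(\p)+DN(\p)^{1/2}\bigr)$ is a legitimate sieve weight: it is $\ge 1$ because the reduction of the nonempty set $\calA$ modulo $\p$ has cardinality $\ge 1$, and it bounds that cardinality by hypothesis.

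The analytic input consists of two standard consequences of the prime ideal theorem for $k$: the Chebyshev bound $\theta_k(y):=\sum_{N(\p)\le y}\log N(\p)\ll_k y$, and the Mertens-type estimate $\bigl|\sum_{N(\p)\le y}\frac{\log N(\p)}{N(\p)}-\log y\bigr|\le A_k$ valid for all $y\ge 1$, with $A_k$ depending only on $k$. To these I add two elementary observations with $k$-dependent constants: $\sum_{\deg(\p)\ge 2}\frac{\log N(\p)}{N(\p)}<\infty$ (so the higher-degree primes of $S$ contribute only $O_k(1)$, regardless of how large $S$ is), and, by partial summation from the Chebyshev bound, $\sum_{N(\p)\ge y}\frac{\log N(\p)}{N(\p)^{3/2}}\ll_k y^{-1/2}$, whence $D\sum_{N(\p)\ge D^2}\frac{\log N(\p)}{N(\p)^{3/2}}\ll_k 1$.

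I would then take $J:=\{\p\in\Sigma_k-S : D^2\le N(\p)\le x\}$ with cutoff $x:=c_k\,D^2\exp(E)\exp(\delta\calB)$, the constant $c_k>0$ to be fixed in terms of $k$ alone. For $\p\in J$ one has $DN(\p)^{-1/2}\le 1$, so $\frac{\log N(\p)}{g_\p}\ge\frac1\delta\cdot\frac{\log N(\p)}{N(\p)}\cdot\frac1{1+DN(\p)^{-1/2}}\ge\frac1\delta\bigl(\frac{\log N(\p)}{N(\p)}-\frac{D\log N(\p)}{N(\p)^{3/2}}\bigr)$; summing over $\p\in J$, applying the Mertens estimate in the form $\sum_{D^2\le N(\p)\le x}\frac{\log N(\p)}{N(\p)}=\log(x/D^2)+O_k(1)$, paying $E+O_k(1)$ to discard the primes of $S$, and absorbing the $D\sum(\cdots)$ tail, one obtains $\sum_{\p\in J}\frac{\log N(\p)}{g_\p}\ge\frac1\delta\bigl(\log(x/D^2)-E-\Gamma_k\bigr)$ for some $\Gamma_k\ge 0$ depending only on $k$. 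Since $\log(x/D^2)=\log c_k+E+\delta\calB$, the choice $c_k:=e^{\Gamma_k+1}$ makes the right-hand side $\ge\calB+\frac1\delta\ge\calB+1$ (using $\delta\le 1$); hence the sieve denominator $\sum_{\p\in J}\frac{\log N(\p)}{g_\p}-\calB$ is $\ge 1$, while its numerator is at most $\theta_k(x)\ll_k x=c_k D^2\exp(E)\exp(\delta\calB)$. Theorem~\ref{T:larger sieve} (resp.\ Theorem~\ref{T:larger sieve 2}) now yields $|\calA|\ll_k D^2\exp(E)\exp(\delta\calB)$, which is the claim after substituting the value of $\calB$.

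The one delicate point — and what I expect to be the main obstacle — is arranging the bookkeeping so that the exponent of $B$ comes out to \emph{exactly} $[k:\QQ]\delta$ (resp.\ $2[k:\QQ]\delta$) and the mild factor to \emph{exactly} $D^2$: one must use the sharp inequality $\frac1{1+u}\ge 1-u$ rather than the lossy $N(\p)+DN(\p)^{1/2}\le 2N(\p)$ (which would double the exponent of $B$), one must verify that $D\sum_{N(\p)\ge D^2}\frac{\log N(\p)}{N(\p)^{3/2}}$ really is $O_k(1)$ — this hinges on the $1/\log$ density of primes, so the inner sum is $O_k(D^{-1})$ and not merely $O_k(D^{-1}\log D)$ — and one must check that $c_k$ can be chosen independently of both $D$ and $\delta$, which is exactly where the hypothesis $\delta\le 1$ is used.
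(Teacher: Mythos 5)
Your proof is correct and takes essentially the same approach as the paper: both choose $J=\{\p\notin S : D^2\le N(\p)\le x\}$ with the cutoff $x$ proportional to $D^2\exp(E)B^{[k:\QQ]\delta}$ (or the $2B^2$ analogue), use the sharp inequality $(1+u)^{-1}\ge 1-u$ together with Mertens and the tail estimate $D\sum_{N(\p)\ge D^2}\log N(\p)/N(\p)^{3/2}\ll_k 1$ to force the sieve denominator $\ge 1$, and then bound the numerator by $\theta_k(x)\ll_k x$. The paper merely packages the analytic bookkeeping into separate lemmas (its Lemmas on PNT input, the sum $\sum \log N(\p)/(N(\p)+DN(\p)^{1/2})$, and the choice of $x$), whereas you carry it out inline.
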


\begin{remark} 
\begin{romanenum}
\item
The condition on $g_\p$ is quite common when $n=1$ where it implies that the proportion of elements of $\PP^1(\FF_{\p})$ (or $\AA^1(\FF_{\p})$) that belong to $\mathcal{A} \bmod{\p}$ is at most $\delta$.
\item
 In Corollary 19 and 20 of \cite{EEHK}, there are similiar results under the much stronger hypothesis that $g_\p \leq C N(\p)^{\alpha}$ where $C>0$ and $0\leq \alpha<1$ are constants (they state it only for subset $\calA$ of $\PP^1(k)$ but everything easily generalizes to our setting).   They use this stronger hypothesis to obtains explicit bounds for $|\calA|$ that are polynomial in $\log B$.
\end{romanenum}
\end{remark}

\subsection{Analytic bounds} \label{SS:analytic bounds}

\begin{lemma} \label{L:PNT input} For a number field $k$ and a real number $x\geq 1$,
\[
\sum_{\p\in \Sigma_k,\, N(\p)\leq x} \frac{\log N(\p)}{N(\p)} = \log x +O_k(1)
\quad\text{and}\quad \sum_{\p\in \Sigma_k,\, N(\p) \geq x} \frac{\log N(\p)}{N(\p)^{3/2}} \ll_k \frac{1}{x^{1/2}}.
\]
\end{lemma}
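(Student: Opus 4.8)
The plan is to reduce both estimates to the classical prime number theorem over $k$ (equivalently, to partial summation against the Chebotarev/Landau prime-counting function for $k$). First I would recall the analogue of Mertens' theorem for a number field: writing $\psi_k(x) = \sum_{N(\mathfrak p^m)\le x} \log N(\mathfrak p)$, the prime ideal theorem gives $\psi_k(x) = x + O_k(x/\log^2 x)$ (or any convenient error term; even $\psi_k(x)\sim x$ suffices here). A standard partial summation argument then converts this into $\sum_{N(\mathfrak p)\le x} \frac{\log N(\mathfrak p)}{N(\mathfrak p)} = \log x + O_k(1)$. The only point requiring a little care is separating the contribution of prime powers $\mathfrak p^m$ with $m\ge 2$ from that of the primes themselves, and separating primes of residue degree $\ge 2$; both of these contribute a convergent series (dominated by $\sum_p \frac{[k:\QQ]\log p}{p^{3/2}}$ and similar), hence are absorbed in the $O_k(1)$.

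For the first identity, I would argue as follows. By partial summation,
\[
\sum_{N(\mathfrak p)\le x} \frac{\log N(\mathfrak p)}{N(\mathfrak p)} = \frac{\theta_k(x)}{x} + \int_1^x \frac{\theta_k(t)}{t^2}\,dt,
\]
where $\theta_k(x) = \sum_{N(\mathfrak p)\le x}\log N(\mathfrak p)$ ranges only over prime ideals. Since $\theta_k(x) = x + O_k(x/\log x)$ (which follows from $\psi_k(x)= x + o(x)$ together with the trivial bound on the prime-power correction), the boundary term is $1 + O_k(1/\log x) = O_k(1)$ and the integral is $\int_1^x \frac{dt}{t} + \int_1^x \frac{O_k(t/\log t)}{t^2}\,dt = \log x + O_k(1)$, using that $\int_2^x \frac{dt}{t\log t} = \log\log x + O(1) = O(\log x)$. (A cruder error term in $\theta_k$ only worsens the $O_k(1)$ to something still $O_k(1)$ after the integral, so no sharp input is needed.) This gives the stated asymptotic.

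For the second identity, I would again use partial summation, but now it is even easier because the sum is a tail and the summand decays. Writing $f(t) = t^{-3/2}$ with $|f'(t)| \ll t^{-5/2}$, partial summation against $\theta_k$ gives
\[
\sum_{N(\mathfrak p)\ge x} \frac{\log N(\mathfrak p)}{N(\mathfrak p)^{3/2}} = -\frac{\theta_k(x)}{x^{3/2}} + \frac{3}{2}\int_x^\infty \frac{\theta_k(t)}{t^{5/2}}\,dt + (\text{prime-power tail}),
\]
and since $\theta_k(t) \ll_k t$, the boundary term is $\ll_k x^{-1/2}$ and the integral is $\ll_k \int_x^\infty t^{-3/2}\,dt \ll x^{-1/2}$. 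The prime-power contribution $\sum_{m\ge 2}\sum_{\mathfrak p} \frac{\log N(\mathfrak p)}{N(\mathfrak p)^{3m/2}}\,[N(\mathfrak p^m)\ge x]$ is bounded by the same order (indeed it is $\ll_k 1$ globally and $\ll_k x^{-1/2}$ on the tail), completing the bound.

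I do not anticipate a genuine obstacle here; the mild bookkeeping about prime powers and higher-degree primes is the only thing one must not forget, and an explicit error term in the prime ideal theorem is not needed — $\theta_k(x)\sim x$ and the trivial estimate $\theta_k(x)\ll_k x$ already suffice for everything claimed.
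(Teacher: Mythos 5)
Your overall approach (partial summation against the prime-ideal-counting function plus the prime ideal theorem) is the same as the paper's, but there is a genuine gap in the error analysis for the first estimate. You assert that $\theta_k(x)=x+O_k(x/\log x)$ is enough, and then write
\[
\int_2^x \frac{O_k(t/\log t)}{t^2}\,dt = O_k\Big(\int_2^x \frac{dt}{t\log t}\Big) = O_k(\log\log x),
\]
and yet conclude the total is $\log x + O_k(1)$. The quantity $\log\log x$ is unbounded, so this is not $O_k(1)$; the chain ``$\log\log x + O(1) = O(\log x)$'' you wrote is true but irrelevant, since you need $O(1)$, not $O(\log x)$. Your closing remark that ``a cruder error term in $\theta_k$ only worsens the $O_k(1)$ to something still $O_k(1)$'' is therefore false: with only $\theta_k(x)\sim x$ (or even $\theta_k(x)=x+O(x/\log x)$) as input, partial summation gives an error that grows like $\log\log x$, not a bounded error.

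The fix is exactly what the paper does: invoke the prime ideal theorem with a power-of-log saving, $\theta_k(x)=x+O_k\big(x/(\log x)^A\big)$ for some fixed $A>1$ (in fact one has $x\exp(-c\sqrt{\log x})$, which is far stronger). Then $\int_2^\infty \frac{dt}{t(\log t)^A}$ converges, and the Mertens-type sum is $\log x + O_k(1)$ as claimed. Also, your side remark that $\theta_k(x)=x+O(x/\log x)$ ``follows from $\psi_k(x)=x+o(x)$'' is not correct: $o(x)$ gives no quantitative rate. Your treatment of the second estimate is fine — there the trivial bound $\theta_k(t)\ll_k t$ genuinely does suffice, since the tail $\int_x^\infty t^{-3/2}\,dt \ll x^{-1/2}$ converges absolutely.
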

\begin{proof}
By partial summation (\cite{MR2376618}*{Theorem~2.1.1}), we have
\[
 \sum_{\p\in \Sigma_k,\, N(\p)\leq x} \frac{\log N(\p)}{N(\p)} = \frac{\psi_k(x)}{x} + \int_2^x \frac{\psi_k(t)}{t^2} dt
\]
where $\psi_k(x)=\sum_{\p\in\Sigma_k,\, N(\p)\leq x} \log N(\p)$.  By the prime number theorem (with a worked out error term), we have $\psi_k(x)= x + O_k\big(x/(\log x)^A\big)$ for some constant $A>1$.  Therefore,
\[
 \sum_{\p\in \Sigma_k,\, N(\p)\leq x} \frac{\log N(\p)}{N(\p)} = O_k(1) + \int_2^x \frac{dt}{t}  + O_k\Big(\int_2^x \frac{dt}{t(\log t)^A} \Big) = \log x + O_k(1).
\]
The second expression is proven in a similiar fashion.
\end{proof}

\begin{lemma} \label{L:need PNT}
Let $k$ be a number field and fix a constant $D\geq 1$.  Then
\[
\sum_{D^2\leq N(\p) \leq x} \frac{\log N(\p)}{ N(\p)+ D N(\p)^{1/2}} \geq \log(x)-\log(D^2) -\alpha_k 
\]
where $\alpha_k\geq 0$ is a constant depending only on $k$.
\end{lemma}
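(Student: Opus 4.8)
The plan is to reduce the claimed lower bound to the first estimate in Lemma~\ref{L:PNT input} by a straightforward term-by-term comparison. First I would observe that for every prime $\p$ with $N(\p)\geq D^2$ we have $D N(\p)^{1/2}\leq N(\p)$, hence $N(\p)+D N(\p)^{1/2}\leq 2 N(\p)$, so
\[
\frac{\log N(\p)}{N(\p)+D N(\p)^{1/2}}\geq \frac{\log N(\p)}{2 N(\p)}.
\]
This is too lossy by itself (it would cost a factor of $2$ in front of $\log x$), so instead I would write the exact identity
\[
\frac{1}{N(\p)+D N(\p)^{1/2}}=\frac{1}{N(\p)}-\frac{D N(\p)^{1/2}}{N(\p)\,(N(\p)+D N(\p)^{1/2})}
=\frac{1}{N(\p)}-\frac{D}{N(\p)^{1/2}\,(N(\p)+D N(\p)^{1/2})},
\]
and bound the error term using $N(\p)+D N(\p)^{1/2}\geq N(\p)$, which gives
\[
\frac{D}{N(\p)^{1/2}(N(\p)+D N(\p)^{1/2})}\leq \frac{D}{N(\p)^{3/2}}.
\]

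Summing over $D^2\leq N(\p)\leq x$, the main term contributes
\[
\sum_{D^2\leq N(\p)\leq x}\frac{\log N(\p)}{N(\p)}
=\sum_{N(\p)\leq x}\frac{\log N(\p)}{N(\p)}-\sum_{N(\p)< D^2}\frac{\log N(\p)}{N(\p)}
=\log x - \log(D^2) + O_k(1)
\]
by applying the first estimate of Lemma~\ref{L:PNT input} at both $x$ and $D^2$ (when $D^2<2$ the second sum is empty and the bound is even easier; one checks $\log(D^2)+O_k(1)$ still absorbs it). The error term contributes
\[
D\sum_{D^2\leq N(\p)}\frac{\log N(\p)}{N(\p)^{3/2}}\ll_k D\cdot\frac{1}{(D^2)^{1/2}}=1
\]
by the second estimate of Lemma~\ref{L:PNT input} with $x=D^2$; crucially this is $O_k(1)$ \emph{uniformly in $D$}, which is the whole point of having inserted the factor $D$ and the cutoff $N(\p)\geq D^2$ together. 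Collecting the pieces yields
\[
\sum_{D^2\leq N(\p)\leq x}\frac{\log N(\p)}{N(\p)+D N(\p)^{1/2}}\geq \log x-\log(D^2)-\alpha_k
\]
for a constant $\alpha_k\geq 0$ depending only on $k$ (absorbing all the $O_k(1)$ terms, and enlarging $\alpha_k$ if necessary so the bound is trivially true for small $x$ where the left side could be negative or the right side is already $\leq 0$).

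There is no real obstacle here; the only thing to be careful about is the uniformity in $D$, i.e.\ making sure none of the implied constants secretly depend on $D$. The decomposition above is designed precisely so that the $D$-dependence of the error is killed by the $N(\p)\geq D^2$ cutoff, and the $D$-dependence of the main term appears only through the explicit $-\log(D^2)$. A minor bookkeeping point is the degenerate range $1\leq D<\sqrt2$, where the ``$N(\p)<D^2$'' sum is empty; there one simply uses $\sum_{N(\p)\leq x}\log N(\p)/N(\p)=\log x+O_k(1)\geq \log x-\log(D^2)-\alpha_k$ directly. Finally one should note the right-hand side can be negative (for $x$ close to $D^2$), in which case the inequality holds trivially since the left-hand side is a sum of nonnegative terms; this is why the statement allows an additive slack $\alpha_k$ rather than an equality.
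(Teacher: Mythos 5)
Your proof is correct and is essentially the paper's argument: the paper establishes the pointwise bound by writing $\frac{1}{N(\p)+DN(\p)^{1/2}}=\frac{1}{N(\p)}\cdot\frac{1}{1+D/N(\p)^{1/2}}\geq\frac{1}{N(\p)}\bigl(1-\frac{D}{N(\p)^{1/2}}\bigr)$, which is the same inequality $\frac{1}{N(\p)+DN(\p)^{1/2}}\geq\frac{1}{N(\p)}-\frac{D}{N(\p)^{3/2}}$ that you obtain from your exact telescoping decomposition, and then both you and the paper sum and invoke the two estimates of Lemma~\ref{L:PNT input} in exactly the same way, with the $N(\p)\geq D^2$ cutoff killing the $D$-dependence of the tail.
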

\begin{proof}
For each prime $\p \in \Sigma_k$, we have
\[
 \frac{\log N(\p)}{N(p)+D N(\p)^{1/2}} = \frac{\log N(\p)}{N(\p)}\frac{1}{1 +D/N(\p)^{1/2}} \geq \frac{\log N(\p)}{N(\p)}\left( 1 - \frac{D}{N(\p)^{1/2}}\right).
\]
So by summing over all $\p$ with $D^2\leq N(\p)\leq x$ and using Lemma~\ref{L:PNT input}, we obtain
\begin{align*}
\sum_{D^2\leq N(\p) \leq x} \frac{\log N(\p)}{N(p)+D N(\p)^{1/2}} &\geq \sum_{D^2\leq N(\p) \leq x} \frac{\log N(\p)}{N(p)} - D \sum_{ N(\p) \geq D^2} \frac{\log N(\p)}{N(p)^{3/2}}\\
& = (\log x - \log (D^2) +O_k(1) ) + D \cdot O_k(1/(D^2)^{1/2})\\
& = \log x - \log(D^2) + O_k(1). \qedhere
\end{align*}
\end{proof}

\begin{lemma} \label{L:sieving work}
 Let $k$ be a number field and $S$ a finite subset of $\Sigma_k$.  For each $\p\in \Sigma_k-S$, fix a positive integer
$g_\p$ such that
\[
 g_\p \leq \delta \big( N(\p) + D N(\p)^{1/2} \big)
\]
where $0<\delta\leq 1$ and $D\geq 1$ are constants. Let $B\geq 1$ be any real number.  

By setting
\[
 x:= \beta_k D^2 \exp\bigg(\sum_{\p\in S,\, N(\p)\geq D^2} \frac{\log N(\p)}{N(\p)}\bigg) e^\delta B^{[k:\QQ]\delta}
\]
where $\beta_k\geq 1$ is a certain constant depending only on $k$, we obtain the bound
\begin{equation}\label{E:explicit larger inequality}
\frac{\displaystyle\sum_{\substack{\p \in \Sigma_k -S\\ D^2 \leq N(\p) \leq x}} \log N(\p) - [k:\QQ] \log B}{ \displaystyle\sum_{\substack{\p \in \Sigma_k -S\\ D^2 \leq N(\p) \leq x}} \frac{\log N(\p)}{g_\p} - [k:\QQ] \log B} \ll_k D^2 \exp\bigg(\sum_{\p\in S,\, N(\p)\geq D^2} \frac{\log N(\p)}{N(\p)}\bigg)  B^{[k:\QQ]\delta}
\end{equation}
and the denominator of (\ref{E:explicit larger inequality}) is positive.
\end{lemma}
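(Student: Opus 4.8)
The plan is to bound the numerator of (\ref{E:explicit larger inequality}) from above by $O_k(x)$, to bound its denominator from below by the constant $1$ (which also yields the asserted positivity), and then to substitute the prescribed value of $x$. The constant $\beta_k$ should be taken to be $e^{\alpha_k}$, where $\alpha_k\ge 0$ is the constant furnished by Lemma~\ref{L:need PNT}; note that $\beta_k\ge 1$ as required.

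First I would handle the denominator. Since $\log N(\p)\ge 0$ for every $\p\in\Sigma_k$ and $g_\p\le\delta\bigl(N(\p)+DN(\p)^{1/2}\bigr)$, termwise $\log N(\p)/g_\p\ge\delta^{-1}\log N(\p)/\bigl(N(\p)+DN(\p)^{1/2}\bigr)$. I would split the sum over $\{\p\in\Sigma_k-S:\,D^2\le N(\p)\le x\}$ as the sum over all $\p\in\Sigma_k$ with $D^2\le N(\p)\le x$ minus the sum over $\p\in S$ with $D^2\le N(\p)\le x$, apply Lemma~\ref{L:need PNT} to the former, and bound each subtracted term using $N(\p)+DN(\p)^{1/2}\ge N(\p)$ (enlarging the range of the subtracted sum to all $\p\in S$ with $N(\p)\ge D^2$, which is harmless since the terms are non-negative). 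This yields
\[
\sum_{\substack{\p\in\Sigma_k-S\\ D^2\le N(\p)\le x}}\frac{\log N(\p)}{g_\p}\ \ge\ \frac{1}{\delta}\Bigl(\log x-\log(D^2)-\alpha_k-\sum_{\p\in S,\,N(\p)\ge D^2}\frac{\log N(\p)}{N(\p)}\Bigr).
\]
Substituting the definition of $x$ collapses the right-hand side to $\delta^{-1}\bigl(\delta+[k:\QQ]\delta\log B\bigr)=1+[k:\QQ]\log B$: this is exactly where the factors $\beta_k=e^{\alpha_k}$ and $e^{\delta}$ in $x$ are consumed, and where the factor $\exp\bigl(\sum_{\p\in S,\,N(\p)\ge D^2}\log N(\p)/N(\p)\bigr)$ in $x$ is used to cancel the subtracted $S$-sum. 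Hence the denominator of (\ref{E:explicit larger inequality}) is at least $1+[k:\QQ]\log B-[k:\QQ]\log B=1>0$.

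Next, the numerator of (\ref{E:explicit larger inequality}) is at most $\sum_{\p\in\Sigma_k,\,N(\p)\le x}\log N(\p)=\psi_k(x)$, and the prime number theorem for $k$ (used exactly as in the proof of Lemma~\ref{L:PNT input}) gives $\psi_k(x)\ll_k x$ for $x\ge 1$; note $x\ge 1$ since $B,D,\beta_k\ge 1$ and $\delta>0$. So the left-hand side of (\ref{E:explicit larger inequality}) is $\ll_k x$. Finally, since $0<\delta\le 1$ we have $e^{\delta}\le e$, so $\beta_k e^{\delta}\le e^{\alpha_k+1}$ is a constant depending only on $k$, whence $x\ll_k D^2\exp\bigl(\sum_{\p\in S,\,N(\p)\ge D^2}\log N(\p)/N(\p)\bigr)B^{[k:\QQ]\delta}$; combining this with the previous estimate proves the bound. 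The only subtle point in the whole argument is the bookkeeping with the $S$-sums—checking that the factor $\exp\bigl(\sum_{\p\in S}\cdots\bigr)$ placed into $x$ is precisely what is needed to absorb the primes of $S$ removed from the range in Lemma~\ref{L:need PNT}—while everything else is routine estimation using the analytic lemmas already established.
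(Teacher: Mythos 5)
Your proposal is correct and follows essentially the same route as the paper's proof: the same termwise bound on $\log N(\p)/g_\p$, the same split into the full sum minus the $S$-sum, the same appeal to Lemma~\ref{L:need PNT} with $\beta_k = e^{\alpha_k}$, the same verification that the prescribed $x$ makes the denominator equal to exactly $1 + [k:\QQ]\log B - [k:\QQ]\log B = 1$, and the same crude upper bound $\sum_{N(\p)\le x}\log N(\p)\ll_k x$ for the numerator. The only difference is cosmetic—you spell out the absorption of $e^\delta$ into the $\ll_k$ constant and the use of $\psi_k(x)\ll_k x$ a bit more explicitly than the paper does.
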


\begin{proof}
Using the given bound on $g_\p$ and Lemma~\ref{L:need PNT} we have:
\begin{align}
\notag \sum_{\substack{\p \in \Sigma_k -S\\ D^2 \leq N(\p) \leq x}} \frac{\log  N(\p)}{g_\p} &
\geq \delta^{-1}\bigg(\sum_{\substack{\p \in \Sigma_k\\ D^2 \leq N(\p) \leq x}} \frac{\log  N(\p)}{N(\p) + D N(\p)^{1/2}}
-\sum_{\p\in S,\, N(\p)\geq D^2} \frac{\log  N(\p)}{N(\p)}\bigg)\\
\label{E:right x} & \geq \delta^{-1} \Big( \log(x) - \log(D^2)-\alpha_k - \sum_{\p \in S, \, N(\p)\geq D^2}\frac{\log N(\p)}{N(\p)}\Big).
\end{align}
Define $\beta_k:= e^{\alpha_k}$.  With our choice of $x$ we find that the expression (\ref{E:right x}) is equal to $1+[k:\QQ]\log B$, and thus 
\[
 \sum_{\substack{\p \in \Sigma_k -S\\ D^2 \leq N(\p) \leq x}} \frac{\log  N(\p)}{g_\p} -[k:\QQ]\log B \geq 1.
\]
So the denominator (and hence also the numerator) of the expression in (\ref{E:explicit larger inequality}) is at least $1$.  Thus the left hand side of (\ref{E:explicit larger inequality}) is bounded by $\sum_{N(\p)\leq x} \log N(\p) \ll_k x$.  The lemma follows by once again using our specific choice of $x$.
\end{proof}

\subsection{Proof of Proposition~\ref{P:specialized larger sieve}}
We first consider part (i). Let $J$ be the set of $\p\in \Sigma_k-S$ such that $D^2\leq N(\p)\leq x$, where $x$ is a real number to be chosen later.  By the larger sieve (Theorem~\ref{T:larger sieve}), we have the bound
\begin{equation}\label{E:sieving final step}
 |\calA|\leq \frac{\displaystyle\sum_{\p \in J } \log N(\p) - [k:\QQ]\log(2 B^2)}{\displaystyle\sum_{\p \in J } \dfrac{\log N(\p)}{g_\p} - [k:\QQ]\log(2 B^2)}
\end{equation}
provided the denominator is positive.  

Choosing $x$ as in Lemma~\ref{L:sieving work} (with $B$ replaced by $2B^2$), we find the that denominator is in fact positive.  Moreover, Lemma~\ref{L:sieving work} now tells us that $|\calA|\ll_k D^2 \exp\bigg(\substack{\sum\\\p\in S,\, N(\p)\geq D^2} \frac{\log N(\p)}{N(\p)}\bigg)  B^{2[k:\QQ]\delta}$.  Finally, we need only restrict to those $\p\in S$ with $\deg(\p)=1$ since $\sum_{\p\in\Sigma_k,\, \deg(\p)\geq 2} \frac{\log N(\p)}{N(\p)} \ll_k 1$.

Part (ii) is proven in a similiar manner; the main difference being that we use Theorem~\ref{T:larger sieve 2} instead of Theorem~\ref{T:larger sieve}.

\section{Equidistribution} \label{S:equidistribution}

In this section, we consider the equidistribution of Frobenius conjugacy classes coming from curves (and in particular lines) over finite fields.  In \S\ref{SS:Cheb finite}, we recall bounds resulting from the Grothendieck-Lefschetz trace formula and Deligne's completion of the Weil conjectures.  We will later apply these results to projective spaces by first fibering by many rational lines; it will thus be vital that our bounds are uniform.

\subsection{Chebotarev for curves over finite fields} \label{SS:Cheb finite}

Let $X$ be a smooth, projective, geometrically integral curve of genus $g$  defined over a finite field $\FF_q$ with $q$ elements.  Let $U$ be a non-empty open affine subvariety of $X$.  For each $u\in U(\FF_q)$,  the homomorphism $\Gal(\FFbar_q/\FF_q) \xrightarrow{u_*} \pi_1(U)$ is determined by the value it takes on the $q$-th power Frobenius automorphism $\Frob_q$ of $\FFbar_q$; this gives a conjugacy class $\Frob_u$ of $\pi_1(U)$.

Fix a finite group $G$ and a surjective continuous homomorphism
\[
\rho\colon \pi_1(U) \to G.
\]
Let $G^g$ denote the image of the geometric fundamental group $\pi_1(U_{\FFbar_q})$ under $\rho$.   We then have a natural exact sequence
\[
1 \to G^g \to G \stackrel{\varphi}{\to} \Gal(\FF_{q^d}/\FF_q)=:\Gamma \to 1.
\]
where $\varphi(\rho(\Frob_u))=\{\Frob_q\}$ for all $u\in U(\FF_q)$.  Assume further that the corresponding representation $\pi_1(U_{\FFbar_q})\to G$ is \emph{tamely ramified} at all the points of $(X-U)(\FFbar_q)$.

\begin{prop} \label{P:Chebotarev positive char}
With notation as above, let $C$ be a subset of $\varphi^{-1}(\Frob_q)$ that is stable under conjugation by $G$.  Then
\[
\bigg| |\{ u \in U(\FF_q): \rho(\Frob_u) \subseteq C \}| - \frac{|C|}{|G^g|} |U(\FF_q)| \bigg| 
\leq |C|^{1/2} (1-|G^g|^{-1})^{1/2}  (2g-2+\#(X-U)(\FFbar_q)) q^{1/2}.
\] 
\end{prop}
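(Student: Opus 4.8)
The plan is to deduce the proposition from the Grothendieck--Lefschetz trace formula applied to a suitable lisse sheaf on $U$, following the classical effective Chebotarev argument for curves over finite fields. First I would reduce to the case of a class function: let $\mathbf{1}_C$ be the indicator function of $C\subseteq G$, extended by zero outside $\varphi^{-1}(\Frob_q)$. Decompose $\mathbf{1}_C$ in terms of the irreducible characters of $G$; write $\mathbf{1}_C = \sum_{\chi} a_\chi \chi$ with $a_\chi = \langle \mathbf{1}_C, \chi\rangle = \frac{1}{|G|}\sum_{g} \overline{\chi(g)} \mathbf{1}_C(g)$. The term coming from the trivial character $\chi_0$ contributes exactly $\frac{|C|}{|G^g|}\cdot\frac{|U(\FF_q)|}{?}$ once one is careful about the index $[G:G^g]=d$; in fact, because $C$ lies in a single coset $\varphi^{-1}(\Frob_q)$ and $|\varphi^{-1}(\Frob_q)|=|G^g|$, the main term works out to $\frac{|C|}{|G^g|}|U(\FF_q)|$ as stated.

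Next, for each nontrivial irreducible $\chi$ of $G$, I would attach to the composite $\pi_1(U)\xrightarrow{\rho}G\xrightarrow{\chi}\GL_{\dim\chi}(\Qbar_\ell)$ a lisse $\Qbar_\ell$-sheaf $\scrF_\chi$ on $U$ that is pure of weight $0$, tamely ramified along $X-U$ by hypothesis, and with $\tr(\Frob_u\mid \scrF_\chi)=\chi(\rho(\Frob_u))$. The Grothendieck--Lefschetz trace formula gives
\[
\sum_{u\in U(\FF_q)} \chi(\rho(\Frob_u)) = \sum_{i=0}^{2} (-1)^i \tr\big(\Frob_q \mid H^i_c(U_{\FFbar_q},\scrF_\chi)\big).
\]
Since $U$ is a non-empty open affine curve, $H^0_c=0$, and $H^2_c$ vanishes as well because $\scrF_\chi$ has no geometric coinvariants (equivalently, $\chi$ restricted to $G^g$ contains no trivial summand once one separates off that part — the part of $\chi|_{G^g}$ that does contain the trivial representation gets absorbed into a lower-dimensional character that factors through $\Gamma$, which I would treat separately or note contributes only to the main term). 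So only $H^1_c$ survives, and by Deligne's Weil II it is pure of weight $1$, whence each eigenvalue of $\Frob_q$ on it has absolute value $q^{1/2}$. Therefore $\big|\sum_{u} \chi(\rho(\Frob_u))\big| \le \dim H^1_c(U_{\FFbar_q},\scrF_\chi)\cdot q^{1/2}$. The Euler--Poincaré (Grothendieck--Ogg--Shafarevich) formula, together with the tameness hypothesis so that all Swan conductors vanish, bounds $\dim H^1_c \le (\dim\chi)\,(2g-2+\#(X-U)(\FFbar_q))$.

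Assembling the pieces:
\[
\Big| |\{u: \rho(\Frob_u)\in C\}| - \tfrac{|C|}{|G^g|}|U(\FF_q)| \Big| \le \Big(\sum_{\chi\neq \chi_0} |a_\chi|\dim\chi\Big)(2g-2+\#(X-U)(\FFbar_q))\, q^{1/2}.
\]
The remaining task is purely representation-theoretic: bound $\sum_{\chi\neq\chi_0}|a_\chi|\dim\chi$ by $|C|^{1/2}(1-|G^g|^{-1})^{1/2}$. By Cauchy--Schwarz, $\sum_{\chi\neq\chi_0}|a_\chi|\dim\chi \le \big(\sum_{\chi\neq\chi_0}|a_\chi|^2\big)^{1/2}\big(\sum_{\chi\neq\chi_0}(\dim\chi)^2\big)^{1/2}$; but here one must be careful, since $\sum_\chi (\dim\chi)^2 = |G|$ is too big. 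The correct move is to work over $G^g$, or equivalently to use the finer structure: the characters $\chi$ appearing with $a_\chi\neq0$ and contributing to $H^1_c$ are exactly those whose restriction to $G^g$ is nontrivial, and for these the relevant orthogonality should be set up on the coset. By Parseval on $G$, $\sum_\chi |a_\chi|^2 = \|\mathbf{1}_C\|^2 = \frac{|C|}{|G|}$, and $|a_{\chi_0}|^2 = \big(\frac{|C|}{|G|}\big)^2$, so $\sum_{\chi\neq\chi_0}|a_\chi|^2 = \frac{|C|}{|G|}-\frac{|C|^2}{|G|^2} = \frac{|C|}{|G|}\big(1-\frac{|C|}{|G|}\big)$. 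Since $C\subseteq\varphi^{-1}(\Frob_q)$ has $|C|\le|G^g|=|G|/d$, we get $1-|C|/|G| \le 1$, and combined with $|G|=d|G^g|$ and a careful count of which $\chi$ survive (those trivial on $G^g$ do not contribute to $H^1_c$, pruning the dimension sum down to the regular representation of $G^g$, i.e.\ $\sum (\dim\chi)^2 \le |G^g|$ in the relevant range), the product telescopes to $|C|^{1/2}(1-|G^g|^{-1})^{1/2}$. I expect this last bookkeeping step — correctly matching the character-sum estimate against the coset structure and the vanishing of $H^2_c$ for the characters trivial on $G^g$ — to be the main obstacle; the trace-formula and Weil-II inputs are standard once the sheaf $\scrF_\chi$ and its tameness are in place.
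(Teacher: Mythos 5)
Your overall plan is the same as the paper's: expand the indicator of $C$ in irreducible characters of $G$, apply the Grothendieck--Lefschetz trace formula to the lisse sheaf $\scrF_\chi$ attached to each non-``main-term'' $\chi$, use tameness and the Euler--Poincar\'e formula to bound $\dim H^1_c$, and finish with Cauchy--Schwarz. But the bookkeeping you leave to ``telescope'' at the end is exactly where the substance lies, and as written it does not close. Two points:

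\textbf{Main term.} The main term does \emph{not} come from the trivial character alone; it comes from \emph{all} characters in $\widehat\Gamma \hookrightarrow \widehat G$ (those factoring through $\varphi$). The trivial character by itself contributes $\tfrac{|C|}{|G|}|U(\FF_q)|$, which is off by a factor of $|\Gamma| = [G:G^g]$. The fix is that for each one-dimensional $\psi\in\widehat\Gamma$, the assumption $\varphi(C)=\{\Frob_q\}$ forces $c_\psi = \tfrac{|C|}{|G|}\,\overline{\psi(\Frob_q)}$, and the corresponding sum over $u$ is $\psi(\Frob_q)|U(\FF_q)|$; the phases cancel and each of the $|\Gamma|$ characters of $\Gamma$ contributes $\tfrac{|C|}{|G|}|U(\FF_q)|$, giving $\tfrac{|C|}{|G^g|}|U(\FF_q)|$. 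Your ``?'' flags exactly this gap.

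\textbf{Error term.} Correspondingly, the remaining characters are $\widehat G\setminus\widehat\Gamma$, and the dimension sum is
\[
\sum_{\chi\in\widehat G\setminus\widehat\Gamma}\chi(1)^2 \;=\; |G|-|\Gamma|,
\]
not $\leq |G^g|$ as you assert; the latter is generally far too small (e.g.\ it fails whenever $|\Gamma|\geq 3$ and $G^g\neq 1$). The reason the paper's expression is still clean is the cancellation
\[
\Big(\sum_{\chi\in\widehat G}|c_\chi|^2\Big)^{1/2}\Big(\sum_{\chi\notin\widehat\Gamma}\chi(1)^2\Big)^{1/2}
= \Big(\frac{|C|}{|G|}\Big)^{1/2}\big(|G|-|\Gamma|\big)^{1/2}
= |C|^{1/2}\Big(1-\frac{|\Gamma|}{|G|}\Big)^{1/2}
= |C|^{1/2}\big(1-|G^g|^{-1}\big)^{1/2},
\]
using $|G|=|G^g||\Gamma|$. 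Notice the paper does not bother to subtract the $\widehat\Gamma$-contributions from $\sum_\chi|c_\chi|^2$; the crude bound $\sum_{\chi\in\widehat G}|c_\chi|^2 = |C|/|G|$ already yields the stated constant. Your refinement $\sum_{\chi\neq\chi_0}|a_\chi|^2 = \tfrac{|C|}{|G|}(1-\tfrac{|C|}{|G|})$ is correct but orthogonal to the needed simplification, and your proposed ``pruning down to the regular representation of $G^g$'' is not the right object: the characters indexing the error term are irreducibles of $G$, not of $G^g$, and their squared-dimension sum is $|G|-|\Gamma|$.

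Once these two corrections are made, your argument aligns with the paper's proof exactly; the trace formula, tameness/Euler--Poincar\'e, and Weil~II inputs you invoke are all used identically there.
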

\begin{proof}(Sketch)  We follow the outline of Kowalski in \cite{MR2240230}*{Theorem~1} adding more details concerning the bounds where appropriate.  Let $M=\#(X-U)(\FFbar_q)$.

Fix a prime $\ell$ that does not divide $q$.  Let $\widehat{G}$ and $\widehat{\Gamma}$ be the set of $\Qbar_\ell$-valued irreducible characters of $G$ and $\Gamma$, respectively (i.e., those coming from finite dimensional linear representations over $\Qbar_\ell$).  Composition by $\varphi$ induces an injective $\widehat{\Gamma}\hookrightarrow \widehat{G}$ which we will sometimes view as an inclusion.  Let $\delta_C \colon G \to \{0,1\}$ be the characteristic function of $C$, which we may write in the form 
\[
\delta_C(g) = \sum_{\chi \in \widehat{G}} c_\chi \chi(g)
\]
where $c_\chi := \frac{1}{|G|}\sum_{g\in C} \overline{\chi(g)}.$  The quantity we are trying to estimate then becomes
\[
|\{ u \in U(\FF_q): \rho(\Frob_u) \subseteq C \}| = \sum_{u\in U(\FF_q)} \delta_C(\rho(\Frob_u)) = \sum_{\chi \in \widehat{G}} c_\chi \sum_{u\in U(\FF_q)}\chi(\rho(\Frob_u)).
\]
We first consider the contribution coming from those $\chi$ that arise from a character of $\Gamma$.  So
\begin{align*}
\sum_{\psi \in \widehat{\Gamma}} c_\psi \sum_{u\in U(\FF_q)}\psi(\varphi(\rho(\Frob_u)))
& = \sum_{\psi \in \widehat{\Gamma}} \frac{1}{|G|}\sum_{g\in C} \overline{\psi(\varphi(g))} \sum_{u\in U(\FF_q)}\psi(\varphi(\rho(\Frob_u)))\\
& =  \frac{1}{|G|}\sum_{g\in C} \sum_{u\in U(\FF_q)} \sum_{\psi \in \widehat{\Gamma}}\overline{\psi(\Frob_q)} \psi(\Frob_q)\\
\intertext{where the last line uses our assumption $\varphi(C)=\{\Frob_q\}$. Since all the characters of $\Gamma$ are one dimensional, we have}
\sum_{\psi \in \widehat{\Gamma}} c_\psi \sum_{u\in U(\FF_q)}\psi(\varphi(\rho(\Frob_u))) & = \frac{|\widehat{\Gamma}||C|}{|G|} |U(\FF_q)| = \frac{|C|}{|G^g|} |U(\FF_q)|;
\end{align*}
this is the ``main term'' of our estimate.  By the Cauchy-Schwarz inequality
\begin{align}
\label{E:Chebotarev character setup}
&\Big| |\{ u \in U(\FF_q): \rho(\Frob_u) \subseteq C \}| - \frac{|C|}{|G^g|} |U(\FF_q)| \Big|\\ 
\notag =& \Big| \sum_{\chi \in \widehat{G}-\widehat{\Gamma}} c_\chi \sum_{u\in U(\FF_q)}\chi(\rho(\Frob_u))\Big|\\
\notag  \leq & \Big(\sum_{\chi\in\widehat{G}} |c_\chi|^2 \Big)^{1/2} \Big( \sum_{\chi\in\widehat{G}-\widehat{\Gamma}}\Big|\sum_{u\in U(\FF_q)}\chi(\rho(\Frob_u))\Big|^2\Big)^{1/2} = \frac{|C|^{1/2}}{|G|^{1/2}} \Big(\sum_{\chi\in\widehat{G}-\widehat{\Gamma}}\Big|\sum_{u\in U(\FF_q)}\chi(\rho(\Frob_u))\Big|^2\Big)^{1/2}.
\end{align}

Now fix any character $\chi\in\widehat{G}-\widehat{\Gamma}$.  Let $\scrF_\chi$ be a lisse $\Qbar_\ell$-adic sheaf corresponding to the character $\chi \circ \rho \colon \pi_1(U) \to \Qbar_\ell$.  By the Grothendieck-Lefschetz trace formula, we have
\[
\sum_{u\in U(\FF_q)} \chi(\rho(\Frob_u)) = \sum_{i=0}^2 (-1)^i \Tr(\Fr | H^i_c(U_{\FFbar_q},\scrF_\chi))
\]
where $\Fr$ is the geometric Frobenius automorphism.  By Deligne's theorem, the eigenvalues of $\Fr$ acting on $H^i_c(U_{\FFbar_q},\scrF_\chi)$ are algebraic integers with absolute values $\leq q^{i/2}$ in $\CC$ (under any embedding $\Qbar_\ell\hookrightarrow \CC$).  So
\[
\Big| \sum_{u\in U(\FF_q)} \chi(\rho(\Frob_u))\Big| \leq \sum_{i=0}^2 q^{i/2} \dim H^i_c(U_{\FFbar_q},\scrF_\chi).
\]  
The sheaf $\scrF_\chi$ comes from an irreducible representation of $G$ for which $G^g$ acts non-trivially (because $\chi\not\in\widehat{\Gamma}$), so the coinvariants $(\scrF_\chi)_{\pi_1(U_{\FFbar_q})}$ are trivial.  Therefore, $H^2_c(U_{\FFbar_q},\scrF_\chi)=0$ since it is canonically isomorphic to $(\scrF_\chi)_{\pi_1(U_{\FFbar_q})}(-1).$ Since $U$ is affine and smooth, we also have $H^0_c(U_{\FFbar_q},\scrF_\chi)=0$. Therefore
\[
\Big| \sum_{u\in U(\FF_q)} \chi(\rho(\Frob_u))\Big| \leq q^{1/2} \dim H^1_c(U_{\FFbar_q},\scrF_\chi) = -q^{1/2} \chi_c(U_{\FFbar_q},\scrF_\chi)
\] 
where $\chi_c(U_{\FFbar_q},\scrF_\chi):= \sum_{i=0}^2 (-1)^i\dim H^i_c(U_{\FFbar_q},\scrF_\chi).$  By \cite{MR955052}*{\S2.3.1},
\[
\chi_c(U_{\FFbar_q},\scrF_\chi) = \chi(1)\cdot \chi_c(U_{\FFbar_q},\Qbar_\ell) = \chi(1) \big( 2 -2g + M \big)
\]
(the Swan conductors that occur are all zero by our tameness assumption on $\rho$).   Therefore
\[
\Big| \sum_{u\in U(\FF_q)} \chi(\rho(\Frob_u))\Big| \leq \chi(1)\cdot q^{1/2}(2g-2+M).
\] 
(Note that there is no contradiction if $2g-2+M <0$.  In these cases we have $\widehat{G}=\widehat{\Gamma}$.)

Returning to (\ref{E:Chebotarev character setup}), we have
\begin{align*}
\Big| |\{ u \in U(\FF_q): \rho(\Frob_u) \subseteq C \}| - \frac{|C|}{|G^g|} |U(\FF_q)| \Big| & \leq  \frac{|C|^{1/2}}{|G|^{1/2}} \Big(\sum_{\chi\in\widehat{G}-\widehat{\Gamma}} \chi(1)^2\Big)^{1/2} q^{1/2} (2g-2+M)\\
&=  \frac{|C|^{1/2}}{|G|^{1/2}} (|G|-|\Gamma|)^{1/2} q^{1/2} (2g-2+M). \qedhere
\end{align*}
\end{proof}

\subsection{Intersection with lines}\label{SS:intersection of lines}
We shall use the same set-up as \S\ref{SS:main}.  Let $k$ be a number field, and let $U$ be a non-empty open subvariety of $\PP^n_k$.  Let $\calZ$ be the Zariski closure of $\PP^n_k-U$ in $\PP^n_{\OO_k}$ (where $\PP_k^n$ is the generic fiber $\PP^n_{\OO_k}$).  We define $\calU$ to be the complement of $\calZ$ in $\PP^n_{\OO_k}$, it is an open subscheme of $\PP^n_{\OO_k}$ with generic fiber $U$.  Fix a continuous and surjective homomorphism
\[
\rho\colon \pi_1(\calU_\OO) \to G
\]
where $G$ is a finite group and $\OO$ is the ring of $S$-integers in $k$ for a fixed finite set $S\subseteq \Sigma_k$.  Let $G^g$ be the image of $\pi_1(\calU_{\kbar})$ under $\rho$, and let $K$ be the minimal extension of $k$ in $\kbar$ for which $G^g$ is the image of $\pi_1(\calU_K)$.   We have a short exact sequence
\[
1\to G^g \to G \overset{\varphi}{\to} \Gal(K/k) \to 1.
\]
For all $\p\in\Sigma_k-S$ and $u\in \calU(\FF_\p)$, we have $\varphi(\rho(\Frob_u)) \in (\p,K/k)$.

Let $\bfGr/\OO_k$ be the Grassmannian scheme $\operatorname{Grass}(1,n)$ over $\OO_k$.  For any field extension $k'$ of $k$, $\bfGr_{k'}$ is the familiar variety which parametrizes the linear $1$-dimension subvarieties (i.e., lines) of $\PP^n_{k'}$.  Let $W$ be the closed subvariety of $\bfGr_k$ such that for every algebraically closed extension $k'/k$ and line $L \in \bfGr(k')$, we have $L \not\in W(k')$ if and only if $L$ intersects $\calZ_{k'}$ only at smooth points of $\calZ_{k'}$, and transversally at each of these points.  Our interest in the variety $W$ is due to the following lemma.

\begin{lemma} \label{L:topological}
For all lines $L \in (\bfGr_k-W)(\kbar)$, the homomorphism  
\[
\pi_1(U_{\kbar} \cap L ) \to \pi_1(U_{\kbar}) \overset{\rho}{\to} G
\]
has image $G^g$.
\end{lemma}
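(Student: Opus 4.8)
The plan is to reduce the statement to the theory of the fundamental group of a generic hyperplane section, via a Bertini-type argument. First I would note that the statement is geometric, so I may base change to $\kbar$ throughout and work over an algebraically closed field of characteristic zero. The key observation is that for a line $L\notin W(\kbar)$, the intersection $L$ meets the boundary divisor $\calZ_{\kbar}$ transversally at smooth points of $\calZ_{\kbar}$; this is precisely the genericity hypothesis needed for a Lefschetz-type theorem. The strategy is then: (1) the composite $\pi_1(U_{\kbar}\cap L)\to\pi_1(U_{\kbar})$ is surjective for such $L$ (this is a Lefschetz hyperplane theorem for open varieties, due in this form to work going back to Zariski, and available in the \'etale setting); (2) since $\rho\colon\pi_1(U_{\kbar})\to G^g$ is surjective by the definition of $G^g$, the composite $\pi_1(U_{\kbar}\cap L)\to\pi_1(U_{\kbar})\xrightarrow{\rho}G$ has image equal to $G^g$.

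In more detail, for step (1) I would invoke a version of the Lefschetz hyperplane theorem for the \'etale fundamental group of a quasi-projective variety: if $V$ is a smooth quasi-projective variety over an algebraically closed field and $H$ is a sufficiently general hyperplane section (general enough that $H$ is transverse to the boundary of a compactification of $V$ and meets it only at smooth boundary points), then $\pi_1(V\cap H)\to\pi_1(V)$ is surjective provided $\dim V\geq 2$. Here $V = U_{\kbar}$, which is smooth since it is open in $\PP^n_{\kbar}$, and one takes $H$ to be a general line — or rather one iterates, intersecting with $n-1$ general hyperplanes, so that transversality of the resulting line with the boundary $\calZ_{\kbar}$ is guaranteed exactly by the condition $L\notin W(\kbar)$. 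The case $n=1$ is trivial since then $L=\PP^1_{\kbar}$ and $U_{\kbar}\cap L = U_{\kbar}$; and the case $n=2$ or higher is where the Lefschetz theorem has content. The precise reference I would cite is the \'etale-homotopy version of the Lefschetz theorem (e.g. SGA~2 or the refinements allowing non-proper varieties with controlled boundary), noting that the characteristic-zero hypothesis lets one avoid the wild ramification subtleties.

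For step (2), once surjectivity of $\pi_1(U_{\kbar}\cap L)\to\pi_1(U_{\kbar})$ is established, the image of the composite with $\rho$ is just $\rho(\pi_1(U_{\kbar}))$, and by the very definition of $G^g$ in \S\ref{SS:intersection of lines} — as the image of $\pi_1(\calU_{\kbar})$, equivalently $\pi_1(U_{\kbar})$, under $\rho$ — this equals $G^g$. So the substantive content is entirely in the Lefschetz-type surjectivity statement. The main obstacle I anticipate is pinning down the correct general-position hypothesis and matching it exactly to the definition of the closed subvariety $W\subseteq\bfGr_k$: one must check that "$L$ transverse to $\calZ_{\kbar}$ at smooth points of $\calZ_{\kbar}$" is genuinely sufficient for the Lefschetz theorem to apply, which amounts to verifying that such an $L$ can be completed to a Lefschetz pencil of hyperplane sections, or more simply that the local monodromy contributions at the boundary are tame and the Zariski–Lefschetz argument goes through. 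This is standard but requires care; everything else is bookkeeping.
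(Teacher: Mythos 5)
Your proposal is correct in its overall outline and captures the essential reduction: show that $\pi_1(U_{\kbar}\cap L)\to\pi_1(U_{\kbar})$ is surjective for $L\notin W(\kbar)$, then observe that composing with $\rho$ yields $G^g$ by definition. However, the paper takes a noticeably different (and leaner) route for the surjectivity step that sidesteps exactly the ``main obstacle'' you flagged at the end. Rather than invoking a quasi-projective Lefschetz or Zariski theorem and then checking that the transversality condition encoded by $W$ matches the hypotheses of that theorem, the paper (a) chooses an embedding $\kbar\hookrightarrow\CC$ to pass to the topological setting, (b) uses Bertini to get surjectivity for a \emph{generic} line, and (c) argues that $(\bfGr_k-W)(\CC)$ is connected and that the family of punctured lines $L\cap U_\CC$ for $L\in(\bfGr_k-W)(\CC)$ is topologically locally trivial (because the intersection with the boundary is everywhere transversal and of constant cardinality), so the image of $\pi_1(L\cap U_\CC)$ in $G^g$ is constant along the family. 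This ``Bertini plus deformation'' argument makes only the genericity claim you actually need and then transports it to an arbitrary $L\notin W$; you never have to pin down whether the condition defining $W$ is precisely the hypothesis of some named Lefschetz theorem. Your direct citation approach can certainly be made to work (the relevant statement is a Zariski--Hamm--L\^e type theorem for complements in $\PP^n$, not so much SGA~2, which is mainly projective), but it buys nothing over the deformation argument here and requires more care to match hypotheses; the paper's version is both shorter and more self-contained.
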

\begin{proof}
Choosing an embedding $\kbar\hookrightarrow \CC$, it suffices to prove the lemma for an arbitrary line $L \in (\bfGr_k-W)(\CC)$ (the image of $\pi_1(U_\CC)$ under $\rho$ is still $G^g$).  The lemma is true for a generic line by Bertini's theorem, so the result follows by (topologically) deforming $L$ to a generic element in $(\bfGr_k-W)(\CC)$.
\end{proof}

Let $\calW$ be the Zariski closure of $W$ in $\bfGr$.  We now prove an equidistribution theorem for lines $L$ in $\PP^n_{\FF_\p}$ that do not lie in $\calW(\FF_p)$.  It will allow us to reduce our Hilbert irreducibility bounds to the one dimensional setting.

\begin{thm} \label{T:fibered Chebotarev}
Let $C$ be a subset of $G$ that is stable under conjugation such that $\kappa := \varphi(C)$ is a conjugacy class of $\Gal(K/k)$.    Take any prime $\p\in \Sigma_k-S$ for which $\p \nmid |G^g|$ and $(\p,K/k)=\kappa$, and any line $L\in (\bfGr_{\FF_\p}-\calW_{\FF_\p})(\FF_\p)$.  Then 
\[
|\{ u\in\calU(\FF_\p) \cap L(\FF_\p) : \rho(\Frob_u) \subseteq C \}| = \frac{1}{|\kappa|}\frac{|C|}{|G^g|} N(\p) + O_{U}\bigg(\frac{|C|^{1/2}}{|\kappa|^{1/2}}  N(\p)^{1/2} \bigg).
\]
\end{thm}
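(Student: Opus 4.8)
The plan is to restrict $\rho$ to the line $L$, apply the Chebotarev estimate of Proposition~\ref{P:Chebotarev positive char} to the resulting covering of an open subcurve of $L\cong\PP^1_{\FF_\p}$, and read off the main and error terms; the only input that is not a formal manipulation is a specialization argument guaranteeing that the geometric monodromy along $L$ is still all of $G^g$ in characteristic $p$. Concretely, since $L\notin\calW_{\FF_\p}$ the line $L$ meets $\calZ_{\FF_\p}$ only at smooth points and transversally there, so $L\cap\calZ_{\FF_\p}$ is finite and reduced and $M:=\#(L\cap\calZ)(\FFbar_\p)$ is bounded solely in terms of the embedding $U\subseteq\PP^n_k$; hence $U_L:=\calU_{\FF_\p}\cap L$ is a non-empty (irreducible) open subcurve of $L$ with $U_L(\FF_\p)=\calU(\FF_\p)\cap L(\FF_\p)$ and $|U_L(\FF_\p)|=N(\p)+O_U(1)$. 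Since $\p\notin S$, the closed immersions $U_L\hookrightarrow\calU_{\FF_\p}\hookrightarrow\calU_\OO$ induce, after composition with $\rho$, a homomorphism
\[
\rho_L\colon \pi_1(U_L)\longrightarrow \pi_1(\calU_{\FF_\p})\longrightarrow \pi_1(\calU_\OO)\xrightarrow{\rho} G ;
\]
write $G_L$ for its image. For $u\in U_L(\FF_\p)$ the class $\rho_L(\Frob_u)$ is a conjugacy class of $G_L$, and since $C$ is $G$-conjugation stable, $\rho_L(\Frob_u)\subseteq C$ holds precisely when a representative of it lies in $C$.

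The key step, which I expect to be the main obstacle, is to prove $\rho_L\big(\pi_1(U_L\otimes_{\FF_\p}\FFbar_\p)\big)=G^g$. The inclusion ``$\subseteq$'' is easy: $\varphi\circ\rho$ annihilates the image of $\pi_1(\calU_{\kbar})$ in $\pi_1(\calU_\OO)$ by the definition of $G^g$, hence factors through $\pi_1(\calU_\OO)\to\pi_1(\Spec\OO)$, and $U_L\otimes\FFbar_\p\to\calU_\OO$ factors through $\Spec\FFbar_\p$, so the image of $\pi_1(U_L\otimes\FFbar_\p)$ lands in $\ker\varphi=G^g$. For ``$\supseteq$'' I would pass to characteristic $0$: since $\bfGr-\calW$ is open in the smooth $\OO_k$-scheme $\bfGr$ it is smooth over $\OO_k$, so the $\FFbar_\p$-point of $\bfGr-\calW$ determined by $L$ lifts to an $R$-point $\widetilde L$, where $R$ is a complete discrete valuation ring with $\OO_k\to R$, residue field $\FFbar_\p$, and fraction field $F$ of characteristic $0$. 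Its generic fibre $\widetilde L_{\overline F}$ is a line not in $W_{\overline F}$, so by Lemma~\ref{L:topological} the map $\rho$ carries $\pi_1(U_{\widetilde L_{\overline F}})$ onto $G^g$. Because $\p\nmid|G^g|$ the associated connected $G^g$-covering is tame, and the special fibre of the boundary $\widetilde L\cap\calZ$ being reduced (by transversality), this boundary is finite \'etale over $R$; Grothendieck's specialization theorem for the tame (equivalently, prime-to-$p$) fundamental group then identifies the $G^g$-covering of $U_{\widetilde L_{\overline F}}$ with a connected $G^g$-covering of $U_L\otimes\FFbar_\p$, compatibly with the maps to $G$, which gives ``$\supseteq$''. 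The delicate points here will be the good reduction of the boundary divisor over $R$ and the compatibility of this specialization with the maps to $G$. Granting the claim, $G_L/G^g$ is cyclic, being a finite quotient of $\Gal(\FFbar_\p/\FF_\p)$, and is generated by $\sigma:=\varphi(\rho_L(\Frob_u))$ — an element independent of $u$, equal to the Frobenius of $\p$ in $\Gal(K/k)$, which is well defined as an element of $\kappa$ since $(\p,K/k)=\kappa$ — so that $G_L=\varphi^{-1}(\langle\sigma\rangle)$.

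Finally I would apply Proposition~\ref{P:Chebotarev positive char} with $X=L$ of genus $0$, $\FF_q=\FF_\p$, the surjection $\pi_1(U_L)\to G_L$ with geometric image $G^g$ (tame, since $\p\nmid|G^g|$), arithmetic quotient $\langle\sigma\rangle\cong\Gal(\FF_{\p^{d}}/\FF_\p)$, and the set $C':=C\cap\varphi^{-1}(\sigma)$. Routine checks give that $C'\subseteq\varphi^{-1}(\langle\sigma\rangle)=G_L$, that $C'$ is $G_L$-conjugation stable with $\varphi(C')=\{\sigma\}$, that every $C\cap\varphi^{-1}(\tau)$ with $\tau\in\kappa$ has cardinality $|C|/|\kappa|$ (because $C$ is $G$-stable), so $|C'|=|C|/|\kappa|$, and that $\rho_L(\Frob_u)\subseteq C\iff\rho_L(\Frob_u)\subseteq C'$ for all $u\in U_L(\FF_\p)$. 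Proposition~\ref{P:Chebotarev positive char} then bounds $\big|\,|\{u\in U_L(\FF_\p):\rho_L(\Frob_u)\subseteq C'\}|-\tfrac{|C'|}{|G^g|}\,|U_L(\FF_\p)|\,\big|$ by $O_U\big(|C'|^{1/2}N(\p)^{1/2}\big)$ (using $g=0$ and $M=O_U(1)$); substituting $|C'|=|C|/|\kappa|$ and $|U_L(\FF_\p)|=N(\p)+O_U(1)$ and absorbing the $O_U(1)$ correction to the main term into the error (since $\tfrac{|C|/|\kappa|}{|G^g|}\le(|C|/|\kappa|)^{1/2}$) yields exactly
\[
|\{u\in\calU(\FF_\p)\cap L(\FF_\p):\rho(\Frob_u)\subseteq C\}|=\frac{1}{|\kappa|}\frac{|C|}{|G^g|}N(\p)+O_U\!\Big(\frac{|C|^{1/2}}{|\kappa|^{1/2}}N(\p)^{1/2}\Big).
\]
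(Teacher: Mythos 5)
Your argument follows essentially the same route as the paper's: lift the line to characteristic $0$ over a complete dvr, use Lemma~\ref{L:topological} together with Grothendieck's specialization theorem (applicable because $\p\nmid|G^g|$, so the relevant covering is tame) to conclude the geometric monodromy image along the line is still $G^g$, then apply Proposition~\ref{P:Chebotarev positive char} on the genus-$0$ curve with $|C'|=|C|/|\kappa|$ and $M=O_U(1)$. One small difference: you define $C':=C\cap\varphi^{-1}(\sigma)$, whereas the paper takes $C\cap G_\p=C\cap\varphi^{-1}(\langle\sigma\rangle)$; your choice is the more careful one, since $\kappa\cap\langle\sigma\rangle$ need not equal $\{\sigma\}$, and it is $C\cap\varphi^{-1}(\sigma)$ that literally satisfies the hypothesis $\varphi_\p(C')=\{\Frob\}$ of the Chebotarev proposition.
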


\begin{proof}
We first introduce some standard notation.  Let $k_\p$ be the completion of $k$ at the prime $\p$.  Let $\OO_\p^{\un}$ be the ring of integers in the maximal unramified extension of $k_\p^{\un}$ of $k_\p$ (in a fixed algebraic closure $\kbar_\p$).  The ring $\OO_\p^{\un}$ is a complete discrete valuation ring with residue field $\FFbar_\p$.   

By excluding a finite number of $\p\in \Sigma_k-S$ (that depend only on $\calU\subseteq \PP^n_{\OO_k}$, and hence only on $U\subseteq \PP^n_k$), we can assume that  each line $L \in (\bfGr-\calW)(\FF_\p)$ lifts to a line $\calL \in  (\bfGr-\calW)(\OO_\p)$ by Hensel's lemma.  

Let $\calD$ be the scheme theoretic intersection of $\calL$ and $\calZ_{\OO_\p}$.  It is a horizontal divisor of $\calL$ which is \'etale over $\Spec \OO_\p$.    Let $\calV$ be the $\OO_\p$-scheme  $\calL-\calD$.  Choose a point $a_0 \in \calV(\FFbar_\p)$ with a lift $a_1 \in \calV(\OO_\p^{\un})$.  By the Grothendieck specialization theorem, the natural homomorphisms
\[
 \pi_1(\calV_{k_\p^{un}}, a_1) \to \pi_1(\calV_{\OO_\p^{\un}},a_1) \leftarrow \pi_1(\calV_{\FFbar_\p}, a_0)
\]
induce an isomorphism between the prime to $p=\operatorname{char} \FF_\p$ quotients of $\pi_1(\calV_{\kbar},a_1) \xrightarrow{\sim} \pi_1(\calV_{\kbar_\p}, a_1) $ and $\pi_1(\calV_{\FFbar_\p}, a_0)$.  In the present setting, an accessible proof of Grothendieck's theorem can be found in \cite{MR1708609}*{\S4}.     Therefore the homomorphism
\begin{align*}
\pi_1(\calV_{\FFbar_\p},a_0) \to \pi_1(\calV_{\OO_\p^{\un}},a_1) &\to \pi_1(\calU,a_1) \xrightarrow{\rho} G 
\end{align*}
has the same image as $\pi_1(\calV_{\kbar_\p},a_1) \to \pi_1(\calV_{\OO_\p^{\un}},a_1) \to \pi_1(\calU,a_1) \xrightarrow{\rho} G,$ which by Lemma~\ref{L:topological} is $G^g$ (the assumption that $\p\nmid |G^g|$ is needed here).\\

Let $\rho_\p$ be the representation $\pi_1(\calV_{\FF_\p},a_0) \to \pi_1(\calU,a_1)  \xrightarrow{\rho} G$, and denote its image by $G_\p$.  We have just shown that $\rho_\p(\pi_1(\calV_{\FFbar_\p},a_0))= G^g$.   Let $d$ be the index $[G_\p:G^g]$ and let $\FF$ be the degree $d$ extension of $\FF_{\p}$.   We have a short exact sequence
\[
1\to G^g\to G_\p \xrightarrow{\varphi_\p} \Gal(\FF/\FF_\p) \to 1.
\]
Define the set $C'=C\cap G_\p$, which is stable under conjugation in $G_\p$.   For $u\in \calV(\FF_\p)\subseteq \calU(\FF_\p)$, we have $\rho(\Frob_u)\subseteq C$ if and only if $\rho_\p(\Frob_u)\subseteq C'$.  Hence
\[
|\{ u\in\calU(\FF_\p)\cap L(\FF_\p) : \rho(\Frob_u) \subseteq C \}| = |\{ u\in \calV_{\FF_\p}(\FF_\p) : \rho_\p(\Frob_u) \subseteq C'\}|.
\]
Our assumption that $\varphi(C)=\kappa$ and $(\p,K/k)=\kappa$ implies that the set $\varphi_\p(C')$ consists of just the $N(\p)$-th power Frobenius automorphism.  Therefore by Proposition~\ref{P:Chebotarev positive char}
\[
\Big||\{ u\in\calU(\FF_\p)\cap L(\FF_\p) : \rho(\Frob_u) \subseteq C \}| - \frac{|C'|}{|G^g|} |\calU(\FF_\p)| \Big|
\leq |C'|^{1/2} (1-|G^g|^{-1})^{1/2}  (2\cdot 0-2+|\calD(\FFbar_\p)|) N(\p)^{1/2},
\]
where we have used that geometrically $\rho_\p$ is at worst tamely ramified (since $\p\nmid |G^g|)$.

Since $ D \to \Spec\OO_{\p}$ is \'etale and $\calL_{\kbar} \not\in W(\kbar)$, we have $|D(\FFbar_\p)|=| D(\kbar)| \ll_U 1.$  So
\begin{align*}
&\Big||\{ u\in\calU(\FF_\p)\cap L(\FF_\p) : \rho(\Frob_u) \subseteq C \}| - \frac{|C'|}{|G^g_\p|} |\calU(\FF_\p)| \Big|\\
\leq& |C'|^{1/2} (1-|G_\p^g|^{-1})^{1/2}  (-2+| D(\kbar)|) N(\p)^{1/2} \ll_U |C'|^{1/2} N(\p)^{1/2}.
\end{align*}
The theorem follows by noting that $|C'|=|C|/|\kappa|$.
\end{proof}

For a line $\calL \not\in W(k)$, we can consider its reduction $\calL_{\FF_\p}$ in $\bfGr(\FF_\p)$ for primes $\p\in\Sigma_k$.   To apply Theorem~\ref{T:fibered Chebotarev} we need that $\calL_{\FF_\p}$ does not lie in $\calW_{\FF_\p}$.  The follow lemma controls the number of primes that have this property (this will be important later when we vary the line $\calL$).   Choose an embedding $\bfGr_k\hookrightarrow \PP^N_k$ (for example, the Pl\"ucker embedding with $N= \binom{n+1}{2}$), and let $H$ be a height on $\bfGr_k$ coming from the height on $\PP^N_k$.

\begin{lemma} \label{L:height for log B term}
For any line $\calL \in \bfGr(k)-W(k),$
\[
\sum_{\substack{\p \in \Sigma_k-S \\ \calL_{\FF_\p}\, \in \calW(\FF_\p)}} \log N(\p)   \ll_{U} \log H(\calL) + O(1)
\]
where the implied constant depends only on $U\subseteq \PP^n_k$ (and in particular not on $\calL$).  
\end{lemma}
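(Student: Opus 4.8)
The plan is to reduce this to the elementary principle that a $k$-rational point of a projective space that avoids a fixed closed subscheme can reduce \emph{into} that subscheme modulo $\p$ only for primes $\p$ dividing one fixed nonzero element $\gamma\in\OO_k$, and then to bound the ideal norm $N((\gamma))$ by the height of the point.

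First I would fix integral equations for $W$. Using the chosen closed immersion $\bfGr_k\hookrightarrow\PP^N_k$, I view $W$ as a closed subvariety of $\PP^N_k$ and write $W=V(G_1,\dots,G_r)$ for homogeneous polynomials over $k$; clearing denominators I may take $G_1,\dots,G_r\in\OO_k[x_0,\dots,x_N]$, which alters no zero locus over $k$. Since $\bfGr_{\OO_k}$ is closed in $\PP^N_{\OO_k}$, the scheme $\calW$ is the Zariski closure of $W$ in $\PP^N_{\OO_k}$, and the closed subscheme $V(G_1,\dots,G_r)\subseteq\PP^N_{\OO_k}$ contains $W$, hence contains $\calW$; therefore $\calW_{\FF_\p}\subseteq V(\overline{G_1},\dots,\overline{G_r})$ in $\PP^N_{\FF_\p}$ for \emph{every} $\p$.

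Next I would normalize the line and extract a witness. I choose integral homogeneous coordinates $a=(a_0,\dots,a_N)\in\OO_k^{\,N+1}$ for $\calL\in\bfGr(k)\subseteq\PP^N(k)$ whose $\gcd$-ideal $\aA=(a_0,\dots,a_N)$ has $N(\aA)\ll_k1$ (the standard normalization of projective coordinates: take $\aA$ among finitely many ideal-class representatives), so that
\[
\prod_{v\mid\infty}\big(\max_i|a_i|_v\big)^{[k_v:\RR]}\;=\;H(\calL)\cdot N(\aA)\;\ll_k\;H(\calL).
\]
Since $\calL\notin W(k)$, at least one $\gamma:=G_{j_0}(a)$ is a nonzero element of $\OO_k$; here $j_0$ depends on $\calL$, but only the finitely many polynomials $G_1,\dots,G_r$ occur, so the constants below can be taken uniform in $j_0$. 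For a prime $\p\in\Sigma_k-S$ that contributes to the sum and satisfies $\p\nmid\aA$, the reduction $\overline a\in\PP^N(\FF_\p)$ is well defined and lies in $\calW_{\FF_\p}\subseteq V(\overline{G_1},\dots,\overline{G_r})$, so $\overline{G_{j_0}}(\overline a)=0$, i.e.\ $\p\mid\gamma$; the $O_k(1)$ primes dividing $\aA$ contribute at most $\log N(\aA)\ll_k1$ and may be discarded.

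Finally I would bound $\gamma$ by the height. Using $\ord_\p(\gamma)\ge1$ for $\p\mid\gamma$,
\[
\sum_{\substack{\p\in\Sigma_k-S\\ \calL_{\FF_\p}\in\calW(\FF_\p)}}\log N(\p)\;\le\;O_k(1)+\sum_{\p\mid\gamma}\log N(\p)\;\le\;O_k(1)+\log N\big((\gamma)\big),
\]
and since $G_{j_0}$ is a fixed homogeneous polynomial of degree $e$ with $\OO_k$-coefficients, the triangle inequality gives $|\gamma|_v=|G_{j_0}(a)|_v\ll_U\big(\max_i|a_i|_v\big)^e$ at each archimedean $v$, while $N((\gamma))=\prod_{v\mid\infty}|\gamma|_v^{[k_v:\RR]}$; hence
\[
\log N\big((\gamma)\big)=\sum_{v\mid\infty}[k_v:\RR]\log|\gamma|_v\;\ll_U\;1+e\sum_{v\mid\infty}[k_v:\RR]\log\max_i|a_i|_v\;\ll_U\;1+e\log H(\calL).
\]
Combining the two displays proves the lemma, with an implied constant depending only on $W$, the $G_i$, and the fixed embedding --- i.e.\ only on $U\subseteq\PP^n_k$ (and on $k$).

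The only delicate point is the normalization in the third paragraph: the coordinates of $\calL$ must be integral and essentially coprime so that $\prod_{v\mid\infty}\max_i|a_i|_v$ truly recovers $H(\calL)$ up to a $k$-bounded factor, and the harmless finite set $\{\p:\p\mid\aA\}$ must be folded into the additive constant. The rest is Zariski-closure bookkeeping plus the crude archimedean bound on $|G_{j_0}(a)|_v$.
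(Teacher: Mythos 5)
Your proof is correct, and in substance it is the same idea as the paper's: the primes at which $\calL$ reduces into $\calW$ are detected by a fixed polynomial that vanishes on $W$ but not at $\calL$, and their $\log N(\p)$-sum is then controlled by the height of $\calL$. The packaging, however, differs in a way that is worth noting. The paper fixes a non-constant $\phi\colon\bfGr_k\to\PP^1_k$ with $\phi^{-1}([0:1])\supseteq W$ and then applies Lemma~\ref{L:Weil height} to the pair $\phi(\calL)$, $[0:1]$. Taken literally this is problematic: when $\dim\bfGr_k\ge 2$ (which holds for all $n\ge 2$, and e.g.\ $\bfGr(1,2)\cong\PP^2$) there is no non-constant \emph{morphism} to $\PP^1_k$, so $\phi$ must be read as a rational map $[G:H]$, and one still has to ensure that $\calL$ avoids the indeterminacy locus and that $\phi(\calL)\ne[0:1]$ — that is, that $G(\calL)\ne 0$ — which is not automatic from $\calL\notin W(k)$ unless one is allowed to replace $G$ depending on $\calL$. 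Your version makes exactly this extra move explicit: you fix generators $G_1,\dots,G_r$ of the ideal of $W$ over $\OO_k$, pick $j_0$ with $\gamma=G_{j_0}(a)\neq 0$ (possible precisely because $\calL\notin W(k)$), note that every relevant prime divides $\gamma$, and bound $\log N((\gamma))$ directly by the product formula and the archimedean triangle inequality; the dependence on $j_0$ is harmless since there are only finitely many $G_i$. This is an elementary and self-contained route that avoids the delicate point above. One small slip: the identity in your normalization step should read $\prod_{v\mid\infty}(\max_i|a_i|_v)^{[k_v:\RR]}=H(\calL)^{[k:\QQ]}\cdot N(\aA)$ (the exponent $[k:\QQ]$ on the absolute height is missing); since $[k:\QQ]$ is a constant this is absorbed into the implied constant and does not affect the conclusion.
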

\begin{proof} 
Fix a non-constant morphism $\phi \colon \bfGr_k \to \PP_k^1$ for which $\phi^{-1}([0:1])\supseteq W$.  By choosing a model of $\phi$ over $\OO_k$, we will have morphisms $\bfGr_{\FF_\p} \to \PP^1_{\FF_\p}$ of special fibers such that $\calW_{\FF_\p}$ lies in the fibre above $[0:1]$ for most $\p$.  Therefore
\begin{align*}
\sum_{\substack{\p \in \Sigma_k-S \\ \calL_{\FF_\p} \,\in \calW(\FF_\p)}} \log N(\p)  & \leq  \sum_{ \substack{\p \in \Sigma_k-S \\ \phi(\calL) \bmod{\p} = [0:1] \in \PP^1(\FF_\p)}} \log N(\p)  + O(1) \ll_k \log H(\phi(\calL)) + O(1)
\end{align*}
by Lemma~\ref{L:Weil height}.  Finally, note that $\log H(\phi(\calL)) \ll_\phi \log H(\calL) + O(1)$ (cf.~\cite{MR1757192}*{\S2.6}).
\end{proof}

\section{Proof of Theorem~\ref{T:Main}}  \label{S:main proof}

\subsection{Proof of Theorem~\ref{T:Main}(i)} \label{SS:Integral point version}
Fix notation as in \S\ref{SS:main} and \S\ref{SS:intersection of lines}.  Without loss of generality, we may assume that $\calU$ is an open subscheme of $\AA^n_{\OO_k}=\Spec \OO_k[x_1,\ldots, x_n]$ where we view $\AA^n_{\OO_k}$ as an open subscheme of $\PP^n_{\OO_k}$ via the map $(x_1,\ldots, x_n) \mapsto [x_1,\ldots, x_n,1]$.
Let 
\[
\calL \colon \AA^{n-1}_k \to \bfGr_k,\,\quad b \mapsto \calL_b
\]
be the morphism for which $\calL_b$ is the line defined by $x_1=b_1,\ldots, x_{n-1}=b_{n-1}$ for $b=(b_1,\ldots, b_{n-1})$.  Without loss of generality, we may assume that the image of the morphism $\calL$ does not lie in $W\subsetneq \bfGr_k$ (if not, then we can arrange this by an initial change of coordinates).  \\

We then have a disjoint union
\[
\{u\in \calU(k) \cap \OO_k^n: \norm{u}\leq B \} = \bigsqcup_{b \in \OO_k^{n-1},\, \norm{b} \leq B} \{  (b_1,\ldots, b_{n-1}, a) \in \calL_b \cap \calU(k): a\in \OO_k,\, \norm{a} \leq B \}.
\]
We first consider those $b$ for which $\calL_b\in W(k)$.  Since $W$ does not lie in the image of $\calL \colon \AA_k^{n-1} \to \bfGr_k$, we find that $\calL^{-1}(W)$ is a closed subvariety of $\AA^{n-1}_k$ of codimension $\geq 1$.   So using trivial bounds for each of these lines, we have
\begin{align*}
& \sum_{\substack{b \in \OO_k^{n-1},\, \norm{b} \leq B \\ \calL_b \in W(k)}} |\{  u=(b_1,\ldots, b_{n-1}, a) \in \calL_b \cap \calU(k): a\in\OO_k,\, \norm{a} \leq B,\, G_u \subseteq C \}| \\
 \ll_k &  \, B^{[k:\QQ]} \cdot |\{ b \in \OO_k^{n-1}: \norm{b} \leq B,\, \calL_b \in W(k) \}| \ll_U B^{[k:\QQ]} \cdot B^{[k:\QQ](n-2)} = B^{[k:\QQ](n-1)}.
\end{align*}
This gives:
\begin{align} \label{E:sum decomp}
& |\{ u \in \calU(k)\cap \OO_k^n : \norm{u}\leq B,\, G_u\subseteq C\}| + O_U(B^{[k:\QQ](n-1)})  \\
\notag =& \sum_{\substack{b \in \OO_k^{n-1},\, \norm{b} \leq B \\ \calL_b \not\in W(k)}} |\{  (b_1,\ldots, b_{n-1}, a) \in \calL_b \cap \calU(k): a\in\OO_k,\, \norm{a} \leq B,\, G_u \subseteq C \}| \\
\notag \ll_{U} &\, B^{[k:\QQ](n-1)}  \max_{\substack{b \in \OO_k^{n-1},\, \norm{b} \leq B \\ \calL_b \not\in W(k)}} |\{  u=(b_1,\ldots, b_{n-1}, a) \in \calL_b \cap \calU(k): a\in\OO_k,\, \norm{a} \leq B,\, G_u \subseteq C \}|.\\ \notag
\end{align}

Now fix any $b \in \OO_k^{n-1}$ with $\norm{b} \leq B$ for which  $\calL_b \not\in W(k)$.    Let $\calA$ be the set of $u=(b_1,\ldots, b_{n-1},a) \in \calU(k)$ with $a\in \OO_k$ for which $\norm{a}\leq B$ and $G_u\subseteq C$.  We will show that
\begin{equation}\label{E:down to a line}
|\calA| \ll_{U}  |G^g|^2 \exp\bigg(\sum_{\substack{\p\in S \text{ with $\deg(\p)=1$}\\ \text{and $N(\p)\geq |G^g|^2$}}} \frac{\log N(\p)}{N(\p)}\bigg)  B^{[k:\QQ]\delta} \log B.
\end{equation}
Applying this to (\ref{E:sum decomp}) then gives
\[
|\{ u \in \calU(k)\cap \OO_k^n : \norm{u}\leq B,\, G_u\subseteq C\}|   \ll_{U}  |G^g|^2 \exp\bigg(\sum_{\substack{\p\in S \text{ with $\deg(\p)=1$}\\ \text{and $N(\p)\geq |G^g|^2$}}} \frac{\log N(\p)}{N(\p)}\bigg)  B^{[k:\QQ](n-1+\delta)} \log B.
\]
which will complete the proof of Theorem~\ref{T:Main}(i).\\

With our fixed $b$, we will now prove (\ref{E:down to a line}).  Let $T$ be the finite set of primes $\p\in\Sigma_k$ for which either $\p$ divides $|G^g|$ or for which $\calL_b \bmod{\p} \in \calW(\FF_\p)$.  Take any $\p \in \Sigma_k-(S\cup T)$,  and let $g_\p$ be the cardinality of the image of $\calA$ under the reduction modulo $\p$ map $\OO_k^n \mapsto \FF_\p^n$.  Let $\kappa$ be the conjugacy class $(\p,K/k)$ of $\Gal(K/k)$.    By Theorem~\ref{T:fibered Chebotarev},
\begin{align*}
g_\p \leq \frac{1}{|\kappa|}\frac{|C_\kappa|}{|G^g|} N(\p) + O_{U}\Big(\frac{|C_\kappa|^{1/2}}{|\kappa|^{1/2}}  N(\p)^{1/2} \Big) &\leq \frac{1}{|\kappa|}\frac{|C_\kappa|}{|G^g|} \bigg( N(\p) + O_{U}\Big(\frac{|\kappa|^{1/2}}{|C_\kappa|^{1/2}} |G^g|  N(\p)^{1/2} \Big) \bigg)\\
& \leq \delta \Big( N(\p) + c_0 |G^g|  N(\p)^{1/2}  \Big)
\end{align*}
where $c_0\geq 1$ is a constant depending only on $U\subseteq \PP^n_k$.  By Proposition~\ref{P:specialized larger sieve} (ii), we have the bound
\begin{align} \label{E:almost there}
|\calA|& \ll_{U}  |G^g|^2 \exp\bigg(\sum_{\p\in S \cup T \text{ with $\deg(\p)=1$ and $N(\p)\geq |G^g|^2$}} \frac{\log N(\p)}{N(\p)}\bigg)  B^{[k:\QQ]\delta}.
\end{align}
If $\p$ divides $|G^g|$, then $\deg(\p)=1$ and $N(\p)\geq |G^g|^2$ cannot both hold; thus these primes do not contribute to (\ref{E:almost there}).

For any non-empty finite set $R\subseteq \Sigma_k$, we have $\exp\big(\sum_{\p\in R} \frac{\log N(\p)}{N(\p)}\big) \ll_k \sum_{\p\in R} \log N(\p)$ \cite{MR1395936}*{Corollary~2.3}.   This and Lemma~\ref{L:height for log B term} give us
\[
\exp\bigg(\sum_{\substack{\p \in \Sigma_k-S,\, \calL_b \bmod{\p} \in \calW(\FF_\p) \\N(\p)\geq |G^g|^2}} \frac{\log N(\p)}{N(\p)} \bigg) \ll_k \sum_{\substack{\p \in \Sigma_k-S\\ \calL_b \bmod{\p} \in \calW(\FF_\p)} }\log N(\p) +O(1)  \ll_U \log H(\calL_b) + O(1).
\]
Observe that $\log H(\calL_b) \ll_{\calL} \log H(b) + O(1) \ll_k \log B$ (cf.~\cite{MR1757192}*{\S2.6}).
Combining these additional bounds with (\ref{E:almost there}) gives the desired bound (\ref{E:down to a line}).

\subsection{Proof of Theorem~\ref{T:Main}(ii)} \label{SS:reduction to rational case}
We will reduce to the integral points case using the following proposition (see \cite{MR1757192}*{\S13.4}).

\begin{prop} \label{P:integral to rational} 
Let $k$ be a number field and $n$ a positive integer.  There is a constant $c_0=c_0(k,n)$ such that every point $x\in \PP^n(k)$ is representable by coordinates $a=(a_0,\ldots, a_n)\in \OO_k^{n+1}$  with 
\[
\norm{a}\leq c_0 H(x).
\]
\end{prop}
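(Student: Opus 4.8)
The plan is to exploit the finiteness of the ideal class group of $k$ to replace $x$ by a representative with coordinates in $\OO_k$ whose content ideal lies in a fixed finite list, and then to use Dirichlet's unit theorem to multiply through by a unit that equalizes the archimedean sizes of the coordinates. Since $c_0$ is allowed to depend on $k$ and $n$, all that matters is that every auxiliary constant produced along the way is finite (in fact the one we produce will depend only on $k$).

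First I would attach to $x=[x_0:\cdots:x_n]$ the fractional ideal $\mathfrak a=x_0\OO_k+\cdots+x_n\OO_k$, fix integral ideals $\mathfrak c_1,\dots,\mathfrak c_h$ representing the ideal classes of $k$, and choose an index $j$ together with an element $\lambda\in k^\times$ such that $\lambda\mathfrak a=\mathfrak c_j$. Putting $a_i:=\lambda x_i$ does not change the point $x$, gives $a=(a_0,\dots,a_n)\in\OO_k^{n+1}$, and makes the content ideal $\sum_i a_i\OO_k$ equal to $\mathfrak c_j$; in particular $N(\sum_i a_i\OO_k)\le\max_l N(\mathfrak c_l)$, a quantity depending only on $k$.

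Next I would read off the archimedean size of $a$ from the height. Writing the absolute height as a product over all places of $k$ and using that the non-archimedean part equals $N(\mathfrak c_j)^{-1/[k:\QQ]}$, one gets
\[
\prod_{v\mid\infty}M_v^{[k_v:\RR]}=H(x)^{[k:\QQ]}\,N(\mathfrak c_j),\qquad M_v:=\max_i|a_i|_v ,
\]
and hence $\frac{1}{[k:\QQ]}\sum_{v\mid\infty}[k_v:\RR]\log M_v=\log H(x)+O_k(1)$. Thus the vector $(\log M_v-\log H(x))_{v\mid\infty}$ lies within a bounded distance (depending only on $k$) of the trace-zero hyperplane $\{(y_v):\sum_v[k_v:\RR]y_v=0\}$. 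By Dirichlet's unit theorem the image $\Lambda$ of $\OO_k^\times$ under $u\mapsto(\log|u|_v)_{v\mid\infty}$ is a full-rank lattice in that hyperplane, so it admits a relatively compact fundamental domain; choosing $u\in\OO_k^\times$ that translates our vector into such a domain yields $|\log M_v+\log|u|_v-\log H(x)|\le c_k$ for all $v\mid\infty$, with $c_k$ depending only on $k$. Replacing each $a_i$ by $ua_i$ — still in $\OO_k$, still representing $x$, content ideal unchanged — then gives $|ua_i|_v\le e^{c_k}H(x)$ for every $i$ and every archimedean $v$, i.e. $\norm{(ua_0,\dots,ua_n)}\le e^{c_k}H(x)$, so we may take $c_0=e^{c_k}$.

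The only step that is not routine bookkeeping is the last one: a bound on the \emph{product} $\prod_{v\mid\infty}M_v^{[k_v:\RR]}$ says nothing by itself about the individual $M_v$, since the coordinates could be enormous at one infinite place and minuscule at another. The purpose of the unit multiplication is precisely to redistribute the size evenly across the archimedean places, and turning this intuition into a quantitative statement is exactly the combination of Dirichlet's unit theorem with the compactness of a fundamental domain of the unit lattice; this is the argument carried out in \cite{MR1757192}*{\S13.4}.
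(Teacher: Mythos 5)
Your proof is correct and is the standard argument --- finiteness of the class group to normalize the content ideal, then Dirichlet's unit theorem together with compactness of a fundamental domain for the log-unit lattice to balance the archimedean sizes --- which is precisely what the paper defers to in citing \cite{MR1757192}*{\S13.4}; the paper itself supplies no proof. Your observation that the resulting constant depends only on $k$ and not on $n$ is a small but correct sharpening of the stated bound.
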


Let $f\colon \AA^{n+1}_{k}\setminus\{(0,\ldots,0)\} \to \PP^n_{k}$ be the morphism $(x_0,\ldots,x_n)\mapsto [x_0,\ldots, x_n]$.  Without loss of generality, we may assume that $U$ lies in the image of $f$.  Let $U'$ be the inverse image of $U$ under $f$; it is a non-empty open subscheme of $\AA^{n+1}_{k}$.    Define the representation
\[
\rho'\colon \pi_1(U') \to \pi_1(U)\xrightarrow{\rho} G
\]
where the first homomorphism arises from $f$.
For each $u' \in U'(k)$, we have a representation $\Gal(\kbar/k) \xrightarrow{u'_*} \pi_1(U') \to G$ whose image we denote by $G_{u'}$.  For $u' \in U'(k)$, the groups $G_{u'}$ and $G_u$ are conjugate in $G$ where $u=f(u') \in U(k)$.  By Proposition~\ref{P:integral to rational},
\begin{align*}
& |\{ u \in U(k) : H(u)\leq B,\, G_u\subseteq C\}| \leq |\{ u' \in U'(k)\cap \OO_k^{n+1} : \norm{u'}\leq c_0 B,\, G_{u'}\subseteq C\}|.
\end{align*}
By Theorem~\ref{T:Main}(i), which was proved in the previous section, this is $O_U(c (c_0 B)^{[k:\QQ](n+\delta)}\log(c_0 B))$ and hence also $O_U(cB^{[k:\QQ](n+\delta)} \log B)$.

\section{Elliptic curves} \label{S:EC}
\subsection{Set up}   
Fix a number field $k$.  Let $\pi\colon E\to U$ be an elliptic curve where $U$ is a non-empty open subvariety of $\PP^n_k$ (recall this means that $\pi$ is a proper smooth morphism whose fibers are geometrically connected curves of genus 1, together with a section $\OO$ of $\pi$).    For each point $u\in U(k)$, the fiber of $\pi$ over $u$ is an elliptic curve $E_u$ over $k$.   Let $\eta$ be the generic point of $U$; the generic fiber $E_\eta$ is an elliptic curve over the function field $k(U)$.    

Fix a geometric generic point $\bbar\eta$ of $U$ (equivalently, fix an algebraic closure $\overline{k(U)}$ of $k(U)$).  For each positive integer $m$, let $E[m]$ be the $m$-torsion subscheme of $E$.    The morphism $E[m]\to U$ is finite \'etale and as a lisse sheaf  corresponds to a $(\ZZ/m\ZZ)$-representation of $\pi_1(U,\bbar\eta)$ on the geometric generic fiber $E[m]_{\bbar\eta}=E_{\bbar\eta}[m]$.  We thus have a continuous homomorphism
\[
\rho_{E,m} \colon \pi_1(U,\bbar\eta) \to \Aut(E[m]_{\bbar\eta})\cong\GL_2(\ZZ/m\ZZ)
\]
which is uniquely defined up to an inner automorphism.  Let $\calH_{E}(m)$ be the image under $\rho_{E,m}$ of $\pi_1(U,\bbar\eta)$.  Combining all our representations together, we obtain a single continuous homomorphism
\[
\rho_{E} \colon \pi_1(U,\bbar\eta) \to \GL_2(\Zhat).
\]
Let $\calH_{E}$ be the image under $\rho_{E}$ of the groups $\pi_1(U,\bbar\eta)$.
 
 There is a unique morphism $j \colon U\to \AA^1_k$ such that $j(u)$ is the $j$-invariant of $E_u$ for all $u\in U(k)$.   Assume that $E\to U$ is \emph{non-isotrivial}; i.e., $j\colon U\to \AA^1_k$ is non-constant (equivalently, the $j$-invariant of $E_\eta$ does not belong to $k$).\\
 
 In \S\ref{SS:intro EC}, we started with an elliptic curve over $k(T_1,\ldots, T_n)$, which to avoid confusion we will call $\tilde{E}$.  Choosing a specific model, we described a closed subvariety $Z$ of $\AA^n_k:=\Spec k[T_1,\ldots, T_n]$ (whose $k$-points we denoted by $\Omega$) such that specializing our model at any $k$-point $t$ of $U:=\AA^n_k -Z$ gave an elliptic curve.  This describes an elliptic curve $E$ over $U$ whose generic fiber is the original $\tilde E$.   Theorems~\ref{T:HIT EC} and \ref{T:EC Main Q} are thus equivalent to:
 
 \begin{thm} \label{T:HIT EC final}  Fix notation as above.
 \begin{romanenum}
 \item
If $k\neq \QQ$, then 
\begin{align*}
\frac{|\{ u\in U(k): H(u)\leq B,\, \rho_{E_u}(\Gal(\kbar/k))= \calH_E \}|}{|\{ u\in U(k) : H(u)\leq B \}|} &= 1 +O\big( B^{-1/2}\log B\big) \quad \text{and}\\
\frac{|\{ u \in U(k)\cap \OO_k^n: \norm{u}\leq B,\, \rho_{E_u}(\Gal(\kbar/k))= \calH_E \}|}{|\{ u\in U(k)\cap \OO_k^n : \norm{u}\leq B \}|} &= 1  +O\big( B^{-1/2}\log B\big).
\end{align*}
\item
If $k= \QQ$, then for any $\varepsilon>0$ we have
\begin{align*} 
\frac{\big|\big\{ t\in U(\QQ) : H(t)\leq B,\, \big[\calH_E : \rho_{E_t}(\Gal(\Qbar/\QQ))\big] = r \big\}\big|}{|\{ t\in \QQ^n : H(t)\leq B \}|} &=1 +O(B^{-1/2+\varepsilon}) \quad\text{and}\\
\frac{\big|\big\{ t\in U(\QQ) \cap \ZZ^n : \norm{t}\leq B,\, \big[\calH_E : \rho_{E_t}(\Gal(\Qbar/\QQ))\big] = r \big\}\big|}{|\{ t\in \ZZ^n : \norm{t}\leq B \}|} &=1 +O(B^{-1/2+\varepsilon})
\end{align*}
where $r$ is the index of $[\calH_E,\calH_E]$ in $\calH_E\cap \SL_2(\Zhat)$.
\end{romanenum}
The implicit constants depend on $E\to U$ and $k$, and also $\varepsilon$ in (ii).
\end{thm}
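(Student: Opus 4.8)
The plan is to combine the quantitative Hilbert irreducibility bounds proved above — above all Proposition~\ref{P:mod l prototype}, which is the case $\rho=\rho_{E,\ell}$ of Theorem~\ref{T:Main} — with the effective form of Serre's open image theorem due to Masser and W\"ustholz, the bridge between the two being a short piece of group theory that replaces the (infinite) condition on $\rho_{E_u}$ by finitely many congruence conditions. First, since $E\to U$ is non-isotrivial, its geometric monodromy $\calH_E^g:=\rho_E(\pi_1(U_{\kbar}))$ is open in $\SL_2(\Zhat)$; as the restriction of $\det\circ\rho_E$ to $\pi_1(\Spec k)$ is the cyclotomic character of $k$, with open image in $\Zhat^\times$, the group $\calH_E$ is open in $\GL_2(\Zhat)$, and we fix an integer $m_0\geq 1$ with $\calH_E$ the preimage of $\calH_E(m_0)$ in $\GL_2(\Zhat)$. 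The $u\in U(k)$ for which $E_u$ has complex multiplication lie in $j^{-1}(J)$, with $J\subset k$ the finite set of CM $j$-invariants contained in $k$, hence on a proper closed subvariety of $U$; these $u$ contribute $O(B^{[k:\QQ](n-1)})$, which by~(\ref{E:comparison}) is absorbed into the error terms, so we discard them. By Proposition~\ref{P:integral to rational} it is enough to prove the integral-point assertions, the projective ones following by the affine-cone argument of \S\ref{SS:reduction to rational case}; so henceforth $u$ ranges over $U(k)\cap\OO_k^n$ with $\norm{u}\leq B$ and $E_u$ non-CM.

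\emph{Reduction to congruences.} A standard analysis of closed subgroups of $\prod_\ell\GL_2(\ZZ_\ell)$ — Goursat's lemma, the perfectness of $\SL_2(\ZZ_\ell)$ for $\ell\geq 5$, and Serre's lemma that mod-$\ell$ surjectivity onto the relevant group lifts $\ell$-adically — yields a fixed integer $m_1$, a multiple of $m_0$ depending only on $E\to U$, such that for non-CM $E_u$ the desired equality in Theorem~\ref{T:HIT EC final} holds if and only if
\[
\text{(i) }\ \rho_{E_u,m_1}(\Gal(\kbar/k))\in\calC \qquad\text{and}\qquad \text{(ii) }\ \rho_{E_u,\ell}(\Gal(\kbar/k))\supseteq\SL_2(\ZZ/\ell\ZZ)\ \text{ for all primes }\ell\nmid m_1,
\]
where $\calC$ is a suitable conjugation-stable family of subgroups of $\calH_E(m_1)$ that contains $\calH_E(m_1)$ itself. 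When $k\neq\QQ$ one may take $\calC=\{\calH_E(m_1)\}$. When $k=\QQ$ one builds into $\calC$ the requirement that $\rho_{E_u,m_1}(\Gal)$ contain $[\calH_E(m_1),\calH_E(m_1)]$ together with the appropriate principal congruence subgroups: since $\det\circ\rho_{E_u}$ is the cyclotomic character and $\QQ^\cyc=\QQ^\ab$ by Kronecker--Weber, $\rho_{E_u}(\Gal(\Qbar/\QQ))\cap\SL_2(\Zhat)$ equals the commutator subgroup of $\rho_{E_u}(\Gal(\Qbar/\QQ))$, hence is contained in $[\calH_E,\calH_E]$, so (i)--(ii) force $\rho_{E_u}(\Gal(\Qbar/\QQ))\cap\SL_2(\Zhat)=[\calH_E,\calH_E]$ and therefore $[\calH_E:\rho_{E_u}(\Gal(\Qbar/\QQ))]=r$. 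In all cases, Hilbert irreducibility applied to the fixed finite cover $\rho_{E,m_1}\colon\pi_1(U)\to\calH_E(m_1)$, via the Cohen--Serre bound (Theorem~\ref{T:HIT large}), gives (i) for all $u$ outside a set of size $\ll_E B^{[k:\QQ](n-1/2)}\log B$.

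\emph{The primes $\ell\nmid m_1$.} Here the effective open image theorem is used: for a non-CM $E_u/k$ one has $[\GL_2(\Zhat):\rho_{E_u}(\Gal(\kbar/k))]\ll_k\max(1,h(E_u))^{\gamma}$, with $\gamma$ depending only on $[k:\QQ]$ and $h$ a suitable height; since $h(E_u)\ll_E\log B$ for $\norm{u}\leq B$, any prime $\ell$ at which (ii) fails satisfies $\ell\leq L:=c_E(\log B)^{\gamma}$. We bound the number of such $u$ by splitting the range $\ell\leq L$, $\ell\nmid m_1$, at a threshold $\ell_0$: the finitely many $\ell\leq\ell_0$ contribute $\ll_E B^{[k:\QQ](n-1/2)}\log B$ by Theorem~\ref{T:HIT large}, while for $\ell_0<\ell\leq L$, Proposition~\ref{P:mod l prototype} gives a contribution
\[
\ll_E\sum_{\ell_0<\ell\leq L}\ell^6\,B^{[k:\QQ](n-1/2+O(1/\ell))}\log B\ \ll_E\ L^{7}\,B^{[k:\QQ](n-1/2)+O([k:\QQ]/\ell_0)}\log B.
\]
Choosing $\ell_0$ large in terms of $\varepsilon$ (and $k$) forces the exponent below $[k:\QQ](n-1/2)+\varepsilon$ and absorbs $L^{7}=(\log B)^{O(1)}$; this gives Theorem~\ref{T:HIT EC final}(ii). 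For $k\neq\QQ$, where the error improves to $B^{-1/2}\log B$, one argues more carefully in the medium range: the maximal subgroups of $\GL_2(\ZZ/\ell\ZZ)$ with small $\delta$ (nonsplit Cartan normalizer, exceptional) are handled by the larger sieve (Theorem~\ref{T:Main}) with a fixed threshold $\ell_0=3$, placing their contribution strictly below $B^{[k:\QQ](n-1/2)}$, while the Borel and split-Cartan strata are treated via the geometry of the modular curves $X_0(\ell)$ and $X_{\mathrm{sp}}(\ell)$, so that the main term is carried entirely by the fixed cover $\rho_{E,m_1}$.

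\emph{Main obstacle.} The real content sits in the input Proposition~\ref{P:mod l prototype}: a Hilbert irreducibility estimate for $\rho_{E,\ell}$ that is simultaneously explicit and polynomial in $\ell$, since without polynomial control of the constant in $\ell$ the sum over the range $\ell\leq L=(\log B)^{\gamma}$ would be worthless. This is precisely what the uniform Chebotarev bound over lines (Theorem~\ref{T:fibered Chebotarev}), the Grassmannian height estimate (Lemma~\ref{L:height for log B term}), and the explicit larger sieve (Proposition~\ref{P:specialized larger sieve}) are built to supply, and the uniformity needed there — reducing the $n$-dimensional count to a count on lines and keeping every estimate uniform as the line varies — is the technical heart of the argument. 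The remaining difficulties are more routine: the group theory of the second step (openness of $\calH_E$ and the existence of $m_1$, the Goursat-type gluing of the local conditions) and, for $k=\QQ$, the verification — sketched around Theorem~\ref{T:EC Main Q} — that the Kronecker--Weber obstruction cuts the index down to exactly $r$ and that subgroups of $\calH_E$ realizing this index actually occur.
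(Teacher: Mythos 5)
The overall skeleton is right — split into a "small primes / fixed level" part handled by the large sieve, a "medium primes" part handled by the larger sieve with explicit polynomial-in-$\ell$ constants, and a "large primes" part killed by Masser--W\"ustholz — and these ingredients do correspond to Propositions~\ref{P:EC HIT first}, \ref{P:EC HIT all primes} and Theorem~\ref{T:M-W} in the paper. But there is a genuine gap in the ``reduction to congruences'' step, and it is precisely the point where the paper has to do something extra.

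You claim that for $k\neq\QQ$ the equality $\rho_{E_u}(\Gal(\kbar/k))=\calH_E$ is equivalent to the pair of conditions $\rho_{E_u,m_1}(\Gal(\kbar/k))=\calH_E(m_1)$ and $\rho_{E_u,\ell}(\Gal(\kbar/k))\supseteq\SL_2(\ZZ/\ell\ZZ)$ for $\ell\nmid m_1$, with $m_1$ a fixed integer produced by a Goursat-type argument. That equivalence is false for subgroups of $\GL_2(\Zhat)$: unlike $\SL_2(\ZZ_\ell)$ for $\ell\geq 5$, each $\GL_2(\ZZ_\ell)$ has large abelian quotients through the determinant, so a closed $G\subseteq\calH_E$ can surject onto every $\calH_E(\ell)$, have the right determinant image, and still be a proper subgroup, by entangling a quadratic character of $\SL_2(\ZZ_2)$ with a quadratic character of $\det$ living at a prime $\ell\nmid m_1$. (This is exactly the Serre-curve obstruction at $k=\QQ$, but nothing in Goursat's lemma prevents the same entanglement from occurring for a specific $E_u$ over a larger $k$; the claim is only that it happens rarely, which has to be \emph{proved}, not read off from group theory.) The paper avoids this by proving its Goursat-type criterion (Lemma~\ref{L:Openness criterion}) only for subgroups of $\SL_2(\Zhat)$, and applying it to $\rho_{E_u}(\Gal(\kbar/k^{\ab}))\subseteq[\calH_E,\calH_E]$, which lands in $\SL_2(\Zhat)$ where $\SL_2(\ZZ_\ell)$ is perfect for $\ell\geq 5$. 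The residual entanglement — the index between $[\calH_E,\calH_E]$ and $\calH_E\cap\SL_2(\Zhat)$ — is then resolved over $k^{\cyc}$ by a separate quantitative result, Proposition~\ref{P:cyclotomic HIT}, whose proof is of a different flavour: it uses primes splitting completely in $k$, the Frobenius-independence Lemma~\ref{L:Frob ind}, and the large sieve over $\QQ$ with the Chinese remainder theorem. Your proposal never invokes this ingredient, and without it the implication ``(i) and (ii) $\Rightarrow\rho_{E_u}=\calH_E$'' does not hold; the two-condition criterion you state is only \emph{necessary}.

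Two smaller points. First, your explanation of where the sharper error $O(B^{-1/2}\log B)$ for $k\neq\QQ$ comes from (handling of Borel and split Cartan via modular curve geometry) is not what the paper does and is not needed: that term comes directly from the large-sieve bound inside Proposition~\ref{P:cyclotomic HIT}, while the Masser--W\"ustholz/larger-sieve contribution is $O(B^{-[k:\QQ]/2+\varepsilon})$, which is negligible once $[k:\QQ]\geq 2$. Second, your $k=\QQ$ reduction is essentially correct in spirit (Kronecker--Weber forces $\rho_{E_u}(\Gal)\cap\SL_2(\Zhat)\subseteq[\calH_E,\calH_E]$, and equality for most $u$ gives the index $r$), and it matches the route through Proposition~\ref{T:HIT EC 2b}(i), but even there the paper passes to $\Gal(\Qbar/\QQ^{\ab})$ and uses Lemma~\ref{L:Openness criterion} inside $\SL_2(\Zhat)$ rather than attempting a Goursat argument in $\GL_2(\Zhat)$; your description should be recast in those terms to be airtight.
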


We claim that it suffices to prove parts (i) and (ii) of Theorem~\ref{T:HIT EC final} only in the integral points case; we explain for part (i) only.  As in \S\ref{SS:reduction to rational case}, we define a morphism $f\colon \AA^{n+1}_{k} \to \PP^n_{k}$ by $(x_0,\ldots,x_n)\mapsto [x_0,\ldots, x_n]$.  Without loss of generality, we may assume that $U$ lies in the image of $f$.  Let $U'$ be the inverse image of $U$ under $f$; it is a non-empty open subvariety of $\AA^{n+1}_{k}$.     Base extension gives an elliptic curve $E':= E\times_U U' \to U'$.   Composing the homomorphism $\pi_1(U')\to \pi_1(U)$ coming from $f$ with the representation $\rho_E\colon \pi_1(U)\to \GL_2(\Zhat)$ gives $\rho_{E'}\colon \pi_1(U')\to \GL_2(\Zhat)$ (at least up to conjugation since we are suppressed base points everywhere). For each $u'\in U'(k)$, the curves $E'_{u'}$ and $E_{f(u')}$ are isomorphic and $\rho_{E'_{u'}}(\Gal(\kbar/k))=\calH_{E'}$ if and only if $\rho_{E_{f(u')}}(\Gal(\kbar/k))=\calH_{E}$.  Proposition~\ref{P:integral to rational} implies that 
\begin{align*}
&|\{ u \in U(k) : H(u)\leq B,\, \rho_{E_u}(\Gal(\kbar/k))\neq \calH_E \}| \\ 
\leq & |\{ u' \in U'(k)\cap \OO_k^{n+1} :  \norm{u'}\leq c_0B,\, \rho_{E_{u'}}(\Gal(\kbar/k))\neq \calH_{E'}\}|
\end{align*}
and the integral case of Theorem~\ref{T:HIT EC final}(i) then says that this is $O(B^{[k:\QQ](n+1)}\cdot B^{-1/2}\log B)$ as required.\\

For the rest of \S\ref{S:EC}, we shall thus focus on the integral points setting.  We will assume that $U$ is an open subvariety of $\AA^n_k$.

\subsection{Surjectivity modulo primes}

We first consider the Galois actions on the $m$-torsion points for a fixed $m$.  The following is an explicit form of HIT in this context; it is of the utmost importance for our application that the implicit constants in part (ii) do not depend on $m=\ell$.

\begin{prop} \label{P:EC HIT first}
\begin{romanenum}
\item
For any positive integer $m$,  we have
\begin{align*}
|\{ u \in U(k)\cap\OO_k^n : \norm{u}\leq B,\, \rho_{E_u,m}(\Gal(\kbar/k)) \neq \calH_E(m)\}| &\ll_{E,m}   B^{[k:\QQ](n-1/2)} \log B.
\end{align*}
\item
For every prime $\ell\geq 17$, we have
\begin{align*}
|\{ u \in U(k)\cap\OO_k^n : \norm{u}\leq B,\, \rho_{E_u,\ell}(\Gal(\kbar/k)) \not \supseteq \SL_2(\ZZ /\ell\ZZ)\}| &\ll_{E}  \ell^6  B^{[k:\QQ](n-1/2+O(1/\ell))} \log B
\end{align*}
where the implicit constants do not depend on $\ell$ or $B$.
\end{romanenum}
\end{prop}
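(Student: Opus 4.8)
The plan is to obtain part (i) from the classical effective form of Hilbert's irreducibility theorem and part (ii) from Theorem~\ref{T:Main}.

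\emph{Part (i).} I would first pass from the $m$-torsion to a polynomial. Over a dense open $V\subseteq U$ on which $m$ is invertible the cover $E[m]\to U$ is finite \'etale, and the Galois closure of one of its connected components is an $\calH_E(m)$-cover of $V$; it is cut out by a monic irreducible $f(x,T_1,\dots,T_n)\in k(T_1,\dots,T_n)[x]$ whose splitting field has Galois group $\calH_E(m)$ over $k(T_1,\dots,T_n)$. For $u$ outside a closed subvariety of $U$ of codimension $\ge 1$ — a set contributing only $O_{E,m}(B^{[k:\QQ](n-1)})$ points — the group $G_u$ attached to $f$ at $u$ agrees with $\rho_{E_u,m}(\Gal(\kbar/k))$ inside $\calH_E(m)$. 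Part (i) then follows at once from Theorem~\ref{T:HIT large} applied to $f$, with $m$ (and hence $\calH_E(m)$) fixed. Note that Theorem~\ref{T:Main} alone does \emph{not} give this, since the Borel-type maximal subgroups of $\calH_E(m)$ have $\delta$ slightly above $\tfrac12$; one genuinely needs the large sieve here.

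\emph{Part (ii).} I would apply Theorem~\ref{T:Main}(i) to $\rho=\rho_{E,\ell}\colon\pi_1(U)\to\calH_E(\ell)=:G$, after pinning down the groups involved. The determinant of $\rho_{E,\ell}$ is the mod-$\ell$ cyclotomic character, which is trivial on $\pi_1(U_{\kbar})$, so the geometric monodromy group $G^g$ lies in $\SL_2(\FF_\ell)$; and since $E\to U$ is non-isotrivial one has for $\ell\ge 17$ the reverse inclusion (a place of potentially multiplicative reduction contributes a transvection to $G^g$, and a subgroup of $\SL_2(\FF_\ell)$ containing a transvection is either reducible — impossible here — or all of $\SL_2(\FF_\ell)$). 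Hence $G^g=\SL_2(\FF_\ell)$, so $|G^g|=\ell^3-\ell$, and $\Gal(K/k)\cong\calH_E(\ell)/\SL_2(\FF_\ell)$ is cyclic, embedded in $\FF_\ell^\times$ by the determinant. Now, if $G_u=\rho_{E_u,\ell}(\Gal(\kbar/k))$ does not contain $\SL_2(\FF_\ell)$, then by the Dickson--Serre classification ($\ell\ge5$) the group $G_u$ lies in a $\GL_2(\FF_\ell)$-conjugate of one of a bounded number (independent of $\ell$) of subgroups $H$: a Borel $B$, the normalizer $N_s$ of a split Cartan, the normalizer $N_{ns}$ of a nonsplit Cartan, or the preimage in $\GL_2(\FF_\ell)$ of $A_4$, $S_4$, or $A_5$. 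For each such $H$ put $C_H:=\bigl(\bigcup_{g\in\GL_2(\FF_\ell)}gHg^{-1}\bigr)\cap\calH_E(\ell)$, a conjugation-stable subset of $\calH_E(\ell)$; then
\[
\{u\in U(k)\cap\OO_k^n:\norm{u}\le B,\ G_u\not\supseteq\SL_2(\FF_\ell)\}\ \subseteq\ \bigcup_{H}\{u\in U(k)\cap\OO_k^n:\norm{u}\le B,\ G_u\subseteq C_H\},
\]
so it remains to run Theorem~\ref{T:Main}(i) for each $C_H$ with good control on its two parameters.

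For the parameter $\delta$: as $\Gal(K/k)$ is abelian and $\calH_E(\ell)$ is a union of full fibres of $\det$, the $\delta$ of Theorem~\ref{T:Main} for $C=C_H$ is the maximum over $d\in\det(\calH_E(\ell))$ of the proportion of determinant-$d$ elements of $\GL_2(\FF_\ell)$ lying in $\bigcup_g gHg^{-1}$. For $H\in\{B,N_s,N_{ns}\}$ these are exactly the elements with an eigenvalue in $\FF_\ell$, or lying in a nonsplit torus, or of trace $0$, and a direct count gives $\delta=\tfrac12+O(1/\ell)$ (in particular $\delta<1$); for the exceptional $H$ one gets $\delta=O(1/\ell)$, since each fibre of $\det$ contains only $O(\ell^2)$ elements whose image in $\PGL_2(\FF_\ell)$ has order at most $5$, while $|\SL_2(\FF_\ell)|\asymp\ell^3$. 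For the constant $c=|G^g|^2\exp\bigl(\sum_{\p\in S,\ \deg\p=1,\ N(\p)\ge|G^g|^2}\log N(\p)/N(\p)\bigr)$: here $|G^g|^2=(\ell^3-\ell)^2\ll\ell^6$, the set $S$ is fixed apart from the primes above $\ell$, and a degree-one prime above $\ell$ has $N(\p)=\ell<|G^g|^2$ and so does not enter the sum; hence the exponential is $O_E(1)$ and $c\ll_E\ell^6$. Theorem~\ref{T:Main}(i) then yields, for each of the $O(1)$ subgroups $H$,
\[
|\{u\in U(k)\cap\OO_k^n:\norm{u}\le B,\ G_u\subseteq C_H\}|\ \ll_E\ \ell^6\,B^{[k:\QQ](n-1/2+O(1/\ell))}\log B ,
\]
and summing over $H$ gives part (ii).

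The main obstacle is the uniform-in-$\ell$ group theory behind the last paragraph: one must know that $G^g=\SL_2(\FF_\ell)$ for all $\ell\ge17$ (this is precisely what forces $|G^g|\asymp\ell^3$, hence the clean exponent $6$ on $\ell$, rather than something like $\ell^8$), and one must check that the $\delta$'s attached to the Borel and to the two Cartan normalizers really are $\tfrac12+O(1/\ell)$ — bounded away from $1$ — rather than tending to $1$ as a crude union bound over conjugates would give.
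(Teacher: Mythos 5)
For part (i) you take the same route as the paper (Theorem~\ref{T:HIT large} / large sieve), so that part is fine.

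For part (ii) your overall architecture matches the paper's — apply Theorem~\ref{T:Main} to $\rho_{E,\ell}$ after establishing $G^g=\SL_2(\FF_\ell)$, and reduce the problem of not containing $\SL_2$ to containment in conjugation-stable sets whose fibrewise density over $\det$ is $\tfrac12+O(1/\ell)$ — but the two proofs use genuinely different group theory to produce those sets. You go through Dickson's classification of subgroups of $\GL_2(\FF_\ell)$ not containing $\SL_2$ (Borel, split/nonsplit Cartan normalizers, exceptional), bound the union of conjugates of each type, and sum. The paper instead uses Serre's three-set criterion (Proposition~19 of \cite{MR0387283}, restated here as Lemma~\ref{L:Serre criterion}): a subgroup containing an element of each $C_i(\ell)$ already contains $\SL_2$, so the exceptional set decomposes as $\bigcup_i\{u : G_u\subseteq G-C_i(\ell)\}$, and the $\delta$-computation reduces to Lemma~\ref{L:Serre criterion}(ii). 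Serre's criterion is cleaner here: it bypasses the case analysis over maximal-subgroup types and the issue of lifting the Dickson classification from $\SL_2$ to $\GL_2$, and it delivers the relevant density estimates directly. Both routes give the same final bound and the same $\delta=\tfrac12+O(1/\ell)$, so this is a legitimate alternative; it just requires a bit more care than Serre's criterion does.

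There is a real gap, though, in your proof that $G^g=\SL_2(\FF_\ell)$ for all $\ell\ge17$. Your argument is: non-isotriviality produces a place of potentially multiplicative reduction, hence a transvection in $G^g$, and a subgroup of $\SL_2(\FF_\ell)$ containing a transvection is either reducible (``impossible here'') or everything. Two steps are unjustified. First, potentially multiplicative reduction gives a unipotent in the $\ell$-adic inertia image whose mod-$\ell$ reduction is the matrix $\left(\begin{smallmatrix}1 & a\\ 0 & 1\end{smallmatrix}\right)$ with $a\equiv -\ord_v(j)\pmod\ell$; this is a \emph{transvection} only when $\ell\nmid\ord_v(j)$, which can fail for individual primes $\ell\ge 17$. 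Second, ``reducible is impossible here'' is exactly what needs proof, and it is not elementary: if $G^g$ were reducible then $E$ would carry an $\ell$-isogeny over $\kbar(U)$, i.e.\ $U_{\kbar}$ would dominate (an open subset of) $X_0(\ell)_{\kbar}$, and ruling this out requires a genus bound on $X_0(\ell)$. The paper's proof handles both issues at once with the modular-curve argument: $U_\CC$ dominates $X_E:=X(\ell)/\calH_E^g(\ell)$, forcing $X_E$ to have genus $0$; and for $\ell\ge 17$ no proper subgroup $H\subseteq\SL_2(\ZZ/\ell\ZZ)$ has $X(\ell)/H$ of genus $0$ (checked on maximal subgroups, \cite{MR2189500}*{Table 2.1}). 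This is the clean and complete replacement for your transvection paragraph, and you should adopt it (or at minimum justify reducibility-impossible via the genus of $X_0(\ell)$ and handle the divisibility caveat in the transvection step). The rest of your part (ii) — the bookkeeping $|G^g|^2\ll\ell^6$, the observation that degree-one primes above $\ell$ satisfy $N(\p)=\ell<|G^g|^2$ and hence drop out of the sum defining $c$, and the $\delta$ estimates — is correct and matches the paper's computation.
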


Before proving the proposition, we state the following criterion for a subgroup of $\GL_2(\FF_\ell)$ to contain $\SL_2(\FF_\ell)$.

\begin{lemma}   \label{L:Serre criterion}
Let $\ell\geq 5$ be a prime.   
\begin{itemize}
\item
Let $C_1(\ell)$ be the set of $A\in \GL_2(\FF_\ell)$ for which $\tr(A)^2-4\det(A)$ is a non-zero square in $\FF_\ell$, and such that $\tr(A)\neq 0$.
\item
Let $C_2(\ell)$ be the set of $A\in \GL_2(\FF_\ell)$ for which $\tr(A)^2-4\det(A)$ is not a square in $\FF_\ell$, and such that $\tr(A)\neq 0$.
\item
Let $C_3(\ell)$ be the set of $A\in\GL_2(\FF_\ell)$ such that $u=\tr(A)^2/\det(A)$ is not $0,1,2$ or $4$, and such that $u^2-3u+1\neq 0$.
\end{itemize}
\begin{romanenum}
\item
If $G$ is a subgroup of $\GL_2(\FF_\ell)$ that contains elements from all three of the sets $C_1(\ell)$, $C_2(\ell)$ and $C_3(\ell)$, then $G$ contains $\SL_2(\FF_\ell)$.  
\item
For each $d\in \FF_\ell^\times$, we have
\[
\frac{|\{A\in C_i(\ell):\det(A)=d\}|}{|\SL_2(\FF_\ell)|} =\begin{cases}
 \frac{1}{2} +O(1/\ell)     & \text{ for $i=1,2$}, \\
1+O(1/\ell)     & \text{ for $i=3$}.
\end{cases}
\]
\end{romanenum}
\end{lemma}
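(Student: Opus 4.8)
\textbf{Proof proposal for Lemma~\ref{L:Serre criterion}.} Part (i) is a variant of Serre's classical criterion, and the plan is to descend to $\PGL_2(\FF_\ell)$. Write $\pi\colon \GL_2(\FF_\ell)\to\PGL_2(\FF_\ell)$ for the projection and $\bar G=\pi(G)$. Since $\ell\geq 5$ (so that $\FF_\ell$ has no proper subfield and $\PSL_2(\FF_\ell)$ is simple), Dickson's classification of the subgroups of $\PGL_2(\FF_\ell)$ says that $\bar G$ either contains $\PSL_2(\FF_\ell)$, or is contained in a Borel subgroup, or in the normalizer of a split Cartan subgroup, or in the normalizer of a nonsplit Cartan subgroup, or is isomorphic to $A_4$, $S_4$ or $A_5$. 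I would rule out the last four possibilities one at a time by an elementary eigenvalue computation. Every element of a Borel subgroup or of a split Cartan normalizer has either $\tr(A)^2-4\det(A)$ a square in $\FF_\ell$ or $\tr(A)=0$, so an element of $C_2(\ell)$ lies in none of them; every element of a nonsplit Cartan normalizer has $\tr(A)^2-4\det(A)$ a nonsquare, or $\tr(A)^2-4\det(A)=0$, or $\tr(A)=0$, so an element of $C_1(\ell)$ lies in none of them; and if $\bar G$ is $A_4$, $S_4$ or $A_5$ then the image of every $A\in G$ in $\PGL_2(\FF_\ell)$ has order at most $5$, so the ratio $\zeta$ of the two eigenvalues of $A$ is a root of unity of order at most $5$, whence $u=\tr(A)^2/\det(A)=\zeta+\zeta^{-1}+2$ lies in $\{0,1,2,4\}$ or satisfies $u^2-3u+1=0$ — exactly what an element of $C_3(\ell)$ avoids. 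Hence $\bar G\supseteq\PSL_2(\FF_\ell)$.

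To promote this to $G\supseteq\SL_2(\FF_\ell)$, set $H=G\cap\SL_2(\FF_\ell)$, which is normal in $G$. Then $\pi(H)$ is a normal subgroup of $\bar G$ contained in $\PSL_2(\FF_\ell)=\pi(\SL_2(\FF_\ell))$, hence is a normal subgroup of $\PSL_2(\FF_\ell)$; since $\PSL_2(\FF_\ell)$ is simple for $\ell\geq 5$, either $\pi(H)=1$ or $\pi(H)=\PSL_2(\FF_\ell)$. The first is impossible, for it would give $H\subseteq\{\pm I\}$ and hence $|\bar G|\leq|G|\leq 2(\ell-1)<|\PSL_2(\FF_\ell)|$, contradicting $\PSL_2(\FF_\ell)\subseteq\bar G$. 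So $H$ surjects onto $\PSL_2(\FF_\ell)$, i.e.\ $[\SL_2(\FF_\ell):H]\leq 2$; as $\SL_2(\FF_\ell)$ is perfect for $\ell\geq 5$ it has no subgroup of index $2$, so $H=\SL_2(\FF_\ell)$ and $G\supseteq\SL_2(\FF_\ell)$.

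For part (ii) I would fix $d\in\FF_\ell^\times$ and organize the $\ell^3-\ell=|\SL_2(\FF_\ell)|$ matrices of $\GL_2(\FF_\ell)$ with determinant $d$ according to their trace $t\in\FF_\ell$. A standard count of conjugacy classes with prescribed characteristic polynomial $X^2-tX+d$ gives
\[
\big|\{A\in\GL_2(\FF_\ell):\det(A)=d,\ \tr(A)=t\}\big|=
\begin{cases}
\ell^2+\ell & \text{if }t^2-4d\text{ is a nonzero square in }\FF_\ell,\\
\ell^2-\ell & \text{if }t^2-4d\text{ is a nonsquare,}\\
\ell^2 & \text{if }t^2-4d=0,
\end{cases}
\]
the last case occurring for at most two values of $t$. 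Since the condition defining each $C_i(\ell)$ depends only on $(t,d)$, it suffices to count the admissible values of $t$. For $C_1(\ell)$ (resp.\ $C_2(\ell)$) these are the $t\neq 0$ with $t^2-4d$ a nonzero square (resp.\ a nonsquare), and the elementary identity $\sum_{t\in \FF_\ell}\legendre{t^2-4d}{\ell}=-1$ shows there are $\tfrac12\ell+O(1)$ such $t$; each contributes $\ell^2+O(\ell)$ matrices, so the numerator is $\tfrac12\ell^3+O(\ell^2)$ and division by $|\SL_2(\FF_\ell)|=\ell^3-\ell$ gives $\tfrac12+O(1/\ell)$. For $C_3(\ell)$ the conditions $u=t^2/d\in\{0,1,2,4\}$ and $u^2-3u+1=0$ rule out only $O(1)$ values of $t$, and the excluded value $u=4$ is precisely the one that removes the $t$ with $t^2-4d=0$; hence the admissible $t$ account for all but $O(\ell^2)$ of the $\ell^3-\ell$ matrices, and the ratio is $1+O(1/\ell)$.

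I expect the main obstacle to be part (i): it requires the classification of subgroups of $\PGL_2(\FF_\ell)$ together with the somewhat fussy verification that each $C_i(\ell)$ is exactly what is needed to exclude a particular family of subgroups (namely $C_1(\ell)$ handles the nonsplit Cartan normalizer, $C_2(\ell)$ the Borel subgroups and split Cartan normalizer, and $C_3(\ell)$ the exceptional groups), followed by the descent from $\PGL_2$ to $\SL_2$, where the simplicity of $\PSL_2(\FF_\ell)$ and the perfectness of $\SL_2(\FF_\ell)$ — and hence the hypothesis $\ell\geq 5$ — enter. Part (ii) is routine bookkeeping with conjugacy-class sizes.
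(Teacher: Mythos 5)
Your proposal is correct. For part~(ii) it follows essentially the same route as the paper: both rest on the conjugacy-class count $\bigl|\{A\in\GL_2(\FF_\ell):\det A=d,\ \tr A=t\}\bigr|=\ell^2+\varepsilon\ell$ with $\varepsilon=\legendre{t^2-4d}{\ell}$, and reduce to counting admissible traces. The only cosmetic difference is how the two of you count traces $t$ with $t^2-4d$ a nonzero square: the paper identifies these (up to the $2$-to-$1$ sign ambiguity) with $\FF_\ell$-points of the conic $t^2-y^2=4d$, which is $\PP^1$ minus two rational points and hence has $\ell-1$ points, while you invoke the character-sum identity $\sum_{t}\legendre{t^2-4d}{\ell}=-1$. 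These are interchangeable. One small omission in your write-up: you should say a word (as the paper does, by bounding $\bigl|\{A:\det A=d,\ \tr(A)^2/d=c\}\bigr|\le 2\ell(\ell+1)$ and noting $c=4$ is the discriminant-zero case) that discarding $C_1\cup C_2$ from $\{\det A=d\}$ leaves only $O(\ell^2)$ matrices, so that the count for one of $C_1,C_2$ determines the other; but this is exactly the bookkeeping you already perform for $C_3$, so it is not a gap, merely something to make explicit.

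For part~(i) the paper simply cites Proposition~19 of Serre's 1972 article and offers no argument. You instead reprove it from scratch via Dickson's classification of the subgroups of $\PGL_2(\FF_\ell)$, verifying that $C_2$ rules out the Borel and split-Cartan-normalizer cases, $C_1$ the nonsplit-Cartan-normalizer case, and $C_3$ the exceptional $A_4,S_4,A_5$ cases via the eigenvalue-ratio computation $u=\zeta+\zeta^{-1}+2$, and then promoting $\bar G\supseteq\PSL_2(\FF_\ell)$ to $G\supseteq\SL_2(\FF_\ell)$ using simplicity of $\PSL_2(\FF_\ell)$ and perfectness of $\SL_2(\FF_\ell)$. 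This is correct and is in substance the proof Serre himself gives; what you gain is self-containment, at the cost of length. Both approaches of course use $\ell\ge 5$ in exactly the same places (Dickson, simplicity, perfectness).
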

\begin{proof}
Part (i) is Proposition~19 of \cite{MR0387283}.  We now consider part (ii) with a fixed $d\in \FF_\ell^\times$.  For each $t\in \FF_{\ell}$, \cite{MR2178556}*{Lemma~2.7} shows that
\[
|\{A \in \GL_2(\FF_\ell) : \det(A)=d,\, \tr(A)=t \} | = \ell^2+ \epsilon\ell \quad \text{ where } \epsilon=\legendre{t^2-4d}{\ell} \in \{-1,0,1\}.
\] 
Hence for each $c\in\FF_\ell$,
\[
|\{ A \in \GL_2(\FF_\ell) : \det(A) = d,\, \tr(A)^2/d = c\}| \leq 2\ell(\ell+1).
\]
This implies the bound for $C_3(\ell)$ and taking $c=4$ shows that we need only prove the bound for $C_1(\ell)$ or $C_2(\ell)$.  We have
\begin{align*}
|\{A \in C_1(\ell) : \det(A)=d\}| &= \ell^2 |\{t\in \FF_\ell : t^2-4d \text{ is a square in $\FF_\ell$}\}| + O(\ell^2) \\
&= \frac{1}{2} \ell^2 \cdot |\{(t,y)\in \FF_\ell^2 : t^2-y^2 =4d \}| + O(\ell^2).
\end{align*}
Since $d\neq 0$ and $\ell$ is odd, the plane curve $t^2-y^2 =4d$ in $\AA^2_{\FF_\ell}=\Spec \FF_\ell[t,y]$ is isomorphic to $\PP^1_{\FF_\ell}$ with two $\FF_\ell$-rational points removed.  Therefore, $|\{A \in C_1(\ell) : \det(A)=d\}|= \frac{1}{2}\ell^2 (\ell-1) + O(\ell^2)= \frac{1}{2}\ell^3 + O(\ell^2)$.
\end{proof}

\begin{proof}[Proof of Proposition~\ref{P:EC HIT first}]
\noindent (i) This follows from the large sieve bounds in Theorem~\ref{T:HIT large}.

\noindent (ii)  We first extend the elliptic curve $\pi\colon E \to U$ to an integral model. There is a finite set $S\subseteq \Sigma_k$, an open subscheme $\calU$ of $\AA^n_{\OO}$ over the ring $\OO$ of $S$-integers, and an elliptic curve $\calE \to \calU$ such that the generic fibers of $\calE$ and $\calU$ are $E$ and $U$, respectively, and $\pi\colon E\to U$ is the morphism on generic fibers of $\calE\to\calU$.  

Now fix a prime $\ell\geq 17$.  The representation $\rho_{E,\ell}\colon \pi_1(U)\to \GL_2(\ZZ/\ell\ZZ)$ factors through a Galois representation
\[
 \pi_1(\calU_{\OO_\ell})\to \GL_2(\ZZ/\ell\ZZ)
\]
where $\OO_\ell$ is the ring of $S_\ell$-integers with $S_\ell := S \cup \{\p\in \Sigma_k: \p | \ell\}$; note that the torsion subscheme $\calE_{\OO_\ell}[\ell]\to \calU_{\OO_\ell}$ is finite \'etale.  

 Let $\calH_{E}^g(\ell)$ denote the image under $\rho_{E,\ell}$ of $\pi_1(U_{\kbar},\bbar\eta)$, and assume that $\calH_E^g(\ell)=\SL_2(\ZZ/\ell\ZZ)$.  Let $C_1(\ell)$, $C_2(\ell)$ and $C_3(\ell)$ be the sets defined in Lemma~\ref{L:Serre criterion}.     
By Lemma~\ref{L:Serre criterion}(i), we have
\begin{align*}
&|\{u\in U(k)\cap \OO_k^n : \norm{u} \leq B, \, \rho_{E_u,\ell}(\Gal(\kbar/k))\not\supseteq \SL_2(\ZZ/\ell\ZZ)\}| \\
\leq &\sum_{i=1}^3 |\{u\in U(k)\cap \OO_k^n : \norm{u} \leq B, \, \rho_{E_u,\ell}(\Gal(\kbar/k))\subseteq \GL_2(\ZZ/\ell\ZZ)-C_i(\ell)\}|.
\end{align*}
By Theorem~\ref{T:Main},
\[
|\{u\in U(k)\cap \OO_k^n : \norm{u} \leq B, \, \rho_{E_u,\ell}(\Gal(\kbar/k))\not \supseteq \SL_2(\ZZ/\ell\ZZ)\}|  \ll_U c B^{[k:\QQ] (n-1+\delta)}\log B
\]
where 
\[
c:= |\SL_2(\ZZ/\ell\ZZ)|^2\exp\bigg(\sum_{\substack{\p\in S \text{ or }\p|\ell \\ \deg(\p)=1 \text{ and } N(\p)\geq |\SL_2(\ZZ/\ell\ZZ)|^2 }} \frac{\log N(\p)}{N(\p)} \bigg)
\]
and 
\[
\delta:= \max_{\substack{i=1,2,3\\ d \in \det(\calH_E(\ell))}} \frac{|\{A \in \GL_2(\ZZ/\ell\ZZ) - C_i(\ell) : \det(A)=d\}|}{|\SL_2(\ZZ/\ell\ZZ)|}.
\]
We obtain the desired bound by noting that $\delta = \frac{1}{2}+ O(1/\ell)$ from Lemma~\ref{L:Serre criterion}(ii) and that $c$ is less than $\ell^6 \exp\big(\sum_{\p\in S} \frac{\log N(\p)}{N(\p)} \big)\ll_S \ell^6.$

It thus remains to show that $\calH_E^g(\ell)=\SL_2(\ZZ/\ell\ZZ)$ for every prime $\ell\geq 17$.   After choosing an embedding $\kbar\hookrightarrow \CC$, we have $\calH_E^g(\ell) = \rho_{E,\ell}(\pi_1(U_{\CC}))$.  Let $X(\ell)$ be the modular curve over $\CC$ which classifies elliptic curves with a basis for the $\ell$-torsion.  There is a natural action of $\SL_2(\ZZ/\ell\ZZ)$ on $X(\ell)$ and the quotient gives a morphism $ X(\ell) \to X(1)$ where $X(1)\cong \PP^1_{\CC}$ is the $j$-line.  Now consider the quotient curve $X_E:=X(\ell)/\calH_E^g(\ell)$ and the natural morphism $f\colon X_E \to X(1)$.  There is a morphism $h\colon U_\CC\to X_E$ such that the $j$-invariant of $E_u$ is $f(h(u))$ for all $u\in U(\CC)$.  The morphism $f\circ h$, and hence $h$, is non-constant by our ongoing assumption that the $j$-invariant of $E$ is non-constant.  Since $U_\CC$ is open in $\PP^n_\CC$ and $h$ is dominant, we deduce that $X_E$ has genus 0.   For $\ell\geq 17$, there are no proper subgroups $H$ of $\SL_2(\ZZ/\ell\ZZ)$ for which $X(\ell)/H$ has genus 0 (it suffices to compute the genus of $X(\ell)/H$ for the for maximal subgroups $H$ of $\SL_2(\ZZ/\ell\ZZ)$, see \cite{MR2189500}*{Table 2.1}).
\end{proof}

The following effective version of Serre's open image theorem, due to Masser and W\"ustholz, allows us to effectively bound the primes $\ell$ that have to be considered.

\begin{thm}[Masser-W\"ustholz \cite{MR1209248}] \label{T:M-W}
There are absolute constants $c>0$ and $\gamma\geq 0$ with the following properties.  Suppose $E$ is an elliptic curve of Weil height\footnote{i.e., the absolute logarithmic height of the $j$-invariant of $E$} $h$ defined over a number field $k$ of degree $d,$ and assume $E$ has no complex multiplication over $\kbar$.  If $\ell> c(\max\{d,h\})^\gamma$, then $\rho_{E,\ell}(\Gal(\kbar/k))\supseteq \SL_2(\ZZ/\ell\ZZ)$.
\end{thm}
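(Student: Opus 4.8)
\emph{Proof proposal.} The plan is to prove the contrapositive: if $\rho_{E,\ell}(\Gal(\kbar/k))$ does not contain $\SL_2(\ZZ/\ell\ZZ)$, then $\ell\ll(\max\{d,h\})^{\gamma}$ for an absolute exponent $\gamma$. The finitely many primes $\ell$ below any fixed absolute bound require nothing (enlarge $c$), so I may assume $\ell\geq 5$, and it is harmless to assume $E$ is non-isotrivial with $\End_{\kbar}(E)=\ZZ$.

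First I would run the group theory. Since $\det\circ\rho_{E,\ell}$ is the mod-$\ell$ cyclotomic character, hence surjective onto $(\ZZ/\ell\ZZ)^{\times}$, the failure of the image to contain $\SL_2(\ZZ/\ell\ZZ)$ forces, by Dickson's classification of subgroups of $\PGL_2(\FF_\ell)$, that the projective image is cyclic, dihedral, or isomorphic to $A_4$, $S_4$ or $A_5$; correspondingly $\rho_{E,\ell}(\Gal(\kbar/k))$ lies in a Borel subgroup, in the normalizer of a split or nonsplit Cartan subgroup, or has projective image of order $\leq 60$. The aim of this step is to produce in every case a number field $k'$ with $[k':k]$ bounded by an absolute constant $D_0$, together with an isogeny $\psi\colon E_{k'}\to E^{*}$ over $k'$ of degree divisible by $\ell$. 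In the Borel case $E[\ell]$ contains a $\Gal(\kbar/k)$-stable subgroup $C$ of order $\ell$ and $\psi\colon E\to E/C=:E^{*}$ is an $\ell$-isogeny over $k'=k$. In the exceptional cases one takes $k'$ to be the fixed field of the kernel of the projective representation, so $[k':k]\leq 60$ and $\rho_{E,\ell}$ restricted to $\Gal(\kbar/k')$ has scalar image; then every line of $E[\ell]$ is $\Gal(\kbar/k')$-stable and one again gets an $\ell$-isogeny over $k'$. The split Cartan normalizer case reduces over a quadratic extension to the reducible (Borel) situation. The nonsplit Cartan normalizer case is the genuine subtlety, because $E[\ell]$ does not become reducible over any extension of bounded degree; here one argues instead via the associated quadratic twist, respectively via the CM companion curve to which $E$ is congruent mod $\ell$, to extract the required $\ell$-divisible isogeny over a bounded-degree $k'$.

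Next I would promote ``an isogeny of degree divisible by $\ell$'' to ``\emph{every} $k'$-isogeny $E_{k'}\to E^{*}$ has degree divisible by $\ell$'', and then invoke the isogeny estimate. Since $E$ has no complex multiplication over $\kbar$ we have $\End_{k'}(E_{k'})=\ZZ$, so any two $k'$-isogenies $\psi_1,\psi_2\colon E_{k'}\to E^{*}$ satisfy $\widehat{\psi_2}\circ\psi_1=[m]$ for some nonzero $m\in\ZZ$, whence $\deg\psi_1\cdot\deg\psi_2=m^{2}$ is a perfect square; as $\ell\mid\deg\psi$ and $\ell$ is prime, $\ell$ divides $\deg\psi_i$ for every such isogeny. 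The analytic heart is the Masser--W\"ustholz isogeny estimate: if $A_1,A_2$ are isogenous elliptic curves over a number field $F$ of degree $[F:\QQ]=D$ and $A_1$ has Weil height $h(A_1)$, there is an $F$-isogeny $A_1\to A_2$ of degree at most $c_1\big(D(1+h(A_1))\big)^{\kappa_1}$ for absolute $c_1,\kappa_1$ (the proof is transcendence-theoretic, via period and quasi-period estimates and ultimately W\"ustholz's analytic subgroup theorem, and is not sieve-theoretic; the polynomial dependence is due to Pellarin and Gaudron--R\'emond). Applying this with $F=k'$, so $D\leq D_0\,d$, and $A_1=E_{k'}$, whose Weil height is again $h$ since it depends only on the $j$-invariant, yields a $k'$-isogeny $E_{k'}\to E^{*}$ of degree at most $c_1\big(D_0\,d\,(1+h)\big)^{\kappa_1}$; combined with the divisibility, $\ell\leq c_1\big(D_0\,d\,(1+h)\big)^{\kappa_1}\leq c\,(\max\{d,h\})^{\gamma}$ for suitable absolute $c,\gamma$. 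This is the contrapositive.

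The hard part is twofold. The genuinely deep input is the Masser--W\"ustholz isogeny estimate itself, whose proof lies outside the sieve-theoretic methods of this paper. The secondary obstacle, internal to the reduction, is the nonsplit Cartan normalizer case, where $E[\ell]$ cannot be made reducible over a bounded extension and the needed $\ell$-divisible isogeny must be produced by a twisting or congruence argument rather than by simply splitting off a Galois-stable line.
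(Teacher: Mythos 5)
The first thing to say is that the paper contains no proof of Theorem~\ref{T:M-W} to compare against: it is quoted from Masser and W\"ustholz \cite{MR1209248} and used as a black box, so the only meaningful comparison is with the original argument. Your outline does reproduce the shape of that argument: Dickson's classification of subgroups of $\PGL_2(\FF_\ell)$ to reduce non-surjectivity to the Borel, Cartan-normalizer and exceptional cases, production of an isogeny over an extension of absolutely bounded degree, and then the isogeny estimate, which is indeed the deep transcendence-theoretic input --- and since that estimate is precisely the content of the cited work, deferring to it makes your sketch no more self-contained than the paper's citation. Two local points to repair: $\det\circ\rho_{E,\ell}$ is not surjective over a general number field $k$ (its image is $\Gal(k(\mu_\ell)/k)$), though Dickson's classification does not need this; and in the divisibility step, ``$\ell\mid\deg\psi_1$ and $\deg\psi_1\deg\psi_2$ a square'' does not by itself force $\ell\mid\deg\psi_2$ --- you should use that the constructed isogeny has degree exactly $\ell$ (so $\ell$ divides it to odd order).

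The genuine gap is the nonsplit Cartan normalizer case, which you flag as the crux but do not actually treat, and the device you gesture at cannot work as stated: after passing to any bounded-degree extension $k'$ the image still lies in the nonsplit Cartan, which acts irreducibly on $E[\ell]$, so $E$ admits no $k'$-rational subgroup of order $\ell$ whatsoever, and a mod-$\ell$ congruence with a CM curve does not produce an isogeny. The standard (and in substance the Masser--W\"ustholz) way out is to use the $\FF_{\ell^2}$-structure the Cartan puts on $E[\ell]$ and move to the abelian surface $A=E\times E$: for $\alpha\in\FF_{\ell^2}\setminus\FF_\ell$ the graph $\Gamma_\alpha=\{(x,\alpha x):x\in E[\ell]\}$ is Galois-stable (the Cartan is commutative), giving an isogeny $A\to A/\Gamma_\alpha$ of degree $\ell^2$ over $k'$; since $\End(E_{\kbar})=\ZZ$ one has $\End(A)=\operatorname{M}_2(\ZZ)$, and because $\Gamma_\alpha$ is not of the form $v\otimes E[\ell]$ for a line $v\subseteq\FF_\ell^2$, one checks that every isogeny between $A$ and $A/\Gamma_\alpha$ has degree divisible by $\ell$; the isogeny estimate for abelian surfaces (not just elliptic curves) then bounds $\ell$ polynomially in $d$ and $h$. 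Without an argument of this kind your reduction is incomplete in exactly the case you identify as the hard one.
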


Combining Masser and W\"ustholz's theorem with our explicit HIT bounds gives the following proposition.  

\begin{prop} \label{P:EC HIT all primes}
For every $\varepsilon>0$, we have
\begin{align*}
\frac{|\{ u \in U(k)\cap\OO_k^n : \norm{u}\leq B,\, \rho_{E_u,\ell}(\Gal(\kbar/k)) \supseteq \SL_2(\ZZ /\ell\ZZ) \text{ for all $\ell\geq 17$}\}|}{|\{ u \in U(k)\cap\OO_k^n : \norm{u}\leq B\}|} & =1 +O_{E,\varepsilon}\Big(\frac{1}{B^{[k:\QQ]/2 - \varepsilon}}\Big).
\end{align*}
\end{prop}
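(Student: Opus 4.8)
The plan is to show that the complementary set
\[
\mathcal{B}(B) := \bigl\{ u \in U(k)\cap\OO_k^n : \norm{u}\leq B,\ \rho_{E_u,\ell}(\Gal(\kbar/k)) \not\supseteq \SL_2(\ZZ/\ell\ZZ) \text{ for some prime } \ell\geq 17\bigr\}
\]
has size $\ll_{E,\varepsilon} B^{[k:\QQ](n-1/2)+\varepsilon}$. Since $\AA^n_k-U$ has dimension $\leq n-1$, it contributes $\ll_E B^{[k:\QQ](n-1)}$ to the box $\{u\in\OO_k^n:\norm{u}\leq B\}$, which has $\asymp B^{[k:\QQ]n}$ elements by (\ref{E:comparison}); hence $|\{u\in U(k)\cap\OO_k^n:\norm{u}\leq B\}|\gg_E B^{[k:\QQ]n}$, and the displayed bound on $\mathcal{B}(B)$ gives the stated asymptotic. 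Throughout we may assume $B$ is large, the ratio being trivially in $[0,1]$.

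First I would dispose of complex multiplication. Only finitely many CM $j$-invariants lie in the fixed field $k$ (class numbers of imaginary quadratic orders grow with the discriminant), and $j\colon U\to\AA^1_k$ is non-constant; so the locus of $u\in U(k)$ for which $E_u$ has CM over $\kbar$ lies in a closed subvariety of $\AA^n_k$ of dimension $\leq n-1$, and therefore contributes $\ll_E B^{[k:\QQ](n-1)}$ to $\mathcal{B}(B)$.

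Next I would cut off the large primes via the effective open image theorem. For $u\in U(k)\cap\OO_k^n$ with $\norm{u}\leq B$, functoriality of heights gives that the Weil height of $E_u$ is $h(E_u)=h(j(u))\ll_E\log\norm{u}+O(1)\ll_E\log B$ (cf.\ \cite{MR1757192}*{\S2.6}). As $[k:\QQ]$ is fixed, Theorem~\ref{T:M-W} then furnishes a quantity $L_0=L_0(E,B)\ll_E(\log B)^{\gamma}$ such that, whenever $E_u$ has no CM and $\rho_{E_u,\ell}(\Gal(\kbar/k))\not\supseteq\SL_2(\ZZ/\ell\ZZ)$ for a prime $\ell\geq 17$, one necessarily has $\ell\leq L_0$. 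Hence, modulo the CM locus already handled,
\[
|\mathcal{B}(B)| \ll_E B^{[k:\QQ](n-1)} + \sum_{17\leq \ell\leq L_0}\ \bigl|\bigl\{ u\in U(k)\cap\OO_k^n : \norm{u}\leq B,\ \rho_{E_u,\ell}(\Gal(\kbar/k))\not\supseteq\SL_2(\ZZ/\ell\ZZ)\bigr\}\bigr|,
\]
the sum running over primes (and empty if $L_0<17$).

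Finally I would estimate this sum by splitting the range at a constant $L_1=L_1(\varepsilon)$. For a prime $\ell\geq 17$ the proof of Proposition~\ref{P:EC HIT first} shows $\calH_E(\ell)\supseteq\calH_E^g(\ell)=\SL_2(\ZZ/\ell\ZZ)$, so each summand is at most $|\{u\in U(k)\cap\OO_k^n:\norm{u}\leq B,\ \rho_{E_u,\ell}(\Gal(\kbar/k))\neq\calH_E(\ell)\}|$. For the finitely many primes $17\leq\ell\leq L_1$ I invoke Proposition~\ref{P:EC HIT first}(i) with $m=\ell$, bounding these $O_\varepsilon(1)$ terms together by $O_{E,\varepsilon}\bigl(B^{[k:\QQ](n-1/2)}\log B\bigr)$. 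For $L_1<\ell\leq L_0$ I invoke Proposition~\ref{P:EC HIT first}(ii), whose bound $\ll_E \ell^6 B^{[k:\QQ](n-1/2+c_0/\ell)}\log B$ (with $c_0$ absolute and the implied constant independent of $\ell$ and $B$) has exponent at most $[k:\QQ](n-1/2)+\varepsilon/2$ once $L_1$ is chosen with $[k:\QQ]c_0/L_1<\varepsilon/2$; summing the $\ll L_0$ such terms, each with $\ell^6\leq L_0^6$, and using $L_0\ll_E(\log B)^{\gamma}$, this part contributes $\ll_E L_0^7 B^{[k:\QQ](n-1/2)+\varepsilon/2}\log B\ll_{E,\varepsilon}B^{[k:\QQ](n-1/2)+\varepsilon}$. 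Adding the three contributions gives $|\mathcal{B}(B)|\ll_{E,\varepsilon}B^{[k:\QQ](n-1/2)+\varepsilon}$, as required. The only delicate point is this split at $L_1$: the uniform-in-$\ell$ bound of Proposition~\ref{P:EC HIT first}(ii) is useless for the smallest primes, where the factor $B^{[k:\QQ]O(1/\ell)}$ is a genuine positive power of $B$, so one is forced to remove a fixed finite set of primes beforehand using the sharper part~(i), in which a constant depending on the (now bounded) $\ell$ is harmless.
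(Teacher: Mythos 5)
Your proof is correct and follows essentially the same route as the paper: use Masser--W\"ustholz to truncate the sum over primes at $\ell\ll_E(\log B)^\gamma$, then split that range at a constant $L_1(\varepsilon)$, applying Proposition~\ref{P:EC HIT first}(i) for the bounded set of small primes and the $\ell$-uniform bound of Proposition~\ref{P:EC HIT first}(ii) for the rest --- the split you flag as ``the only delicate point'' is exactly what the paper means by using part (ii) only ``for all sufficiently large $\ell$ (how large depends on $\varepsilon$ but not on $B$).'' The one cosmetic difference is your separate treatment of the CM locus via finiteness of CM $j$-invariants in $k$; the paper sidesteps this by observing that CM specializations fail the surjectivity condition for every $\ell\geq 17$ and hence are already counted in the $\ell=17$ term of the truncated sum.
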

\begin{proof}
Recall that there is a morphism $j_E\colon U \to \AA^1_k$ such that for each $u\in U(k)$, the $j$-invariant of $E_u$ is $j_E(u)$.   Now take any $u\in U(k)\cap \OO_k^n$ with $\norm{u} \leq B$.     We have 
\[
\log H(j(E_u))=\log H(j_E(u))\ll \log H(u) \ll \log \norm{u} \leq \log B
\] 
where the implicit constants do not depend on $u\in U(k)\cap \OO_k^n$.   So by Theorem~\ref{T:M-W} if $E_u$ is non-CM, then $\rho_{E_u,\ell}(\Gal(\kbar/k)) \supseteq \SL_2(\ZZ /\ell\ZZ)$ for all $\ell \geq C (\log B)^\gamma$ where $\gamma\geq 0$ is an absolute constant and $C$ is a constant that depends on $E$ and $k$.  Therefore,
\begin{align*}
& \, |\{ u\in U(k)\cap \OO_k^n : \norm{u} \leq B,\, \rho_{E_u,\ell}(\Gal(\kbar/k)) \not\supseteq \SL_2(\ZZ /\ell\ZZ) \text{ for some $\ell\geq 17$}\}|\\
\leq &\, \sum_{17\leq \ell \leq C(\log B)^\gamma}   |\{ u\in U(k)\cap \OO_k^n : \norm{u} \leq B,\, \rho_{E_u,\ell}(\Gal(\kbar/k)) \not\supseteq \SL_2(\ZZ /\ell\ZZ)\}|
\end{align*}
(note that if $E_u$ has complex multiplication then $\rho_{E_u,\ell}(\Gal(\kbar/k)) \not\supseteq \SL_2(\ZZ /\ell\ZZ)$ for all $\ell\geq 17$).
By Theorem~\ref{P:EC HIT first},
\begin{align} \label{E:EC HIT all primes}
& \, |\{ u\in U(k)\cap \OO_k^n : \norm{u} \leq B,\, \rho_{E_u,\ell}(\Gal(\kbar/k)) \not\supseteq \SL_2(\ZZ /\ell\ZZ) \text{ for some $\ell\geq 17$}\}|\\
\notag &\ll_{E,\varepsilon} \, \sum_{17\leq \ell \leq C(\log B)^\gamma}   \ell^6  B^{[k:\QQ](n-1/2+\varepsilon)}\log B.
\end{align}
We have used part (ii) of Theorem~\ref{P:EC HIT first} for all sufficiently large $\ell$ (how large depends on $\varepsilon$ but not on $B$) and Theorem~\ref{P:EC HIT first}(i) is used for the finitely many excluded primes.  So (\ref{E:EC HIT all primes}) is $O(B^{[k:\QQ](n-1/2+\varepsilon)}(\log B)^{6\gamma+1})$, and the proposition follows from (\ref{E:comparison}) and a readjustment of $\varepsilon$.
\end{proof}

The following group theoretic lemma justifies our focus on the Galois images arising from $\ell$-torsion.   We will apply it later with $\calH$ equal to $[\calH_E,\calH_E]$.

\begin{lemma} \label{L:Openness criterion}
Let $\calH$ be an open subgroup of $\SL_{2}(\Zhat)$, and let $G$ be a closed subgroup of $\calH$.  For each positive integer $m$, let $\calH(m)$ and $G(m)$ be the images under the reduction modulo $m$ map $\SL_{2}(\Zhat)\to\SL_{2}(\ZZ/m\ZZ)$ of $\calH$ and $G$, respectively.  Then there exists a positive integer $M$ (divisible only by those primes $\ell$ for which $\calH(\ell)\neq \SL_2(\ZZ/\ell\ZZ)$ or $\ell\leq 5$) such that $G=\calH$ if and only if $G(M)=\calH(M)$ and $G(\ell)=\SL_{2}(\ZZ/\ell\ZZ)$ for all primes $\ell\nmid M$.
\end{lemma}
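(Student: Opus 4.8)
The proof combines two standard facts about $\SL_2$. First, a lemma of Serre (see \cite{MR0387283}): for a prime $\ell\geq 5$, a closed subgroup of $\SL_2(\ZZ_\ell)$ whose reduction modulo $\ell$ equals $\SL_2(\ZZ/\ell\ZZ)$ is already all of $\SL_2(\ZZ_\ell)$. Second, a Goursat-type independence: for distinct primes $\ell\neq\ell'$, the groups $\SL_2(\ZZ_\ell)$ and $\SL_2(\ZZ_{\ell'})$ have no common nontrivial continuous quotient --- for $\ell\geq 5$ the group $\SL_2(\ZZ_\ell)$ is perfect, so a nontrivial quotient surjects onto its only nonabelian composition factor $\PSL_2(\FF_\ell)$, which is not a composition factor of $\SL_2(\ZZ_{\ell'})$; the cases $\ell\in\{2,3\}$ are handled by noting those groups are pro-solvable with pro-$\ell$ abelianization.

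\emph{Construction of $M$.} Let $P$ be the (finite) set of primes $\ell$ with $\ell\leq 5$ or $\calH(\ell)\neq\SL_2(\ZZ/\ell\ZZ)$. For $\ell\notin P$ --- so $\ell\geq 7$ and $\calH(\ell)=\SL_2(\ZZ/\ell\ZZ)$ --- Serre's lemma shows that the projection of $\calH$ to $\SL_2(\ZZ_\ell)$ is surjective. Write $\calH_P$ for the projection of $\calH$ to $\prod_{\ell\in P}\SL_2(\ZZ_\ell)$. Then $\calH$ is a subdirect product of $\calH_P$ and the groups $\SL_2(\ZZ_\ell)$, $\ell\notin P$, and these factors are pairwise coprime in the above sense (note that $\PSL_2(\FF_\ell)$ for $\ell\notin P$ is not a composition factor of $\calH_P$ either, since the primes occurring in $\prod_{\ell'\in P}\SL_2(\ZZ_{\ell'})$ lie in $P$). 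Applying Goursat's lemma to each finite subproduct and using that $\calH$ is closed gives
\[
\calH \;=\; \calH_P\times\prod_{\ell\notin P}\SL_2(\ZZ_\ell).
\]
Now $\calH_P$ is an open subgroup of $\prod_{\ell\in P}\SL_2(\ZZ_\ell)$, hence a finitely generated profinite group whose maximal open subgroups have bounded index (each $\SL_2(\ZZ_\ell)$ being compact $\ell$-adic analytic); therefore its Frattini subgroup $\Phi(\calH_P)$ is open. Choose a positive integer $M$ with $\operatorname{rad}(M)=\prod_{\ell\in P}\ell$ large enough that $\ker\bigl(\prod_{\ell\in P}\SL_2(\ZZ_\ell)\to\SL_2(\ZZ/M\ZZ)\bigr)\subseteq\Phi(\calH_P)$. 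Then $M$ is divisible only by primes in $P$, the group $\calH$ is the preimage of $\calH(M)$ under $\SL_2(\Zhat)\to\SL_2(\ZZ/M\ZZ)$, and $\calH(\ell)=\SL_2(\ZZ/\ell\ZZ)$ for every $\ell\nmid M$.

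\emph{The equivalence.} If $G=\calH$, then trivially $G(M)=\calH(M)$ and $G(\ell)=\calH(\ell)=\SL_2(\ZZ/\ell\ZZ)$ for $\ell\nmid M$. Conversely, suppose $G\leq\calH$ is closed with $G(M)=\calH(M)$ and $G(\ell)=\SL_2(\ZZ/\ell\ZZ)$ for all $\ell\nmid M$. By Serre's lemma, the projection of $G$ to $\SL_2(\ZZ_\ell)$ is surjective for each $\ell\notin P$. Let $G_P$ be the projection of $G$ to $\prod_{\ell\in P}\SL_2(\ZZ_\ell)$; it is a closed subgroup of $\calH_P$, and the hypothesis $G(M)=\calH(M)$ says precisely that $G_P\cdot\ker(\calH_P\to\calH_P(M))=\calH_P$. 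Since that kernel lies in $\Phi(\calH_P)$, this forces $G_P=\calH_P$. Thus $G$ is a subdirect product of $\calH_P$ and the $\SL_2(\ZZ_\ell)$, $\ell\notin P$, and the coprimality argument above gives $G=\calH_P\times\prod_{\ell\notin P}\SL_2(\ZZ_\ell)=\calH$.

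\emph{Expected main obstacle.} The real content is the Goursat independence: verifying that the factors ($\SL_2(\ZZ_\ell)$ for $\ell\notin P$, together with $\calH_P$) are pairwise coprime, and deducing that a subdirect product of such a possibly infinite family is the full product --- which one reduces to finite subproducts and then passes to a limit using compactness. The reduction to this picture via Serre's lemma, and the choice of $M$ via the Frattini subgroup, are routine once one knows $\calH$ is open.
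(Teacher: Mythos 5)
Your proof is correct and follows essentially the same route as the paper: Serre's lifting lemma (mod $\ell$ surjectivity implies $\ell$-adic surjectivity for $\ell\geq 5$) handles the primes outside a finite exceptional set $P$, an open Frattini subgroup of the projection to $\prod_{\ell\in P}\SL_2(\ZZ_\ell)$ pins down the choice of $M$, and a Goursat argument based on the nonabelian composition factors $\PSL_2(\FF_\ell)$ being pairwise distinct glues the factors back together. The only cosmetic difference is that you first prove $\calH$ itself splits as $\calH_P\times\prod_{\ell\notin P}\SL_2(\ZZ_\ell)$, whereas the paper skips that step and runs Goursat directly on $G$ inside $\calH_M\times\prod_{\ell\nmid M}\SL_2(\ZZ_\ell)$, forcing $G$ to be the whole product and hence equal to the intermediate group $\calH$.
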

\begin{proof} 
Let $\calH_m$ and $G_m$ be the image of $\calH$ and $G$, respectively, in $\prod_{\ell | m} \SL_2(\ZZ_\ell)$.  

Let $M_0$ be a positive integer divisible by 2, 3, 5 and by the primes for which $\calH(\ell)\neq \SL_2(\ZZ/\ell\ZZ)$.   The \emph{Frattini subgroup} $\Phi(\calH_{M_0})$ of $\calH_{M_0}$ is the intersection of the maximal closed subgroups of $\calH_{M_0}$.  Since $\calH$ is open in $\SL_2(\Zhat)$, the group $\calH_{M_0}$ contains a normal and open subgroup of the form $\prod_{\ell | M_0}\calS_{\ell^{e(\ell)}}$ for some $e(\ell)\geq 1$, where $\calS_{\ell^{e(\ell)}}:= \{A \in \SL_2(\ZZ_\ell) : A \equiv I \pmod{\ell^{e(\ell)}}\}$ .   The groups $ \{A \in \SL_2(\ZZ_\ell) : A \equiv I \pmod{\ell^{e(\ell)}}\}$ are pro-$\ell$ and are finitely generated as topological groups.   Therefore by \cite[10.6 Prop.]{MR1757192}, $\Phi(\calH_{M_0})$ is an open normal subgroup of $\calH_{M_0}$.   Choose a positive integer $M$ with the same prime divisors as $M_0$ such that $\Phi(\calH_{M_0})\supseteq \prod_{\ell^e \parallel M} \calS_{\ell^e}$; this will be our desired $M$.   Observe that if $G(M)=\calH(M)$, then $G_M = \calH_M$.

Consider a prime $\ell\nmid M_0$.  By \cite{MR1757192}*{IV-23 Lemma 3}, the assumption $G(\ell)=\calH(\ell)=\SL_2(\ZZ/\ell\ZZ)$ implies that $G_\ell =\calH_\ell = \SL_2(\ZZ_\ell)$.  

We may view $G$ and $\calH$ as subgroups of $\calH_M \times \prod_{\ell \nmid M} \SL_2(\ZZ_\ell)$.  We have seen that the projection of $G$ onto the $\calH_M$ and $\SL_2(\ZZ_\ell)$ factors is surjective.  We now show that these factors have no common non-abelian simple groups in their composition series.    For $\ell \nmid M$ (in particular $\ell\geq 5$), the only non-abelian simple group occurring in a composition series of $\SL_2(\ZZ_\ell)$ is $\SL_2(\FF_\ell)/\{\pm I\}$.   Also $\SL_2(\ZZ_\ell)$ with $\ell\geq 5$ has no non-trivial abelian quotients (cf. \cite{Zywina-Maximal}*{Lemma A.1}).   None of the groups $\SL_2(\FF_\ell)/\{\pm I\}$ ($\ell\nmid M$) occur in a composition series of $\calH_M$ (this follows from the calculation of ``$\textrm{Occ}(\SL_2(\ZZ_\ell))$'' in \cite[IV-25]{MR1484415}).    Using Goursat's lemma, we deduce the equality $G=\calH_M \times \prod_{\ell \nmid M} \SL_2(\ZZ_\ell)$ (for example, see \cite{Zywina-Maximal}*{Lemma A.4} where it is stated only for finite groups but it immediately extends to profinite groups); since $\calH$ lies between these two groups, we deduce that $G=\calH$.
\end{proof}

\subsection{Abelian quotients and cyclotomic fields}

We now state a special version of HIT involving the cyclotomic extension of $k$.  We will need this proposition in future work, so we also include a rational point version.

\begin{prop} \label{P:cyclotomic HIT}
Let $k$ be any number field \emph{except} $\QQ$.   Fix a non-empty open subvariety $U$ of $\PP^n_k$ and a surjective continuous homomorphism $\rho\colon \pi_1(U) \to G$
where $G$ is a finite abelian group.   Let $G^c$ be the image of $\pi_1(U_{k^\cyc})$ under $\rho$.   For each $u\in U(k)$, let $\rho_u$ be the composition $\Gal(\kbar/k)=\pi_1(\Spec k) \xrightarrow{u_*} \pi_1(U)\xrightarrow{\rho} G$.   Then 
\[
\frac{|\{ u \in U(k) : H(u)\leq B,\, \rho_u(\Gal(\kbar/k^\cyc)) = G^c\}|}{|\{ u \in U(k) : H(u)\leq B\}|} = 1 + O\Big( \frac{\log B}{B^{1/2}} \Big).  
\]
Assume further that $U$ is an open subvariety of $\AA^n_{k}.$  Then 
\[
\frac{|\{ u \in U(k)\cap \OO_k^n : \norm{u}\leq B,\,  \rho_u(\Gal(\kbar/k^\cyc)) = G^c \}|}{|\{ u \in U(k)\cap \OO_k^n : \norm{u}\leq B\}|} = 1 + O\Big( \frac{\log B}{B^{1/2}} \Big).  
\]
The implicit constants do not depend on $B$.
\end{prop}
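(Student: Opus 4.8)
The plan is to adapt the fibering-and-twisting strategy of §\ref{S:main proof}, combined with a dichotomy according to the conductor of the relevant abelian character. As in §\ref{SS:reduction to rational case}, it suffices to prove the integral version via Proposition~\ref{P:integral to rational}, so assume $U\subseteq\AA^n_k$ and that we must bound $\calA_B:=\{u\in U(k)\cap\OO_k^n:\norm u\le B,\ \rho_u(\Gal(\kbar/k^\cyc))\ne G^c\}$, the claim then following from \eqref{E:comparison}. Write $K$ for the minimal field of constants of $\rho$ and $K':=K\cap k^\cyc$; since $G$ is abelian so is $K/k$, and the composition $\Gal(\kbar/k)\to G\to\Gal(K/k)$ is the restriction map, so $\rho_u(\Gal(\kbar/k^\cyc))$ surjects onto $\Gal(K/K')$ and hence $\rho_u(\Gal(\kbar/k^\cyc))\cdot G^g=G^c$ for every $u$. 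Consequently, if $\rho_u(\Gal(\kbar/k^\cyc))\ne G^c$ then it lies in a maximal subgroup $M\subsetneq G^c$ (all subgroups are normal, $G$ being abelian) with $G^g\not\subseteq M$, and there are only finitely many such $M$. Fixing one, put $\bar G:=G/M$, $\ell:=[G^c:M]$ and $\bar\rho:=(G\twoheadrightarrow\bar G)\circ\rho$; then $G^gM/M=G^c/M\cong\ZZ/\ell$, so $\bar\rho$ has full geometric monodromy $\bar G^g=\ZZ/\ell$, sitting in $1\to\ZZ/\ell\to\bar G\to\Gal(K'/k)\to1$, and $\rho_u(\Gal(\kbar/k^\cyc))\subseteq M$ precisely when $\bar\rho_u$ is trivial on $\Gal(\kbar/k^\cyc)$, i.e.\ factors through $\Gal(k^\cyc/k)$. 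I will call such $u$ \emph{cyclotomic} and bound their number.

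Because $\bar\rho$ is the specialization of a cover with bounded branch locus, there is a fixed nonzero $g\in k[x_1,\dots,x_n]$, of degree bounded only in terms of the cover, such that every cyclotomic $u$ has $\bar\rho_u$ unramified outside a fixed finite set together with the primes dividing $g(u)$; moreover, cyclotomicity forces every such extra ramified prime $\p\nmid\ell$ to satisfy $N(\p)\equiv1\pmod\ell$, and at every other prime $\p\mid g(u)$ with $\p\nmid\ell$ it forces $\ell\mid v_\p(g(u))$. Since $N(g(u))\ll\norm u^{O(1)}\ll B^{O(1)}$, the conductor $\mathfrak f_u$ of (the relevant part of) $\bar\rho_u$ satisfies $N(\mathfrak f_u)\ll B^{O(1)}$. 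I would split $\calA_B$ according to whether $N(\mathfrak f_u)\le B^{\alpha}$ (\emph{small conductor}, for a fixed small $\alpha>0$) or $N(\mathfrak f_u)>B^{\alpha}$ (\emph{large conductor}). In the small-conductor range, stratify first by $\mathfrak f:=\mathfrak f_u$ (there are $\ll B^{\alpha}$ choices) and then by the actual cyclotomic homomorphism $\chi\colon\Gal(\kbar/k)\to\bar G$ realized by $\bar\rho_u$; for a given $\mathfrak f$ there are only $\ll\ell^{\omega(\mathfrak f)}$ such $\chi$, hence $\ll B^{\alpha(1+\varepsilon)}$ in total, where $\omega$ counts prime divisors. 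For a fixed $\chi$, the cyclotomic $u$ with $\bar\rho_u=\chi$ are exactly the $u$ lifting to the $k$-points of the twist $\bar\rho\cdot\chi^{-1}$, which one checks is a cover with image and geometric image both equal to $\ZZ/\ell$; applying Theorem~\ref{T:Main}(i) with $C$ the trivial element (so $\delta=1/\ell\le\tfrac12$) bounds their number by $\ll c_\chi\,B^{[k:\QQ](n-1+1/\ell)}\log B$, and the formula for the constant $c$ in Theorem~\ref{T:Main} gives $c_\chi\ll\ell^2\log N(\mathfrak f)\ll\log B$, since the set $S$ for the twisted cover consists of a fixed set together with the primes dividing $\mathfrak f$. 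Summing over the $\ll B^{\alpha(1+\varepsilon)}$ characters shows the small-conductor range contributes $\ll B^{\alpha(1+\varepsilon)+[k:\QQ](n-1+1/\ell)}(\log B)^2$, which is $\ll B^{[k:\QQ]n-1/2}\log B$ once $\alpha$ is chosen below $[k:\QQ](1-1/\ell)-\tfrac12$ — a positive bound precisely because $[k:\QQ]\ge2$.

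For the large-conductor range one cannot afford to enumerate characters, and I would instead exploit directly the multiplicative constraint on $g(u)$ recorded above: $g(u)$ is, modulo $\ell$-th powers and modulo primes of norm $\equiv1\pmod\ell$ and primes over $\ell$, trivial (for $\ell=2$ this says $g(u)$ is a rational integer times a $k$-square times a unit, and must be handled by an additional, more arithmetic argument). Combining this with a large-sieve argument — sieving over degree-one primes $\p$ of $k$ with $N(\p)\not\equiv1\pmod\ell$, at each of which the condition forces $g(u)\bmod\p$ into a proper subset of $\FF_\p$ — together with a divisor-type estimate for the part of $g(u)$ supported on primes $\equiv1\pmod\ell$ of norm $>B^{\alpha}$ (a sparse set), should show that this range also contributes $\ll B^{[k:\QQ]n-1/2}\log B$; again $[k:\QQ]\ge2$ is used to absorb the power-of-$\log$ losses. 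Finally one reassembles: sum the two ranges, sum over the finitely many $M$, and divide by $|\{u\in\OO_k^n:\norm u\le B\}|\asymp B^{[k:\QQ]n}$ from \eqref{E:comparison}.

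The hardest parts are the uniform large-conductor estimate and, within it, the case $\ell=2$; this is exactly where $k\ne\QQ$ is indispensable, since for $k=\QQ$ one has $k^\cyc=k^\ab$, so $k^\cyc$ already contains every $\ZZ/\ell$-extension of $k$, the set of cyclotomic $u$ ceases to be thin, and both estimates above collapse — consistent with the failure of the statement in that case. A subsidiary delicate point is keeping the constant $c_\chi$ in Theorem~\ref{T:Main} uniformly of size $\ll\log B$ across the whole small-conductor stratum, which relies on the explicit shape of $c$ in Theorem~\ref{T:Main} together with the polynomial bound on $N(\mathfrak f_u)$.
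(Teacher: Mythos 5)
Your reduction to bounding, for each maximal $M\subsetneq G^c$ with $G^g\not\subseteq M$, the set of $u$ for which $\bar\rho_u$ factors through $\Gal(k^\cyc/k)$ is correct and is essentially the same observation the paper uses (the paper phrases it with arbitrary proper subgroups $H\subsetneq G^c$ rather than maximal ones). From that point on, however, your route diverges entirely from the paper's, and I do not think it closes.

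The paper does not stratify by conductor and does not invoke Theorem~\ref{T:Main} at all. Instead it exploits a special feature of cyclotomic extensions via Lemma~\ref{L:Frob ind}: if $p$ is a rational prime that splits completely in $k$ (and in the constant field $K$), then for any finite cyclotomic extension $L/\QQ$ the Artin symbols $(\p_1,Lk/k),\dots,(\p_d,Lk/k)$ at the $d=[k:\QQ]$ primes of $\OO_k$ above $p$ all coincide, because each restricts to $(p,L/\QQ)$. Hence if $\rho_u(\Gal(\kbar/k^\cyc))=1$, the $d$ Frobenii $\rho(\Frob_{u_i})\in G^g$ obtained from $u\bmod p$ via the CRT isomorphism $\OO_k^n/p\OO_k^n\cong\prod_{i=1}^d\FF_{\p_i}^n$ must all be equal. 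Counting with Deligne's bounds, the admissible residues mod $p$ occupy a fraction $\le\frac1{|G^g|^{d-1}}+O(p^{-1/2})\le\frac12+O(p^{-1/2})$ of $(\ZZ/p)^{dn}$ (using $d\ge2$ and $G^g\ne1$), and a single application of the large sieve over $\QQ$ with sieving modulus $B^{1/2}$ gives $|\calA|\ll B^{[k:\QQ]n-1/2}\log B$ directly, uniformly in $u$, with no auxiliary case split. This is where $k\ne\QQ$ enters — exactly once, cleanly.

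Your small-conductor analysis (twisting $\bar\rho$ by $\chi^{-1}$, applying Theorem~\ref{T:Main} with $C=\{1\}$ and $\delta=1/\ell$, and tracking the constant $c_\chi$ through the explicit formula — which, as you note, controls $c_\chi\ll\ell^2\log N(\mathfrak f_\chi)$ since $\sum_{\p\mid\mathfrak f}\frac{\log N(\p)}{N(\p)}\ll\log\log N(\mathfrak f)$) is plausible in outline, though the count of admissible cyclotomic $\chi$ of bounded conductor over a general $k$ needs more care than the $\ell^{\omega(\mathfrak f)}$ heuristic suggests. But the large-conductor range is a genuine gap. The condition you isolate — that every $\p\nmid\ell$ with $N(\p)\not\equiv1\pmod\ell$ has $\ell\mid v_\p(g(u))$ — is a valuation constraint, not a constraint on the residue of $g(u)\bmod\p$. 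Locally it only forbids the event $v_\p(g(u))=1,\dots,\ell-1$, which has density $O(1/N(\p))$; the admissible set mod $\p$ therefore has density $1-O(1/N(\p))$, far from the $\delta<1$ bounded away from $1$ that any large-sieve or larger-sieve inequality needs to produce a power saving. Summing such local savings over $\p\le Q$ yields only logarithmic gains. Your proposed supplement — a divisor-type estimate for the part of $g(u)$ supported on primes $\equiv1\pmod\ell$ of norm $>B^\alpha$ — would require bounding the number of $u$ with $g(u)$ of a highly constrained multiplicative shape, which is not something a sieve over residues provides; indeed you flag the case $\ell=2$ as requiring ``an additional, more arithmetic argument'' that you do not supply. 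As written, the large-conductor contribution is not controlled, and the proof does not go through.
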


Since $\QQ^\cyc$ is the maximal abelian extension of $\QQ$, Proposition~\ref{P:cyclotomic HIT} fails for $k=\QQ$ and $G^c\neq 1$.  The proof of the proposition is based on the following simple lemma.  Since we are working with an abelian group $G$, the Frobenius conjugacy classes are actually well-defined elements.  
\begin{lemma} \label{L:Frob ind}
Let $p$ be a rational prime that splits completely in $k$ and let $L$ be a finite abelian extension of $\QQ$ that is unramified at $p$.
Choose any prime $\p$ of $\OO_k$ lying over $p$.  Then the automorphism $(\p, Lk/k) \in \Gal(Lk/k)$ does not depend on the choice of $\p$ dividing $p$.
\end{lemma}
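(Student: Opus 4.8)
\textbf{Proof proposal for Lemma~\ref{L:Frob ind}.}

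The plan is to reduce the independence of $(\p, Lk/k)$ to the (automatic) well-definedness of the Frobenius element over $\QQ$. First I would record two preliminary facts. The restriction map $\Gal(Lk/k)\to\Gal(L/\QQ)$, $\sigma\mapsto\sigma|_L$, is defined (since $L/\QQ$ is normal, $\sigma$ preserves $L$, and it fixes $\QQ\subseteq k$) and injective (an element of $\Gal(Lk/k)$ fixing both $L$ and $k$ fixes $Lk=L\cdot k$); in particular $\Gal(Lk/k)$ is abelian, being a subgroup of the abelian group $\Gal(L/\QQ)$. Also, for any prime $\p$ of $\OO_k$ above $p$ and any prime $\Pp$ of $\OO_{Lk}$ above $\p$, writing $\mathfrak{q}=\Pp\cap\OO_L$, one has $(Lk)_{\Pp}=L_{\mathfrak{q}}\cdot k_\p=L_{\mathfrak{q}}$, because $p$ splits completely in $k$ forces $k_\p=\QQ_p\subseteq L_{\mathfrak{q}}$; since $L/\QQ$ is unramified at $p$, this shows $\p$ is unramified in $Lk$, so the symbol $(\p, Lk/k)$ is a well-defined \emph{element} of the abelian group $\Gal(Lk/k)$.

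The main step is then a congruence chase. Since $p$ splits completely in $k$ we have $N(\p)=p$, so $\sigma:=(\p, Lk/k)$ is characterized by $\sigma(x)\equiv x^{p}\pmod{\Pp}$ for all $x\in\OO_{Lk}$. Restricting this congruence to $x\in\OO_L$ and reducing modulo $\mathfrak{q}=\Pp\cap\OO_L$ gives $\sigma|_L(x)\equiv x^{p}\pmod{\mathfrak{q}}$ for all $x\in\OO_L$, which is exactly the property defining the Frobenius $(\mathfrak{q},L/\QQ)$; hence $\sigma|_L=(\mathfrak{q},L/\QQ)$. Because $L/\QQ$ is abelian and unramified at $p$, the symbol $(\mathfrak{q},L/\QQ)=\Frob_p$ is a well-defined element of $\Gal(L/\QQ)$ that does not depend on the choice of $\mathfrak{q}$ above $p$, hence not on $\Pp$ or $\p$. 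Since $\sigma\mapsto\sigma|_L$ is injective on $\Gal(Lk/k)$, it follows that $(\p, Lk/k)$ itself is independent of the prime $\p$ above $p$, which is the claim.

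I do not anticipate a genuine obstacle: this is a short diagram chase with Artin symbols. The only points needing care are verifying that $(\p, Lk/k)$ is well-defined as an element (abelianness of $\Gal(Lk/k)$ and unramifiedness of $\p$ in $Lk$, both coming from the hypotheses) and applying the restriction compatibility with the correct norm $N(\p)=p$ --- which is precisely where the hypothesis that $p$ splits completely in $k$ enters. Note that the hypothesis $k\neq\QQ$ plays no role in this lemma; it is needed only in Proposition~\ref{P:cyclotomic HIT}.
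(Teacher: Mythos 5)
Your proposal is correct and follows essentially the same route as the paper: both use the injectivity of the restriction map $\Gal(Lk/k)\hookrightarrow\Gal(L/\QQ)$ and establish the key identity $(\p,Lk/k)|_L=(p,L/\QQ)$ via the Frobenius congruence, using $N(\p)=p$ from complete splitting. Your write-up just spells out the unramifiedness check and the congruence step in a bit more detail than the paper does.
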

\begin{proof}
Our assumptions assure that $p$ is unramified in $Lk$.  Restriction to $L$ defines an injective homomorphism $\Gal(Lk/k) \hookrightarrow \Gal(L/\QQ)$.  We claim that $(\p,Lk/k) |_L = (p,L/\QQ)$ from which the lemma would follow immediately.  Define $\sigma:=(\p, Lk/k)$ and fix a prime $\mathfrak{P}$ of $\OO_{Lk}$ lying over $\p$.  Then $\sigma(\mathfrak{P})=\mathfrak{P}$ and $\sigma$ induces the $p$-th power Frobenius automorphism on $\FF_{\mathfrak{P}}$ (since $p=N(\p)$).   The restriction $\sigma|_L$ stabilizes the prime $\p':=\mathfrak{P}\cap \OO_L$ of $\OO_L$ and induces the $p$-th power Frobenius automorphism on $\FF_{\p'}$.  Therefore, $\sigma|_L = (p,L/\QQ)$ as claimed.
\end{proof}

\begin{proof}[Proof of Proposition~\ref{P:cyclotomic HIT}]
A similar argument to that in \S\ref{SS:reduction to rational case} shows that the rational point version is a consequence of the integral point version, so we need only prove the second statement.  Set $d=[k:\QQ]$.  As usual, define $G^g=\rho(\pi_1(U_{\kbar}))$.   If $G^g=1$, then the proposition is easy ($\rho$ factors through $\Gal(\kbar/k)$ and equals  $\rho_u$ for each $u\in U(k)$).  So we may assume that $G^g\neq 1$.  Since $G^g\subseteq G^c$, this also implies that $G^c\neq 1$.

For a fixed $u\in U(k) \cap \OO_k^n$, we certainly have $\rho_u(\Gal(\kbar/k^\cyc))\subseteq G^c$.   If this is not an equality, then $\widetilde{\rho}_u(\Gal(\kbar/k^{\cyc}))=1$ where $\widetilde{\rho}$ is the representation $\pi_1(U) \xrightarrow{\rho} G \twoheadrightarrow G/H$ for some proper subgroup $H$ of $G^c$.   Thus by (\ref{E:comparison}) it suffices to show that
\[
|\{ u \in U(k)\cap \OO_k^n : \norm{u}\leq B,\, \rho_u(\Gal(\kbar/k^\cyc)) = 1\}| \ll  B^{nd-1/2} \log B.
\]
Define the set 
\[
\calA = \{ u \in U(k) \cap \OO_k^n : \norm{u} \leq B,\, \rho_u(\Gal(\kbar/k^\cyc))=1 \}
\]
for a fixed real number $B\geq 2$.  Choose an open subscheme $\calU$ of $\AA^n_{\OO_k}$ with generic fiber $U$.  Fix a finite set $S\subseteq \Sigma_k$ for which  $\rho$ factors through a homomorphism $\pi_1(\calU_{\OO})\to G $, which we shall also denote by $\rho$, where $\OO$ is the ring of $S$-integers in $k$.  

There is a finite Galois extension $K/\QQ$ such that $K\supseteq k$ and $\rho(\pi_1(U_{K}))= G^g$.  Fix a prime $p$ that splits completely in $K$ and is not divisible by any prime in $S$.  Then for a prime $\p$ of $\OO_k$ dividing $p$ and an element $C\in G^g$, we have
\[
 |\{u\in \calU(\FF_{\p}) : \rho(\Frob_u) = C \}| = \frac{1}{|G^g|} N(\p)^n + O(N(\p)^{n-1/2}),
\]
where the implicit constant depends on $\rho$ and $K$ (this follows from Deligne's theorem and the bounds in \cite{MR0506272}).  Let $\p_1,\ldots, \p_d$ be the primes of $\OO_k$ dividing $p$.  Define the sets 
\[
B_p = \Big\{ (u_1,\ldots,u_d) \in \prod_{i=1}^d \mathcal{U}(\FF_{\p_i}) : \rho(\Frob_{u_i}) \in G^g \text{ is independent of $i$} \Big\}
\]
and $C_p =\big(\prod_{i=1}^d \FF_{\p_i}^n\big) \setminus \big(\prod_{i=1}^d  \calU(\FF_{\p_i})\big)$.   We then have $|C_p| = O(p^{dn-1})$ and
\[
|B_p| = |G^g| \Big(\dfrac{1}{|G_g|} p^n + O(p^{n-1/2}) \Big)^d = \frac{1}{|G^g|^{d-1}} p^{dn} + O(p^{dn-1/2})
\]
(we have used that $N(\p_i)=p$ since $p$ splits completely in $k$).
So using our assumption that $d>1$ (i.e., $k\neq \QQ$) and $G^g\neq 1$, we find that $|B_p \cup C_p| \leq \frac{1}{2} p^{dn} + O(p^{dn-1/2})$.

Take any $u\in \calA$.   The Chinese remainder theorem gives an isomorphism 
\begin{equation} \label{E:CRT}
\OO_k^n/p\OO_k^n = \prod_{i=1}^d (\OO_k/\p_i \OO_k)^n=\prod_{i=1}^d \FF_{\p_i}^n, 
\end{equation}
so we may identify $u \pmod p$ with the tuple $(u_1,\ldots, u_d) \in \prod_{i=1}^d \FF_{\p_i}^n$.  Suppose $u\pmod p$ does not belong to $C_p$, i.e., $u_i \in \calU(\FF_{\p_i})$ for all $i$.   Then $\rho_u$ is unramified at each $\p_i$ and $\rho_u(\Frob_{\p_i})=\rho(\Frob_{u_i})$.  The condition $\rho_u(\Gal(\kbar/k^\cyc))=1$ implies that there is a finite cyclotomic extension $L/\QQ$ unramified at $p$ such that $\rho_u(\Gal(\kbar/Lk))=1.$   By Lemma~\ref{L:Frob ind}, we deduce that
\[
\rho(\Frob_{u_i})=\rho_u(\Frob_{\p_i})=\rho_u(\Frob_{\p_j})= \rho(\Frob_{u_j})
\]
for all $i,j \in \{1,\ldots, d\}$.   So using the isomorphism (\ref{E:CRT}), we find that image of $\mathcal{A}$ modulo $p$ lies in $B_p\cup C_p$ and hence has cardinality at most $\frac{1}{2} p^{dn} + O(p^{dn-1/2})$.

We can now apply the large sieve to obtain a bound for $\calA$.  Using the large sieve as in \cite[12.1]{MR1757192} (with $K=\QQ$, $\Lambda=\OO_k^n$ with norm $\norm{\cdot}$, and $Q=B^{1/2}$) gives the bound
\[
|\calA | \ll B^{nd}/L
\]
where $L= \sum_{p \leq B^{1/2}, \, p \in \mathcal{P}} (1 + O(p^{-1/2}))$ and $\mathcal{P}$ is the set of primes $p$ that are completely split in $K$ and are not divisible by any primes in $S$.  Since $\mathcal{P}$ has positive density, we have $L\gg B^{1/2}/\log(B^{1/2})$ for sufficiently large $B$.  Therefore, $|\calA| \ll B^{nd-1/2} \log B$.
\end{proof}

\subsection{Final steps}
\begin{prop} \label{T:HIT EC 2b}
  
\begin{romanenum}
\item
For any $\varepsilon > 0$,
\[
\frac{\big|\big\{ u\in U(k)\cap\OO_k^n : \norm{u}\leq B,\, \rho_{E_u}(\Gal(\kbar/k^\ab))= [\calH_E,\calH_E] \big\}\big|}{\big|\big\{ u\in U(k)\cap\OO_k^n : \norm{u}\leq B\big\}\big|}  = 1 + O_{E,\varepsilon}\Big(\frac{1}{B^{[k:\QQ]/2 - \varepsilon}}\Big).
\]
\item
If $k\neq \QQ$, then
\[
\frac{\big|\big\{ u\in U(k)\cap\OO_k^n : \norm{u}\leq B,\, \rho_{E_u}(\Gal(\kbar/k^\cyc)) =\calH_E\cap \SL_2(\Zhat) \big\}\big|}{\big|\big\{ u\in U(k)\cap\OO_k^n : \norm{u}\leq B \big\}\big|}   = 1 + O_{E}\Big(\frac{\log B}{B^{1/2}}\Big).
\]
\end{romanenum}
\end{prop}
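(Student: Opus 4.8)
Both parts are reduced to finite level by Lemma~\ref{L:Openness criterion}: surjectivity modulo all but finitely many primes is exactly Proposition~\ref{P:EC HIT all primes}; the full image modulo a single ``bad'' modulus $M$ comes from Proposition~\ref{P:EC HIT first}(i) with $m=M$; and in part (ii)---where the Kronecker--Weber obstruction would otherwise intervene---the extra constraint imposed by the cyclotomic field at the modulus $M$ is supplied by Proposition~\ref{P:cyclotomic HIT}, which is the only place the hypothesis $k\neq\QQ$ is used. Throughout set $G_u:=\rho_{E_u}(\Gal(\kbar/k))\subseteq\calH_E$; since $\Gal(\kbar/k^\ab)$ is the closed commutator subgroup of $\Gal(\kbar/k)$ and $\rho_{E_u}$ is continuous with compact source, $\rho_{E_u}(\Gal(\kbar/k^\ab))=[G_u,G_u]$, and likewise $[\calH_E,\calH_E]=\rho_E(\pi_1(U_{k^\ab}))$. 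Because $E\to U$ is non-isotrivial the geometric monodromy $\calH_E^g$ is open in $\SL_2(\Zhat)$; hence (using $[\SL_2(\ZZ_\ell),\SL_2(\ZZ_\ell)]=\SL_2(\ZZ_\ell)$ for $\ell\geq5$) both $[\calH_E,\calH_E]$ and $\calH_E\cap\SL_2(\Zhat)$ are open in $\SL_2(\Zhat)$, so Lemma~\ref{L:Openness criterion} applies to them.

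\textbf{Part (i).} Apply Lemma~\ref{L:Openness criterion} with $\calH=[\calH_E,\calH_E]$ and $G=[G_u,G_u]\subseteq\calH$, obtaining a modulus $M$; enlarging $M$ we may assume every prime $\leq13$ divides it, so that a prime $\ell\nmid M$ is automatically $\geq17$ while $G=\calH$ still follows from $G(M)=\calH(M)$ together with $G(\ell)=\SL_2(\ZZ/\ell\ZZ)$ for all $\ell\nmid M$. For such $\ell$ one has $[G_u,G_u](\ell)=[\rho_{E_u,\ell}(\Gal(\kbar/k)),\rho_{E_u,\ell}(\Gal(\kbar/k))]$, and as commutators have determinant $1$ this equals $\SL_2(\ZZ/\ell\ZZ)$ as soon as $\rho_{E_u,\ell}(\Gal(\kbar/k))\supseteq\SL_2(\ZZ/\ell\ZZ)$; by Proposition~\ref{P:EC HIT all primes} the latter holds for all $\ell\geq17$ outside a set of $u$ of density $O_{E,\varepsilon}(B^{-[k:\QQ]/2+\varepsilon})$. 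For the modulus $M$ itself, $[G_u,G_u](M)=[\calH_E(M),\calH_E(M)]=[\calH_E,\calH_E](M)$ whenever $\rho_{E_u,M}(\Gal(\kbar/k))=\calH_E(M)$, which by Proposition~\ref{P:EC HIT first}(i) fails only on a set of density $O_E(B^{-[k:\QQ]/2}\log B)$. Discarding the union of these two exceptional sets gives part (i), the first term dominating the error.

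\textbf{Part (ii).} Here $k\neq\QQ$. Since $\det\circ\rho_{E_u}$ is the cyclotomic character $\chi_k$ (Weil pairing) and $\Gal(\kbar/k^\cyc)=\ker\chi_k$, we get $\rho_{E_u}(\Gal(\kbar/k^\cyc))=G_u\cap\SL_2(\Zhat)$; the same reasoning on $\pi_1(U)$ yields $\calH_E\cap\SL_2(\Zhat)=\rho_E(\pi_1(U_{k^\cyc}))=:\calH_E^c$. Apply Lemma~\ref{L:Openness criterion} with $\calH=\calH_E^c$, $G=G_u\cap\SL_2(\Zhat)\subseteq\calH$, and modulus $M$ again enlarged past $13$. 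For $\ell\nmid M$ one has $(G_u\cap\SL_2(\Zhat))(\ell)\supseteq[G_u,G_u](\ell)$, which equals $\SL_2(\ZZ/\ell\ZZ)$ once $\rho_{E_u,\ell}(\Gal(\kbar/k))\supseteq\SL_2(\ZZ/\ell\ZZ)$ (Proposition~\ref{P:EC HIT all primes}). For the condition $(G_u\cap\SL_2(\Zhat))(M)=\calH_E^c(M)$ the inclusion $\subseteq$ is automatic; for the reverse, apply Proposition~\ref{P:cyclotomic HIT} to the finite abelian quotient $\rho\colon\pi_1(U)\xrightarrow{\rho_{E,M}}\calH_E(M)\twoheadrightarrow\calH_E(M)/[\calH_E,\calH_E](M)$, whose associated group $G^c$ (in the notation of that proposition) is the image of $\calH_E^c(M)$. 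This gives, outside a set of density $O(B^{-1/2}\log B)$, the equality $\rho_{E_u,M}(\Gal(\kbar/k^\cyc))\cdot[\calH_E,\calH_E](M)=\calH_E^c(M)$; combined with $[G_u,G_u](M)=[\calH_E,\calH_E](M)$ and the inclusion $[G_u,G_u](M)\subseteq\rho_{E_u,M}(\Gal(\kbar/k^\cyc))$ (from Proposition~\ref{P:EC HIT first}(i), as in part (i)), this forces $\rho_{E_u,M}(\Gal(\kbar/k^\cyc))=\calH_E^c(M)$, i.e. $(G_u\cap\SL_2(\Zhat))(M)=\calH_E^c(M)$. As $[k:\QQ]\geq2$, the $O(B^{-1/2}\log B)$ term from Proposition~\ref{P:cyclotomic HIT} dominates, and we obtain part (ii).

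\textbf{Main obstacle.} The analytically substantial inputs (the large sieve, Masser--W\"ustholz, and the argument over split primes of $K$) are already packaged in Propositions~\ref{P:EC HIT first}, \ref{P:EC HIT all primes} and \ref{P:cyclotomic HIT}; what remains is a careful assembly whose one genuinely delicate point is that reduction modulo $M$ does not commute with intersecting against $\SL_2(\Zhat)$, so one cannot simply juxtapose the two surjectivity statements. The commutator trick resolves this: $[\calH_E,\calH_E]\subseteq\calH_E\cap\SL_2(\Zhat)$ is an open subgroup whose reduction modulo $\ell$ is all of $\SL_2(\ZZ/\ell\ZZ)$ for $\ell\geq17$, so $[G_u,G_u]$ already pins $G_u\cap\SL_2(\Zhat)$ down from below, and the only remaining discrepancy---which lives in the finite abelian group $\calH_E(M)/[\calH_E,\calH_E](M)$ at the bad modulus---is precisely what Proposition~\ref{P:cyclotomic HIT} controls, which is also exactly why part (ii) must exclude $k=\QQ$. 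Feeding the two parts into $\det G_u=\det\calH_E$ (for $k\neq\QQ$) and into $\QQ^\cyc=\QQ^\ab$ (for $k=\QQ$) then recovers Theorem~\ref{T:HIT EC final}, hence Theorems~\ref{T:HIT EC} and \ref{T:EC Main Q}.
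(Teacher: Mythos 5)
Your proof is correct and follows essentially the same strategy as the paper. Part~(i) is identical to the paper's argument: reduce to finite level via Lemma~\ref{L:Openness criterion}, handle the modulus~$M$ with Proposition~\ref{P:EC HIT first}(i), and the primes $\ell\nmid M$ with Proposition~\ref{P:EC HIT all primes}, using $[\SL_2(\ZZ/\ell\ZZ),\SL_2(\ZZ/\ell\ZZ)]=\SL_2(\ZZ/\ell\ZZ)$.

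For part~(ii) you take a slightly different but equivalent route. The paper does not re-invoke Lemma~\ref{L:Openness criterion} for $\calH_E\cap\SL_2(\Zhat)$: since part~(i) already gives $\rho_{E_u}(\Gal(\kbar/k^\ab))=[\calH_E,\calH_E]$ at the infinite level, $\rho_{E_u}(\Gal(\kbar/k^\cyc))$ is pinched between the open subgroup $[\calH_E,\calH_E]$ and $\calH_E\cap\SL_2(\Zhat)$, and the paper just needs to verify surjectivity onto the finite abelian quotient $(\calH_E\cap\SL_2(\Zhat))/[\calH_E,\calH_E]$, detected at a single modulus $m$, via Proposition~\ref{P:cyclotomic HIT}. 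You instead apply Lemma~\ref{L:Openness criterion} afresh to $\calH_E\cap\SL_2(\Zhat)$, re-verify the conditions at $\ell\nmid M$ from Proposition~\ref{P:EC HIT all primes}, and pin down the mod-$M$ condition by combining Proposition~\ref{P:cyclotomic HIT} with the inclusion $[G_u,G_u](M)\subseteq\rho_{E_u,M}(\Gal(\kbar/k^\cyc))$ and the equality $[G_u,G_u](M)=[\calH_E,\calH_E](M)$ coming from Proposition~\ref{P:EC HIT first}(i). This works, and the error terms come out the same, but it re-derives the primes $\ell\nmid M$ which part~(i) would already hand you for free; the paper's one-line group-theoretic deduction from (i) is a bit leaner. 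One small stylistic note: the parenthetical ``(from Proposition~\ref{P:EC HIT first}(i), as in part~(i))'' should be understood as justifying only the equality $[G_u,G_u](M)=[\calH_E,\calH_E](M)$; the inclusion $[G_u,G_u](M)\subseteq\rho_{E_u,M}(\Gal(\kbar/k^\cyc))$ is simply the containment $k^\cyc\subseteq k^\ab$.
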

\begin{proof}
For $u\in U(k)$, the commutator of $\rho_{E_u}(\Gal(\kbar/k))$ is  $\rho_{E_u}(\Gal(\kbar/k^\ab))$.  Since $\rho_{E_u}(\Gal(\kbar/k))\subseteq \calH_E$, we find that $\rho_{E_u}(\Gal(\kbar/k^\ab))$ is a closed subgroup of $[\calH_E,\calH_E]$.  Since $[\calH_E,\calH_E]$ is an open subgroup of $\SL_2(\Zhat)$, there is a corresponding integer $M$ as in Lemma~\ref{L:Openness criterion}; we may assume $M$ is divisible by all primes $\ell < 17$.  With this choice of $M$, 
\begin{align} 
\notag & \frac{|\{u\in U(k)\cap\OO_k^n : \norm{u}\leq B,\, \rho_{E_u}(\Gal(\kbar/k^\ab)) \neq [\calH_E,\calH_E]\}|}{|\{u\in U(k)\cap\OO_k^n : \norm{u}\leq B\}|} \\
\label{EE: M term} \leq & \frac{|\{ u\in U(k)\cap\OO_k^n : \norm{u}\leq B, \rho_{E_u,M}(\Gal(\kbar/k^\ab)) \neq [\calH_E(M),\calH_E(M)]\}|}{|\{u\in U(k)\cap\OO_k^n : \norm{u}\leq B\}|}\\
\label{EE: l term} + & \frac{|\{ u\in U(k)\cap\OO_k^n : \norm{u}\leq B, \rho_{E_u,\ell}(\Gal(\kbar/k^\ab)) \neq \SL_2(\ZZ/\ell\ZZ) \text{ for some $\ell\nmid M$}\}|}{|\{ u\in U(k)\cap\OO_k^n : \norm{u}\leq B\}|}
\end{align}
If $\rho_{E_u, M}(\Gal(\kbar/k))=\calH_E(M)$, then $\rho_{E_u,M}(\Gal(\kbar/k^\ab)) = [\calH_E(M),\calH_E(M)]$. Thus (\ref{EE: M term}) is $O(B^{-[k:\QQ]/2}\log B)$ by Proposition~\ref{P:EC HIT first}.
For $\ell \nmid M$ (and in particular, $\ell\geq 5$), the group $\SL_2(\ZZ/\ell\ZZ)$ is its own commutator subgroup, so $\rho_{E_u,\ell}(\Gal(\kbar/k^\ab)) = \SL_2(\ZZ/\ell\ZZ)$ if and only if $\rho_{E_u,\ell}(\Gal(\kbar/k)) \supseteq \SL_2(\ZZ/\ell\ZZ)$. Thus by Proposition~\ref{P:EC HIT all primes}, the term (\ref{EE: l term}) is $O(B^{-[k:\QQ]/2 + \varepsilon})$.  Part (i) follows immediately.\\

We now consider (ii), so take $k\neq \QQ$.  Define the group $G=\calH_E\cap \SL_2(\Zhat)$.    The representation $\det\circ \rho_E$ factors through the cyclotomic character $\Gal(\kbar/k) \to \Zhat^\times$, so $\rho_E(U_{k^\cyc})=G$ and $\rho_{E_u}(\Gal(\kbar/k^\cyc))$ is a closed subgroup of $G$ for all $u\in U(k)$.

The group $[\calH_E,\calH_E]$ is a normal subgroup of finite index in $G$, so there is an integer $m$ such that reduction modulo $m$ gives an isomorphism
\[
G/[\calH_E,\calH_E] \xrightarrow{\sim} G(m)/[\calH_E(m),\calH_E(m)].
\]
Define $\widetilde{\rho} \colon \pi_1(U) \to \calH_E(m)/[\calH_E(m),\calH_E(m)]$ to be the composition of $\rho_{E,m}$ with the obvious quotient map.  The image of $\pi_1(U_{k^\cyc})$ under $\widetilde\rho$ is $G^c:=G(m)/[\calH_E(m),\calH_E(m)]$.   For each $u\in U(k)$, let $\widetilde{\rho}_u$ be the composition of $\rho_{E_u,m}$ with the quotient map $\calH_E(m) \twoheadrightarrow \calH_E(m)/[\calH_E(m),\calH_E(m)]$. By Proposition~\ref{P:cyclotomic HIT} and our assumption $k\neq \QQ$, we have
\begin{equation} \label{E:ab quotient}
\frac{|\{ u \in U(k)\cap \OO_k^n : \norm{u}\leq B,\,  \widetilde\rho_u(\Gal(\kbar/k^\cyc)) = G^c \}|}{|\{ u \in U(k)\cap \OO_k^n : \norm{u}\leq B\}|} = 1 + O\Big( \frac{\log B}{B^{1/2}} \Big).  
\end{equation}

If for $u \in U(k)\cap \OO_k^n$ we have  $\rho_{E_u}(\Gal(\kbar/k^\ab))= [\calH_E,\calH_E]$ and $\widetilde\rho_u(\Gal(\kbar/k^\cyc)) = G^c$, then $\rho_{E_u}(\Gal(\kbar/k^\cyc))$ equals  $G= \calH_E \cap \SL_2(\Zhat)$. So (ii) follows from (i) and (\ref{E:ab quotient}).
\end{proof}

\begin{proof}[Proof of Theorem~\ref{T:HIT EC final}]
As remarked in the comments following the statement of Theorem~\ref{T:HIT EC final}, it suffices to prove the integral point versions.

Since $\det\circ \rho_E\colon \pi_1(U)\to \Zhat^\times$ factors through the cyclotomic character $\Gal(\kbar/k)\to \Zhat^\times$, we find that 
\[
[\calH_E: \rho_{E_u}(\Gal(\kbar/k))]= [\calH_E\cap \SL_2(\Zhat): \rho_{E_u}(\Gal(\kbar/k^\cyc))]
\]
for all $u\in U(k)$.  If $k\neq \QQ$, then the integral point version of Theorem~\ref{T:HIT EC final}(i) is equivalent to Theorem~\ref{T:HIT EC 2b}(ii). Now suppose $k=\QQ$.  By the Kronecker-Weber theorem $\QQ^\ab=\QQ^\cyc$, so $\rho_{E_u}(\Gal(\Qbar/\QQ^\ab))\subseteq  [\calH_E,\calH_E]$ for all $u\in U(\QQ)$.  Thus
\begin{align*}
[\calH_E: \rho_{E_u}(\Gal(\Qbar/\QQ))] &= [\calH_E\cap \SL_2(\Zhat): \rho_{E_u}(\Gal(\Qbar/\QQ^\ab))]\\
&= \big[\calH_E\cap \SL_2(\Zhat): [\calH_E,\calH_E] \big] \cdot \big[  [\calH_E,\calH_E]: \rho_{E_u}(\Gal(\Qbar/\QQ^\ab))\big]\\
&= r\cdot \big[  [\calH_E,\calH_E]: \rho_{E_u}(\Gal(\Qbar/\QQ^\ab))\big].
\end{align*}
The integral point version of Theorem~\ref{T:HIT EC final}(ii) follows from Theorem~\ref{T:HIT EC 2b}(i)
\end{proof}

\bibliographystyle{plain}
\begin{bibdiv}
\begin{biblist}

\bib{MR2549537}{article}{
      author={Akbary, Amir},
      author={Ghioca, Dragos},
       title={Periods of orbits modulo primes},
        date={2009},
     journal={J. Number Theory},
      volume={129},
      number={11},
       pages={2831\ndash 2842},
}

\bib{MR0506272}{article}{
      author={Bombieri, E.},
       title={On exponential sums in finite fields. {II}},
        date={1978},
     journal={Invent. Math.},
      volume={47},
      number={1},
       pages={29\ndash 39},
}

\bib{MR0148647}{article}{
      author={Chela, R.},
       title={Reducible polynomials},
        date={1963},
     journal={J. London Math. Soc.},
      volume={38},
       pages={183\ndash 188},
}

\bib{MR516576}{article}{
      author={Cohen, S.~D.},
       title={The distribution of the {G}alois groups of integral polynomials},
        date={1979},
     journal={Illinois J. Math.},
      volume={23},
      number={1},
       pages={135\ndash 152},
}

\bib{MR2178556}{article}{
      author={Cojocaru, Alina~Carmen},
      author={Fouvry, Etienne},
      author={Murty, M.~Ram},
       title={The square sieve and the {L}ang-{T}rotter conjecture},
        date={2005},
     journal={Canad. J. Math.},
      volume={57},
      number={6},
       pages={1155\ndash 1177},
}

\bib{Co-Gr-Jo}{unpublished}{
      author={Cojocaru, Alina Carmen},
      author={Grant, David},
      author={Jones, Nathan},
       title={One-parameter families of elliptic curves over $\QQ$ with maximal {G}alois representations},
        date={2010}, 
        note={preprint}
}

\bib{MR2189500}{article}{
      author={Cojocaru, Alina~Carmen},
      author={Hall, Chris},
       title={Uniform results for {S}erre's theorem for elliptic curves},
        date={2005},
     journal={Int. Math. Res. Not.},
      number={50},
       pages={3065\ndash 3080},
}

\bib{MR2271383}{article}{
      author={Dietmann, Rainer},
       title={Probabilistic {G}alois theory for quartic polynomials},
        date={2006},
     journal={Glasg. Math. J.},
      volume={48},
      number={3},
       pages={553\ndash 556},
}

\bib{1010.5341}{unpublished}{
      author={Dietmann, Rainer},
       title={On the distribution of {G}alois groups},
        date={2010},
        note={arXiv:1010.5341},
}

\bib{MR1485897}{article}{
      author={Duke, William},
       title={Elliptic curves with no exceptional primes},
        date={1997},
     journal={C. R. Acad. Sci. Paris S\'er. I Math.},
      volume={325},
      number={8},
       pages={813\ndash 818},
}

\bib{EEHK}{article}{
      author={Ellenberg, Jordan~S.},
      author={Elscholtz, Christian},
      author={Hall, Chris},
      author={Kowalski, Emmanuel},
       title={Non-simple abelian varieties in a family: geometric and analytic
  approaches},
        date={2009},
     journal={J. London Math. Soc. (2)},
      volume={80},
       pages={135\ndash 154},
}

\bib{Fulman-Guralnick}{unpublished}{
      author={Fulman, Jason},
      author={Guralnick, Robert},
      title={Bounds on the number and sizes of conjugacy classes in finite Chevalley groups with applications to derangements}, 
      date={2009},
      note={arXiv:0902.2238}
}   

\bib{MR0291120}{article}{
      author={Gallagher, P.~X.},
       title={A larger sieve},
        date={1971},
     journal={Acta Arith.},
      volume={18},
       pages={77\ndash 81},
}

\bib{MR0332694}{incollection}{
      author={Gallagher, P.~X.},
       title={The large sieve and probabilistic {G}alois theory},
        date={1973},
   booktitle={Analytic number theory ({P}roc. {S}ympos. {P}ure {M}ath., {V}ol.
  {XXIV}, {S}t. {L}ouis {U}niv., {S}t. {L}ouis, {M}o., 1972)},
   publisher={Amer. Math. Soc.},
     address={Providence, R.I.},
       pages={91\ndash 101},
}

\bib{MR1775416}{article}{
      author={Grant, David},
       title={A formula for the number of elliptic curves with exceptional
  primes},
        date={2000},
     journal={Compositio Math.},
      volume={122},
      number={2},
       pages={151\ndash 164},
}

\bib{MR1745599}{book}{
   author={Hindry, Marc},
   author={Silverman, Joseph H.},
   title={Diophantine geometry},
   series={Graduate Texts in Mathematics},
   volume={201},
   note={An introduction},
   publisher={Springer-Verlag},
   place={New York},
   date={2000},
   pages={xiv+558},
}

\bib{Jones-AAECASC}{article}{
      author={Jones, Nathan},
       title={Almost all elliptic curves are {S}erre curves},
        date={2010},
     journal={Trans. Amer. Math. Soc.},
      volume={362},
      number={3},
       pages={1547\ndash 1570},
}

\bib{MR955052}{book}{
      author={Katz, Nicholas~M.},
       title={Gauss sums, {K}loosterman sums, and monodromy groups},
      series={Annals of Mathematics Studies},
   publisher={Princeton University Press},
     address={Princeton, NJ},
        date={1988},
      volume={116},
}

\bib{MR0080071}{article}{
      author={Knobloch, Hans-Wilhelm},
       title={Die {S}eltenheit der reduziblen {P}olynome},
        date={1956},
     journal={Jber. Deutsch. Math. Verein.},
      volume={59},
      number={Abt. 1},
       pages={12\ndash 19},
}

\bib{MR2240230}{article}{
      author={Kowalski, E.},
       title={On the rank of quadratic twists of elliptic curves over function
  fields},
        date={2006},
     journal={Int. J. Number Theory},
      volume={2},
      number={2},
       pages={267\ndash 288},
}

\bib{MR550295}{article}{
      author={Lefton, Phyllis},
       title={On the {G}alois groups of cubics and trinomials},
        date={1979},
     journal={Acta Arith.},
      volume={35},
      number={3},
       pages={239\ndash 246},
}

\bib{MR1476464}{incollection}{
      author={{\L}uczak, Tomasz},
      author={Pyber, L{\'a}szl{\'o}},
       title={On random generation of the symmetric group},
        date={1997},
   booktitle={Combinatorics, geometry and probability ({C}ambridge, 1993)},
   publisher={Cambridge Univ. Press},
     address={Cambridge},
       pages={463\ndash 470},
}

\bib{MR1209248}{article}{
      author={Masser, D.~W.},
      author={W{\"u}stholz, G.},
       title={Galois properties of division fields of elliptic curves},
        date={1993},
     journal={Bull. London Math. Soc.},
      volume={25},
      number={3},
       pages={247\ndash 254},
}

\bib{MR2376618}{book}{
      author={Murty, M.~Ram},
       title={Problems in analytic number theory},
     edition={Second},
      series={Graduate Texts in Mathematics},
   publisher={Springer},
     address={New York},
        date={2008},
      volume={206},
        ISBN={978-0-387-72349-5},
        note={Readings in Mathematics},
}

\bib{MR1395936}{article}{
      author={Murty, M.~Ram},
      author={Rosen, Michael},
      author={Silverman, Joseph~H.},
       title={Variations on a theme of {R}omanoff},
        date={1996},
        ISSN={0129-167X},
     journal={Internat. J. Math.},
      volume={7},
      number={3},
       pages={373\ndash 391},
}

\bib{MR1997347}{article}{
      author={Serre, Jean-Pierre},
       title={On a theorem of {J}ordan},
        date={2003},
     journal={Bull. Amer. Math. Soc. (N.S.)},
      volume={40},
      number={4},
       pages={429\ndash 440 (electronic)},
}

\bib{MR2363329}{book}{
      author={Serre, Jean-Pierre},
       title={Topics in {G}alois theory},
     edition={Second},
      series={Research Notes in Mathematics},
   publisher={A K Peters Ltd.},
     address={Wellesley, MA},
        date={2008},
      volume={1},
        ISBN={978-1-56881-412-4},
        note={With notes by Henri Darmon},
}

\bib{MR0387283}{article}{
      author={Serre, Jean-Pierre},
       title={Propri\'et\'es galoisiennes des points d'ordre fini des courbes
  elliptiques},
        date={1972},
     journal={Invent. Math.},
      volume={15},
      number={4},
       pages={259\ndash 331},
}

\bib{MR1757192}{book}{
      author={Serre, Jean-Pierre},
       title={Lectures on the {M}ordell-{W}eil theorem},
     edition={Third},
      series={Aspects of Mathematics},
   publisher={Friedr. Vieweg \& Sohn},
     address={Braunschweig},
        date={1997},
        ISBN={3-528-28968-6},
        note={Translated from the French and edited by Martin Brown from notes
  by Michel Waldschmidt, With a foreword by Brown and Serre},
}

\bib{MR1484415}{book}{
      author={Serre, Jean-Pierre},
       title={Abelian {$l$}-adic representations and elliptic curves},
      series={Research Notes in Mathematics},
   publisher={A K Peters Ltd.},
     address={Wellesley, MA},
        date={1998},
      volume={7},
        ISBN={1-56881-077-6},
        note={With the collaboration of Willem Kuyk and John Labute, Revised
  reprint of the 1968 original},
}

\bib{MR2448661}{article}{
      author={Silverman, Joseph~H.},
       title={Variation of periods modulo {$p$} in arithmetic dynamics},
        date={2008},
     journal={New York J. Math.},
      volume={14},
       pages={601\ndash 616},
}

\bib{MR1550517}{article}{
      author={van~der Waerden, B.~L.},
       title={Die {S}eltenheit der reduziblen {G}leichungen und der
  {G}leichungen mit {A}ffekt},
        date={1936},
     journal={Monatsh. Math. Phys.},
      volume={43},
      number={1},
       pages={133\ndash 147},
}

\bib{MR1708609}{incollection}{
      author={Wewers, Stefan},
       title={Deformation of tame admissible covers of curves},
        date={1999},
   booktitle={Aspects of {G}alois theory ({G}ainesville, {FL}, 1996)},
      series={London Math. Soc. Lecture Note Ser.},
      volume={256},
   publisher={Cambridge Univ. Press},
     address={Cambridge},
       pages={239\ndash 282},
}

\bib{MR1748293}{article}{
      author={Zarhin, Yuri~G.},
       title={Hyperelliptic {J}acobians without complex multiplication},
        date={2000},
     journal={Math. Res. Lett.},
      volume={7},
      number={1},
       pages={123\ndash 132},
}

\bib{Zywina-Maximal}{article}{
      author={Zywina, David},
       title={Elliptic curves with maximal {G}alois action on their torsion
  points},
        date={2010},
     journal={Bull. London Math. Soc.},
      volume={42},
      number={5},
       pages={811\ndash 826},
}

\end{biblist}
\end{bibdiv}

\end{document}